\documentclass[11pt, a4paper]{amsart}
\usepackage[english]{babel}
\usepackage{amsmath, amsfonts, amssymb, amsthm, latexsym}
\usepackage{mathrsfs, mathtools, stmaryrd}
\usepackage{centernot}
\usepackage{bbm} 
\usepackage{mathdots}
\usepackage{comment}

\usepackage{graphicx}
\usepackage[rightcaption]{sidecap}
\usepackage{wrapfig}
\usepackage{subfig}
\usepackage[table,xcdraw]{xcolor}
\usepackage{stackengine} 
\usepackage{geometry}
\usepackage{tikz}
\usepackage{tikz-cd}

\usepackage{hyperref}
\hypersetup{
    colorlinks=true,
    linkcolor=blue,
    filecolor=magenta,      
    urlcolor=cyan,
    citecolor=red,
}

\theoremstyle{plain}
\newtheorem{theorem}{Theorem}[section]
\newtheorem{lemma}[theorem]{Lemma}
\newtheorem{proposition}[theorem]{Proposition}
\newtheorem{corollary}[theorem]{Corollary}

\theoremstyle{definition}
\newtheorem{definition}[theorem]{Definition}

\theoremstyle{remark}
\newtheorem{remark}[theorem]{Remark}

\DeclareMathOperator{\id}{id}

\DeclareMathOperator{\Sym}{Sym}
\DeclareMathOperator{\Spec}{Spec}
\DeclareMathOperator{\Ai}{Ai}
\DeclareMathOperator{\trace}{Tr}

\DeclareMathOperator{\SL}{SL}
\DeclareMathOperator{\Gal}{Gal}

\DeclareMathOperator{\Var}{Var}

\DeclareMathOperator{\Frob}{Frob}
\DeclareMathOperator{\Conv}{Conv}
\DeclareMathOperator{\csup}{c}
\DeclareMathOperator{\loc}{loc}

\DeclareMathOperator{\Gm}{\mathbb{G}_{\mathrm{m}}}
\DeclareMathOperator{\Swan}{Swan}
\DeclareMathOperator{\geom}{geom}

\newcommand{\Qlbar}{\overline{\mathbb{Q}}_{\ell}}
\newcommand{\Qpbar}{\overline{\mathbb{Q}}_{p}}
\newcommand{\Qbar}{\overline{\mathbb{Q}}}
\newcommand{\Qp}{\mathbb{Q}_{p}}
\newcommand{\Fpbar}{\overline{\mathbb{F}}_{p}}
\newcommand{\Fp}{\mathbb{F}_{p}}
\newcommand{\Qlbartimes}{\overline{\mathbb{Q}}_{\ell}^{\times}}

\title{On the Generalized Arithmetic Picard--Lefschetz Formula}
\author{Ping-Hsun Chuang}
\date{\today}

\begin{document}

\begin{abstract}
This dissertation establishes an explicit arithmetic Picard--Lefschetz formula for isolated singularities of diagonal type. We investigate proper and flat families of varieties defined over a henselian discrete valuation ring of mixed characteristic. When the smooth generic fiber degenerates into a special fiber with isolated diagonal singularities, we construct an explicit eigenbasis for the tame inertia action on the local vanishing cycles. This construction yields a complete spectral decomposition, from which we deduce explicit formulas for the intersection pairing and the variation morphism. Furthermore, by computing the action of the geometric Frobenius operator on the subspace of inertia invariants, we establish an exact trace formula expressed in terms of Jacobi sums.

We apply this theoretical framework to study the arithmetic of the symmetric powers of the Airy sheaf. By relating the compactly supported cohomology to degenerating affine hypersurfaces, we completely determine the local Galois representation for the $\ell$-adic realization of motives associated to Airy moments. This application provides a precise direct sum decomposition of the Galois module, a clear identification of the inertia invariants, and the explicit characteristic polynomial of the Frobenius action.
\end{abstract}

\maketitle

\tableofcontents

\section{Introduction}
\label{sec:intro}

\subsection{Background and Motivation}
The classical Picard--Lefschetz formula provides a fundamental description of the monodromy action on the nearby cycles and vanishing cycles of a complex hypersurface with an isolated singularity. For a holomorphic function with a singular point, the topology of the degeneration is described in terms of the Milnor fiber and its intersection pairing \cite{Mil68, Dimca1992, AGV1988}. A central theme in this classical theory is the explicit computation of the variation operator and the monodromy action on the middle cohomology of the Milnor fiber.

For specific classes of singularities, one can obtain explicit and computable descriptions of these topological invariants. Notably, for singularities of diagonal type (often referred to as Brieskorn-Pham singularities) defined by equations of the form $\sum z_i^{k_i} = 0$, the vanishing cohomology and the generalized Picard--Lefschetz formulas were completely determined by Pham \cite{Pham1965}. By using the finite group $G = \prod \mathbb{Z}/k_{i}\mathbb{Z}$ action, Pham described the middle cohomology as a $\mathbb{Z}[G]$-module and the variation map as a $\mathbb{Z}[G]$-module homomorphism. Moreover, Pham constructed explicit generators of the nearby and vanishing cycles. From a structural perspective, Sebastiani and Thom \cite{ST1971} established a fundamental theorem describing the vanishing cycles and monodromy action for a direct sum of functions $f+g$ in disjoint variables. In Section \ref{sec:classical_PL}, we will use their result to compute an explicit basis of vanishing cycles and the variation morphism, reproving the result of Pham.

In arithmetic geometry, it is natural to seek an analog of this phenomenon for varieties defined over a henselian discrete valuation ring. Let $S$ be the spectrum of a henselian discrete valuation ring with generic point $\eta$ and closed special point $s$. Consider a proper flat morphism $f: X \rightarrow S$. When the generic fiber $X_{\eta}$ is smooth but the special fiber $X_{s}$ acquires singularities, the difference between the étale cohomology of the generic and special fibers is controlled by the vanishing cycle complex $R\Phi(\overline{\mathbb{Q}}_\ell)$. The Galois group $\operatorname{Gal}(\overline{\eta}/\eta)$, and particularly its inertia subgroup $I$, acts on these cycles, providing arithmetic information of such degeneration. 

The systematic arithmetic framework for vanishing cycles and the Picard--Lefschetz theory was developed by Grothendieck and Deligne in \cite{SGA7_II}. Within this monumental work, Deligne successfully established the arithmetic Picard--Lefschetz formula for ordinary double points. The explicit description of the inertia action and the variation morphism for ordinary double points played an indispensable role in his subsequent proof of the Weil conjectures. Later in \cite{Ill00}, Illusie provided a purely algebraic proof of the arithmetic Picard--Lefschetz formula for homogeneous singularities.

While the arithmetic theory for homogeneous singularities is thoroughly understood, explicit formulas for more complex singularities in the arithmetic setting remain unknown. Although Pham's classical results provide a complete topological picture for diagonal type singularities over the complex numbers, the arithmetic analogue is required for number-theoretic applications. Our primary motivation to bridge this gap comes from the arithmetic study of exponential motives, specifically those arising from the symmetric powers of the Airy sheaf $\Sym^{k}\Ai$. 

By establishing a precise geometric model, we translate the cohomological computations of $\Sym^{k}\Ai$ into the singularity analysis of specific affine hypersurfaces. We consider the affine varieties $A''$ defined by
\begin{equation}\label{eq:A''_def_eq}
    A'' = \left\{ z^{3} = \sum_{j=1}^{k} \left(\frac{1}{3}y_{j}^{3} - y_{j}\right) \right\} \subseteq \mathbb{G}_{\mathrm{m},z} \times \mathbb{A}^{k}_{y}.
\end{equation}
Roughly speaking, the compactly supported cohomology of $\Sym^{k}\Ai$ is intrinsically related to the middle cohomology of $A''$. When analyzing its local Galois representation as a $\Gal(\overline{\mathbb{Q}}_p/\mathbb{Q}_p)$-module for each prime $p$, we discover that it decomposes into a classical geometric component, arising from the modulo $p$ reduction $A''_{\overline{\mathbb{F}}_p}$, and a contribution from the vanishing cycles. Under a proper compactification, the singular locus of this degenerating family over $\mathbb{Z}_p$ consists entirely of isolated singularities of diagonal $A_{2}$ type on the special fiber. This geometric structure therefore motivates us to develop the generalized Picard--Lefschetz formula for diagonal type singularities beyond the homogeneous case.

Therefore, achieving precise control over the inertia and Frobenius actions on the vanishing cycles of these diagonal singularities is essential. This geometric translation not only elucidates the local Galois representations arising from Airy moments, but also determines the necessary corrections at small primes to reconstruct the global representation. Through this approach, our explicit Picard--Lefschetz framework provides the foundational toolkit for analyzing the global cohomological properties of motives associated to Airy moments, resolving their local arithmetic invariants, and computing the exact Frobenius trace via Jacobi sums.

\subsection{Main Results}
The primary goal of this paper is to establish an arithmetic Picard--Lefschetz formula for singularities of diagonal type. Let $S$ be the spectrum of a henselian discrete valuation ring and $f:X\rightarrow S$ be a proper flat morphism. We assume that the generic fiber $X_{\eta}$ is smooth and the singular locus of the special fiber $X_s$ consists of isolated points. Locally around each singularity $x$, we assume the space is defined by a diagonal equation of the form $\sum a_{i}z_{i}^{k_{i}} + u\pi^{v} = 0$, where $\pi$ is a uniformizer.

Our first main result explicitly computes the inertia action on the vanishing cycles at each singularity $x$, that is, the local cohomology $H^{n}_{\{x\}}(X_{\overline{s}}, R\Psi(\Qlbar))$ of the nearby cycle complex $R\Psi(\Qlbar)$. Moreover, we provide a precise description of the Frobenius action. Specifically, we establish a spectral decomposition of the vanishing cycles into eigenspaces of a chosen topological generator of the tame inertia group. By restricting to the inertia-invariant subspace, we deduce an explicit trace formula for the Frobenius operator $\Frob_q^r$ in terms of Jacobi sums and Gauss sums over finite fields. 

To combine these local computations into a global geometric picture, we state our first main theorem (Theorem \ref{thm:main_gen_PL}) as follows:

\begin{theorem}[Main Theorem]\label{thm:intro_main}
Let $S$ be the spectrum of a henselian discrete valuation ring of characteristic $0$ with generic point $\eta$ and a closed special point $s$ with residue characteristic $p\geq 0$. Let $f: X \rightarrow S$ be a proper flat morphism of relative dimension $n$. Suppose that $X$ is smooth outside a finite subset $\Sigma\subseteq X_{s}$ consisting of isolated singularities of diagonal type. Suppose that locally around each singularity $x \in \Sigma$, $X$ is isomorphic to a diagonal polynomial of type $(k_{0}, \ldots, k_{n})$:
\[
f_{x}(z) = a_{0}z_{0}^{k_{0}} + \cdots + a_{n}z_{n}^{k_{n}}+u\pi^{v}=0,
\]
where $a_{i},u$ are units, $\pi$ is a uniformizer, and $k_{i}$ are integers prime to $p$. Here, the parameters $a_{i}, u, v$, and the exponents $k_{i}$ depend on the singular point $x$. Then the following hold:
\begin{enumerate}
    \item Vanishing Cycles and Pairing. For each $x \in \Sigma$, there exist vanishing cycles $e_{\boldsymbol{\zeta},x} \in H^{n}(X_{\overline{\eta}},\Qlbar)$ indexed by $\Xi_{x} := \{\boldsymbol{\zeta} = (\zeta_{0},\ldots,\zeta_{n}) \mid \zeta_{i} \in \mu_{k_{i}}\setminus\{1\}\}$ which form a basis for the local cohomology $H^{n}_{\{x\}}(X_{\overline{s}},R\Psi(\Qlbar))$. These cycles satisfy an explicitly computable intersection pairing $\langle e_{\boldsymbol{\zeta},x},e_{\boldsymbol{\xi},x}\rangle$.
    \item Cospecialization Exact Sequence. The cospecialization map is an isomorphism $sp:H^{i}(X_{\overline{s}},\Qlbar) \xrightarrow{\sim} H^{i}(X_{\overline{\eta}},\Qlbar)$ for $i \neq n, n+1$, and an exact sequence relating the global generic fiber, the special fiber, and the local vanishing cycles:
    \[
    \begin{aligned}
    0 \rightarrow H^{n}(X_{\overline{s}}, \Qlbar) \xrightarrow{sp} H^{n}(X_{\overline{\eta}}, \Qlbar) \xrightarrow{can} \bigoplus_{x \in \Sigma} R^{n}\Phi(\Qlbar)_{x} &\\
    &\hspace{-3cm}\xrightarrow{\partial} H^{n+1}(X_{\overline{s}}, \Qlbar) \xrightarrow{sp} H^{n+1}(X_{\overline{\eta}}, \Qlbar) \rightarrow 0.
    \end{aligned}
    \]
    \item Arithmetic Picard--Lefschetz Formula. The inertia group $I$ acts tamely on $H^{n}(X_{\overline{\eta}},\Qlbar)$. Moreover, the action of $\sigma\in I$ on $\alpha \in H^{n}(X_{\overline{\eta}},\Qlbar)$ reads
    \[
    \sigma(\alpha) = \alpha + \sum_{x\in\Sigma}\left(\sum_{\boldsymbol{\xi}\in\Xi_{x}}C_{\boldsymbol{\xi}}(\sigma)\cdot\langle \alpha,e_{\boldsymbol{\xi},x}\rangle\cdot e_{\boldsymbol{\xi}^{-1},x}\right),
    \]
    where $C_{\boldsymbol{\xi}}(\sigma)$ are explicitly determined constants depending on certain tame characters of $I$.
    \item Frobenius Trace. Assume that the residue field of $S$ is $\mathbb{F}_q$. The trace of the Frobenius action $\operatorname{Frob}_{q}^r$ on the inertia-invariant subspace of the local cohomology is given by:
    \[
    \begin{aligned}
    \operatorname{Tr}\left(\operatorname{Frob}_{q}^r \mid H_{\{x\}}^{n}(X_{\overline{s}},R\Psi(\Qlbar))^{I}\right) \\
    &\hspace{-3cm}= \sum_{\boldsymbol{\zeta} \in \Xi_{x, q^r}^{\mathrm{inv}}} \chi_{I, r}(u) \left( \prod_{i=0}^{n} \chi_{\zeta_{i}, r}(a_{i}^{-1}) \right) J_r(\chi_{\zeta_{0}, r}, \ldots, \chi_{\zeta_{n}, r}),
    \end{aligned}
    \]
    where the sum runs over Frobenius-stable and inertia-invariant indices $\Xi_{x, q^r}^{\mathrm{inv}}$, and $J_r$ denotes the Jacobi sum over $\mathbb{F}_{q^r}$.
\end{enumerate}
\end{theorem}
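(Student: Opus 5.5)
The plan is to reduce everything to a local computation at each $x\in\Sigma$ and then glue. First, the standard formalism of \cite{SGA7_II} gives a distinguished triangle $\Qlbar_{X_{\bar s}}\to R\Psi(\Qlbar)\to R\Phi(\Qlbar)\to$, with $H^i(X_{\bar s},R\Psi)\cong H^i(X_{\bar\eta},\Qlbar)$ by properness. Since $X$ is smooth outside $\Sigma$, $R\Phi$ is supported on $\Sigma$. Part (2) then follows from the long exact sequence once we know that $R\Phi(\Qlbar)_x$ is concentrated in degree $n$. This concentration is Milnor-type vanishing for isolated singularities; for the diagonal model it can be checked directly, as described below.

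\textbf{Local model.} Near $x$, we pass to the strict henselization and to the tame extension $\pi'=\pi^{1/N}$, where $N=\mathrm{lcm}(k_0,\dots,k_n,v)$ is prime to $p$ up to the $v$-part. After the substitution $z_i=\pi'^{vN/k_i}w_i$, rescaling turns the equation into $\sum a_i w_i^{k_i}+u=0$. This is the smooth affine Fermat-type variety $F_x$ over the residue field. Hence the Milnor fibre's cohomology is identified with $H^n_c$ (equivalently, the primitive part of $H^n$) of $F_x$. The group $G=\prod\mu_{k_i}$ acts on it, and its character decomposition (as in the Sebastiani--Thom argument of Section~\ref{sec:classical_PL}) has exactly the nontrivial characters $\boldsymbol{\zeta}\in\Xi_x$ as summands, each of rank one.

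We choose generators $e_{\boldsymbol\zeta,x}$ as the $G$-isotypic projections of a single Pham-type vanishing cycle, for instance the class of the simplex $\{w_i\in\mathbb{R}_{\ge0}\text{-segment}\}$ transported étale-ly, or equivalently the cup product of the $1$-dimensional vanishing cycles of each $a_iw_i^{k_i}$. The inertia acts through $\mu_N$ via $\pi'\mapsto\zeta\pi'$, which translates into the action of an element of $G$ together with the scaling of $u$. Hence $\sigma e_{\boldsymbol\zeta}=\chi_{\boldsymbol\zeta}(\sigma)e_{\boldsymbol\zeta}$ for an explicit tame character, which shows the action is tame.

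\textbf{Pairing and Picard--Lefschetz.} The pairing on $H^n_{\{x\}}$ is computed by Sebastiani--Thom from the one-variable case $a w^k$. There the vanishing cycles $\zeta^j\delta$ have an explicit intersection matrix, and the tensor product gives $\langle e_{\boldsymbol\zeta},e_{\boldsymbol\xi}\rangle=0$ unless $\boldsymbol\xi=\boldsymbol\zeta^{-1}$, with an explicit product of factors $(1-\zeta_i)$. The variation $\mathrm{Var}(\sigma)=\sigma-1$ factors as $H^n(X_{\bar\eta})\xrightarrow{can}\bigoplus_x R^n\Phi_x\to H^n_{\{x\}}(R\Psi)\to H^n(X_{\bar\eta})$. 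Here $can(\alpha)$ is read off by pairing with the dual basis, using the duality $R\Phi_x\times H^n_{\{x\}}\to\Qlbar(-n)$ of \cite{Ill00}. Diagonalizing then gives the formula in (3), with $C_{\boldsymbol\xi}(\sigma)=(\chi_{\boldsymbol\xi}(\sigma)-1)/\langle e_{\boldsymbol\xi},e_{\boldsymbol\xi^{-1}}\rangle$ up to sign conventions.

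\textbf{Frobenius trace.} Invariant indices are those with $\chi_{\boldsymbol\zeta}|_I=1$, i.e. $\prod\zeta_i$ compatible with $v$. For these, the inertia-invariant part is the corresponding isotypic piece of $H^n_c(F_x)$, now viewed over $\mathbb{F}_q$ with the twist by $u$. By the Grothendieck--Lefschetz trace formula, the point count of $\sum a_iw_i^{k_i}=-u$ decomposes via multiplicative characters into $\chi(u)\prod\chi_{\zeta_i}(a_i^{-1})J_r(\dots)$. Only Frobenius-stable $\boldsymbol\zeta$ (i.e. $\boldsymbol\zeta^{q^r}=\boldsymbol\zeta$) contribute diagonal trace terms.

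\textbf{Main obstacle.} I expect the main obstacle to be making the reduction to $F_x$ rigorous étale-locally, namely identifying $R\Psi$ at $x$ with the cohomology of the rescaled fibre in mixed characteristic, including the comparison of the chosen generators with the $G$- and $I$-actions and the signs in the pairing. I would handle this by an explicit weighted blow-up, i.e. a tame base change plus normalization with a $\mu_N$ quotient, reducing to a semistable-type configuration whose exceptional divisor is the projective closure of $F_x$.
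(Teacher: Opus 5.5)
\textbf{The gap is in parts (1) and (3), the pairing and the variation.}

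You propose to get the pairing by tensoring the one-variable intersection matrices. That gives the wrong answer, because the intersection form is not multiplicative under Thom--Sebastiani. What is multiplicative is the variation (equivalently the Seifert form), $\Var=\bigotimes_i\Var_i$. The intersection form must then be recovered by symmetrizing: $(a,a')=(-1)^{n(n-1)/2}\bigl(\langle\Var^{-1}a,a'\rangle+(-1)^n\langle\Var^{-1}a',a\rangle\bigr)$. This step is what produces the factor $\prod_i\zeta_i-1$ in the paper's value $(-1)^{n(n+1)/2}\frac{\prod_i\zeta_i-1}{\prod_i(\zeta_i-1)}\prod_ik_i$.

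A tensor product of the (non-degenerate) one-variable forms never vanishes, and this already fails for a node on a curve ($n=1$, $k_0=k_1=2$). There $\Xi_x=\{(-1,-1)\}$ and the form on $H^1$ is alternating, so $\langle e,e\rangle=0$. For the same reason $H^n_c(F_x)$ is not ``the primitive part of $H^n$'': the summands with $\prod_i\zeta_i=1$ span exactly $\ker j$.

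\textbf{The same problem breaks your formula in (3).} Your recipe $C_{\boldsymbol{\xi}}(\sigma)=(\chi_{\boldsymbol{\xi}}(\sigma)-1)/\langle e_{\boldsymbol{\xi}},e_{\boldsymbol{\xi}^{-1}}\rangle$ comes from solving $\sigma=\id+\Var(\sigma)\circ q$ on $H^n_{\{x\}}(X_{\overline{s}},R\Psi(\Qlbar))$. With the correct pairing and $\prod_i\xi_i\neq 1$, it agrees with the paper up to the sign $-(-1)^{n(n+1)/2}$. But that relation determines $\Var(\sigma)$ only on the image of $q$, and $q(e_{\boldsymbol{\xi}})=0$ exactly when $\prod_i\xi_i=1$, so your formula is $0/0$ there.

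Those indices carry non-trivial unipotent monodromy; for the node, $\sigma(\alpha)=\alpha+\ell(\sigma)\langle\alpha,e\rangle e$. The correct constant is $C_{\boldsymbol{\xi}}(\sigma)=\bigl(\prod_ik_i\bigr)^{-1}\prod_i(\xi_i-1)\,\ell(\sigma)$, with $\ell$ the tame logarithm. Eigenvalue data alone cannot produce it; you need an independent computation of the variation itself. The paper does this in three steps:
\begin{enumerate}
\item It gets $V=\Var(T)$ from the one-variable case, where $\Var$ is invertible, via Thom--Sebastiani.
\item It transfers $V$ to the \'etale setting through the lisse sheaf $R^ng_!\Qlbar$ of the universal diagonal family on $\mathbb{G}_{\mathrm{m},B}$ and comparison with $\mathbb{C}$. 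This is also the mechanism your ``transported \'etale-ly'' is missing.
\item It iterates the composition rule $\Var(\tau\sigma)=\Var(\tau)+\Var(\sigma)+\Var(\tau)\circ q\circ\Var(\sigma)$ to get $\Var(T^v)=\bigl(\sum_{m=0}^{v-1}T^m\bigr)\circ V$.
\end{enumerate}

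\textbf{A smaller point.} Do not put $v$ into $N$. The condition $k_i\mid N$ already makes $vN/k_i$ integral. If $p\mid v$, which the hypotheses allow, including $v$ would make $K(\pi^{1/N})/K$ wildly ramified and break your tameness argument. Take $N=\operatorname{lcm}(k_i)$.

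\textbf{What works.} With these repairs the rest of your plan is a sound alternative route. Reading the eigenvalues off the induced action $w_i\mapsto\zeta^{-vN/k_i}w_i$ on the rescaled good-reduction model is more direct than the paper's universal-monodromy argument. Weil's point count can replace the Gauss-sum computation in (4). The identification you call the main obstacle is done in the paper without a weighted blow-up: it uses the cone isomorphism $H^n_{\{0\}}=H^n_c$ for $R\Psi$ and the toric compactification with normal-crossing boundary.
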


\paragraph{Strategy of Proof for Theorem \ref{thm:intro_main}.}
The proof proceeds by localizing the global geometric fibration and explicitly computing the vanishing cycles via a suitable geometric model. First, we use the proper base change theorem and the formalism of vanishing cycles from \cite{SGA7_II} to reduce the global specialization exact sequence to the local model.

To explicitly compute local cohomology $H^{n}_{\{x\}}(X_{\overline{s}}, R\Psi(\Qlbar))$, we construct a smooth toric compactification of the non-degenerate affine hypersurface associated with the diagonal singularity. This compactification allows us to identify the local vanishing cohomology with the compactly supported cohomology of a smooth affine generic fiber, provided the boundary is a simple normal crossing divisor. 

Once translated to the compactly supported cohomology of the generic fiber, the tame inertia action and the variation morphism can be explicitly determined. We employ the Thom--Sebastiani theorem to decompose the variation operator into a tensor product of $1$-dimensional cyclic covers and construct a precise eigenbasis $e_{\boldsymbol{\zeta}}$ indexed by roots of unity. Finally, the computation of the Frobenius trace on the inertia-invariant subspace is achieved by analyzing the pullback of Artin--Schreier sheaf on the affine hypersurface, reducing the trace evaluation to classical properties of Gauss sums and Jacobi sums over finite fields.

Our second main result is to apply this arithmetic framework to the study of the symmetric powers of the Airy sheaf $\Sym^k \Ai$. The classical approach to understanding the distribution of Frobenius traces of the Airy sheaf over finite fields relies on computing its power moments, which are related to the Frobenius action on the compactly supported cohomology of $\Sym^{k}\Ai$. 

To elevate this local cohomological data into a global arithmetic framework, we translate the computations of $\Sym^{k}\Ai$ into the geometry of specific affine hypersurfaces $A'$ and $A''$ (as defined in \eqref{eq:A''_def_eq}) defined over $\mathbb{Q}$. This translation allows us to construct global motives $M_{k}'$ and $M_{k}''$ associated to the Airy moments. The $\ell$-adic realizations of these motives, denoted by $M_{k,{\text{ét}}}'$ and $M_{k,{\text{ét}}}''$, carry a natural action of the absolute Galois group $\Gal(\overline{\mathbb{Q}}/\mathbb{Q})$. 

This motivic perspective indicates how the local Airy moments should be assembled to form a global Galois representation. However, if one naively considers the modulo $p$ reduction of these motives, the étale cohomology of the special fiber (the geometric component) only captures the unramified aspect of the geometry. It fails to fully reconstruct the local Galois representation of the decomposition group $\Gal(\overline{\mathbb{Q}}_p/\mathbb{Q}_p)$. The missing information, which accounts for the ramification, is given by the singularities of the reduction. 

By choosing a suitable compactifications $\overline{A'}$, $\overline{A''}$ whose boundary is a simple normal crossing divisor and whose singular locus consists entirely of isolated singularities of diagonal type over $\mathbb{Z}_p$, we apply our generalized arithmetic Picard--Lefschetz formula to bridge this gap. Theorem \ref{thm:intro_airy} essentially demonstrates exactly what vanishing cycle components ($E$) must be added to the naive geometric reduction to completely reconstruct the local Galois representation of $M_{k,{\text{ét}}}''$. Theorems \ref{thm:k_odd_non_cl_motive} to \ref{thm:k_even_non_cl_motive_iner_inv} are summarized as follows:

\begin{theorem}[Arithmetic of the Motive Associated to Airy Moments]\label{thm:intro_airy}
Let $p > 5$ be a prime, $\ell \neq p$ be another prime, and $k \geq 2$ be an integer. Let $M_{k,{\text{ét}}}'' = H^{k}_{\text{ét},\csup}(A''_{\overline{\mathbb{Q}}},\Qlbar)^{S_{k}\times \mu_{2},\chi}$ be the $\ell$-adic realization of the motive associated to Airy moments. Then, as a representation of the decomposition group $\Gal(\overline{\mathbb{Q}}_p/\mathbb{Q}_p)$, the following hold:
\begin{enumerate}
    \item Cohomological Decomposition. We have a direct sum decomposition $M_{k,{\text{ét}}}'' = M \oplus E$, where $M = H^{k}_{\text{ét},\csup}(A^{\prime\prime}_{\overline{\mathbb{F}}_p},\Qlbar)^{S_{k}\times \mu_{2},\chi}$ is the geometric component arising from the modulo $p$ reduction, and $E$ is the vanishing cycle component spanned by the local cohomology groups $H^{k}_{\{x_{a}\}}(\overline{A''}_{\overline{\mathbb{F}}_p},R\Psi(\Qlbar))$ supported at the isolated singularities $x_a$ of the special fiber.
    \item Inertia Invariants. The inertia-invariant subspace is given by $(M_{k,{\text{ét}}}'')^{I_p} = M \oplus E'$, where $E' \subseteq E$ is the subspace generated by the vanishing cycles whose corresponding singularity index $a$ satisfies a specific $p$-adic valuation congruence condition (namely, $v_p(a) \equiv 5 \pmod 6$ for odd $k$, and $v_p(a) \equiv 2 \pmod 3$ for even $k$).
    \item Frobenius Action. The characteristic polynomial of the Frobenius endomorphism $\Frob_p$ on $E'$ is explicitly determined. In particular, for $p \equiv 1 \pmod 3$, the trace $\operatorname{Tr}(\Frob_p \mid E')$ evaluates to a sum of eigenvalues $\Lambda_a + \overline{\Lambda}_a$, where each $\Lambda_a$ is completely determined in terms of the Jacobi sum $J_1(\chi, \chi_{\mathrm{sgn}}^{\otimes k})$ and the characters of $\mathbb{F}_p^{\times}$. For $p \equiv 2 \pmod 3$, the trace on $E'$ is zero.
\end{enumerate}
\end{theorem}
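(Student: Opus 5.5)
The plan is to deduce Theorem \ref{thm:intro_airy} from Theorem \ref{thm:intro_main} applied to a compactification $\overline{A''}$ over $\mathbb{Z}_p$. I would carry out four steps in order.

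\textbf{Step 1: reduce to a proper family.} I will construct a compactification $\overline{A''}\to\Spec\mathbb{Z}_p$ whose boundary $D$ is a relative simple normal crossing divisor. Compactly supported cohomology of $A''$ is then the cohomology of $\overline{A''}$ relative to $D$. Since $D$ is smooth over $\mathbb{Z}_p$ with normal crossings, the cohomology of $D$ and of its strata is unramified. So the vanishing cycles of $\overline{A''}$ relative to $D$ are concentrated at the singular points of the special fiber lying in $A''$.

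\textbf{Step 2: locate the singularities.} Critical points of $\sum(\tfrac13 y_j^3-y_j)$ occur at $y_j=\pm1$. At such a point $a$ the critical value is $c_a=-\tfrac23\sum\epsilon_j$, so the local equation is
\[
z^3-c_a=\sum\epsilon_j w_j^2+\text{(higher order)}.
\]
Singularities of the special fiber therefore arise where $z^3\equiv c_a$ has repeated roots, i.e.\ at $z\equiv0$ with $p\mid c_a$. After the change $w_j\mapsto$ unit$\cdot w_j$, the local equation becomes diagonal:
\[
z^3-\sum \epsilon_j w_j^2-c_a=0,
\]
with $c_a=u\,p^{v_p(c_a)}$. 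This is of type $(3,2,\dots,2)$. The hypothesis $p>5$ ensures that $2$ and $3$ are prime to $p$ and that the completing-square changes are legitimate.

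\textbf{Step 3: apply the main theorem.} Theorem \ref{thm:intro_main}(2) gives the sequence from the special fiber to the generic fiber. I would then show that the connecting map $\partial$ vanishes after taking the $S_k\times\mu_2,\chi$-isotypic part, using purity or weight considerations together with the smoothness of the boundary. This yields the splitting $M_{k,\text{ét}}''=M\oplus E$. The splitting is $\Gal(\overline{\mathbb{Q}}_p/\mathbb{Q}_p)$-equivariant because $I$ acts unipotently-trivially on $M$ and semisimply with finite order on $E$. Theorem \ref{thm:intro_main}(3) shows that the inertia acts on $e_{\boldsymbol\zeta}$ through the tame character determined by $\prod\zeta_i$ together with $v$. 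Concretely, $\sigma$ acts by $\chi(\sigma)^{v}$-twisted roots of unity, with $\zeta_0\in\mu_3\setminus\{1\}$ and $\zeta_i=-1$. So the eigenvalue is trivial exactly when $v/3+\text{(}k\text{ halves)}\in\mathbb{Z}$, i.e.\ when $6\mid v+\dots$. Projecting to the $\chi$-isotypic part under $S_k\times\mu_2$, which permutes the $a$ and selects the sign parity, gives the congruence $v_p(a)\equiv5\pmod 6$ for $k$ odd and $v_p(a)\equiv 2 \pmod 3$ for $k$ even.

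\textbf{Step 4: compute the Frobenius.} Theorem \ref{thm:intro_main}(4) gives the Frobenius trace as a Jacobi sum $J(\chi_3,\chi_{\rm sgn},\dots,\chi_{\rm sgn})$, with the $k$ quadratic characters. By the standard identity for Jacobi sums of quadratic characters this collapses to $J_1(\chi,\chi_{\rm sgn}^{\otimes k})$ times character values of the units $a_i$ and $u$. When $p\equiv2\pmod3$ the cubic character is not defined over $\mathbb{F}_p$. In that case Frobenius swaps the two choices of $\zeta_0$, so the index set $\Xi^{\rm inv}_{x,p}$ is empty and the trace vanishes. The characteristic polynomial then follows by computing the trace over $\mathbb{F}_{p^2}$ as well.

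The main obstacle is Step 1 together with the vanishing of $\partial$. One must build the compactification with snc boundary smooth over $\mathbb{Z}_p$. One must also check that no singularities appear at infinity or on $z=0$ beyond those located in Step 2. I would first try a weighted projective compactification in $z$ and each $y_j$ separately, and verify transversality using $p>5$.
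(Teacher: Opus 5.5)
There is a genuine gap in Step~1, and Step~3 relies on it. The variety $A''$ lives in $\mathbb{G}_{\mathrm{m},z}\times\mathbb{A}^{k}_{y}$, where $\partial_z(z^{3}-g_k)=3z^{2}$ is a unit because $p\neq 3$. So $A''$ is \emph{smooth} over $\mathbb{Z}_p$, and there are no singular points, hence no vanishing cycles, inside $A''$ at all. The $A_2$ points you correctly locate in Step~2 sit at $z\equiv 0$, i.e.\ on the boundary. In the closure $\overline{A''}\subseteq\mathbb{P}^{k+1}_{[w:z:y]}$ they lie on $D_0=\overline{A''}\cap\{z=0\}$, whose open part $D_0\setminus D_\infty\cong A'$ itself acquires ordinary double points modulo $p$. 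For $k$ even this already happens over $\mathbb{Q}_p$ at the points with $a=0$: there $c_a=0$, so no finite $v$ exists, and Theorem~\ref{thm:intro_main} cannot be applied to $\overline{A''}$ before blowing up $\Sigma_0$. Hence the boundary is not smooth over $\mathbb{Z}_p$, and its cohomology does not specialize isomorphically (cf.\ Theorem~\ref{thm:k_odd_A'_SES}). A cleverer compactification cannot save Step~1 either. If $A''$ were the complement of a relative normal crossings divisor in a proper $\mathbb{Z}_p$-model, Lemma~\ref{prop_compactification_proper_base_change_SGA7_2} (with $R\Psi(\Qlbar)=\Qlbar$ on the smooth $A''$) would give a Galois-equivariant isomorphism $H^{k}_{\csup}(A''_{\overline{\mathbb{F}}_p},\Qlbar)\cong H^{k}_{\csup}(A''_{\overline{\mathbb{Q}}_p},\Qlbar)$. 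That forces $E=0$, contradicting the statement whenever the index set of $E$ is non-empty. The ramification of $M''_{k,\text{\'et}}$ is a boundary phenomenon, and the proof has to go through the boundary.

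This is what the paper does. It applies Theorem~\ref{thm:main_gen_PL} only to the proper scheme $\overline{A''}$ (to $\operatorname{Bl}_{\Sigma_0}\overline{A''}$ when $k$ is even). It then compares the open--closed sequences $H^{k-1}_{\csup}(D'')\xrightarrow{\eta}H^{k}_{\csup}(A'')\to H^{k}(\overline{A''})\to H^{k}(D'')$ over $\overline{\mathbb{F}}_p$ and $\overline{\mathbb{Q}}_p$. The boundary is controlled by the classical Picard--Lefschetz formula for the double points of $D_0\setminus D_\infty\cong A'$; through Theorem~\ref{thm:k_odd_A'_SES} this makes $H^{k}_{\csup}(D''_{\overline{\mathbb{F}}_p})\to H^{k}_{\csup}(D''_{\overline{\mathbb{Q}}_p})$ an isomorphism on $\chi$-parts. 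A weight argument then kills $\eta$: $M''_{k,\text{\'et}}$ is pure of weight $k$ (deduced from the Hodge numbers of \cite{SY23}), while $H^{k-1}_{\csup}(D'')$ has weights $\le k-1$. So the cokernel $Q$ of $M\hookrightarrow M''_{k,\text{\'et}}$ injects into $\bigl(\bigoplus_{x}R^{k}\Phi(\Qlbar)_{x}\bigr)^{S_k\times\mu_2,\chi}$. Weights do not give surjectivity; the paper gets it by a dimension count. It computes $\dim M$ from Propositions~\ref{prop:Ai_cohomology_non_inv_part_A''}, \ref{prop:Ai_cohomology_hypersurface}, \ref{prop:dim_Sym_Ai'_F_p} and the Swan conductors at $\infty$ of \cite{HRL11}, and $\dim M''_{k,\text{\'et}}$ from Lemma~\ref{lem:dim_over_Qp_and_de_Rham}. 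This yields $\dim Q=\lfloor k/p\rfloor+\delta$ (with $\delta\in\{0,1\}$ the parity correction), which is twice the number of admissible indices $a$. Your proposal has nothing playing the role of this count. Once $M''_{k,\text{\'et}}=M\oplus E$ is in place, your Steps~3--4 on inertia and Frobenius agree with the paper. Note only that the congruences come directly from $v=v_p(\tfrac23 ap)=v_p(a)+1$ and the order of $\Lambda=\zeta_0(-1)^k$ (six for $k$ odd, three for $k$ even), not from the isotypic projection.
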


\paragraph{Strategy of Proof for Theorem \ref{thm:intro_airy}.}
Once the geometric models are established, the main challenge lies in isolating the cohomological contribution of the singularities (the vanishing cycles $E$) from the non-smooth geometric background. We achieve this separation by constructing a suitable proper compactification $\overline{A''}$. By embedding $A''$ into a projective space and performing blow-ups along the singular locus of the generic fiber if necessary, we obtain a proper model. Crucially, the special fiber $\overline{A''}_{\Fp}$ of this model contains only isolated singularities.

Since these isolated singularities on the special fiber are locally isomorphic to diagonal $A_2$ singularities, we can directly apply the generalized arithmetic Picard--Lefschetz formula (Theorem \ref{thm:intro_main}) to obtain the exact sequence relating to the vanishing cycles. To confirm the exactness and establish the direct sum decomposition $M_{k,{\text{ét}}}'' = M \oplus E$, we precisely determine the cohomological dimensions by employing the Grothendieck--Ogg--Shafarevich formula on the curve $\mathbb{G}_{\mathrm{m}}$. By calculating the local Swan conductors at infinity using the representation theory of the geometric monodromy groups, we complete the dimension counting. Finally, identifying the inertia invariants and evaluating the exact Frobenius trace follows from the explicit Jacobi sum formulas established in Theorem \ref{thm:intro_main}.

\subsection{A Dictionary: From the Holomorphic to the Arithmetic Setting}
\label{subsec:users_guide}

As the transition from the classical topological Picard--Lefschetz theory and differential equations over $\mathbb{C}$ to their arithmetic counterparts involves highly technical machinery, we provide a dictionary translating phenomena from the holomorphic category into the $\ell$-adic arithmetic setting. This aims to clarify the underlying conceptual framework of this dissertation.

\subsubsection*{The Picard--Lefschetz Correspondence}
The most direct analogy emerges when analyzing the local degeneration of families of varieties. Over the complex numbers, consider a holomorphic fibration over a small disk acquiring an isolated singularity on the special fiber. The classical theory investigates the topology of the Milnor fiber and the action of the topological monodromy operator $T$ on its cohomology (see, for instance, \cite{Mil68,AGV1988,Dimca1992}).

In the arithmetic setting over a henselian discrete valuation ring $R$, the local geometric generic fiber assumes the role of the Milnor fiber, and the topological monodromy is replaced by the action of the inertia group $I$. A profound insight is that the classical complex geometry purely reflects the \textit{tame} aspect of the arithmetic world. When the wild inertia subgroup acts trivially, a choice of topological generator $\sigma_{\mathrm{top}}$ of the tame inertia group (which relies on fixing a compatible system of roots of unity) mirrors the topological cyclic monodromy. While Deligne \cite{SGA7_II} elegantly established the arithmetic Picard--Lefschetz formula for ordinary double points, this thesis extends the framework to singularities of diagonal type. Rather than developing an arithmetic analogue of the Thom--Sebastiani theorem, we deduce the arithmetic structure by bridging the two worlds. Specifically, we apply the classical Thom--Sebastiani theorem \cite{ST1971} over $\mathbb{C}$ to explicitly compute the topological monodromy and variation operators, and subsequently utilize the specialization formalism in \cite{SGA7_II} to rigorously transfer these topological invariants into explicit formulas for the tame inertia action.

However, the arithmetic setting introduces rich phenomena that are completely invisible over $\mathbb{C}$. First, the existence of \textit{wild ramification} introduces complexities with no complex analogue. Second, and most importantly, the arithmetic setting over finite fields comes equipped with the \textit{geometric Frobenius endomorphism}. This operator acts on the vanishing cycles, and its trace encodes the number-theoretic data such as Jacobi sums, allowing us to connect local singularity invariants directly to point counting.

\subsubsection*{Exponential Connections versus Artin--Schreier $\ell$-adic Sheaves}
The second profound analogy lies in the study of the Airy equation. Over $\mathbb{C}$, the Airy differential equation can be understood via the theory of $\mathcal{D}$-modules. Specifically, it is realized as the Fourier transform of the $\mathcal{D}$-module associated with the exponential twisted connection $\mathrm{d} + \mathrm{d}(\frac{x^3}{3})$. We refer the reader to \cite{Yu14,Sabbah_Irr_Hodge,SY23} for comprehensive treatments.

In positive characteristic, the natural analogue of an exponential twisted connection is the Artin--Schreier sheaf $\mathcal{L}_{\psi}$ constructed via a non-trivial additive character $\psi$. Mirroring the $\mathcal{D}$-module setting precisely, the arithmetic $\ell$-adic Airy sheaf is defined as the Fourier--Deligne transform of $\mathcal{L}_{\psi(\frac{x^3}{3})}$ \cite{SGA4.5,Lau87, Kat90}.

Yet, a critical geometric challenge arises: there is no ``global motive'' defined over $\mathbb{Q}$ that canonically bridges the classical Airy connection with the $\ell$-adic Airy sheaf across all characteristics. To overcome the absence of such a global Galois representation, this thesis constructs specific affine varieties (denoted as $A'$ and $A''$). By analyzing the modulo $p$ reduction of these varieties and precisely computing their vanishing cycles at primes of bad reduction using our generalized Picard--Lefschetz formula, we successfully reconstruct the Galois representations associated with the Airy moments.

\subsubsection*{Summary of the Analogies}
We summarize these conceptual bridges in Table \ref{tab:users_guide_dictionary}.

\begin{table}[ht]
\centering
\renewcommand{\arraystretch}{1.3}
\begin{tabular}{p{0.45\linewidth} p{0.45\linewidth}}
\hline
\rowcolor[HTML]{EFEFEF} 
\textbf{Holomorphic Setting (over $\mathbb{C}$)} & \textbf{Arithmetic Setting (over $R$ or $\mathbb{F}_p$)} \\ \hline
Milnor Fiber & Local Geometric Generic Fiber \\
Topological Monodromy Operator $T$ & A Topological Generator $\sigma_{\mathrm{top}} \in I_t$ \\
Classical Picard--Lefschetz Formula for ODPs & Arithmetic Picard--Lefschetz Formula for ODPs (Deligne) \\
Thom--Sebastiani for Diagonal Type Singularities & Arithmetic Picard--Lefschetz for Diagonal Type Singularities (This thesis) \\ \hline
\rowcolor[HTML]{F8F8F8}
\textit{Invisible over $\mathbb{C}$} & Wild Inertia Action \\
\rowcolor[HTML]{F8F8F8}
\textit{Invisible over $\mathbb{C}$} & Geometric Frobenius Endomorphism \\ \hline
Exponential Twisted Connection $\mathrm{d} + \mathrm{d}f$ & Artin--Schreier Sheaf $\mathcal{L}_{\psi(f)}$ \\
Fourier Transform of $\mathcal{D}$-modules & $\ell$-adic Fourier--Deligne Transform \\
Airy Connection (Differential Equation) & Airy Sheaf ($\ell$-adic Galois Representation) \\ \hline
\end{tabular}
\caption{Dictionary between the holomorphic and arithmetic settings.}
\label{tab:users_guide_dictionary}
\end{table}

\subsection{Outline and Methodology}
The structure of this paper is to progressively build the arithmetic Picard--Lefschetz framework from its topological foundations, culminating in its application to the Galois representations of motives associated to Airy moments. 

In Section \ref{sec:classical_PL}, we establish the foundational topological results over the complex numbers. To study the arithmetic variation operator and intersection pairing, one must first understand the topological structure of the corresponding Milnor fiber. We analyze the classical Picard--Lefschetz formula, focusing specifically on finite cyclic covers and diagonal polynomials. By systematically applying the Thom--Sebastiani Theorem, we compute the explicit basis of the vanishing cycles and the variation matrices. The highly symmetric nature of the intersection form in this classical setting provides the essential structural inputs for the arithmetic computations that follow.

In Section \ref{sec:diag_singularity}, we transition to the arithmetic setting and develop the core theoretical framework of this paper. We begin by reducing the global geometric fibration over a henselian discrete valuation ring of equi-characteristic $0$ or mixed characteristic to the local study of diagonal singularities. The key step here is the use of toric compactifications; by constructing a proper model with a simple normal crossing divisor boundary, we rigorously identify the local vanishing cycles with the compactly supported cohomology of the generic affine fiber. This identification allows us to explicitly determine the tame inertia action and the spectral decomposition of the vanishing cycles. Furthermore, by relating the generic fiber to the finite field reduction, we derive the exact Frobenius trace formula on the inertia-invariant subspace in terms of Jacobi and Gauss sums.

Finally, in Section \ref{sec:application}, we apply our generalized Picard--Lefschetz formula to determine the local Galois structure of the motives associated to the symmetric powers of the Airy sheaf $\Sym^k \Ai$. The methodology begins by translating the cohomological computations of the Airy sheaf into the singularity analysis of specific affine hypersurfaces via the Künneth formula and Kummer pullbacks. To isolate the vanishing cycles, we construct suitable proper compactifications of these affine varieties. Subsequently, to perform precise dimension counting and confirm the exactness of the cospecialization sequence, we analyze the geometric monodromy groups of the relevant sheaves and apply the Grothendieck--Ogg--Shafarevich formula along with the computation of local Swan conductors. Eventually, by applying the generalized arithmetic Picard--Lefschetz formula developed in Section \ref{sec:diag_singularity}, we establish the complete cohomological decomposition of the $\ell$-adic realization $M_{k,{\text{ét}}}''$ (of the motive associated to Airy moments), and explicitly determine the characteristic polynomial of its Frobenius action.

\section{Classical Picard--Lefschetz Formula}
\label{sec:classical_PL}
In this section, we analyze the classical Picard--Lefschetz formula for diagonal type polynomial, reproving the result of Pham \cite{Pham1965}. The basic settings are mostly taken from \cite{MR425167}.

Let $f(z_{0},\ldots,z_{n}):\mathbb{C}^{n+1}\rightarrow\mathbb{C}$ be a holomorphic function in $n+1$ complex variables such that $f(0) = 0$ and $f$ has an isolated singularity at the origin. Fixing $\alpha >0$ and small $\varepsilon>0$, the variety of vanishing cycles is defined by
\[
F = \{(z_{i})\in\mathbb{C}^{n+1}\mid \sum|z_{i}|^{2}\leq \alpha, f((z_{i})) = \varepsilon\}.
\]
Traveling along the circle centered at $0$ with radius $\varepsilon$, we obtain a diffeomorphism $T$ of $F$ which is called the monodromy diffeomorphism. $T$ acts as identity on the boundary of $F$
\[
\partial F = \{(z_{i})\in\mathbb{C}^{n+1}\mid \sum|z_{i}|^{2} = \alpha, f((z_{i})) = \varepsilon\}.
\]
By \cite{Mil68}, $F$ is homotopic to a bouquet of $\mu$ $n$-spheres, where $\mu$ is the Milnor number of $f$. It follows that the reduced homology $\widetilde{H}_{i}(F) = 0$ for all $i\neq n$ and $\widetilde{H}_{n}(F)$ is a free $\mathbb{Z}$ module of rank $\mu$. By duality, the same property holds for $\widetilde{H}_{i}(F,\partial F)$. 

Throughout this section, by duality, we identify $\widetilde{H}_{n}(F)$ with $\widetilde{H}^{n}_{\csup}(\mathring{F})$ and $\widetilde{H}_{n}(F,\partial F)$ with $\widetilde{H}^{n}(F)$, where $\mathring{F}$ denotes the interior of $F$. Moreover, extending the coefficients to $\mathbb{C}$, we identify these spaces with the de Rham cohomology $\widetilde{H}^{n}_{\mathrm{dR},\csup}(\mathring{F},\mathbb{C})$ and $\widetilde{H}^{n}_{\mathrm{dR}}(F,\mathbb{C})$ respectively. We also identify $\widetilde{H}^{n}(F)$ as the dual of $\widetilde{H}^{n}_{\csup}(\mathring{F})$ via the Poincaré pairing
\[
\begin{tikzcd}[row sep=tiny]
\widetilde{H}^{n}(F)\times \widetilde{H}^{n}_{\csup}(\mathring{F}) \arrow[rr, "{\langle\ ,\ \rangle}"] &  & \mathbb{C}                                                            \\
{(b,a)} \arrow[rr]                                                                                          &  & {\langle b,a\rangle = -(-1)^{n(n-1)/2}\int\omega_{b}\wedge\omega_{a}}
\end{tikzcd}
\]
where $\omega_{a}$ and $\omega_{b}$ are closed differential $n$-forms corresponding to $a\in\widetilde{H}^{n}_{\csup}(\mathring{F})$ and $b\in\widetilde{H}^{n}(F)$. Finally, we write $j:\widetilde{H}^{n}_{\csup}(\mathring{F})\rightarrow \widetilde{H}^{n}(F)$ for the natural map. $j$ induces the intersection form $(\ ,\ )$ on $\widetilde{H}^{n}_{\csup}$ by
\begin{equation}\label{pairing_on_H_{0}}
(a,a') = -(-1)^{n(n-1)/2}\left\langle j(a),a'\right\rangle.
\end{equation}

The monodromy action $T$ on $F$, which is the identity on $\partial F$, induces a variation homomorphism $\Var$
\[
\begin{tikzcd}[row sep = tiny]
\widetilde{H}^{n}(F) \arrow[rr, "\Var"] &  & \widetilde{H}^{n}_{\csup}(\mathring{F}),
\end{tikzcd}
\]
which can be calculated as follows: if $b \in \widetilde{H}^{n}(F)$ is the class of the differential form $\omega$, then $\Var(b) \in \widetilde{H}^{n}_{\csup}(\mathring{F})$ is the class of the differential form $V_\omega=T \omega-\omega$.
The automorphisms of $\widetilde{H}^{n}(F)$ and $\widetilde{H}^{n}_{\csup}(\mathring{F})$ induced by $T$ satisfy
\begin{equation}\label{eq_T_on_classical}
\begin{aligned}
\left.T\right|_{\widetilde{H}^{n}_{\csup}(\mathring{F})} &= \id + \Var\circ j\\
\left.T\right|_{\widetilde{H}^{n}(F)} &= \id + j\circ \Var.
\end{aligned}
\end{equation}

If $\omega,\omega'$ are two closed $n$-forms corresponding to classes $b,b'\in\widetilde{H}^{n}(F)$, we have
\[
\int T(\omega\wedge\omega') - \omega\wedge\omega' = 0.
\]
That is,
\[
\int \Var\omega\wedge\Var\omega' + \int\omega\wedge\Var\omega' + \int \Var\omega\wedge\omega' =0,
\]
or
\[
\left\langle j\circ\Var(b),\Var(b')\right\rangle+\left\langle b,\Var(b')\right\rangle
+(-1)^{n}\left\langle b',\Var(b)\right\rangle = 0.
\]
In fact, $\Var$ is a bijection. Then, for any $a,a'\in \widetilde{H}^{n}_{\mathrm{c}}(\mathring{F})$, we have
\[
\left\langle j(a),a'\right\rangle+\left\langle\Var^{-1}(a),a'\right\rangle+(-1)^{n}\left\langle \Var^{-1}(a'),a\right\rangle = 0.
\]
Therefore,
\begin{align}
j &= -\Var^{-1} - (-1)^{n} (\Var^{-1})^{t};\label{eq_j_formula_classical}\\
(a,a') & = (-1)^{n(n-1)/2}\left(\left\langle \Var^{-1}(a),a'\right\rangle+(-1)^{n}\left\langle \Var^{-1}(a'),a\right\rangle\right). \label{eq_pairing_formula_classical}
\end{align}
From \eqref{eq_j_formula_classical}, \eqref{eq_T_on_classical} is rewritten into
\begin{equation*}
\begin{aligned}
\left.T\right|_{\widetilde{H}^{n}_{\mathrm{c}}(\mathring{F})} &= (-1)^{n+1}\Var\circ (\Var^{-1})^{t},\\
\left.T\right|_{\widetilde{H}^{n}(F)} &= (-1)^{n+1}(\Var^{-1})^{t}\circ \Var = ((\left.T\right|_{\widetilde{H}^{n}_{\mathrm{c}}(\mathring{F})})^{-1})^{t}.
\end{aligned}
\end{equation*}

\subsection{The Case of Finite Cyclic Cover} 
Let $f:\mathbb{C}\rightarrow\mathbb{C}$, $x\mapsto x^{k}$, be the degree $k$ cyclic cover. The Milnor fiber $F = f^{-1}(1) = \{\zeta_{k}^{a}\mid a=0,\ldots,k-1\}$ consists of $k$ distinct points, where $\zeta_{k} = \exp(2\pi i/k)\in\mathbb{C}$. 

Topologically, $F$ is a compact $0$-dimensional manifold with empty boundary ($\partial F = \emptyset$). In this degenerate case, the interior $\mathring{F}$ coincides with $F$, and the relative theories collapse, yielding $H_{0}(F, \partial F) = H_{0}(F)$ and $H^{0}_{\csup}(\mathring{F}) = H^{0}(F)$. While this seems to eliminate the topological distinction between the spaces of compactly supported and ambient forms, the underlying $0$-dimensional Poincaré duality still gives a canonical pairing between homology and cohomology.

In particular, when $n=0$, to understand the algebraic structure required for the Thom--Sebastiani formalism, we must properly interpret the reduced theories. Following in singularity theory (e.g., \cite{Mil68, Dimca1992}), the natural $0$-dimensional counterpart to the compactly supported cohomology $\widetilde{H}^{n}_{\csup}(\mathring{F})$ is the reduced homology $\widetilde{H}_{0}(F)$, and the counterpart to the ambient cohomology $\widetilde{H}^{n}(F)$ is the reduced cohomology $\widetilde{H}^{0}(F)$.\footnote{The reduced homology $\widetilde{H}_{0}(F)$ is the kernel of the augmentation map $H_{0}(F)\rightarrow H_{0}(\text{pt})$; the reduced cohomology $\widetilde{H}^{0}(F)$ is the cokernel of $H^{0}({\text{pt}})\rightarrow H^{0}(F)$.} This topological interpretation preserves the duality required, perfectly matching the framework established in higher dimensions.

Let $T$ be the monodromy operator acting on $F$ by cyclic permutation of the points. The absolute homology $H_{0}(F)$ possesses a canonical basis given by the points of $F$, which induces an isomorphism $\phi:H^{0}(F) \xrightarrow{\sim} H_{0}(F)$ via $0$-dimensional Poincaré duality. Since $T$ preserves the sum of the points, the operator $T-\id$ maps $H_{0}(F)$ into the reduced homology $\widetilde{H}_{0}(F)$. Moreover, $T-\id$ annihilates the totally symmetric diagonal cycle $\sum_{\zeta \in F} \{\zeta\}$, which precisely corresponds to the subspace of constant functions $\mathbb{Z} \cdot \mathbf{1} \subset H^{0}(F)$ under $\phi$. Consequently, the composition $(T-\id) \circ \phi$ naturally descends to the quotient space $\widetilde{H}^{0}(F) \cong H^{0}(F) / \mathbb{Z} \cdot \mathbf{1}$, yielding a well-defined algebraic variation map
\[
\Var: \widetilde{H}^{0}(F) \longrightarrow \widetilde{H}_{0}(F).
\]
This definition establishes the $0$-dimensional analogue of the variation map introduced in the previous section.

For each $i=1,\ldots,k-1$, write
\[
\delta_{i} = \{\zeta_{k}^{k-i-1}\} - \{\zeta_{k}^{k-i}\}\in \widetilde{H}_{0}(F)
\]
to be the reduced zero cycles. Note that $\{\delta_{1},\ldots,\delta_{k-1}\}$ forms a basis of $\widetilde{H}_{0}(F)$. Let $\{\delta_{1}^{*},\ldots,\delta_{k-1}^{*}\}$ be the corresponding dual basis in $\widetilde{H}^{0}(F)$. The monodromy $T$ acts on $\widetilde{H}_{0}(F)$ by
\begin{equation}\label{eq:T_on_fin_cyc}
\begin{aligned}
T(\delta_{1}) &= -(\delta_{1} + \cdots + \delta_{k-1}),\\ 
T(\delta_{j}) &= \delta_{j-1} \quad \text{for } j\geq 2.
\end{aligned}
\end{equation}
The variation map $\Var:\widetilde{H}^{0}(F)\rightarrow\widetilde{H}_{0}(F)$ reads
\begin{equation}\label{eq:Var_reldim_0_dis_basis}
\Var(\delta_{j}^{*}) = \sum_{i=j}^{k-1}\delta_{i} {\text{ for all }}j=1,\ldots,k-1.
\end{equation}

The intersection form $(\ ,\ )$ on the reduced homology $\widetilde{H}_{0}(F)$ is given by counting the intersections of $0$-cycles. With respect to the basis $\{\delta_1, \ldots, \delta_{k-1}\}$, the intersection matrix reads
\begin{align}\label{eq:pairing_delta}
\left((\delta_{i},\delta_{j})\right)_{i,j} =
        \begin{pmatrix}
        2 & -1 & 0 & \cdots \\
        -1 & 2 & -1 & \\
        0 & -1 & 2 & \\
        \vdots & & & \ddots
        \end{pmatrix},
\end{align}
that is,
\[
(\delta_{i},\delta_{j}) = 
\begin{cases}
2 & {\text{ if }} i=j,\\
-1 & {\text{ if }} |i-j| = 1,\\
0 & {\text{ otherwise.}}
\end{cases}
\]

In higher dimensions, the natural map $j: \widetilde{H}^{n}_{\csup}(\mathring{F}) \rightarrow \widetilde{H}^{n}(F)$ relates the intersection form to the duality pairing via the formula $(a,a') = -(-1)^{n(n-1)/2}\langle j(a),a'\rangle$. For $n=0$, this reduces to $\langle j(a), a'\rangle = -(a, a')$. That is, the canonical map $\widetilde{H}_{0}(F) \rightarrow \widetilde{H}^{0}(F)$ is induced by the negative of this point-counting intersection pairing. By evaluating $-(\delta_i, \delta_j)$, the map $j$ explicitly reads
\begin{align*}
j(\delta_{1}) &= -2\delta_{1}^{*} + \delta_{2}^{*}\\
j(\delta_{j}) &= \delta_{j-1}^{*} -2 \delta_{j}^{*} + \delta_{j+1}^{*} \quad \text{for } j=2,\ldots,k-2\\
j(\delta_{k-1}) &= \delta_{k-2}^{*} -2 \delta_{k-1}^{*}
\end{align*}

\begin{proposition}\label{prop:fin_cyc_T_eigenvector}
Let $\mu_{k}$ be the set of $k$-th roots of unity. The set of eigenvalues of $T$ acting on $\widetilde{H}_{0}(F)$ are $\mu_{k} \setminus \{1\}$. For each $\zeta \in \mu_{k} \setminus \{1\}$, define the vector
\[
e_{\zeta} := \sum_{j=1}^{k-1} c_j(\zeta) \delta_j, \quad \text{where } c_j(\zeta) = \frac{1-\zeta^j}{1-\zeta} = \sum_{m=0}^{j-1} \zeta^m.
\]
Then, $e_{\zeta}$ is an eigenvector of $T$ with eigenvalue $\zeta$, i.e., $T(e_{\zeta}) = \zeta e_{\zeta}$.
\end{proposition}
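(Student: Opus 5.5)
I will verify the eigenvector relation directly from the explicit action \eqref{eq:T_on_fin_cyc} of $T$ on the basis $\{\delta_1,\ldots,\delta_{k-1}\}$. From it one reads off the eigenvalue statement, since the $k-1$ vectors $e_\zeta$ are nonzero and attached to distinct eigenvalues. Write $e_\zeta=\sum_j c_j\delta_j$ with $c_j=c_j(\zeta)$. Applying \eqref{eq:T_on_fin_cyc} gives
\[
T(e_\zeta) = -c_1(\delta_1+\cdots+\delta_{k-1}) + \sum_{j=2}^{k-1} c_j\delta_{j-1}.
\]
So the coefficient of $\delta_i$ in $T(e_\zeta)$ is $c_{i+1}-c_1$ for $1\le i\le k-2$, and $-c_1$ for $i=k-1$.

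Since $c_1=1$, the claim $T(e_\zeta)=\zeta e_\zeta$ amounts to two identities. The first is $c_{i+1}-1=\zeta c_i$ for $1\le i\le k-2$. The second is $-1=\zeta c_{k-1}$. The first follows from the geometric-sum form, since $1+\zeta c_i = 1+\zeta+\cdots+\zeta^{i} = c_{i+1}$. For the second, I use $\zeta^k=1$ with $\zeta\neq1$, so that $\sum_{m=0}^{k-1}\zeta^m=0$. Then $\zeta c_{k-1}=\zeta+\cdots+\zeta^{k-1}=-1$. This boundary term is the only place where the hypothesis $\zeta\in\mu_k\setminus\{1\}$ enters, and it is the one step needing care.

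For the spectrum, note $e_\zeta\neq0$ because its $\delta_1$-coefficient is $c_1=1$. The $k-1$ elements of $\mu_k\setminus\{1\}$ are distinct, so they give $k-1$ distinct eigenvalues of $T$ on the $(k-1)$-dimensional space $\widetilde{H}_0(F)\otimes\mathbb{C}$. Hence these are all the eigenvalues, and the $e_\zeta$ form an eigenbasis.

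As a cross-check, \eqref{eq:T_on_fin_cyc} is the companion matrix of $1+x+\cdots+x^{k-1}$. Equivalently, $T$ is the cyclic permutation of $k$ points restricted to the complement of the invariant sum, which also shows that the characteristic polynomial is $(x^k-1)/(x-1)$.
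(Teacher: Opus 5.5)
Your proof is correct and follows the paper's own argument: expand $T(e_\zeta)$ via \eqref{eq:T_on_fin_cyc} and match coefficients, using $c_{m+1}-1=\zeta c_m$ for $m\le k-2$ and $\zeta c_{k-1}=-1$ for the last coefficient. Your dimension count also justifies the spectrum claim, which the paper only reads off from \eqref{eq:T_on_fin_cyc}: there are $k-1$ nonzero eigenvectors with distinct eigenvalues in a $(k-1)$-dimensional space.
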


\begin{proof}
From \eqref{eq:T_on_fin_cyc}, $T$ has the set of eigenvalues $\mu_{k}\setminus\{1\}$. We compute the action of $T$ on the sum
\begin{align*}
T(e_{\zeta}) &= \sum_{j=1}^{k-1} c_j T(\delta_j)= c_1 T(\delta_1) + \sum_{j=2}^{k-1} c_j \delta_{j-1}.
\end{align*}
Substituting the formula for $T(\delta_j)$ and noting that $c_1 = 1$, we have:
\[
T(e_{\zeta}) = 1 \cdot \left( -\sum_{m=1}^{k-1} \delta_m \right) + \sum_{m=1}^{k-2} c_{m+1} \delta_m.
\]
We compare the coefficient of each $\delta_m$ in $T(e_{\zeta})$ with the coefficient in $\zeta e_{\zeta} = \sum_{m=1}^{k-1} (\zeta c_m) \delta_m$.

\begin{enumerate}
    \item For $1 \leq m \leq k-2$: The coefficient of $\delta_m$ in $T(e_{\zeta})$ is
    \[
    -1 + c_{m+1} = -1 + \frac{1-\zeta^{m+1}}{1-\zeta} = \frac{-(1-\zeta) + (1-\zeta^{m+1})}{1-\zeta} = \frac{\zeta - \zeta^{m+1}}{1-\zeta} = \zeta \frac{1-\zeta^m}{1-\zeta} = \zeta c_m.
    \]
    This matches the coefficient in $\zeta e_{\zeta}$.
    
    \item For $m = k-1$: The term $\delta_{k-1}$ only appears in the first sum. Its coefficient is $-1$. We check if this matches $\zeta c_{k-1}$:
    \[
    \zeta c_{k-1} = \zeta \frac{1-\zeta^{k-1}}{1-\zeta} = \frac{\zeta - \zeta^k}{1-\zeta} = -1.
    \]
\end{enumerate}
Therefore, $T(e_{\zeta}) = \zeta e_{\zeta}$.
\end{proof}

\begin{proposition}\label{prop:fin_cyc_Variation}
Let $\{e_{\zeta}^* \}_{\zeta \in \mu_k \setminus \{1\}}$ be the dual basis of $\widetilde{H}^{0}(F)$ corresponding to the eigenbasis $\{e_{\zeta}\}$ of $\widetilde{H}_{0}(F)$. The variation map $\Var: \widetilde{H}^{0}(F) \rightarrow \widetilde{H}_{0}(F)$ acts on the dual eigenbasis by
\[
\Var(e_{\zeta}^*) = \frac{\zeta^{-1}(\zeta - 1)}{k} e_{\zeta^{-1}}.
\]
\end{proposition}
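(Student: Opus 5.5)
The proof proceeds in two steps. First, a symmetry argument shows that $\Var(e_{\zeta}^*)$ must be a multiple of $e_{\zeta^{-1}}$. Then a single coordinate computation determines the scalar.

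\textbf{Step 1: $\Var(e_\zeta^*)$ lies on the line spanned by $e_{\zeta^{-1}}$.} By \eqref{eq_T_on_classical}, in its $0$-dimensional form, we have $T|_{\widetilde{H}_0} = \id + \Var\circ j$ and $T|_{\widetilde{H}^0} = \id + j\circ\Var$. Hence
\[
\Var\circ T|_{\widetilde{H}^0} = \Var + \Var\, j\,\Var = T|_{\widetilde{H}_0}\circ \Var,
\]
so $\Var$ is $T$-equivariant. Next, $T|_{\widetilde{H}^0} = ((T|_{\widetilde{H}_0})^{-1})^t$. By Proposition \ref{prop:fin_cyc_T_eigenvector}, $T|_{\widetilde{H}_0}$ is diagonal in the basis $\{e_\xi\}$ with eigenvalues $\xi$. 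Therefore the dual vector $e_\zeta^*$ is an eigenvector of $T|_{\widetilde{H}^0}$ with eigenvalue $\zeta^{-1}$.

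I would double-check the identity $T|_{\widetilde{H}^0} = ((T|_{\widetilde{H}_0})^{-1})^t$ directly in the $n=0$ setting. Concretely, $T$ permutes the points of $F$, so its action on $H^0(F)$ is the inverse transpose of its action on $H_0(F)$ under the canonical pairing. This descends to the reduced groups.

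By equivariance, $\Var(e_\zeta^*)$ lies in the $\zeta^{-1}$-eigenspace of $T$ on $\widetilde{H}_0(F)$. The eigenvalues $\mu_k\setminus\{1\}$ are distinct and there are $k-1=\dim \widetilde{H}_0(F)$ of them, so this eigenspace is the line spanned by $e_{\zeta^{-1}}$. We may therefore write $\Var(e_\zeta^*) = \lambda_\zeta\, e_{\zeta^{-1}}$.

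\textbf{Step 2: computing $\lambda_\zeta$.} I would compare the coefficient of $\delta_{k-1}$ on both sides.

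On the right, the coefficient is $\lambda_\zeta\, c_{k-1}(\zeta^{-1})$. The proof of Proposition \ref{prop:fin_cyc_T_eigenvector} gives $\xi\, c_{k-1}(\xi) = -1$ for every $\xi\in\mu_k\setminus\{1\}$; taking $\xi=\zeta^{-1}$ yields $c_{k-1}(\zeta^{-1}) = -\zeta$.

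On the left, \eqref{eq:Var_reldim_0_dis_basis} shows that every $\Var(\delta_j^*)$ contains $\delta_{k-1}$ with coefficient $1$. Writing $e_\zeta^* = \sum_j e_\zeta^*(\delta_j)\,\delta_j^*$, the coefficient of $\delta_{k-1}$ in $\Var(e_\zeta^*)$ is
\[
\sum_{j} e_\zeta^*(\delta_j) = e_\zeta^*\Big(\sum_{j=1}^{k-1}\delta_j\Big).
\]
By \eqref{eq:T_on_fin_cyc}, $\sum_j\delta_j = -T(\delta_1)$. Thus it suffices to know the $e_\zeta$-coordinate of $\delta_1$, call it $\alpha_\zeta$, and then this coefficient equals $-\zeta\,\alpha_\zeta$. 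Comparing the two sides gives $\lambda_\zeta = \alpha_\zeta$.

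\textbf{Step 3: the Fourier inversion (main obstacle).} It remains to invert the matrix $(c_j(\xi))$, i.e.\ to find $\alpha_\xi$ with $\sum_\xi \alpha_\xi\, c_j(\xi) = \delta_{j,1}$ for $1\le j\le k-1$. Since $c_j(\xi)=(1-\xi^j)/(1-\xi)$, I would use the orthogonality relation $\sum_{\xi\in\mu_k}\xi^m = k\cdot[m\equiv 0 \bmod k]$. Note that the summand $(1-\xi^j)$ vanishes at $\xi=1$, so one may sum over all of $\mu_k$.

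The natural ansatz is $\alpha_\xi = (1-\xi)\,P(\xi^{-1})/k$ for a Laurent polynomial $P$. I would first try $\alpha_\xi = \xi^{-1}(\xi-1)/k$ and check that
\[
\frac1k\sum_{\xi}\xi^{-1}(\xi-1)\frac{1-\xi^j}{1-\xi} = -\frac1k\sum_\xi \big(\xi^{-1}-\xi^{j-1}\big)
\]
equals $\delta_{j,1}$ for $1\le j\le k-1$; the $\xi^{-1}$ sum vanishes for $k\ge 2$. I expect this index bookkeeping, and the sign conventions hidden in the identification of $\widetilde{H}^0(F)$ with a quotient of $H^0(F)$, to be where errors creep in. I would therefore verify the final constant $\lambda_\zeta = \zeta^{-1}(\zeta-1)/k$ by hand in the case $k=2$. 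There $e_{-1}=\delta_1$ and $\Var(\delta_1^*)=\delta_1$, and indeed $(-1)^{-1}(-1-1)/2 = 1$.
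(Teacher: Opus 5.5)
Your proof is correct, and it takes a genuinely different route from the paper.

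\textbf{The paper's route.} It computes the whole dual basis, $e_{\zeta}^{*}=\frac{\zeta-1}{k}\sum_{j}\zeta^{-j}\delta_{j}^{*}$, verified by the same orthogonality relation you use. It then applies \eqref{eq:Var_reldim_0_dis_basis} term by term, swaps the sums, and evaluates a geometric series to recognize $(1-\zeta^{-1})e_{\zeta^{-1}}$. It never uses $j$ or the monodromy relations.

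\textbf{Your route.} You get the direction of $\Var(e_{\zeta}^{*})$ from $T$-equivariance plus simplicity of the spectrum. You get the scalar from the single $\delta_{k-1}$-coefficient, which reduces to $\alpha_{\zeta}=e_{\zeta}^{*}(\delta_{1})$. That is only the $\delta_{1}^{*}$-coefficient of the paper's formula for $e_{\zeta}^{*}$. Your Steps 2 and 3 check out: $c_{k-1}(\zeta^{-1})=-\zeta$, both sides give $-\zeta\lambda_{\zeta}=-\zeta\alpha_{\zeta}$, and the guessed $\alpha_{\zeta}$ is forced by uniqueness of coordinates.

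\textbf{What each buys.} Your route explains structurally why the index inverts: duality inverts eigenvalues and $\Var$ intertwines. It also transfers verbatim to any variation with multiplicity-free semisimple monodromy. The paper's route is self-contained, resting only on the explicit formula for $\Var$.

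\textbf{The soft spot is Step 1.} You use $T|_{\widetilde{H}^{0}}=\id+j\circ\Var$ for equivariance, but the permutation picture for the inverse-transpose claim. These agree only via the $n=0$ form of \eqref{eq_T_on_classical}, which the paper asserts by analogy with higher dimensions. It does hold for the explicit formulas: $\Var^{-1}(\delta_{i})=\delta_{i}^{*}-\delta_{i+1}^{*}$, and $-\Var^{-1}-(\Var^{-1})^{t}$ reproduces the listed $j$.

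It is simpler to bypass $j$ entirely. $\Var$ is $\pm(T-\id)\circ\phi$, and $\phi$ intertwines the permutation actions. So $\Var$ commutes with the geometric action of $T$ on $\widetilde{H}^{0}(F)$, which is exactly the inverse transpose of $T|_{\widetilde{H}_{0}}$.

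If you do the geometric double-check, note that \eqref{eq:Var_reldim_0_dis_basis} is actually $(\id-T)\circ\phi$, the opposite sign to the text's verbal description. This sign is what makes $\id+\Var\circ j=T$ true with $\langle j(a),a'\rangle=-(a,a')$.
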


\begin{proof}
First, we express the dual basis element $e_{\zeta}^*$ in terms of the standard dual basis $\{\delta_j^*\}$. We claim that
\[
e_{\zeta}^* = \frac{\zeta - 1}{k} \sum_{j=1}^{k-1} \zeta^{-j} \delta_j^*.
\]
Indeed, computing the pairing with $e_{\xi} = \sum_{m=1}^{k-1} c_m(\xi) \delta_m$:
\[
\langle e_{\zeta}^*, e_{\xi} \rangle = \frac{\zeta - 1}{k} \sum_{j=1}^{k-1} \zeta^{-j} c_j(\xi) = \frac{\zeta - 1}{k(1-\xi)} \sum_{j=1}^{k-1} (\zeta^{-j} - \zeta^{-j}\xi^j).
\]
If $\xi \neq \zeta$, both sums $\sum \zeta^{-j}$ and $\sum (\zeta^{-1}\xi)^j$ evaluate to $-1$, so the pairing is $0$. If $\xi = \zeta$, the first sum is $-1$ and the second is $k-1$, giving $\frac{\zeta-1}{k(1-\zeta)}(-1 - (k-1)) = 1$. Thus, the expression for $e_{\zeta}^*$ is correct.

Next, we compute the image under $\Var$. Using the formula \eqref{eq:Var_reldim_0_dis_basis}, we have
\[
\Var(e_{\zeta}^*) = \frac{\zeta - 1}{k} \sum_{j=1}^{k-1} \zeta^{-j} \left( \sum_{m=j}^{k-1} \delta_m \right)= \frac{\zeta - 1}{k} \sum_{m=1}^{k-1} \delta_m \left( \sum_{j=1}^{m} (\zeta^{-1})^j \right).
\]
The inner sum is a geometric series
\[
\sum_{j=1}^{m} (\zeta^{-1})^j = \zeta^{-1} \frac{1 - \zeta^{-m}}{1 - \zeta^{-1}} = \frac{1 - \zeta^{-m}}{\zeta - 1}.
\]
Substituting this back, the term $(\zeta-1)$ cancels out
\[
\Var(e_{\zeta}^*) = \frac{1}{k} \sum_{m=1}^{k-1} (1 - \zeta^{-m}) \delta_m.
\]
Finally, we recognize this sum in terms of the eigenvector $e_{\zeta^{-1}}$. Recall that 
\[
e_{\xi} = \frac{1}{1-\xi} \sum_{m=1}^{k-1} (1-\xi^m) \delta_m.
\]
Setting $\xi = \zeta^{-1}$, we have
\[
\sum_{m=1}^{k-1} (1 - \zeta^{-m}) \delta_m = (1 - \zeta^{-1}) e_{\zeta^{-1}}.
\]
Therefore,
\[
\Var(e_{\zeta}^*) = \frac{1 - \zeta^{-1}}{k} e_{\zeta^{-1}} = \frac{\zeta^{-1}(\zeta - 1)}{k} e_{\zeta^{-1}}.
\]
\end{proof}

\begin{proposition}\label{prop:fin_cyc_pairing}
Let $\zeta, \xi \in \mu_{k} \setminus \{1\}$. The pairing between eigenvectors is given by
\[
(e_{\zeta}, e_{\xi}) = 
\begin{cases}
k & \text{if } \xi = \zeta^{-1}, \\
0 & \text{otherwise.}
\end{cases}
\]
\end{proposition}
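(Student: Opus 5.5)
The cleanest route is to leave the basis $\{\delta_j\}$ and work directly with the points of $F$. The intersection form on $\widetilde{H}_0(F)$ is the restriction of the point-counting form on $H_0(F)$, for which the points $p_m := \{\zeta_k^m\}$, $m=0,\ldots,k-1$, are orthonormal. This is consistent with \eqref{eq:pairing_delta}: since $\delta_i = p_{k-i-1} - p_{k-i}$, we get $(\delta_i,\delta_i)=2$, $(\delta_i,\delta_{i\pm 1})=-1$, and $0$ otherwise. So the plan is to rewrite the eigenvector $e_\zeta$ of Proposition \ref{prop:fin_cyc_T_eigenvector} as a linear combination of the $p_m$, and then compute the pairing as an ordinary dot product.

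\textbf{Step 1: express $e_\zeta$ in the points.} Substitute $\delta_j = p_{k-j-1}-p_{k-j}$ into $e_\zeta = \sum_{j=1}^{k-1} c_j(\zeta)\delta_j$. The point $p_{k-i}$, for $1\le i\le k$ with $p_0$ read at $i=k$, receives contributions only from $j=i-1$ and $j=i$. Its coefficient is therefore $c_{i-1}(\zeta)-c_i(\zeta)$, with the conventions $c_0=0$ and $c_k(\zeta)=(1-\zeta^k)/(1-\zeta)=0$. These conventions make the boundary cases $i=1$ and $i=k$ fit the same formula. From $c_j=\sum_{m=0}^{j-1}\zeta^m$ the coefficient is $-\zeta^{i-1}$, so
\[
e_\zeta = -\sum_{i=1}^{k}\zeta^{i-1}\,p_{k-i}.
\]
This is an eigenvector of the cyclic permutation, as expected.

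\textbf{Step 2: compute the pairing.} By orthonormality of the points,
\[
(e_\zeta,e_\xi)=\sum_{i=1}^{k}(\zeta\xi)^{i-1}.
\]
Since $\zeta\xi\in\mu_k$, this geometric sum equals $k$ when $\zeta\xi=1$, that is when $\xi=\zeta^{-1}$, and equals $0$ otherwise. This is exactly the claimed formula.

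\textbf{Cross-check and main obstacle.} As an independent check, I would also derive the result from \eqref{eq_pairing_formula_classical} with $n=0$, using Proposition \ref{prop:fin_cyc_Variation}. That proposition gives $\Var^{-1}(e_\eta)=\frac{k}{1-\eta}e^*_{\eta^{-1}}$. The formula then yields $(e_\zeta,e_\xi)=0$ unless $\xi=\zeta^{-1}$, and in that case
\[
(e_\zeta,e_{\zeta^{-1}})=k\left(\frac{1}{1-\zeta}+\frac{1}{1-\zeta^{-1}}\right)=k.
\]
The only delicate point in the whole argument is this second route. It requires confirming that the sign conventions of \eqref{eq_pairing_formula_classical} really do hold in the degenerate dimension $n=0$, where $j$ is defined as minus the point-counting form. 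This is why I treat the direct point computation as the actual proof.
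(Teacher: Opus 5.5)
Your proof is correct, and it takes a different route from the paper.

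\textbf{The paper's route.} The paper stays in the $\delta$-basis. It applies the tridiagonal matrix of \eqref{eq:pairing_delta} to the coefficient vector $\mathbf{c}(\xi)$ as a discrete Laplacian, obtaining $(A\mathbf{c}(\xi))_i=\xi^{i-1}(1-\xi)$. It then pairs this with $\mathbf{c}(\zeta)$ and evaluates two geometric series separately in the cases $\zeta\xi\neq 1$ and $\zeta\xi=1$, carrying the prefactor $\frac{1-\xi}{1-\zeta}$ through both cases.

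\textbf{Your route.} You lift to $H_0(F)$, where the form is the standard dot product on the points. In effect you factor the matrix of \eqref{eq:pairing_delta} as $D^{t}D$, where $D$ expresses the $\delta_i$ in terms of the points. The first differences $c_{i-1}(\zeta)-c_i(\zeta)=-\zeta^{i-1}$ then show that $e_\zeta$ is a discrete Fourier vector. The pairing collapses to the single character sum $\sum_{i=1}^{k}(\zeta\xi)^{i-1}$, i.e.\ orthogonality of characters of $\mathbb{Z}/k\mathbb{Z}$. Identifying the form with the restriction of the point-counting form is not an extra assumption: you checked it on the basis $\{\delta_i\}$, which spans $\widetilde{H}_0(F)$, so the two bilinear forms coincide.

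\textbf{Comparison.} Your route removes the case split and the bookkeeping of the prefactor, and it explains why the answer holds. It also shows at once that $e_\zeta$ is a $T$-eigenvector, since \eqref{eq:T_on_fin_cyc} says exactly that $T$ is the cyclic shift $p_m\mapsto p_{m+1}$. The paper's route uses only the matrix data as stated and never leaves $\widetilde{H}_0(F)$.

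\textbf{Cross-check.} Your check via \eqref{eq_pairing_formula_classical} with $n=0$ and Proposition \ref{prop:fin_cyc_Variation} is also computed correctly and gives the same value $k$. So the paper's $n=0$ sign conventions are consistent here, but you are right not to make it the actual proof.
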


\begin{proof}
Let $A$ be the intersection matrix of the basis $\{\delta_j\}$ given by \eqref{eq:pairing_delta}. We want to compute
\[
(e_{\zeta}, e_{\xi}) = \sum_{i,j=1}^{k-1} c_i(\zeta) A_{ij} c_j(\xi).
\]
Recall that the matrix $A$ acts on a vector $v$ by the discrete Laplacian:
\[
(Av)_i = -v_{i-1} + 2v_i - v_{i+1},
\]
with the convention $v_0 = v_k = 0$.

First, we compute the action of $A$ on the vector $\mathbf{c}(\xi) = (c_j(\xi))_{j}$. Recall that $c_j(\xi) = \frac{1-\xi^j}{1-\xi}$.
For $1 \leq i \leq k-1$,
\begin{align*}
(A \mathbf{c}(\xi))_i &= -c_{i-1} + 2c_i - c_{i+1} \\
&= -\frac{1-\xi^{i-1}}{1-\xi} + 2\frac{1-\xi^i}{1-\xi} - \frac{1-\xi^{i+1}}{1-\xi} = \xi^{i-1}(1-\xi).
\end{align*}

Now, we compute the pairing
\begin{align*}
(e_{\zeta}, e_{\xi}) &= \sum_{i=1}^{k-1} c_i(\zeta) (A \mathbf{c}(\xi))_i \\
&= \sum_{i=1}^{k-1} \left( \frac{1-\zeta^i}{1-\zeta} \right) \xi^{i-1}(1-\xi) = \frac{1-\xi}{1-\zeta} \left[ \sum_{i=1}^{k-1} \xi^{i-1} - \zeta \sum_{i=1}^{k-1} (\zeta\xi)^{i-1} \right].
\end{align*}
We evaluate the geometric series.

\begin{enumerate}
\item Case 1: $\xi \neq \zeta^{-1}$.
Then $\zeta\xi \neq 1$.
The first sum is $\sum_{i=1}^{k-1} \xi^{i-1} = \frac{1-\xi^{k-1}}{1-\xi} = \frac{1-\xi^{-1}}{1-\xi} = -\xi^{-1}$.
The second sum is $\zeta \frac{1-(\zeta\xi)^{k-1}}{1-\zeta\xi} = \zeta \frac{1-(\zeta\xi)^{-1}}{1-\zeta\xi} = \zeta (-\zeta^{-1}\xi^{-1}) = -\xi^{-1}$.
Substituting these back
\[
(e_{\zeta}, e_{\xi}) = \frac{1-\xi}{1-\zeta} \left[ (-\xi^{-1}) - (-\xi^{-1}) \right] = 0.
\]
\item Case 2: $\xi = \zeta^{-1}$.
Then $\zeta\xi = 1$.
The first sum is still $-\xi^{-1} = -\zeta$.
The second sum involves powers of $1$: $\sum_{i=1}^{k-1} (\zeta\xi)^{i-1} = \sum_{i=1}^{k-1} 1 = k-1$.
The pre-factor becomes $\frac{1-\zeta^{-1}}{1-\zeta} = \frac{(\zeta-1)/\zeta}{1-\zeta} = -\zeta^{-1}$.
Substituting these back:
\begin{align*}
(e_{\zeta}, e_{\zeta^{-1}}) &= (-\zeta^{-1}) \left[ -\zeta - \zeta(k-1) \right] = (-\zeta^{-1}) \left[ -k\zeta \right]= k.
\end{align*}
\end{enumerate}
\end{proof}

By the proposition above, we have the explicit evaluation $(e_{\zeta}, e_{\zeta^{-1}}) = k$.
Therefore, if we order the basis as $\{e_{\zeta^1}, e_{\zeta^2}, \ldots, e_{\zeta^{k-1}}\}$ where $\zeta = \exp(2\pi i/k)$, the intersection form $(\ ,\ )$ corresponds to the anti-diagonal matrix with constant entry $k$
\[
\left((e_{\zeta^i}, e_{\zeta^j})\right) = 
\begin{pmatrix}
0 & & & k \\
& & k & \\
& \iddots & & \\
k & & & 0
\end{pmatrix}.
\]
This simple structure significantly simplifies the computation in the subsequent sections.

\subsection{Diagonal Type Polynomial and Thom--Sebastiani Theorem}
\label{subsec:diag_singularity_cl}
Consider the diagonal polynomial $f:\mathbb{C}^{n+1}\rightarrow\mathbb{C}$ defined by
\begin{equation}
f(z_{0},\ldots,z_{n}) = z_{0}^{k_{0}} + \cdots + z_{n}^{k_{n}},
\end{equation}
where $k_{i}\geq 2$ are integers. This function has an isolated singularity at the origin. We can view $f$ as the sum of functions $f_{i}(z_{i}) = z_{i}^{k_{i}}$ of disjoint variables. 

The Thom--Sebastiani Theorem \cite{Mil68,ST1971} states that there is a natural isomorphism between the vanishing cohomology of the sum and the tensor product of the vanishing cohomologies of the summands. Specifically, let $F$ be the Milnor fiber of $f$ and $F_{i}$ be the Milnor fiber of $f_{i}$ (which is a set of $k_i$ points as discussed in the previous subsection). We have an isomorphism of vector spaces:
\begin{equation}
\Phi: \widetilde{H}^{n}_{\csup}(\mathring{F}) \xrightarrow{\sim} \widetilde{H}^{0}_{\csup}(\mathring{F}_{0}) \otimes \cdots \otimes \widetilde{H}^{0}_{\csup}(\mathring{F}_{n}).
\end{equation}
Under this isomorphism, the monodromy operator $T$ on $\widetilde{H}^{n}_{\csup}(\mathring{F})$ corresponds to the tensor product of the monodromy operators $T_{i}$ on each component:
\[
T = T_{0} \otimes \cdots \otimes T_{n}.
\]
Recall from the previous subsection that for each $f_{i}$, the spectrum of the monodromy $T_{i}$ is $\mu_{k_{i}}\setminus\{1\}$. For each $\zeta \in \mu_{k_{i}}\setminus\{1\}$, let $e_{\zeta}^{(i)} \in \widetilde{H}^{0}_{\csup}(\mathring{F}_{i})$ be the eigenvector constructed in proposition \ref{prop:fin_cyc_T_eigenvector} such that $T_{i}(e_{\zeta}^{(i)}) = \zeta e_{\zeta}^{(i)}$.

We can now construct a basis for $\widetilde{H}^{n}_{\csup}(\mathring{F})$. Let $\boldsymbol{\zeta} = (\zeta_{0}, \ldots, \zeta_{n})$ be a tuple such that $\zeta_{i} \in \mu_{k_{i}}\setminus\{1\}$ for all $i$. Define the tensor product vector
\[
e_{\boldsymbol{\zeta}} := e_{\zeta_{0}}^{(0)} \otimes \cdots \otimes e_{\zeta_{n}}^{(n)}.
\]
Then $e_{\boldsymbol{\zeta}}$ is an eigenvector of $T$ with eigenvalue equal to the product of the component eigenvalues:
\[
T(e_{\boldsymbol{\zeta}}) = (T_{0}e_{\zeta_{0}}^{(0)}) \otimes \cdots \otimes (T_{n}e_{\zeta_{n}}^{(n)}) = (\zeta_{0}\cdots\zeta_{n}) e_{\boldsymbol{\zeta}}.
\]
The set $\{e_{\boldsymbol{\zeta}} \mid \zeta_{i} \in \mu_{k_{i}}\setminus\{1\}\}$ forms a basis of $\widetilde{H}^{n}_{\csup}(\mathring{F})$ consisting of eigenvectors of the monodromy. The dimension of this space is given by the Milnor number $\mu = \prod_{i=0}^{n}(k_{i}-1)$.

\begin{proposition}\label{prop:diag_variation}
Let $\{e_{\boldsymbol{\zeta}}^*\}$ be the dual basis of $\widetilde{H}^{n}(F)$ corresponding to the eigenbasis $\{e_{\boldsymbol{\zeta}}\}$ of $\widetilde{H}^{n}_{\csup}(\mathring{F})$. The variation map $\Var: \widetilde{H}^{n}(F) \rightarrow \widetilde{H}^{n}_{\csup}(\mathring{F})$ acts on the dual eigenbasis by
\[
\Var(e_{\boldsymbol{\zeta}}^*) = \frac{\Lambda^{-1}}{K} \left( \prod_{i=0}^n (\zeta_i - 1) \right) e_{\boldsymbol{\zeta}^{-1}},
\]
where $\Lambda = \prod_{i=0}^n \zeta_i$ is the eigenvalue of $T$ acting on $e_{\boldsymbol{\zeta}}$, $K = \prod_{i=0}^n k_i$, and $\boldsymbol{\zeta}^{-1} = (\zeta_0^{-1}, \ldots, \zeta_n^{-1})$.
\end{proposition}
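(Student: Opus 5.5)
The plan is to reduce the statement to the one-variable case, Proposition \ref{prop:fin_cyc_Variation}, via the Thom--Sebastiani isomorphism $\Phi$. Two inputs are needed beyond the fact $T=T_0\otimes\cdots\otimes T_n$ already recorded. First, the Sebastiani--Thom theorem describes the variation as well as the monodromy: $\Var$ corresponds to the tensor product of the one-variable variations up to a universal sign. Second, $\Phi$ is compatible with duality, so the pairing $\langle\ ,\ \rangle$ on $\widetilde{H}^{n}(F)\times\widetilde{H}^{n}_{\csup}(\mathring{F})$ is the product of the $0$-dimensional pairings. Granting both, the dual basis $\{e_{\boldsymbol{\zeta}}^*\}$ is $\{e_{\zeta_0}^{(0)*}\otimes\cdots\otimes e_{\zeta_n}^{(n)*}\}$.

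Proposition \ref{prop:fin_cyc_Variation} then gives
\[
\bigotimes_i \Var_i\bigl(e_{\zeta_i}^{(i)*}\bigr)=\prod_{i=0}^n\frac{\zeta_i^{-1}(\zeta_i-1)}{k_i}\,e_{\boldsymbol{\zeta}^{-1}}=\frac{\Lambda^{-1}}{K}\prod_{i=0}^n(\zeta_i-1)\,e_{\boldsymbol{\zeta}^{-1}},
\]
which is exactly the claimed formula, provided the sign is $+1$.

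The main obstacle is pinning down this sign, since the product of variations formula usually carries a factor such as $(-1)^{\text{something in } n}$. I would avoid tracking orientations directly and instead fix the sign by consistency with the identities at the start of the section. Since $\Var$ is an isomorphism and $T|_{\widetilde{H}^n_{\csup}}=(-1)^{n+1}\Var\circ(\Var^{-1})^{t}$, write $\Var(e_{\boldsymbol{\zeta}}^*)=v_{\boldsymbol{\zeta}}\,e_{\boldsymbol{\zeta}^{-1}}$. The transpose of $\Var$ sends $e_{\boldsymbol{\zeta}^{-1}}^*\mapsto v_{\boldsymbol{\zeta}}e_{\boldsymbol{\zeta}}$, so $\Var^{t}$ has coefficient $v_{\boldsymbol{\zeta}}$ on $e^*_{\boldsymbol\zeta^{-1}}$. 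Then $(\Var^{-1})^{t}(e_{\boldsymbol\zeta})=v_{\boldsymbol\zeta}^{-1}e^*_{\boldsymbol\zeta^{-1}}$, and hence $T e_{\boldsymbol\zeta}=(-1)^{n+1}\frac{v_{\boldsymbol\zeta^{-1}}}{v_{\boldsymbol\zeta}}e_{\boldsymbol\zeta}$. This must equal $\Lambda e_{\boldsymbol\zeta}$.

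Check the candidate $v_{\boldsymbol\zeta}=\epsilon\,\frac{\Lambda^{-1}}{K}\prod(\zeta_i-1)$. Using $\zeta_i^{-1}-1=-\zeta_i^{-1}(\zeta_i-1)$, we get $v_{\boldsymbol\zeta^{-1}}/v_{\boldsymbol\zeta}=\Lambda^2\cdot(-1)^{n+1}\Lambda^{-1}=(-1)^{n+1}\Lambda$. This is consistent for either value of $\epsilon$, so this check does not fix the sign.

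I would therefore determine $\epsilon$ from the compatibility of $\Var$ with the intersection form via \eqref{eq_pairing_formula_classical}, taking the sign conventions in Thom--Sebastiani (Milnor's join orientation) so that for $n=0$ one recovers Proposition \ref{prop:fin_cyc_Variation}. Concretely, I would verify $\epsilon=1$ by the explicit join construction: the variation of a join of cycles is the join of the variations, with the suspension sign absorbed in the normalisation $-(-1)^{n(n-1)/2}$ built into $\langle\ ,\ \rangle$. With that convention fixed, the formula follows factorwise.
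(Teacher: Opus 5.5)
Your argument is the paper's: it likewise transports the duality pairing and the variation through the Thom--Sebastiani isomorphism, so that $e^*_{\boldsymbol{\zeta}}=\bigotimes_i(e^{(i)}_{\zeta_i})^*$ and $\Var=\Var_0\otimes\cdots\otimes\Var_n$, and then multiplies the coefficients of Proposition~\ref{prop:fin_cyc_Variation}. The paper asserts without comment that no sign occurs, so your sign discussion goes beyond it, and your closing explanation is the right one: $c_n:=-(-1)^{n(n-1)/2}$ satisfies $c_{n_1+n_2+1}=(-1)^{(n_1+1)(n_2+1)}c_{n_1}c_{n_2}$, which absorbs the sign produced by the join. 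Three caveats on that discussion. First, this sign is intrinsic, not a convention, because replacing $\Phi$ by $-\Phi$ leaves every coefficient unchanged. Second, fixing it through the intersection form would be circular within the paper, since Proposition~\ref{prop:cl_diag_pairing} is deduced from this statement. Third, a direct check on the annulus for $z_0^2+z_1^2$ does give $\langle e^*,\Var(e^*)\rangle=1$, as the formula predicts.
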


\begin{proof}
Under the Thom--Sebastiani isomorphism, the dual space $\widetilde{H}^{n}(F)$ decomposes as the tensor product of the duals of the components
\[
\widetilde{H}^{n}(F) \cong \widetilde{H}^{0}(F_0) \otimes \cdots \otimes \widetilde{H}^{0}(F_n).
\]
Consequently, the dual basis vector decomposes as $e_{\boldsymbol{\zeta}}^* = (e_{\zeta_0}^{(0)})^{*} \otimes \cdots \otimes (e_{\zeta_n}^{(n)})^{*}$. The variation map also decomposes as the tensor product of the variation maps on each component: $\Var = \Var_0 \otimes \cdots \otimes \Var_n$.

Using Proposition \ref{prop:fin_cyc_Variation}, for each component $i$, we have
\[
\Var_i((e_{\zeta_i}^{(i)})^{*}) = \frac{\zeta_i^{-1}(\zeta_i - 1)}{k_i} e_{\zeta_i^{-1}}^{(i)}.
\]
Applying this to the tensor product, we obtain
\begin{align*}
\Var(e_{\boldsymbol{\zeta}}^*) &= \bigotimes_{i=0}^n \Var_i((e_{\zeta_i}^{(i)})^{*}) \\
&= \bigotimes_{i=0}^n \left( \frac{\zeta_i^{-1}(\zeta_i - 1)}{k_i} e_{\zeta_i^{-1}}^{(i)} \right) \\
&= \left( \prod_{i=0}^n \frac{\zeta_i^{-1}(\zeta_i - 1)}{k_i} \right) \bigotimes_{i=0}^n e_{\zeta_i^{-1}}^{(i)} \\
&= \left( \frac{\prod \zeta_i^{-1}}{\prod k_i} \prod (\zeta_i - 1) \right) e_{\boldsymbol{\zeta}^{-1}}.
\end{align*}
This is the desired formula.
\end{proof}

\begin{proposition}\label{prop:cl_diag_pairing}
Let $\boldsymbol{\zeta} = (\zeta_{0}, \ldots, \zeta_{n})$ and $\boldsymbol{\xi} = (\xi_{0}, \ldots, \xi_{n})$ be multi-indices of roots of unity, where $\zeta_i, \xi_i \in \mu_{k_i} \setminus \{1\}$. The intersection pairing $(\ ,\ )$ on $\widetilde{H}^{n}_{\csup}(\mathring{F})$ is given by:
\[
(e_{\boldsymbol{\zeta}}, e_{\boldsymbol{\xi}}) = 
\begin{cases} 
\displaystyle (-1)^{\frac{n(n+1)}{2}} \left( \frac{\prod_{i=0}^n \zeta_i - 1}{\prod_{i=0}^n (\zeta_i - 1)} \right) \prod_{i=0}^{n} k_i & \text{if } \boldsymbol{\xi} = \boldsymbol{\zeta}^{-1}, \\
0 & \text{otherwise}.
\end{cases}
\] 
\end{proposition}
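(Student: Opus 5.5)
The plan is not to compute the intersection form directly on the tensor product. Instead I would use the general identity \eqref{eq_pairing_formula_classical},
\[
(a,a') = (-1)^{n(n-1)/2}\left(\left\langle \Var^{-1}(a),a'\right\rangle+(-1)^{n}\left\langle \Var^{-1}(a'),a\right\rangle\right),
\]
which expresses $(\ ,\ )$ entirely through $\Var^{-1}$ and the duality pairing $\langle\ ,\ \rangle$. I would then plug in the diagonal form of $\Var$ from Proposition \ref{prop:diag_variation}. Since $\{e_{\boldsymbol{\zeta}}^*\}$ is by definition the dual basis of $\{e_{\boldsymbol{\zeta}}\}$, we have $\langle e_{\boldsymbol{\zeta}}^*, e_{\boldsymbol{\xi}}\rangle = \delta_{\boldsymbol{\zeta},\boldsymbol{\xi}}$. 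So the whole computation reduces to bookkeeping of scalars.

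First I invert $\Var$. Write $c_{\boldsymbol{\zeta}} := \Lambda_{\boldsymbol{\zeta}}^{-1}\prod_i(\zeta_i-1)/K$, so that $\Var(e^*_{\boldsymbol{\zeta}}) = c_{\boldsymbol{\zeta}}\, e_{\boldsymbol{\zeta}^{-1}}$. Every $c_{\boldsymbol{\zeta}}$ is nonzero, so replacing $\boldsymbol{\zeta}$ by $\boldsymbol{\zeta}^{-1}$ gives
\[
\Var^{-1}(e_{\boldsymbol{\zeta}}) = c_{\boldsymbol{\zeta}^{-1}}^{-1}\, e^*_{\boldsymbol{\zeta}^{-1}}.
\]
Using $\zeta_i^{-1}-1 = -\zeta_i^{-1}(\zeta_i-1)$, one finds
\[
c_{\boldsymbol{\zeta}^{-1}} = \Lambda\prod_i(\zeta_i^{-1}-1)/K = (-1)^{n+1}\prod_i(\zeta_i-1)/K .
\]

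Next I evaluate the two terms of the identity with $a = e_{\boldsymbol{\zeta}}$ and $a' = e_{\boldsymbol{\xi}}$.
\begin{enumerate}
\item The first term $\langle \Var^{-1}(e_{\boldsymbol{\zeta}}), e_{\boldsymbol{\xi}}\rangle$ equals $c_{\boldsymbol{\zeta}^{-1}}^{-1}\delta_{\boldsymbol{\xi},\boldsymbol{\zeta}^{-1}}$.
\item The second term $\langle \Var^{-1}(e_{\boldsymbol{\xi}}), e_{\boldsymbol{\zeta}}\rangle$ equals $c_{\boldsymbol{\xi}^{-1}}^{-1}\delta_{\boldsymbol{\zeta},\boldsymbol{\xi}^{-1}}$. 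When $\boldsymbol{\xi}=\boldsymbol{\zeta}^{-1}$ this is $c_{\boldsymbol{\zeta}}^{-1} = K\Lambda/\prod_i(\zeta_i-1)$.
\end{enumerate}
Both terms vanish unless $\boldsymbol{\xi}=\boldsymbol{\zeta}^{-1}$, which gives the second case of the statement. In the case $\boldsymbol{\xi}=\boldsymbol{\zeta}^{-1}$ the sum is
\[
(-1)^{n(n-1)/2}\frac{K}{\prod_i(\zeta_i-1)}\Bigl((-1)^{n+1} + (-1)^n\Lambda\Bigr) = (-1)^{n(n-1)/2+n}\,K\,\frac{\Lambda-1}{\prod_i(\zeta_i-1)} .
\]
Since $n(n-1)/2+n = n(n+1)/2$, this is exactly the claimed formula.

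The main obstacle is not the algebra but the consistency of conventions. The identity \eqref{eq_pairing_formula_classical} was derived for the pairing $\langle\ ,\ \rangle$ on $\widetilde{H}^n(F)\times\widetilde{H}^n_{\csup}(\mathring F)$. One must check that the dual basis $e^*_{\boldsymbol{\zeta}}$ used in Proposition \ref{prop:diag_variation}, which is built through the Thom--Sebastiani isomorphism, is dual with respect to this same pairing, with no extra Koszul or orientation sign entering through the tensor decomposition.

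I would verify this consistency first in the case $n=0$. There the claimed formula reduces to Proposition \ref{prop:fin_cyc_pairing}: the factor is $\frac{\zeta-1}{\zeta-1}k = k$, with sign $(-1)^0 = 1$. This case therefore serves as a check of the sign conventions. I would then argue that the Thom--Sebastiani isomorphism is compatible with the duality pairings up to the normalisation already built into Proposition \ref{prop:diag_variation}. Given that, the computation above applies verbatim.
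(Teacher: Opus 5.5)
Your proposal is correct and is essentially the paper's argument: both substitute $e_{\boldsymbol{\zeta}}, e_{\boldsymbol{\xi}}$ into \eqref{eq_pairing_formula_classical} and reduce everything to the scalar $\langle \Var^{-1}(e_{\boldsymbol{\zeta}}), e_{\boldsymbol{\zeta}^{-1}}\rangle = (-1)^{n+1}\prod_i k_i/\prod_i(\zeta_i-1)$. The only difference is how that scalar is obtained: you invert Proposition \ref{prop:diag_variation}, while the paper writes $\Var_i^{-1}(e^{(i)}_{\zeta_i}) = (\zeta_i-1)^{-1} j_i(e^{(i)}_{\zeta_i})$ and combines $\langle j_i(u),v\rangle = -(u,v)_i$ with Proposition \ref{prop:fin_cyc_pairing}; the two give the same value, and the paper tacitly relies on the same Thom--Sebastiani compatibility of the pairings that you flag as the convention issue.
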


\begin{proof}
We compute the intersection pairing $(e_{\boldsymbol{\zeta}}, e_{\boldsymbol{\xi}})$ using the variation operator formula derived in equation \eqref{eq_pairing_formula_classical}:
\begin{equation}\label{eq:pairing_via_var}
(a, a') = (-1)^{n(n-1)/2} \left( \langle \Var^{-1}(a), a' \rangle + (-1)^n \langle \Var^{-1}(a'), a \rangle \right),
\end{equation}
where $\langle \cdot, \cdot \rangle$ denotes the duality pairing between $\widetilde{H}^{n}(F)$ and $\widetilde{H}^{n}_{\csup}(\mathring{F})$.

First, we analyze the action of $\Var^{-1}$ on the tensor product basis. Under the Thom--Sebastiani isomorphism, the variation operator for the sum $f = \sum f_i$ satisfies:
\[
\Var^{-1} = \bigotimes_{i=0}^{n} \Var_{i}^{-1},
\]
where $\Var_{i}$ is the variation operator for the component $f_i(z_i) = z_i^{k_i}$.

Recall from the finite cyclic cover case that $T_i = \id + \Var_i \circ j_i$. For an eigenvector $e_{\zeta_i}^{(i)}$ with eigenvalue $\zeta_i \neq 1$, we have $T_i(e_{\zeta_i}^{(i)}) = \zeta_i e_{\zeta_i}^{(i)}$, which implies $\Var_i(j_i(e_{\zeta_i}^{(i)})) = (\zeta_i - 1) e_{\zeta_i}^{(i)}$. Applying $\Var_i^{-1}$, we identify the eigenvalue
\begin{equation*}
\Var_i^{-1}(e_{\zeta_i}^{(i)}) = \frac{1}{\zeta_i - 1} j_i(e_{\zeta_i}^{(i)}).
\end{equation*}
Consider the tensor eigenvector $e_{\boldsymbol{\zeta}} = \bigotimes_{i=0}^n e_{\zeta_i}^{(i)}$. Applying the product formula for $\Var^{-1}$:
\[
\Var^{-1}(e_{\boldsymbol{\zeta}}) = \left( \prod_{i=0}^n \frac{1}{\zeta_i - 1} \right) \bigotimes_{i=0}^n j_i(e_{\zeta_i}^{(i)}).
\]
We compute the first term inside the parenthesis of \eqref{eq:pairing_via_var}:
\begin{align*}
\langle \Var^{-1}(e_{\boldsymbol{\zeta}}), e_{\boldsymbol{\xi}} \rangle &= \frac{1}{\prod (\zeta_i - 1)}\prod_{i=0}^n \langle j_i(e_{\zeta_i}^{(i)}), e_{\xi_i}^{(i)} \rangle.
\end{align*}
Using the $0$-dimensional relation $\langle j_{i}(u),v\rangle = -(u,v)_{i}$ and Proposition \ref{prop:fin_cyc_pairing}, we see that
\[
\langle \Var^{-1}(e_{\boldsymbol{\zeta}}), e_{\boldsymbol{\xi}} \rangle = 
\begin{cases}
\frac{(-1)^{n+1} \prod k_i}{\prod (\zeta_i - 1)} & {\text{ if }}\boldsymbol{\xi} = \boldsymbol{\zeta}^{-1},\\
0 & {\text{ otherwise.}}
\end{cases}
\]
By symmetry, the second term $\langle \Var^{-1}(e_{\boldsymbol{\xi}}), e_{\boldsymbol{\zeta}} \rangle$ is given by
\[
\langle \Var^{-1}(e_{\boldsymbol{\xi}}), e_{\boldsymbol{\zeta}} \rangle = 
\begin{cases}
\frac{(-1)^{n+1}\prod k_i}{\prod (\xi_i - 1)} & {\text{ if }}\boldsymbol{\xi} = \boldsymbol{\zeta}^{-1},\\
0 & {\text{ otherwise.}}
\end{cases}
\]
Let $\boldsymbol{\xi} = \boldsymbol{\zeta}^{-1}$. Since $\xi_i = \zeta_i^{-1}$, we have $\xi_i - 1 = \zeta_i^{-1} - 1 = -\zeta_i^{-1}(\zeta_i - 1)$. Therefore,
\[
\prod_{i=0}^n (\xi_i - 1) = (-1)^{n+1} \left(\prod_{i=0}^n \zeta_i^{-1}\right) \prod_{i=0}^n (\zeta_i - 1) = \frac{(-1)^{n+1}}{\Lambda} \prod_{i=0}^n (\zeta_i - 1),
\]
where $\Lambda = \prod \zeta_i$. Substituting this back:
\[
\langle \Var^{-1}(e_{\boldsymbol{\xi}}), e_{\boldsymbol{\zeta}} \rangle = \frac{(-1)^{n+1}\prod k_i}{\frac{(-1)^{n+1}}{\Lambda}\prod (\zeta_i - 1)} = \frac{\Lambda \prod k_i}{\prod (\zeta_i - 1)}.
\]
Finally, we substitute both terms into the formula \eqref{eq:pairing_via_var}. We obtain
\begin{align*}
(e_{\boldsymbol{\zeta}}, e_{\boldsymbol{\xi}}) &= (-1)^{\frac{n(n-1)}{2}} \left[ \frac{(-1)^{n+1}\prod k_i}{\prod (\zeta_i - 1)} + (-1)^n \left( \frac{\Lambda \prod k_i}{\prod (\zeta_i - 1)} \right) \right] \\
&=(-1)^{\frac{n(n+1)}{2}} \left( \frac{\Lambda - 1}{\prod_{i=0}^n (\zeta_i - 1)} \right) \prod_{i=0}^n k_i.
\end{align*}
This concludes the proof.
\end{proof}

\section{Arithmetic Picard--Lefschetz Formula for Diagonal Type Singularities}
\label{sec:diag_singularity}

In this section, we first reduce the global Picard--Lefschetz formula to the local case. Then, we investigate the geometry of hypersurfaces defined by non-degenerate polynomials via toric compactifications. Finally, we specialize to the local study of diagonal type singularities, deriving explicit formulas for vanishing cycles and the monodromy action.

\subsection{Global Reduction to Local Diagonal Singularities}
\label{subsec:global_reduction}

This subsection connects the local calculations of diagonal singularities to the global geometry of the fibration, following the framework of \cite[Ex. XIII]{SGA7_II}.

Let $S$ be the spectrum of a henselian discrete valuation ring of characteristic $0$ with generic point $\eta$ and a closed special point $s$ with residue characteristic $p\geq 0$. Let $f: X \rightarrow S$ be a proper flat morphism of relative dimension $n$. We assume the following geometric conditions:
\begin{enumerate}
    \item The generic fiber $X_{\eta}$ is smooth.
    \item The special fiber $X_{s}$ has a finite set of isolated singularities $\Sigma = \{x_{1}, \ldots, x_{m}\}$.
    \item Locally around each singularity $x \in \Sigma$, $X$ is isomorphic to a diagonal polynomial of type $(k_{0}, \ldots, k_{n})$:
    \begin{equation}\label{eq:local_model_eq}
        f_{x}(z) = a_{0}z_{0}^{k_{0}} + \cdots + a_{n}z_{n}^{k_{n}}+u\pi^{v}=0,
    \end{equation}
    where $a_{i},u$ are units, $\pi$ is a uniformizer, and $k_{i}$ are integers prime to the residue characteristic $p$. Here, the parameters $a_{i}, u, v$, and the exponents $k_{i}$ depend on the singular point $x$.
\end{enumerate}

We consider the distinguished triangle of sheaves on $X_{\overline{s}}$ relating the constant sheaf, the nearby cycles complex, and the vanishing cycles complex:
\begin{equation*}
\Qlbar \longrightarrow R\Psi(\Qlbar) \longrightarrow R\Phi(\Qlbar) \xrightarrow{+1}.
\end{equation*}
Taking the hypercohomology on $X_{\overline{s}}$, we obtain the long exact sequence:
\begin{equation} \label{eq:long_exact_seq_general}
\cdots \rightarrow H^{n}(X_{\overline{s}}, \Qlbar) \xrightarrow{sp} H^{n}(X_{\overline{s}}, R\Psi(\Qlbar)) \xrightarrow{can} H^{n}(X_{\overline{s}}, R\Phi(\Qlbar)) \xrightarrow{\partial} H^{n+1}(X_{\overline{s}}, \Qlbar) \rightarrow \cdots
\end{equation}

We now simplify the terms in this sequence using the geometric assumptions. Since $f$ is proper, the proper base change theorem provides a canonical isomorphism between the cohomology of the nearby cycle complex and the cohomology of the generic fiber:
\begin{equation*}
H^{n}(X_{\overline{s}}, R\Psi(\Qlbar)) \cong H^{n}(X_{\overline{\eta}}, \Qlbar).
\end{equation*}
Under this identification, the map $sp$ becomes the cospecialization map $H^{n}(X_{\overline{s}}) \rightarrow H^{n}(X_{\overline{\eta}})$.
By hypothesis, the singular locus of the special fiber is a finite discrete set $\Sigma = \{x_{1}, \ldots, x_{m}\}$. The sheaf of vanishing cycles $R\Phi(\Qlbar)$ is supported on $\Sigma$. 
For the isolated singularities of relative dimension $n$, the local vanishing cohomology is concentrated in the middle degree $n$. Thus,
\begin{equation*}
R^{q}\Phi(\Qlbar)_{x} = 0 \quad \text{for } q \neq n \text{ and } x \in \Sigma.
\end{equation*}
Consequently, the global hypercohomology of the vanishing cycles decomposes into a direct sum of local terms:
\begin{equation*}
H^{n}(X_{\overline{s}}, R\Phi(\Qlbar)) \cong \bigoplus_{x \in \Sigma} R^{n}\Phi(\Qlbar)_{x}.
\end{equation*}
For $i \neq n$, $H^{i}(X_{\overline{s}}, R\Phi(\Qlbar)) = 0$.

Substituting these identifications into \eqref{eq:long_exact_seq_general}, we obtain the fundamental exact sequence describing the degeneration

\begin{proposition}[Cospecialization Exact Sequence]
Under the assumptions of isolated singularities, we have the following exact sequence of $\Gal(\overline{\eta}/\eta)$-modules
\[
\begin{aligned}
0 \rightarrow H^{n}(X_{\overline{s}}, \Qlbar) \xrightarrow{sp} H^{n}(X_{\overline{\eta}}, \Qlbar) \xrightarrow{can} \bigoplus_{x \in \Sigma} R^{n}\Phi(\Qlbar)_{x} &\\
&\hspace{-3cm}\xrightarrow{\partial} H^{n+1}(X_{\overline{s}}, \Qlbar) \xrightarrow{sp} H^{n+1}(X_{\overline{\eta}}, \Qlbar) \rightarrow 0.
\end{aligned}
\]
\end{proposition}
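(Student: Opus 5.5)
The plan is to read off the exact sequence from the long exact hypercohomology sequence \eqref{eq:long_exact_seq_general} of the triangle $\Qlbar \to R\Psi(\Qlbar) \to R\Phi(\Qlbar) \xrightarrow{+1}$ on $X_{\overline{s}}$. Every term in that triangle carries a continuous action of $\Gal(\overline{\eta}/\eta)$, compatible with the morphisms. Hence every map in the resulting long exact sequence is Galois-equivariant, and the whole statement reduces to identifying the terms and checking that the sequence truncates at both ends.

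\textbf{Step 1: the nearby cycle terms.} By proper base change for the proper morphism $f$, we have $H^{i}(X_{\overline{s}}, R\Psi(\Qlbar)) \cong H^{i}(X_{\overline{\eta}},\Qlbar)$ equivariantly for every $i$. Under this identification the map induced by $\Qlbar \to R\Psi(\Qlbar)$ is the cospecialization map $sp$.

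\textbf{Step 2: the vanishing cycle terms.} Since $X$ is smooth over $S$ outside $\Sigma$, $R\Phi(\Qlbar)$ vanishes there. Its support is the finite set $\Sigma$, so its hypercohomology is the direct sum of its stalks: $H^{i}(X_{\overline{s}},R\Phi(\Qlbar)) \cong \bigoplus_{x\in\Sigma} R^{i}\Phi(\Qlbar)_{x}$.

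We then need $R^{q}\Phi(\Qlbar)_{x}=0$ for $q\neq n$, the arithmetic analogue of Milnor's bouquet theorem. I would take this from SGA7 (Exp.\ I, and Exp.\ XV--XVI for isolated singularities). Alternatively, it follows directly for our local model \eqref{eq:local_model_eq} from the paper's later identification of the local vanishing cohomology with compactly supported cohomology of a smooth affine fiber. Concretely, in the local model the Milnor fiber is the affine hypersurface $\sum a_i z_i^{k_i} = -u\pi^v$ over $\overline{\eta}$. Because the $k_i$ are prime to $p$, this is a smooth, non-degenerate affine variety, and its reduced cohomology is concentrated in degree $n$ with dimension $\prod (k_i-1)$, by Thom--Sebastiani or Deligne's results on Fermat-type hypersurfaces.

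\textbf{Step 3: truncation.} With these vanishings, for $i\neq n$ the segment
\[
H^{i-1}(R\Phi)=0\to H^{i}(X_{\overline{s}})\to H^{i}(X_{\overline{\eta}})\to H^{i}(R\Phi)
\]
needs to be examined separately around the degree $n$. For $i\le n-1$, we have $H^{i-1}(R\Phi)=0$ and $H^{i}(R\Phi)=0$, so $sp$ is an isomorphism. In degree $n$, the vanishing of $H^{n-1}(R\Phi)$ gives injectivity of $sp$ on $H^{n}$. In degree $n+1$, the vanishing of $H^{n+1}(R\Phi)$ gives surjectivity of $sp$ on $H^{n+1}$. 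The middle maps $can$ and $\partial$ are the maps of the long exact sequence. Together these yield the displayed five-term sequence of $\Gal(\overline{\eta}/\eta)$-modules, as well as the isomorphisms for $i\neq n,n+1$ claimed in the Main Theorem.

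The main obstacle is the concentration statement $R^{q}\Phi_x=0$ for $q\neq n$ in mixed characteristic. In equal characteristic zero, or in the tame case with the $k_i$ prime to $p$, it follows from SGA7's results for isolated singularities. A fully self-contained proof, however, requires the local model computation in the following subsections. I would cite SGA7 Exp.\ XV for the general isolated-singularity statement, and note that the diagonal model makes it explicit.
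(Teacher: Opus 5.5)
Your proposal is correct and follows the same route as the paper: take the hypercohomology long exact sequence of $\Qlbar \to R\Psi(\Qlbar) \to R\Phi(\Qlbar) \xrightarrow{+1}$, identify the $R\Psi$-terms with the cohomology of $X_{\overline{\eta}}$ by proper base change, and truncate using that $R\Phi(\Qlbar)$ is supported on $\Sigma$ and concentrated in degree $n$. The only difference is that you justify the vanishing $R^{q}\Phi(\Qlbar)_{x}=0$ for $q\neq n$ (via SGA 7 or the explicit local model), which the paper simply asserts; the isomorphisms for $i\geq n+2$ follow by the same argument you give for $i\le n-1$.
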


Let $I$ be the inertia subgroup of $\Gal(\overline{\eta}/\eta)$, defined as the kernel of the reduction map
\[
I = \ker\left( \Gal(\overline{\eta}/\eta) \rightarrow \Gal(\overline{s}/s) \right).
\]
The group $\Gal(\overline{\eta}/\eta)$ acts naturally on the nearby cycle and vanishing cycle complexes, making the maps in the specialization exact sequence Galois-equivariant. Because the special fiber $X_{\overline{s}}$ is a base change from $s$, the action of $\Gal(\overline{\eta}/\eta)$ on the cohomology $H^{n}(X_{\overline{s}}, \Qlbar)$ factors through the quotient $\Gal(\overline{s}/s)$. Consequently, the inertia group $I$ acts trivially on $H^{n}(X_{\overline{s}}, \Qlbar)$.

By the $I$-equivariance of the specialization map $sp$, the image of $sp$ is fixed by $I$. That is, for any $\sigma \in I$, the variation operator $\sigma - \id$ vanishes on the image of $sp$. By the exactness of the sequence at $H^{n}(X_{\overline{\eta}}, \Qlbar)$, the map $\sigma - \id$ factors through the canonical map $can$. This precise algebraic structure guarantees the existence of the arithmetic variation morphism $\Var(\sigma): \bigoplus_{x \in \Sigma} R^{n}\Phi(\Qlbar)_{x} \rightarrow H^{n}(X_{\overline{\eta}}, \Qlbar)$ such that $\sigma - \id = \Var(\sigma) \circ can$.

According to \cite[2.4.6, Exp. XIII]{SGA7_II}, the global variation operator decomposes into a sum of local variation morphisms. This decomposition is illustrated by the following commutative diagram
\begin{equation} \label{diag:var_decomposition}
\begin{tikzcd}
{H^{n}(X_{\overline{\eta}}, \Qlbar)} \arrow[rr, "\sigma -\id"] \arrow[d, "\wr"']    &  & {H^{n}(X_{\overline{\eta}}, \Qlbar)}                                                    \\
{H^{n}(X_{\overline{s}},R\Psi(\Qlbar))} \arrow[d, "can"']                                        &  & {H^{n}(X_{\overline{s}},R\Psi(\Qlbar))} \arrow[u, "\wr"']                               \\
\bigoplus_{x \in \Sigma} R^{n}\Phi(\Qlbar)_{x} \arrow[rr, "\bigoplus \Var_{x}(\sigma)"'] &  & {\bigoplus_{x \in \Sigma}H^{n}_{\{x\}}(X_{\overline{s}},R\Psi(\Qlbar))} \arrow[u, "q"]
\end{tikzcd}
\end{equation}

Here,
\begin{itemize}
    \item $can: H^{n}(X_{\overline{\eta}}, \Qlbar) \rightarrow \bigoplus R^{n}\Phi(\Qlbar)_{x}$ is the canonical restriction map to the vanishing cycles.
    \item $\Var_{x}(\sigma): R^{n}\Phi(\Qlbar)_{x} \rightarrow H^{n}_{\{x\}}(X_{\overline{s}},R\Psi(\Qlbar))$ is the local variation morphism at the singularity $x$.
    \item $q: \bigoplus H^{n}_{\{x\}}(X_{\overline{s}},R\Psi(\Qlbar)) \rightarrow H^{n}(X_{\overline{s}},R\Psi(\Qlbar))$ is the forgetting support map.
\end{itemize}

The diagram states that the global variation is the composition
\begin{equation*}
\Var(\sigma) = \sum_{x \in \Sigma} q_{x} \circ \Var_{x}(\sigma) \circ \text{can}_{x}.
\end{equation*}

In the next subsection, we are going to investigate the local cohomology $H_{\{x\}}^{n}(X_{\overline{s}},R\Psi(\Qlbar))$. We will prove that
\[
H_{\{x\}}^{n}(X_{\overline{s}},R\Psi(\Qlbar))\cong H^{n}_{\csup}(X_{\loc,\overline{\eta}},\Qlbar),
\]
where $X_{\loc}\subseteq\mathbb{A}^{n+1}_{S}$ is the hypersurface defined by the equation \eqref{eq:local_model_eq} associated with the singular point $x$.

\begin{lemma}[Comparison of Vanishing and Nearby Cycles Stalks]\label{lem:vanishing_nearby_iso}
Let $n \ge 1$. For each isolated singularity $x \in \Sigma$, the canonical map
\[ (R^n\Psi(\Qlbar))_{x} \longrightarrow (R^n\Phi(\Qlbar))_{x} \]
is an isomorphism.
\end{lemma}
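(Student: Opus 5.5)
The plan is to read off the claim from the stalk at $x$ of the distinguished triangle $\Qlbar \to R\Psi(\Qlbar) \to R\Phi(\Qlbar) \xrightarrow{+1}$ used above. Since taking stalks is exact, we get a long exact sequence
\[
\cdots \to (R^{q}i^{*}Rj_{*}\Qlbar)_{x}\ \text{(i.e. } \mathcal{H}^q(\Qlbar)_x) \to (R^{q}\Psi\Qlbar)_{x} \to (R^{q}\Phi\Qlbar)_{x} \to \mathcal{H}^{q+1}(\Qlbar)_{x} \to \cdots.
\]
The first term is the stalk of the constant sheaf $\Qlbar$ on $X_{\overline{s}}$. So $\mathcal{H}^{q}(\Qlbar)_{x}=\Qlbar$ for $q=0$ and $0$ otherwise. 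The segment around degree $n$ reads
\[
\mathcal{H}^{n}(\Qlbar)_{x} \to (R^{n}\Psi)_{x} \to (R^{n}\Phi)_{x} \to \mathcal{H}^{n+1}(\Qlbar)_{x}.
\]
For $n\ge 1$, both outer terms vanish, because $n\neq 0$ and $n+1\neq 0$. Hence the canonical map is an isomorphism. This is essentially formal; the only content is confirming that the first vertex of the triangle really is the constant sheaf on the special fibre, with stalks concentrated in degree $0$.

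I will carry this out in three steps. First, I will recall the definition $R\Psi(\Qlbar)=\overline{i}^{*}R\overline{j}_{*}\Qlbar$ on $X_{\overline{s}}$, together with the adjunction map $\Qlbar=\overline{i}^{*}\Qlbar\to\overline{i}^{*}R\overline{j}_{*}\overline{j}^{*}\Qlbar$, whose cone is $R\Phi(\Qlbar)$ (SGA 7, Exp. XIII). Second, I will take stalks at the geometric point $x$ and write down the long exact sequence above. Third, I will use that the stalk of the constant sheaf vanishes in nonzero degrees, which forces the isomorphism in degree $n$ whenever $n\geq1$.

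There is no real obstacle; the one point needing care is a degree bookkeeping issue in the case $n=1$. There the map in degree $0$, $\Qlbar\to(R^{0}\Psi)_{x}$, could fail to be surjective, but that only affects $(R^{0}\Phi)_{x}$ and the connecting map into degree $1$. The terms adjacent to degree $n$ are the constant-sheaf stalks in degrees $n$ and $n+1$, and both vanish since $n\ge1$. So no geometric input about the singularity, such as the Milnor fibre, is needed. As a remark, I would also note the cross-check that the local Milnor fibre is a bouquet of $n$-spheres, so $(R^{n}\Psi)_{x}$ has dimension equal to the Milnor number $\prod(k_i-1)$, consistent with the statement's concentration in degree $n$. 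The isomorphism itself, however, only uses the triangle.
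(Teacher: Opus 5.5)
Your proposal is correct and follows the paper's argument. Both take the stalk at $x$ of the triangle $\Qlbar \to R\Psi(\Qlbar) \to R\Phi(\Qlbar) \xrightarrow{+1}$ and observe that the constant-sheaf terms in degrees $n$ and $n+1$ vanish for $n\ge 1$; the paper phrases this as $H^q(X_{(x)},\Qlbar)=0$ for $q\ge 1$ on the strict henselization, while you read it directly off the stalk $\mathcal{H}^q(\Qlbar)_x$.

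One small notational slip: the first term of your long exact sequence should be labelled only $\mathcal{H}^q(\Qlbar)_x$, because the expression $(R^q i^*Rj_*\Qlbar)_x$ that you equate it with is a nearby-cycle object, not the constant sheaf.
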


\begin{proof}
Consider the distinguished triangle defining the vanishing cycles
\[ \Qlbar|_{X_{\overline{s}}} \longrightarrow R\Psi(\Qlbar) \longrightarrow R\Phi(\Qlbar) \xrightarrow{+1}. \]
Taking the stalk at the singular point $x \in X_{\overline{s}}$ corresponds to taking the hypercohomology of the strictly henselian local scheme $X_{(x)} := \Spec(\mathcal{O}_{X_{\overline{s}}, x}^{sh})$. This yields the long exact sequence:
\[ \cdots \to H^n(X_{(x)}, \Qlbar) \to (R^n\Psi)_{x} \to (R^n\Phi)_{x} \to H^{n+1}(X_{(x)}, \Qlbar) \to \cdots \]

Since $X_{(x)}$ is the spectrum of a strictly henselian local ring, by the proper base change, its étale cohomology with constant coefficients is isomorphic to the cohomology of its closed point $\Spec(\overline{k(x)})$.
Since the residue field is separably closed, the cohomology vanishes in positive degrees
\[ H^q(X_{(x)}, \Qlbar) \cong H^q(\Spec(\bar{s}), \Qlbar) = 0 \quad \text{for all } q \ge 1. \]
For $n \ge 1$, this implies the isomorphism $(R^n\Psi)_{x} \cong (R^n\Phi)_{x}$.
\end{proof}

Following the formalism in \cite[Exp. XIII]{SGA7_II}, the cup product on the nearby cycles complex $R\Psi(\Qlbar)$ induces a canonical pairing between the stalk at the singularity and the local cohomology supported at the singularity.

\begin{proposition}[Local Duality]
Let $x \in \Sigma$ be an isolated singularity. There exists a non-degenerate $I$-equivariant pairing:
\begin{equation} \label{eq:local_duality_pairing}
\langle \ , \ \rangle : (R^n\Psi(\Qlbar))_{x} \times H_{\{x\}}^{n}(X_{\overline{s}}, R\Psi(\Qlbar)) \longrightarrow \Qlbar(-n).
\end{equation}
\end{proposition}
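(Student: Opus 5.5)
The plan is to obtain the pairing from the cup product on $R\Psi(\Qlbar)$ together with Verdier duality on the special fiber, in the spirit of \cite[Exp.\ XIII, 2.1]{SGA7_II}.

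\emph{Step 1: the self-duality of nearby cycles.} Since $X_\eta$ is smooth of dimension $n$ and $f$ is proper, we have $D_{X_\eta}(\Qlbar)\cong\Qlbar[2n](n)$. Nearby cycles commute with Verdier duality (Gabber's theorem; in our tame isolated situation SGA7 already suffices). This gives a Galois-equivariant isomorphism
\[
D_{X_{\overline{s}}}\bigl(R\Psi(\Qlbar)\bigr)\cong R\Psi(\Qlbar)[2n](n).
\]
Equivalently, the cup product
\[
R\Psi(\Qlbar)\otimes^{L} R\Psi(\Qlbar)\to R\Psi(\Qlbar\otimes\Qlbar)=R\Psi(\Qlbar)\to R\Psi(K_{X_\eta})[-2n](-n)
\]
induces a perfect pairing $R\Psi(\Qlbar)\to R\mathcal{H}om(R\Psi(\Qlbar),K_{X_{\overline{s}}})[-2n](-n)$.

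\emph{Step 2: localization at $x$.} Let $i_x:\{x\}\hookrightarrow X_{\overline s}$ be the inclusion. Applying $i_x^!$ and using $i_x^!D=Di_x^*$ gives
\[
R\Gamma_{\{x\}}\bigl(R\Psi(\Qlbar)\bigr)\cong R\mathrm{Hom}\bigl(i_x^*R\Psi(\Qlbar),\Qlbar\bigr)[-2n](-n).
\]
Taking $H^n$ and then the degree-$n$ piece of $i_x^*R\Psi$ produces the pairing into $\Qlbar(-n)$. This requires controlling the other cohomology sheaves. For $x$ an isolated singular point of the fiber, $(R^q\Psi)_x$ vanishes for $q\neq 0,n$ (Milnor-fiber vanishing, via Lemma~\ref{lem:vanishing_nearby_iso} and the concentration of $R\Phi$ in degree $n$). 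The degree-$0$ stalk $\Qlbar$ pairs with $H^{2n}_{\{x\}}$, so it does not interfere in degree $n$ once $n\ge1$.

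\emph{Step 3: non-degeneracy.} Non-degeneracy follows because the duality isomorphism is an isomorphism: the spectral sequence collapses to give $H^n_{\{x\}}(R\Psi)\cong \mathrm{Hom}((R^n\Psi)_x,\Qlbar(-n))$. As a cross-check, both sides have dimension equal to the Milnor number $\prod(k_i-1)$.

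\emph{Step 4: $I$-equivariance.} Equivariance is automatic, since $\Gal(\overline\eta/\eta)$ acts on $R\Psi$ compatibly with cup products and with the trace map, and $I$ acts trivially on $\Qlbar(-n)$.

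The main obstacle is Step 1, namely the compatibility of $R\Psi$ with duality in mixed characteristic. I would first try to cite \cite[Exp.\ XIII, 2.1.x]{SGA7_II}, where Deligne treats exactly the case of isolated singular points with $X$ regular outside them. Failing that, I would invoke Gabber's theorem as stated in Illusie's survey \cite{Ill00}.
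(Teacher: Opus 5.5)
Your argument is correct, but it gets non-degeneracy from a different key input than the paper does. Both constructions build the pairing from the cup product on $R\Psi(\Qlbar)$ followed by the trace. The paper then derives non-degeneracy from Poincaré duality on the smooth \emph{proper} generic fiber $X_{\overline{\eta}}$. You instead import Gabber's theorem that $R\Psi$ commutes with Verdier duality, and localize using $i_x^!D=Di_x^*$.

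In the present setting the paper's one-line justification can be made precise without Gabber. Consider the map
\[
c\colon R\Psi(\Qlbar)\longrightarrow D_{X_{\overline{s}}}\bigl(R\Psi(\Qlbar)\bigr)[-2n](-n)
\]
induced by cup product and trace.
\begin{itemize}
\item On $X_{\overline{s}}\setminus\Sigma$, where $f$ is smooth and $R\Psi(\Qlbar)=\Qlbar$, the map $c$ is an isomorphism, so its cone $C$ is supported on the finite set $\Sigma$.
\item Since $X_{\overline{s}}$ is proper, $R\Gamma(X_{\overline{s}},c)$ is the Poincaré duality map of $X_{\overline{\eta}}$, hence an isomorphism.
\item Therefore $\bigoplus_{x\in\Sigma}C_x=R\Gamma(X_{\overline{s}},C)=0$, and $c$ is an isomorphism.
\end{itemize}
From there your Step 2 goes through verbatim.

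The trade-off is as follows. The paper's route needs only classical Poincaré duality plus hypotheses already in force, namely properness of $f$ and finiteness of $\Sigma$. Yours rests on a much deeper theorem but needs neither hypothesis. It would therefore apply directly to the non-proper local model $X_{\loc}$, or to non-isolated singularities.

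Two minor points.
\begin{itemize}
\item The vanishing of $(R^q\Psi)_x$ for $q\neq 0,n$ in your Step 2 is superfluous. With field coefficients, $H^{-n}R\mathrm{Hom}(K,\Qlbar)\cong\mathrm{Hom}(H^{n}K,\Qlbar)$ for any complex $K$. Hence $H^n_{\{x\}}(X_{\overline{s}},R\Psi(\Qlbar))\cong (R^n\Psi(\Qlbar))_x^{\vee}(-n)$ follows immediately, with no spectral sequence to control.
\item Your parenthetical claim that SGA7's tame formalism ``already suffices'' is unsubstantiated, and you do not need it. Either Gabber's theorem or the global argument above does the job.
\end{itemize}
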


This pairing is constructed via the composition of the cup product and the trace map on the generic fiber. The non-degeneracy follows from the Poincaré duality of the smooth generic fiber.

By lemma \ref{lem:vanishing_nearby_iso}, for $n \ge 1$, we have a canonical isomorphism $(R^n\Psi(\Qlbar))_{x} \cong (R^n\Phi(\Qlbar))_{x}$. Using this isomorphism and the non-degeneracy of the pairing \eqref{eq:local_duality_pairing}, we obtain the following identification of the vanishing cycles stalk.

\begin{corollary}
The stalk of the vanishing cycles sheaf at the singularity $x$ is canonically isomorphic to the dual of the local cohomology twisted by $n$:
\begin{equation*}
(R^n\Phi(\Qlbar))_{x}\cong H_{\{x\}}^{n}(X_{\overline{s}}, R\Psi(\Qlbar))(n)^{\vee}.
\end{equation*}
\end{corollary}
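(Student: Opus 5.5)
The corollary follows formally from the two preceding results, Lemma \ref{lem:vanishing_nearby_iso} and the Local Duality proposition. I will first set up the duality map, then use that the spaces are finite-dimensional to conclude, and finally check that the identification is canonical and $I$-equivariant.

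\textbf{Step 1: the duality map.} The pairing \eqref{eq:local_duality_pairing}
\[
(R^n\Psi(\Qlbar))_{x} \times H_{\{x\}}^{n}(X_{\overline{s}}, R\Psi(\Qlbar)) \to \Qlbar(-n)
\]
is a bilinear map, so it induces an adjoint map
\[
(R^n\Psi(\Qlbar))_{x} \longrightarrow \operatorname{Hom}\bigl(H_{\{x\}}^{n}(X_{\overline{s}}, R\Psi(\Qlbar)), \Qlbar(-n)\bigr) = H_{\{x\}}^{n}(X_{\overline{s}}, R\Psi(\Qlbar))(n)^{\vee}.
\]
Here I use that twisting by $(n)$ commutes with taking duals, up to the sign of the twist: $\operatorname{Hom}(V,\Qlbar(-n)) \cong V^{\vee}(-n) \cong (V(n))^{\vee}$.

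\textbf{Step 2: finite-dimensionality and bijectivity.} Both sides are finite-dimensional $\Qlbar$-vector spaces.
\begin{itemize}
\item The stalk is finite-dimensional by the constructibility of $R\Psi$ (finiteness theorems of SGA $4\frac12$/SGA 7).
\item The local cohomology is finite-dimensional for the same reason, since it is supported at an isolated point. Concretely, it can be fit into the localization triangle for $\{x\} \hookrightarrow X_{\overline s}$.
\end{itemize}
Non-degeneracy of the pairing means the adjoint map is injective and so is the opposite adjoint. Injectivity in both directions forces the two dimensions to be equal, so the map of Step 1 is an isomorphism. Composing with the inverse of the isomorphism $(R^n\Psi)_x\cong(R^n\Phi)_x$ from Lemma \ref{lem:vanishing_nearby_iso}, which requires $n\ge1$, yields the claimed isomorphism.

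\textbf{Step 3: canonicity and equivariance.} The identification is canonical because both ingredients are canonical. The specialization triangle and the cup product/trace construction involve no choices. It is also $I$-equivariant, and indeed $\Gal(\overline\eta/\eta)$-equivariant. Equivariance of the pairing gives $\langle\sigma a,\sigma b\rangle=\sigma\langle a,b\rangle$, which is exactly the statement that the adjoint map is equivariant for the contragredient action on the twisted dual. The map $can$ in the triangle is also Galois-equivariant.

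\textbf{Main obstacle.} The main subtlety is the non-degeneracy input in Step 2. It was asserted, via Poincaré duality on the smooth generic fiber, rather than proved in the stated proposition. If it needed justification, I would argue via the local model from the text. The stalk $(R^n\Psi)_x$ is $H^n$ of the local Milnor fiber, and $H^n_{\{x\}}(R\Psi)$ is the compactly supported $H^n_c$ of the same fiber. Poincaré duality for the smooth (rigid/étale-local) Milnor fiber of dimension $n$ then pairs $H^n$ with $H^n_c$ perfectly into $\Qlbar(-n)$. Everything else in the argument is formal linear algebra.
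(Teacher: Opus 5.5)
Your proposal is correct and follows the paper's route: the corollary is the composite of the canonical isomorphism $(R^n\Psi(\Qlbar))_{x}\cong(R^n\Phi(\Qlbar))_{x}$ of Lemma~\ref{lem:vanishing_nearby_iso} (for $n\geq 1$) with the adjoint of the non-degenerate pairing \eqref{eq:local_duality_pairing}, and your extra bookkeeping (finite-dimensionality, $\operatorname{Hom}(V,\Qlbar(-n))\cong V(n)^{\vee}$, equivariance) only makes explicit what the paper leaves implicit. One caveat on your fallback justification of non-degeneracy: the \'etale Milnor fiber (the geometric generic fiber of the strict henselization of $X$ at $x$) is not of finite type, so Poincar\'e duality does not apply to it directly; the cleaner input is the compatibility of $R\Psi$ with Verdier duality $\mathbb{D}$ when $X_{\eta}$ is smooth, which gives $i^{!}R\Psi(\Qlbar)\cong \mathbb{D}\bigl(i^{*}R\Psi(\Qlbar)\bigr)[-2n](-n)$ for $i\colon\{x\}\hookrightarrow X_{\overline{s}}$, and hence exactly the perfect pairing you need.
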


\subsection{Toric Compactification of Hypersurfaces}

Let $R$ be a ring and $f \in R[x_{1}, \ldots, x_{n}]$. Let $Z = (f=0) \subseteq \mathbb{A}^{n}_{R}$ be the hypersurface defined by $f$. We demonstrate that if $f$ satisfies a non-degeneracy condition, $Z$ admits a compactification with good boundary properties.

\begin{definition}
Let $f(x_{1}, \ldots, x_{n}) = \sum_{v \in \mathbb{N}_{0}^{n}} a_{v} x^{v}$. The Newton polytope of $f$ is defined as the convex hull
\[
\Delta_{\infty}(f) = \Conv\left(\{0\} \cup \{v \in \mathbb{N}_{0}^{n} \mid a_{v} \neq 0\}\right) \subseteq \mathbb{R}^{n}.
\]
\end{definition}

\begin{definition}
The polynomial $f$ is said to be non-degenerate if for any face $\gamma \subseteq \Delta_{\infty}(f)$ not containing the origin, the system of equations
\[
f_{\gamma}(x)=0, \quad \frac{\partial f_{\gamma}}{\partial x_{1}} = \cdots = \frac{\partial f_{\gamma}}{\partial x_{n}} = 0,
\]
has no solution in the torus $(\mathbb{G}_{\mathrm{m}})^{n}$, where $f_{\gamma}(x) = \sum_{v \in \gamma \cap \mathbb{N}_{0}^{n}} a_{v} x^{v}$.
\end{definition}

\begin{remark}\label{Rmk:example_non_deg_poly}
Geometrically, this condition implies that the leading part $f_{\gamma}$ defines a smooth hypersurface in $(\mathbb{G}_{\mathrm{m}})^{n}$. Common examples include
\begin{enumerate}
    \item Diagonal type polynomials $f(x) = \sum_{i=0}^{n} a_{i}x_{i}^{k_{i}} + c$, where $k_i$ are prime to the characteristic of $R$ and its residue characteristic.
    \item ADE type polynomials such as type $A_k$ ($x^{k+1} + y^2 + \dots$), $D_k$, $E_6$, $E_7$, and $E_8$.
\end{enumerate}
\end{remark}

We associate the complete dual fan $\Sigma'$ to the Newton polytope $\Delta_{\infty}(f)$. Since $f$ is a polynomial, we may choose a smooth refinement $\Sigma$ of $\Sigma'$ such that the positive orthant is a cone in $\Sigma$. The resulting smooth toric variety $X_{\Sigma}$ serves as a compactification of $\mathbb{A}^n_R$.

\begin{proposition}\label{Prop:non_deg_SNCD_cptify}
Let $\overline{Z}$ be the Zariski closure of $Z \subseteq \mathbb{A}_{R}^{n} \subseteq X_{\Sigma}$. Then, the boundary $\overline{Z} \setminus Z$ is a simple normal crossing divisor (SNCD).
\end{proposition}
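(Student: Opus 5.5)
The plan is to check the SNCD property locally on the affine toric charts of $X_{\Sigma}$ that meet the boundary. Since $\Sigma$ is smooth, each maximal cone $\sigma$ gives a chart $U_\sigma\cong\mathbb{A}^n_R$ with coordinates $y_1,\ldots,y_n$. These are obtained from the $x_i$ by a unimodular monomial change of variables $x^{m}=y^{\langle m,\cdot\rangle}$. In such a chart the toric boundary $X_\Sigma\setminus\mathbb{A}^n_R$ is the union of the coordinate hyperplanes $\{y_j=0\}$ for those rays $\rho_j$ of $\sigma$ not lying in the positive orthant. For such a ray, with primitive generator $u_j$, let $d_j=\min_{v\in\Delta_\infty(f)}\langle u_j,v\rangle$. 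Because $0\in\Delta_\infty(f)$, we have $d_j\le 0$, and $d_j<0$ exactly when $\rho_j$ points outside the orthant. The local equation of $\overline{Z}$ in the chart is $g=y^{-d}\cdot f$, where $y^{-d}=\prod_j y_j^{-d_j}$. Since $\Sigma$ refines the dual fan $\Sigma'$, the cone $\sigma$ lies in the normal cone of a unique face $\gamma_\sigma$, and then
\[
g(y)=f_{\gamma_\sigma}'(y)+\sum_{j} y_j\, h_j(y),
\]
where $f'_{\gamma_\sigma}$ is $f_{\gamma_\sigma}$ written in the $y$-variables and multiplied by $y^{-d}$. It depends only on the variables $y_j$ with $\rho_j\not\subseteq\sigma_{\gamma}$, i.e.\ it is a Laurent polynomial in the complementary torus directions.

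First I will fix a subset $J$ of the boundary rays of $\sigma$ and look at the stratum $O_J=\{y_j=0,\ j\in J;\ y_i\neq0,\ i\notin J\}$. On this stratum, $g$ restricts to $y^{-d}f_{\gamma_J}$, where $\gamma_J$ is the face of $\Delta_\infty(f)$ cut out by the cone spanned by $\{\rho_j\}_{j\in J}$. This face does not contain the origin as soon as $J\neq\emptyset$, precisely because each $d_j<0$. Hence the closure of $Z$ meets $O_J$ in the hypersurface $\{f_{\gamma_J}=0\}$ of the torus $O_J\cong\mathbb{G}_m^{n-|J|}$, after using the homogeneity of $f_{\gamma_J}$ to drop the $|J|$ redundant torus directions. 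By the non-degeneracy hypothesis this hypersurface is smooth, which also forces $\overline{Z}\cap O_J$ to have the expected codimension.

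Next I will deduce the SNC property from this stratumwise transversality. At a point $P\in\overline{Z}\cap O_J$, I need $dg, dy_{j}$ ($j\in J$) to be linearly independent in the cotangent space. By the homogeneity (Euler relations) of $f_{\gamma_J}$ with respect to the weights $u_j$, the non-vanishing of some $\partial f_{\gamma_J}/\partial x_i$ on the torus translates into a non-vanishing partial derivative of $g$ in one of the coordinates $y_i$ with $i\notin J$. The Jacobian criterion then shows three things: $\overline{Z}$ is smooth near $P$, each $D_j=\overline{Z}\cap\{y_j=0\}$ is smooth, and the $D_j$ meet transversally. This is exactly the SNCD condition. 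I will also check that no component of $\overline{Z}$ lies inside the boundary, which holds because $\overline{Z}$ is a closure.

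Over a general base ring $R$, I interpret non-degeneracy fibrewise (as in Remark \ref{Rmk:example_non_deg_poly}, including the residue characteristics). I then apply the above over each point of $\Spec R$, so that the Jacobian criterion gives relative smoothness of the strata.

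The main obstacle is the bookkeeping in the second step. I have to show that the restriction of $g$ to $O_J$ is really $f_{\gamma_J}$ up to a unit monomial, with the right face for every subset $J$. This is where the refinement property of $\Sigma$ and the condition $0\in\Delta_\infty$ enter, and it is also what guarantees that faces through the origin (the positive-orthant cone) never contribute boundary. I would first treat the simplicial normal-cone case explicitly and then argue that refinement only adds rays lying inside existing normal cones, which does not change which face is selected.
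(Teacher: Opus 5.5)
Your route is the paper's route. You work in the affine charts $U_\sigma\cong\mathbb{A}^n_R$ of the smooth refinement, apply the unimodular monomial substitution, and divide by the monomial $y^{d}$ to get the strict transform $g$. You identify $g$ on boundary strata with face polynomials $f_\gamma$, $0\notin\gamma$, and get transversality from non-degeneracy. Your Euler-relation argument for the Jacobian criterion and your fibrewise treatment over $\Spec R$ supply what the paper compresses into ``the non-degeneracy condition ensures that $g_\sigma=0$ intersects the toric boundary divisors transversally.''

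There is one concrete hole in your stratification. You let $J$ run only over \emph{boundary} rays, and you impose $y_i\neq 0$ for every $i\notin J$, including the coordinates of orthant rays $e_i$ lying in $\sigma$. A boundary point of $\overline{Z}$ at which such an orthant coordinate also vanishes lies in no $O_J$, so your Jacobian check never reaches it. Such points exist. For $x_1^2+x_2^2+x_3^2+c$ (whose fan is that of $\mathbb{P}^3$), the points $[w:x_1:x_2:x_3]=[0:0:1:\pm i]$ of the boundary conic have vanishing $e_1$-coordinate in every chart containing them.

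The fix is to let $J$ be any set of rays of $\sigma$ that contains at least one boundary ray. The face $\gamma_J$ is still well defined, since the cone on $J$ lies in a cone of $\Sigma'$. It still avoids $0$ because of that boundary ray. Your argument then gives transversality of $\{g=0\}$ to all $\{y_j=0\}$, $j\in J$, and in particular to the boundary ones.

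Separately, your claim that $d_j<0$ for every ray outside the orthant needs $\Delta_\infty(f)$ to meet each coordinate axis away from $0$ (convenience). Without it, a boundary ray can have $d_j=0$; its face then contains the origin and non-degeneracy says nothing there. Convenience holds in the diagonal case where the proposition is used, and the paper's sketch assumes it tacitly too, but you should state it.
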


\begin{proof}
Consider a maximal cone $\sigma \in \Sigma$ with associated affine chart $U_{\sigma} \cong \mathbb{A}^{n}_{R}$ and coordinates $(z_1, \dots, z_n)$ such that boundary divisors are coordinate hyperplanes. The change of coordinates is monomial: $x^{m} = \prod_{j=1}^{n} z_{j}^{\langle m, u_{j} \rangle}$.
In local coordinates, the polynomial $f$ becomes:
\[
\tilde{f}(z) = \left( \prod_{j=1}^{n} z_j^{m_j} \right) \cdot g_{\sigma}(z),
\]
where $m_j := \min_{v \in \Delta_{\infty}(f)} \langle v, u_j \rangle$. The term $g_{\sigma}(z)$ represents the strict transform. The intersection of $\overline{Z}$ with boundary strata corresponds to restrictions of $g_\sigma$. The non-degeneracy condition ensures that $g_{\sigma}=0$ intersects the toric boundary divisors transversally. Thus, the boundary is a simple normal crossing divisor.
\end{proof}

We now apply the toric compactification results to prove the fundamental isomorphism relating the local vanishing cycles to the cohomology of the Milnor fiber.

\begin{theorem}\label{thm:local_coh_com_genric_coh}
Let $X_{\loc}\subseteq\mathbb{A}^{n+1}_{S}$ be the subscheme defined by the non-degenerate diagonal equation
\[
\sum_{i=0}^{n}a_{i}x_{i}^{k_{i}}+u\pi^{v}=0
\]
over the henselian discrete valuation ring $S=\Spec R$. Then, there is a canonical isomorphism
\begin{equation*}
H_{\{0\}}^{n}(X_{\overline{s}},R\Psi(\Qlbar)) \cong H^{n}_{\csup}(X_{\loc,\overline{\eta}},\Qlbar).
\end{equation*}
\end{theorem}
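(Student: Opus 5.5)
We compactify $X_{\loc}$ with the toric construction of Proposition \ref{Prop:non_deg_SNCD_cptify}, and then compare the cohomology of the compactification with the cohomology of $R\Psi$ near the singular point. Let $\Sigma$ be a smooth refinement of the dual fan of $\Delta_{\infty}(f)$, where $f=\sum a_i x_i^{k_i}+u\pi^v$, and let $\overline{X}$ be the closure of $X_{\loc}$ in $X_{\Sigma,S}$. The polynomial $f$ is diagonal with $k_i$ prime to $p$, so by Remark \ref{Rmk:example_non_deg_poly} the leading forms $f_\gamma$ are non-degenerate on both fibres. This holds because the faces not containing the origin only involve the $a_ix_i^{k_i}$ terms, whose coefficients are units. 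Proposition \ref{Prop:non_deg_SNCD_cptify} then applies over all of $S$: the boundary $D=\overline{X}\setminus X_{\loc}$ is a relative simple normal crossing divisor over $S$, and $\overline{X}$ is smooth over $S$ along $D$. The map $\overline{X}\to S$ is proper. The special fibre $X_{\loc,s}=\{\sum a_ix_i^{k_i}=0\}$ is smooth away from the origin, because the partial derivatives $k_ia_ix_i^{k_i-1}$ vanish simultaneously only at $0$. Hence $\overline{X}\to S$ is smooth outside the single point $0\in X_{\overline{s}}$.

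Write $j:X_{\loc}\hookrightarrow \overline{X}$ and consider $R\Psi(j_!\Qlbar)$ on $\overline{X}_{\overline{s}}$. Proper base change gives
\[
H^n(\overline{X}_{\overline{s}},R\Psi(j_!\Qlbar))\cong H^n(\overline{X}_{\overline\eta},j_!\Qlbar)=H^n_{\csup}(X_{\loc,\overline\eta},\Qlbar).
\]
Next, $R\Psi$ commutes with $j_!$ near $D$. This is the step where the relative SNCD hypothesis enters, through the standard fact that for a pair smooth over $S$ with a relative normal crossing divisor we have $R\Psi(j_!\Qlbar)=j_{s!}\Qlbar$ and $R\Phi(j_!\Qlbar)=0$ along $D$. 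Away from $D\cup\{0\}$ the map $\overline{X}\to S$ is smooth, so $R\Phi=0$ there as well. The vanishing cycle complex of $j_!\Qlbar$ is therefore supported at $0$. The triangle $j_{s!}\Qlbar\to R\Psi(j_!\Qlbar)\to R\Phi\to$ then gives a long exact sequence
\[
H^i_{\csup}(X_{\loc,\overline s},\Qlbar)\to H^i_{\csup}(X_{\loc,\overline\eta},\Qlbar)\to R^i\Phi_0\to\cdots .
\]

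The special fibre $X_{\loc,\overline s}$ is a cone: it is stable under the $\mathbb{G}_{\mathrm m}$-action with weights $K/k_i$, where $K=\prod k_i$, and this action contracts it to $0$. By the standard contraction argument, $H^*_{\csup}(X_{\loc,\overline s})$ is identified with the compactly supported cohomology of $\mathbb{A}^1$ times the quotient link, and this identification should yield $H^i_{\csup}(X_{\loc,\overline s})\cong H^{i+1}_{\csup}(X_{\loc,\overline s}\setminus 0)$ modulo the $\mathbb{A}^1$ shift. The cleaner route is local cohomology. Excision and the localisation triangle for $\{0\}\hookrightarrow \overline{X}_{\overline s}$ give
\[
H^i_{\{0\}}(\overline{X}_{\overline s},R\Psi(j_!\Qlbar))\to H^i(\overline{X}_{\overline s},R\Psi(j_!\Qlbar))\to H^i(\overline{X}_{\overline s}\setminus 0,R\Psi(j_!\Qlbar))\to .
\]
Near $0$ we have $R\Psi(j_!\Qlbar)=R\Psi(\Qlbar)$, and local cohomology at $0$ only sees a neighbourhood of $0$, so the left term is the required $H^n_{\{0\}}(X_{\overline s},R\Psi(\Qlbar))$. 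On $\overline X_{\overline s}\setminus 0$ the complex $R\Psi(j_!\Qlbar)$ equals $j_{s!}\Qlbar$, so the right term is $H^i_{\csup}(X_{\loc,\overline s}\setminus 0)$. It therefore suffices to prove $H^i_{\csup}(X_{\loc,\overline s}\setminus 0,\Qlbar)=0$ for $i=n-1,n$. This vanishing is the main obstacle.

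To obtain it, we use that $U=X_{\loc,\overline s}\setminus 0$ is a $\mathbb{G}_{\mathrm m}$-bundle (up to finite stabilisers prime to $\ell$) over the weighted projective hypersurface $Y=\{\sum a_ix_i^{k_i}=0\}$, which is proper of dimension $n-1$. The quasi-finite action has $\ell$-invertible stabilisers, so we may work with rational coefficients and treat $U\to Y$ as a $\mathbb{G}_{\mathrm m}$-torsor. The Gysin sequence
\[
H^{i-2}(Y)(-1)\xrightarrow{c_1}H^i(Y)\to H^i_{\csup}(U)\to\cdots
\]
expresses $H^i_{\csup}(U)$ through cup product with the hyperplane class $c_1$. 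By hard Lefschetz for the quasi-smooth $Y$, or more simply because the cohomology of $Y$ outside the middle degree $n-1$ is that of weighted projective space, the map $c_1$ is an isomorphism in the relevant range. This should give the vanishing for $i\neq n,\,n+1$, up to an index shift that has to be checked carefully. If the Gysin bookkeeping leaves a middle-degree term, we will instead compare dimensions directly. The Milnor number $\prod(k_i-1)$, obtained by combining Theorem \ref{thm:intro_main}(1) with the Thom--Sebastiani computation of Section \ref{sec:classical_PL}, should equal $\dim H^n_{\csup}(X_{\loc,\overline\eta})$, which can be computed for the Fermat-type affine hypersurface. Injectivity of the comparison map, together with this dimension equality, would then give the isomorphism.
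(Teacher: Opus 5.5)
Your overall architecture is the paper's: a toric compactification with SNCD boundary, proper base change, and the compatibility of $R\Psi$ with $j_!$ along $D$ (the paper's use of \cite[Exp.~XIII, 2.1.9]{SGA7_II}), followed by the cone structure of $X_{\loc,\overline{s}}$. The localisation step, however, contains a genuine error.

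\textbf{The misidentified third term.} On $V:=\overline{X}_{\overline{s}}\setminus\{0\}$ you correctly get $R\Psi(j_!\Qlbar)|_V=j'_!\Qlbar$, where $j'\colon U=X_{\loc,\overline{s}}\setminus\{0\}\hookrightarrow V$. But $V$ is not proper. So $H^i(V,j'_!\Qlbar)$ is cohomology of $U$ with compact supports towards $D$ only, not towards the vertex, and it is not $H^i_{\csup}(U)$.

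\textbf{The vanishing you aim for is false.} For $n=1$ and $x_0^2+x_1^2$, $U$ is two copies of $\mathbb{G}_{\mathrm{m}}$, so $H^1_{\csup}(U)\cong\Qlbar^2\neq 0$. In general, your own Gysin sequence $H^{n-3}(Y)(-1)\xrightarrow{c_1}H^{n-1}(Y)\to H^n_{\csup}(U)\to\cdots$ shows that $H^n_{\csup}(U)$ contains the cokernel of $c_1$. That cokernel is the primitive middle cohomology of $Y$; for $x_0^3+x_1^3+x_2^3$ it is $H^1$ of an elliptic curve. No Lefschetz-type argument removes this term.

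\textbf{The fallback does not work.} The Milnor-number input you take from Theorem~\ref{thm:intro_main}(1) is circular. In the paper, that statement and the dimension count in Theorem~\ref{thm:local_diagonal_main} are both deduced from the very isomorphism you are proving. Moreover, the injectivity you want would still require the degree $n-1$ vanishing of the third term.

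\textbf{What is actually needed.} You need $R\Gamma(V,j'_!\Qlbar)=0$. By excision, this complex is the cone of the forget-supports map $R\Gamma_{\{0\}}(C,R\Psi)\to R\Gamma_{\csup}(C,R\Psi)$ for the cone $C=X_{\loc,\overline{s}}$. It depends only on $R\Psi|_{C\setminus\{0\}}=\Qlbar$, so it is independent of the chosen compactification. Its vanishing is precisely the statement $H^*_{\{0\}}\cong H^*_{\csup}$ that the paper takes from \cite[Exp.~XV, 2.1.3]{SGA7_II}.

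You can prove this vanishing with your own fibration idea, applied to the correct object. Take the weighted projective closure $\overline{C}\subseteq\mathbb{P}(K/k_0,\dots,K/k_n,1)$. The projection $\overline{C}\setminus\{0\}\to Y$ has fibres $\mathbb{A}^1$ (up to finite stabilisers of order prime to $p$), and each fibre meets $U$ in $\mathbb{G}_{\mathrm{m}}$. Since $R\Gamma(\mathbb{A}^1,j_!\Qlbar)=0$ for $j\colon\mathbb{G}_{\mathrm{m}}\hookrightarrow\mathbb{A}^1$, the third term vanishes. With this replacement, your localisation sequence gives $H^i_{\{0\}}(X_{\overline{s}},R\Psi(\Qlbar))\cong H^i_{\csup}(X_{\loc,\overline{\eta}},\Qlbar)$ in every degree, and the argument becomes the paper's.
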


\begin{proof}
The special fiber $X_{\overline{s}}$ is defined by the weighted homogeneous equation $\sum a_{i}x_{i}^{k_{i}} = 0$. This variety admits a natural action of the multiplicative group $\mathbb{G}_{\mathrm{m}}$ defined by the weights $w_{i} = 1/k_{i}$.
Specifically, for $t \in \mathbb{G}_{\mathrm{m}}$, the action $t \cdot (x_{0}, \ldots, x_{n}) = (t^{w_{0}}x_{0}, \ldots, t^{w_{n}}x_{n})$ preserves $X_{\overline{s}}$ and contracts the entire space to the origin $0$. According to \cite[Exp. XV, Lemma 2.1.3]{SGA7_II}, since $X_{\overline{s}}$ is a cone, the cohomology with support at the vertex and the cohomology with compact support are canonically isomorphic:
\begin{equation*}
H_{\{0\}}^{n}(X_{\overline{s}}, R\Psi(\Qlbar)) \cong H_{\csup}^{n}(X_{\overline{s}}, R\Psi(\Qlbar)).
\end{equation*}

From Proposition \ref{Prop:non_deg_SNCD_cptify}, the local model $X_{\loc}$ admits a good compactification $\overline{X_{\loc}}$ such that the boundary $D = \overline{X_{\loc}} \setminus X_{\loc}$ is a simple normal crossing divisor. Applying Lemma \ref{prop_compactification_proper_base_change_SGA7_2}, we obtain
\[
H^{n}_{\csup}(X_{\overline{s}},R\Psi(\Qlbar))\cong H^{n}_{\csup}(X_{\loc,\overline{\eta}},\Qlbar).
\]
\end{proof}

\begin{remark}
Theorem \ref{thm:local_coh_com_genric_coh} also holds for $X_{\mathrm{loc}}$ defined by a non-degenerate weighted homogeneous polynomial such as ADE type polynomials described in Remark \ref{Rmk:example_non_deg_poly}.
\end{remark}

\begin{lemma}[{\cite[Prop. 2.1.9, Exp. XIII]{SGA7_II}}]\label{prop_compactification_proper_base_change_SGA7_2}
Suppose that $f:X\rightarrow S$ admits a factorization $f= f_{1}\circ k$,
\[
\begin{tikzcd}
X \arrow[r, "k", hook] & X_{1} \arrow[r, "f_{1}"] & S
\end{tikzcd},
\]
with $f_{1}$ proper and $k$ the inclusion into $X_{1}$ of the complement of a relative normal crossing divisor $D$. Let $K$ be a complex of sheaves on $X$. If, in a neighborhood $U$ of $D$, the cohomology sheaves $\mathcal{H}^{i}(K)$ are locally constant on $U\setminus (D\cup X_{0})$ and moderately ramified along the normal crossing divisor $D\cup X_{0}$, then the canonical base change morphisms for $K$ is an isomorphism
\[
Rf_{!}R\Psi(K)\xrightarrow{\sim}Rf_{!}(K).
\]

In particular, applying this to the constant sheaf $K = \Qlbar$, whose cohomology sheaves trivially satisfy the locally constant and moderately ramified conditions, we obtain an isomorphism
\begin{align*}
H_{\mathrm{c}}^{i}(X_{\overline{s}},R\Psi(\Qlbar))\xrightarrow{\sim} H_{\mathrm{c}}^{i}(X_{\overline{\eta}},\Qlbar).
\end{align*}
\end{lemma}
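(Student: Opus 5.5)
The statement is Lemma \ref{prop_compactification_proper_base_change_SGA7_2}, which is quoted from \cite[Prop. 2.1.9, Exp. XIII]{SGA7_II}. My plan is to recover it from the proper base change theorem together with tame (moderate) ramification along normal crossings. The main object is the canonical base change morphism $Rf_{!}R\Psi(K)\to Rf_!(K)_{\overline{\eta}}$ (computed on the special fiber). I first factor both sides through the compactification. Since $f_1$ is proper, proper base change for nearby cycles gives $R\Psi_{f}(Rf_{1*}L)\cong Rf_{1*}R\Psi_{f_1}(L)$ for any complex $L$ on $X_1$. I apply this to $L=k_!K$. Because $Rf_!=Rf_{1*}\circ k_!$, the statement reduces to showing that the natural map
\[
k_{\overline{s}!}R\Psi_f(K)\longrightarrow R\Psi_{f_1}(k_!K)
\]
on $X_{1,\overline{s}}$ is an isomorphism. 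This is a local statement, and it suffices to check it on stalks.

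Away from $D$ the map $k$ is an open immersion, and nearby cycles commute with restriction to opens, so the map is an isomorphism there. The whole content is therefore at points $x\in D_{\overline{s}}$. At such a point the left side has zero stalk, so I must show $R\Psi_{f_1}(k_!K)_x=0$. By the definition of nearby cycles, this stalk is the cohomology of $k_!K$ on the geometric generic fiber of the strict henselization $X_{1,(x)}\times_{S}\overline{\eta}$. By dévissage on the cohomology sheaves $\mathcal{H}^i(K)$, it is enough to treat a single lisse sheaf $\mathcal{F}$ on $U\setminus(D\cup X_0)$ that is tamely ramified along the normal crossing divisor $D\cup X_0$.

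For the local computation I choose coordinates at $x$ in which $D\cup X_0$ is given by $t_1\cdots t_r=0$, with $D$ defined by $t_1\cdots t_a$ for some $a\ge 1$. Tameness means $\mathcal{F}$ restricted to the strictly henselian complement is given by a representation of the tame fundamental group $\prod_{i}\widehat{\mathbb{Z}}'(1)$. I then use Abhyankar's lemma, in the form of the SGA7 XIII local acyclicity computations (Exp. XIII, 2.1.5--2.1.8). This shows that extension by zero across a tame component of $D$ in the relative direction has vanishing cohomology on the Milnor tube. In the simplest case $a=1$, with $D$ a smooth relative divisor over $S$, one reduces to a Künneth product with the punctured disk, and $R\Gamma_{\csup}$-type vanishing of $j_!$ of a tame local system on the strict henselian disk along $D$ gives zero. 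The general case follows from the same product decomposition, applied iteratively in the coordinates.

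I expect this local vanishing at the crossing points of $D$ with $X_0$ to be the main obstacle. It is there that moderate ramification is essential, since wild ramification would break the Künneth/Abhyankar reduction. For the final assertion, I take $K=\Qlbar$, which is constant and therefore trivially tame. Applying $R\Gamma$ on the special fiber gives $H^i_{\csup}(X_{\overline{s}},R\Psi(\Qlbar))\cong H^i_{\csup}(X_{\overline{\eta}},\Qlbar)$.
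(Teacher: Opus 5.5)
Your proposal is correct and is essentially Deligne's proof of \cite[Prop.~2.1.9, Exp.~XIII]{SGA7_II}; the paper only quotes this result and gives no proof of its own. You use proper base change for $f_{1}$ to reduce to the vanishing of $R\Psi(k_{!}K)$ at points of $D_{\overline{s}}$, and that local statement follows from Abhyankar's lemma: after a tame base change $R\subseteq R[\pi^{1/m}]$, which does not change nearby cycles, and Kummer covers $t_{i}=t_{i}'^{m}$, the sheaf becomes constant, leaving the constant-coefficient case in which every stratum $D_{I}$ is smooth over $S$. One point you leave implicit is that this trivialization needs finite monodromy of order prime to $\ell$, since a $\Qlbar$-representation of $\prod\widehat{\mathbb{Z}}'(1)$ can have infinite unipotent image, so one works with torsion coefficients, filters by d\'evissage to reduce to that case, and passes to the limit; this is harmless for the constant sheaf used in the paper.
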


\subsection{The Case of Diagonal Type Singularity}

We now focus on the local geometry of diagonal type singularities. Let $R$ be a henselian discrete valuation ring of characteristic $0$ with residue characteristic $p\geq 0$ and $S = \Spec R$.
Consider the subscheme $X \subseteq \mathbb{A}^{n+1}_{S}$ defined by
\begin{equation}\label{eq:local_equation}
\sum_{i=0}^{n} a_{i}x_{i}^{k_{i}} + u\pi^{v}= 0,    
\end{equation}
where $a_{i},u\in R^{\times}$, $k_{i}$ are prime to residue characteristic of $R$, and $\pi$ is a uniformizer of $R$. Note that the generic fiber $X_\eta$ is smooth, while the special fiber $X_s$ has an isolated singularity at the origin.

\begin{theorem}\label{thm:local_diagonal_main}
Let $X$ be defined as in \eqref{eq:local_equation}. Let $\ell$ be a prime such that $\ell\neq p$.
\begin{enumerate}
    \item The action of inertia group $I$ on $H_{\{0\}}^{n}(X_{\overline{s}},R\Psi(\Qlbar))$ is tame.
    \item The local cohomology $H_{\{0\}}^{n}(X_{\overline{s}},R\Psi(\Qlbar))$ has dimension $\prod_{i=0}^{n}(k_{i}-1)$. 
    \item Let $\sigma_{\mathrm{top}}\in I_t$ be a chosen topological generator of tame inertia group. $H_{\{0\}}^{n}(X_{\overline{s}},R\Psi(\Qlbar))$ is generated by vanishing cycles $\{e_{\boldsymbol{\zeta}}\mid \boldsymbol{\zeta}=(\zeta_{0},\ldots,\zeta_{n}), \zeta_{i}\in\mu_{k_{i}}\setminus\{1\}\}$ consisting of eigenvectors of $\sigma_{\mathrm{top}}$:
    \[
        \sigma_{\mathrm{top}}(e_{\boldsymbol\zeta}) = \left({\textstyle\prod_{i=0}^{n}}\zeta_{i}\right)^{v}\cdot e_{\boldsymbol{\zeta}}.
    \]
    \item The pairing $\langle\ ,\ \rangle$ on $H^{n}_{\{0\}}(X_{\overline{s}},R\Psi(\Qlbar))$ is given by
    \[
    \langle e_{\boldsymbol{\zeta}}, e_{\boldsymbol{\xi}}\rangle = 
    \begin{cases} 
    \displaystyle (-1)^{n(n+1)/2} \left( \frac{\prod_{i=0}^n \zeta_i - 1}{\prod_{i=0}^n (\zeta_i - 1)} \right) \prod_{i=0}^{n} k_i & \text{if } \boldsymbol{\xi} = \boldsymbol{\zeta}^{-1}, \\
    0 & \text{otherwise}.
    \end{cases}
    \]
    The self pairing $\langle\ ,\ \rangle$ here is defined by
    \[
    \langle a, b\rangle := -(-1)^{n(n-1)/2}\langle j(a),b\rangle
    \]
    for all $a,b\in H^{n}_{\{0\}}(X_{\overline{s}},R\Psi(\Qlbar))$, where 
    \[
    j:H^{n}_{\{0\}}(X_{\overline{s}},R\Psi(\Qlbar))\subseteq H^{n}_{\csup}(X_{\overline{s}},R\Psi(\Qlbar))\rightarrow H^{n}(X_{\overline{s}},R\Psi(\Qlbar))
    \] 
    is the natural forgetting support map and $\langle\ ,\ \rangle$ on the right hand side is the Poincaré pairing.
\end{enumerate}
\end{theorem}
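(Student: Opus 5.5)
Via Theorem \ref{thm:local_coh_com_genric_coh}, all four assertions become statements about $H^{n}_{\csup}(X_{\loc,\overline{\eta}},\Qlbar)$ for the smooth affine hypersurface $X_{\eta}:\sum a_i x_i^{k_i}=-u\pi^v$. The plan is to make the Galois action on this group explicit through a Kummer/Fermat-type description, and then import the topological computations of Section \ref{sec:classical_PL} through a comparison (specialization) argument.

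\textbf{Tameness.} Choose a finite tame extension $\eta'=\eta(\varpi)$ with $\varpi^{K}=-u\pi^{v}$ and adjoin the roots $a_i^{-1/k_i}$. This extension is unramified except for the $K$-th root, and $K=\prod k_i$ is prime to $p$. Over $\eta'$, the substitution $x_i=a_i^{-1/k_i}\varpi^{K/k_i}y_i$ identifies $X_{\eta'}$ with the constant Fermat-type affine variety $Y:\sum y_i^{k_i}=1$, which is defined over $\mathbb{Z}[1/K]$. By Lemma \ref{prop_compactification_proper_base_change_SGA7_2}, applied to the SNCD toric compactification of $Y$ from Proposition \ref{Prop:non_deg_SNCD_cptify} over the base $R'$ of $\eta'$ (now with smooth special fiber), $H^n_{\csup}(Y_{\overline{\eta}})$ is unramified over $\eta'$. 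Hence the wild inertia acts trivially and $I$ acts through $\Gal(\eta'/\eta)\cap I\cong\mu_{K}$, which proves (1).

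\textbf{The action of $\sigma_{\mathrm{top}}$.} A lift of $\sigma_{\mathrm{top}}$ sends $\varpi\mapsto\zeta_K^{v}\varpi$ (up to the chosen compatible system). Transported to $Y$, it is the automorphism $y_i\mapsto \zeta_{k_i}^{v}y_i$, i.e.\ the element $(v,\dots,v)$ of $G=\prod\mu_{k_i}$ acting on $Y$. The cohomology $H^n_{\csup}(Y)$ decomposes into $G$-isotypic lines indexed by characters $\chi=(\chi_i)$ with every $\chi_i\neq 1$; this is the standard Fermat computation, whose Euler characteristic matches $\prod(k_i-1)$ and which, via Artin comparison, is the Thom--Sebastiani decomposition. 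This gives (2), and shows that $(v,\dots,v)$ acts on the line of $\boldsymbol{\zeta}$ by $(\prod\zeta_i)^v$. To define $e_{\boldsymbol{\zeta}}$, I would embed $\Qlbar\simeq\mathbb{C}$ and use the comparison isomorphism with the Milnor fiber $F$ of $\sum z_i^{k_i}$. The monodromy $T$ is exactly the deck transformation $(1,\dots,1)\in G$ (one turn of $\varepsilon$), so the $e_{\boldsymbol{\zeta}}$ of Subsection \ref{subsec:diag_singularity_cl}, which are joint $G$-eigenvectors by Proposition \ref{prop:fin_cyc_T_eigenvector} applied factorwise, transfer to $Y_{\overline{\eta}}$. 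Checking that the $G$-character of $e_{\boldsymbol{\zeta}}$ is $\boldsymbol{\zeta}$ (not its inverse) fixes the normalization of $\sigma_{\mathrm{top}}$ and gives (3).

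\textbf{Pairing.} The forgetting-support map $j$ and the Poincaré pairing are compatible with the Artin comparison $H^n_{\csup}(Y(\mathbb{C}))\to H^n(Y(\mathbb{C}))$. Moreover, $Y(\mathbb{C})$ retracts onto the Milnor fiber $F$ compatibly with these maps, since $\sum z_i^{k_i}$ is weighted homogeneous, so the affine fiber is diffeomorphic to the interior of $F$ and compactly supported classes correspond. Proposition \ref{prop:cl_diag_pairing} then gives (4) verbatim.

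The main obstacle is making these identifications rigorous and canonical. One must match the $\ell$-adic local cohomology $H^n_{\{0\}}$, the pairing on it, and the action of $\sigma_{\mathrm{top}}$ with the topological $\widetilde{H}^n_{\csup}(\mathring F)$, $j$ and $T$, while controlling the sign and orientation conventions (the factor $(-1)^{n(n-1)/2}$, and which root of unity $\sigma_{\mathrm{top}}$ corresponds to). For this I would pass through the equal-characteristic-zero model $R_0=\mathbb{C}[[t]]$ and invoke SGA7 XIV comparison together with the fact that the tame quotient depends only on $v \bmod K$.
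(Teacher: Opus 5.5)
Your proposal is correct in outline, but its central step differs from the paper's.

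\textbf{Tameness.} The paper argues much as you do, via a tame base change. It uses $R[\pi^{1/m}]$ with $m=\operatorname{lcm}(k_i)$ and keeps the coefficients $a_i,u$ in $Y$.

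\textbf{Action of $\sigma_{\mathrm{top}}$.} Here the two arguments differ.
\begin{itemize}
\item The paper introduces the universal family $g=-u^{-1}\sum a_ix_i^{k_i}$ over $\mathbb{G}_{\mathrm{m},B}$, with $B=\operatorname{Spec}\mathbb{Z}[a_i^{\pm1},u^{\pm1}]$. It makes $R^ng_!\Qlbar$ lisse through a relative good compactification. It then notes that $X_\eta$ is the pullback along $t=\pi^v$, so that $\sigma_{\mathrm{top}}=T^v$ on tame fundamental groups. Finally it specializes this single lisse sheaf to a complex point and reads off the eigenvectors and the pairing from Section 2.
\item You instead realize $X_\eta$ as a Kummer twist of the fixed Fermat variety $Y:\sum y_i^{k_i}=1$ over $\mathbb{Z}[1/K]$. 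Inertia then acts through the geometric automorphism $(v,\dots,v)\in G=\prod\mu_{k_i}$. The only topological input you need is that the Milnor monodromy acts on cohomology as the deck transformation $(1,\dots,1)$.
\end{itemize}

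\textbf{What each route buys.}
\begin{itemize}
\item Your route gives tameness, finite order of the inertia image, the dimension count (via the $G$-isotypic decomposition), and the eigenvalues in one stroke.
\item It needs only $G$-equivariant smooth--proper specialization of one scheme over $\mathbb{Z}[1/K,\mu_K]$. It avoids both a relative compactification over $\mathbb{G}_{\mathrm{m},B}$ and the claim about $h_*$ on tame $\pi_1$.
\item It is the same twist the paper itself uses later in the Frobenius subsection. There the paper writes $y_i\mapsto\zeta^{-vm/k_i}y_i$, which confirms that the direction convention you flag really must be fixed.
\item The paper's route carries the monodromy operator itself, rather than its identification with a deck transformation, through one family across characteristics. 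It also avoids adjoining roots of $a_i$ and $u$.
\end{itemize}

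\textbf{Two small corrections.}
\begin{itemize}
\item To transport $\sigma\in I\setminus I_{\eta'}$, you need $H^n_{\mathrm{c}}(Y_{\overline\eta})$ to be unramified over $\eta$, not merely over $\eta'$. This holds because $Y$ is defined over $\mathbb{Z}[1/K]$ with good reduction at $p$.
\item The inertia image is cyclic of order $K/\gcd(K,v)$, so it is a subgroup of $\mu_K$ rather than $\mu_K$ itself.
\end{itemize}

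Both arguments leave the identification of the arithmetic $j$ and the duality pairing on $H^n_{\{0\}}$ with their topological counterparts at the same informal level. You are right that this comparison carries the real content of (4).
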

\begin{remark}
In the last part of Theorem \ref{thm:local_diagonal_main}, when we define the self-pairing $\langle \ , \ \rangle$ on $H^{n}_{\{0\}}(X_{\overline{s}},R\Psi(\overline{\mathbb{Q}}_\ell))$, the sign $-(-1)^{n(n-1)/2}$ is adopted to ensure consistency with the pairing defined in \eqref{pairing_on_H_{0}}.
\end{remark}
\begin{proof}
By theorem \ref{thm:local_coh_com_genric_coh}, the local cohomology $H_{\{0\}}^{n}(X_{\overline{s}},R\Psi_{\eta}(\Qlbar))$ is isomorphic to $H^{n}_{\csup}(X_{\overline{\eta}},\Qlbar)$. First, we establish the tameness. Let $m = \operatorname{lcm}(k_{0}, \ldots, k_{n})$. Consider the finite extension of the discrete valuation ring $R' = R[\pi^{1/m}]$ with fraction field $K'$. The ramification index is $m$, which is prime to $p$ by assumption, so the extension is tame.
After the base change $R \to R'$, we can introduce new coordinates $y_{i}$ such that $x_{i} = (\pi^{1/m})^{vm/k_{i}} y_{i}$. The defining equation \eqref{eq:local_equation} transforms as follows:
\[
\sum_{i=0}^{n} a_{i}x_{i}^{k_{i}} + u\pi^{v} = \pi^{v} \left( \sum_{i=0}^{n} a_{i}y_{i}^{k_{i}} + u \right) = 0.
\]
Over $K'$, the generic fiber $X_{\overline{\eta}}$ is isomorphic to the hypersurface $Y$ defined by $\sum a_{i}y_{i}^{k_{i}} + u = 0$. This equation has coefficients in $R'$ and defines a smooth hypersurface with good reduction over $R'$. Moreover, since $Y$ is defined by a non-degenerate polynomial, $Y$ admits a good compactification $\overline{Y}$ by Proposition \ref{Prop:non_deg_SNCD_cptify}. By the proper base change, the Mayer-Vietoris argument shows that the cospecialization map 
\[
H^{i}_{\csup}(Y_{\overline{s}},\Qlbar)\xrightarrow{\sim} H^{i}_{\csup}(Y_{\overline{\eta}},\Qlbar)
\]
is an isomorphism. Consequently, the inertia subgroup $I_{K'} \subseteq I_{K}$ acts trivially on the cohomology $H^{n}_{\csup}(Y_{\overline{\eta}},\Qlbar)\cong H^{n}_{\csup}(X_{\overline{\eta}},\Qlbar)$. Since $I_{K}/I_{K'}$ is finite of order $m$ which is prime to $p$, the action of the full inertia group $I_{K}$ on $H^{n}_{\csup}(X_{\overline{\eta}},\Qlbar)$ is tame.

Since the action is tame, it factors through the tame quotient $I_t = I/P \cong \widehat{\mathbb{Z}}(1)$. Let $\sigma_{\mathrm{top}}$ be a topological generator of $I_t$. We relate the action of $\sigma_{\mathrm{top}}$ to the monodromy of the standard family using the properties of compactly supported cohomology.

Define the universal morphism corresponding to the diagonal polynomial over the base scheme $B = \Spec(\mathbb{Z}[a_0^{\pm 1}, \ldots, a_n^{\pm 1}, u^{\pm 1}])$:
\[
g: \mathbb{A}^{n+1}_{B} \to \mathbb{A}^{1}_{B}, \quad g(x) = -\frac{1}{u} \sum_{i=0}^{n} a_{i}x_{i}^{k_{i}}.
\]
This morphism is smooth over $\mathbb{G}_{m, B}$. We consider the sheaf of compactly supported cohomology
\[
\mathcal{F} := R^{n}g_{!}\Qlbar|_{\mathbb{G}_{m, B}}.
\]
Note that the universal diagonal polynomial defining $g$ is non-degenerate. By the argument analogous to Proposition \ref{Prop:non_deg_SNCD_cptify} applied relatively over $B$, the family $g$ admits a relative good compactification $\overline{g}: \overline{X} \to \mathbb{G}_{m, B}$. Therefore, $\overline{g}$ is proper and smooth, and the boundary divisor $\overline{X} \setminus \mathbb{A}^{n+1}_{B}$ is a relative normal crossings divisor over $\mathbb{G}_{m, B}$. Consequently, $\mathcal{F}$ is a lisse sheaf on $\mathbb{G}_{m, B}$, and by the smooth base change theorem, its stalk at a geometric point $\overline{t}$ is canonically isomorphic to $H^{n}_{\csup}(g^{-1}(\overline{t}), \Qlbar)$.

The construction of our specific local model $X$ corresponds to the base change of this family via the map $h: S \to \mathbb{A}^{1}_{B}$ defined by the coordinate $t = \pi^{v}$ and the choice of coefficients $a_i, u \in R$.
\[
\begin{tikzcd}
X_{\eta} \arrow[r] \arrow[d] & g^{-1}(\mathbb{G}_{m, B}) \arrow[d, "g"] \\
\eta \arrow[r, "t=\pi^v"] & \mathbb{G}_{m, B}
\end{tikzcd}
\]
By the proper base change theorem, the compactly supported cohomology of the generic fiber $X_{\overline{\eta}}$ is the stalk of the pulled-back sheaf $h^{*}\mathcal{F}$:
\[
H^{n}_{\csup}(X_{\overline{\eta}}, \Qlbar) \cong (h^{*}\mathcal{F})_{\overline{\eta}}.
\]
The action of the inertia group $\pi_{1}(\eta)$ on this stalk is given by composing the homomorphism of fundamental groups $h_{*}: \pi_{1}(\eta) \to \pi_{1}(\mathbb{G}_{m, B})$ with the monodromy representation of $\mathcal{F}$. The map $h$ sends the uniformizer $\pi$ to $\pi^{v}$, so it induces the multiplication by $v$ on the tame fundamental groups. Consequently, if $T$ denotes the standard monodromy generator for the universal family $g$ (generator of $\pi_{1}(\mathbb{G}_{m, B})$ projected to the factor corresponding to $t$), we have the operator relation:
\[
\sigma_{\mathrm{top}} = T^{v}.
\]

It remains to determine the explicit eigenvalues and pairing of the operator $T$ on the universal sheaf $\mathcal{F}$. Since $\mathcal{F}$ is a lisse sheaf defined over $B$ (which is smooth over $\mathbb{Z}$), the isomorphism class of the monodromy representation is constant on connected components of the base.
We can therefore specialize to a complex point. Consider the map $\Spec \mathbb{C} \to B$ choosing explicit values (e.g., $a_i=1, u=1$). This allows us to compare the algebraic monodromy with the topological monodromy.

Let $X_{\mathbb{C}}$ be the fiber over a complex point $t \in \mathbb{C}^{\times}$. By the comparison theorem between étale cohomology and singular cohomology, we have
\[
H^{n}_{\csup}(g^{-1}(\bar{t}), \Qlbar) \cong H^{n}_{\csup}(g^{-1}(t)(\mathbb{C}), \mathbb{Q}) \otimes \Qlbar.
\]
This isomorphism respects the intersection pairing and the monodromy action.
The topological monodromy $T$ acting on $H^{n}_{\csup}(( \sum z_i^{k_i} = t), \mathbb{C})$ has been computed explicitly in Section \ref{subsec:diag_singularity_cl} using the Milnor fibration and Thom--Sebastiani theorem. The eigenvectors $e_{\boldsymbol{\zeta}}$ satisfy:
\[
T(e_{\boldsymbol{\zeta}}) = \left({\textstyle \prod_{i=0}^{n} \zeta_{i}} \right) e_{\boldsymbol{\zeta}}.
\]
Since the algebraic monodromy over $S$ is a specialization of the same universal monodromy representation, the same eigenvalue formula holds for $X_{\overline{\eta}}$.
Substituting the relation $\sigma_{\mathrm{top}} = T^{v}$, we obtain:
\[
\sigma_{\mathrm{top}}(e_{\boldsymbol{\zeta}}) = T^{v}(e_{\boldsymbol{\zeta}}) = \left({\textstyle \prod_{i=0}^{n} \zeta_{i} }\right)^{v} e_{\boldsymbol{\zeta}}.
\]
Similarly, the pairing formula relies only on the topological intersection numbers of the vanishing cycles in the universal family, which are preserved under specialization. Thus, the assertion about the pairing follows directly from Proposition \ref{prop:cl_diag_pairing}.
\end{proof}

\begin{definition}[Dual Basis]
Let $\{e_{\boldsymbol{\zeta}}\}_{\boldsymbol{\zeta}}$ be the eigenbasis of $H_{\{0\}}^{n}(X_{\overline{s}}, R\Psi(\Qlbar))$ constructed in Theorem \ref{thm:local_diagonal_main}.
Under the non-degenerate pairing \eqref{eq:local_duality_pairing}, we define the dual basis $\{e_{\boldsymbol{\zeta}}^{*}\}_{\boldsymbol{\zeta}}$ of $(R^n\Phi(\Qlbar))_{0}$ such that
\[
\langle e_{\boldsymbol{\zeta}}^{*}, e_{\boldsymbol{\xi}} \rangle = \delta_{\boldsymbol{\zeta}, \boldsymbol{\xi}} = 
\begin{cases} 
1 & \text{if } \boldsymbol{\zeta} = \boldsymbol{\xi}, \\
0 & \text{if } \boldsymbol{\zeta} \neq \boldsymbol{\xi}.
\end{cases}
\]
\end{definition}

\begin{proposition}\label{prop:local_var_formula_sigma_top}
Let $\sigma_{\mathrm{top}} \in I$ be an element whose image in the tame quotient $I_t$ is a topological generator. The local variation morphism
\[
\Var(\sigma_{\mathrm{top}}): (R^n\Phi(\Qlbar))_{0} \longrightarrow H_{\{0\}}^{n}(X_{\overline{s}},R\Psi(\Qlbar))
\]
acts on the dual basis elements $e_{\boldsymbol{\zeta}}^{*}$ as follows:
\begin{equation*}
\Var(\sigma_{\mathrm{top}})(e_{\boldsymbol{\zeta}}^{*}) = C_{\boldsymbol{\zeta}} \cdot e_{\boldsymbol{\zeta}^{-1}},
\end{equation*}
where the coefficient $C_{\boldsymbol{\zeta}}$ is given by:
\[
C_{\boldsymbol{\zeta}} = \frac{1}{K} \left(\prod_{i=0}^n (\zeta_i - 1)\right) \times
\begin{cases}
\displaystyle \frac{\Lambda^{-v}-1}{1-\Lambda} & \text{if } \Lambda \neq 1, \\
\displaystyle v & \text{if } \Lambda = 1,
\end{cases}
\]
with $\Lambda = \prod_{i=0}^{n} \zeta_{i}$ and $K = \prod_{i=0}^{n} k_{i}$.
\end{proposition}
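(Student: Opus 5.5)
The plan is to reduce everything to the monodromy $T$ of the universal family $g$ from the proof of Theorem \ref{thm:local_diagonal_main}, where $\sigma_{\mathrm{top}}$ acts as $T^{v}$. Then $\Var(\sigma_{\mathrm{top}})$ is expressed through the classical variation $\Var_T$ computed in Proposition \ref{prop:diag_variation}, by means of a cocycle (geometric series) identity.

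\textbf{Step 1 (identifying $\Var_T$ with the classical variation).} As in the proof of Theorem \ref{thm:local_diagonal_main}, specialize the lisse sheaf $\mathcal{F}=R^n g_!\Qlbar|_{\mathbb{G}_{m,B}}$ to a complex point and use the comparison theorem. The local duality pairing \eqref{eq:local_duality_pairing} then corresponds to the Poincaré pairing $\widetilde{H}^n(F)\times\widetilde{H}^n_{\csup}(\mathring{F})\to\mathbb{C}$, and the canonical map corresponds to $j$. Under these identifications, the arithmetic variation $\Var(T)$, defined by $T-\id=\Var(T)\circ can$, matches the topological $\Var$. I expect this step to be the main obstacle, since it requires checking that the \cite[Exp. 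XIII]{SGA7_II} variation is compatible with specialization and with the classical conventions, including signs and Tate twists. I would argue that both are characterized by the same relations \eqref{eq_T_on_classical}, and that, because $\Var$ is bijective, $\Var$ is uniquely determined by $T$ and $j$. Proposition \ref{prop:diag_variation} then gives
\[
\Var(T)(e_{\boldsymbol{\zeta}}^{*})=\frac{\Lambda^{-1}}{K}\Bigl(\prod_{i=0}^{n}(\zeta_i-1)\Bigr)e_{\boldsymbol{\zeta}^{-1}}.
\]

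\textbf{Step 2 (variation of a power).} The action on $(R^n\Phi)_0$ is $T_\Phi=\id+can\circ\Var(T)$, and the action on local cohomology is $T_c=\id+\Var(T)\circ can$. These satisfy $T_c\circ\Var(T)=\Var(T)\circ T_\Phi$. Telescoping $T_c^{v}-\id=\sum_{m=0}^{v-1}T_c^{m}(T_c-\id)$ then gives
\[
\Var(T^{v})=\Var(T)\circ\sum_{m=0}^{v-1}T_\Phi^{m}.
\]
This is justified because $\Var(T^{v})$ is the unique map with $T_c^{v}-\id=\Var(T^{v})\circ can$, and $can$ is surjective on the local pieces. Moreover, $\Var(\sigma_{\mathrm{top}})=\Var(T^{v})$, since variation depends only on the action of the element on the tame quotient.

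\textbf{Step 3 (eigenvalues on the dual basis).} The pairing \eqref{eq:local_duality_pairing} is $I$-equivariant, so $T_\Phi$ is contragredient to $T_c$ up to the twist. The twist is harmless because $T$ lies in inertia and acts trivially on $\Qlbar(-n)$. Since $T_c e_{\boldsymbol{\xi}}=\prod\xi_i\,e_{\boldsymbol{\xi}}$, it follows that $T_\Phi e^{*}_{\boldsymbol{\zeta}}=\Lambda^{-1}e^{*}_{\boldsymbol{\zeta}}$. Hence
\[
\sum_{m=0}^{v-1}T_\Phi^{m}\,e^{*}_{\boldsymbol{\zeta}}=\frac{1-\Lambda^{-v}}{1-\Lambda^{-1}}\,e^{*}_{\boldsymbol{\zeta}}\quad(\Lambda\neq 1),\qquad v\,e^{*}_{\boldsymbol{\zeta}}\quad(\Lambda=1).
\]
Multiplying by the coefficient from Step 1 and simplifying
\[
\Lambda^{-1}\cdot\frac{1-\Lambda^{-v}}{1-\Lambda^{-1}}=\frac{1-\Lambda^{-v}}{\Lambda-1}=\frac{\Lambda^{-v}-1}{1-\Lambda}
\]
gives the stated $C_{\boldsymbol{\zeta}}$. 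In the case $\Lambda=1$, the factor $\Lambda^{-1}$ equals $1$ and the coefficient is $v$, as claimed.
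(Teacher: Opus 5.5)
Your overall plan matches the paper's: write $\sigma_{\mathrm{top}}=T^{v}$, express $\Var(T^{v})$ as a geometric sum against $\Var(T)$, and read off the coefficient from Proposition~\ref{prop:diag_variation}. Your identity $\Var(T^{v})=\Var(T)\circ\sum_{m=0}^{v-1}T_\Phi^{m}$ is also true. \textbf{But the justification you give for it fails exactly on the indices with $\Lambda=\prod_i\zeta_i=1$.}

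The map $can\colon H^{n}_{\{0\}}(X_{\overline{s}},R\Psi(\Qlbar))\to(R^{n}\Phi(\Qlbar))_{0}$ is not surjective in general. By the pairing formula in Theorem~\ref{thm:local_diagonal_main}(4) (equivalently Proposition~\ref{prop:cl_diag_pairing}), $can(e_{\boldsymbol{\xi}})$ is a multiple of $e^{*}_{\boldsymbol{\xi}^{-1}}$ carrying the factor $\prod_i\xi_i-1$. So the image of $can$ is spanned by the $e^{*}_{\boldsymbol{\zeta}}$ with $\Lambda\neq 1$. Consequently, your telescoped identity $\Var(T^{v})\circ can=\Var(T)\circ\sum_m T_\Phi^{m}\circ can$ says nothing about $\Var(T^{v})(e^{*}_{\boldsymbol{\zeta}})$ when $\Lambda=1$. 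That is precisely the second branch $C_{\boldsymbol{\zeta}}=\frac{v}{K}\prod_i(\zeta_i-1)$ of the statement.

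This is not a technicality. On that part $T$ acts as the identity and $can$ vanishes, while $\Var(T)$ is nonzero by Proposition~\ref{prop:diag_variation}. So no argument using only $T_c-\id=\Var\circ can$ and $T_\Phi-\id=can\circ\Var$ can determine $\Var$ there. A concrete case is an ordinary double point in odd relative dimension, $\boldsymbol{\zeta}=(-1,\ldots,-1)$: the vanishing cycle has self-intersection $0$, yet the variation is nonzero, which is why the global monodromy is a nontrivial transvection.

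The same objection breaks the uniqueness argument in your Step~1. On the $\Lambda=1$ eigenspaces $j=0$ and $T=\id$, so $\Var$ is not determined by $T$ and $j$ through \eqref{eq_T_on_classical}. The identification of the arithmetic $\Var(T)$ with the topological one there must come from compatibility of the variation morphism itself with specialization and the comparison theorem. The paper uses this implicitly when it sets $V=\Var(T)$ equal to the topological variation; it cannot come from a uniqueness argument.

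The missing ingredient is the composition law for variations, \cite[Exp. XIII, (1.4.3.4)]{SGA7_II}: $\Var(\tau\sigma)=\Var(\tau)+\Var(\sigma)+\Var(\tau)\circ can\circ\Var(\sigma)$. It holds for the variation maps themselves, not merely after composing with $can$, and it is what the paper uses. With $\tau=T$ and $\sigma=T^{m-1}$ the paper obtains $\Var(T^{v})=\bigl(\sum_{m=0}^{v-1}T_c^{m}\bigr)\circ\Var(T)$, then evaluates $T_c$ on $e_{\boldsymbol{\zeta}^{-1}}$, which has eigenvalue $\Lambda^{-1}$.

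Taking instead $\tau=T^{m-1}$ and $\sigma=T$ gives $\Var(T^{m})=\Var(T)+\Var(T^{m-1})\circ T_\Phi$. This yields exactly your right-hand version with no surjectivity needed. Your Step~3 then goes through unchanged: $T_\Phi$ acts on $e^{*}_{\boldsymbol{\zeta}}$ by $\Lambda^{-1}$ via equivariance of the duality pairing, and you get the same scalar as the paper.
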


\begin{proof}
We use the composition formula for the variation morphism given in \cite[Ex. XIII, (1.4.3.4)]{SGA7_II}: for any $\sigma, \tau \in I$,
\[
\Var(\tau \sigma) = \Var(\tau) + \Var(\sigma) + \Var(\tau) \circ q \circ \Var(\sigma),
\]
where $q:H^{n}_{\{0\}}(X_{\overline{s}},R\Psi(\Qlbar))\rightarrow (R^{n}\Psi(\Qlbar))_{0}\cong (R^{n}\Phi(\Qlbar))_{0}$ is the canonical map.
Recall from the proof of Theorem \ref{thm:local_diagonal_main} that the arithmetic monodromy corresponds to the $v$-th power of the topological monodromy $T$. Let $V = \Var(T)$ be the topological variation and $u = V \circ q$ be the composed endomorphism on $H_{\{0\}}^{n}$. The composition formula implies the recursive relation:
\[
\Var(T^{m}) = V + \Var(T^{m-1}) + V \circ q \circ \Var(T^{m-1}) = V + (\id + V \circ q) \circ \Var(T^{m-1}).
\]
Then, this recursion unfolds to
\[
\Var(T^v) = \left( \sum_{m=0}^{v-1} (\id + u)^m \right) \circ V.
\]
Explicitly, since $\id + u = \left.T\right|_{H^{n}_{\{0\}}}$, $\id +u$ acts on the eigenbasis $e_{\boldsymbol{\xi}}$ as multiplication by the eigenvalue $\prod\xi_{i}$ of $T$ as discussed in section \ref{subsec:diag_singularity_cl}.

We compute the action on the dual basis vector $e_{\boldsymbol{\zeta}}^{*}$.
First, applying Proposition \ref{prop:diag_variation}, the topological variation is
\[
V(e_{\boldsymbol{\zeta}}^{*}) = \frac{\Lambda^{-1}}{K} \left( \prod_{i=0}^n (\zeta_i - 1) \right) e_{\boldsymbol{\zeta}^{-1}}.
\]
Next, we apply the operator $S = \sum_{m=0}^{v-1} (\id + u)^m$. The vector $e_{\boldsymbol{\zeta}^{-1}}$ is an eigenvector of $T = \id + u$ with eigenvalue $\Lambda^{-1}$. Thus:
\[
S(e_{\boldsymbol{\zeta}^{-1}}) = \left( \sum_{m=0}^{v-1} (\Lambda^{-1})^m \right) e_{\boldsymbol{\zeta}^{-1}}.
\]
Combining these, the coefficient $C_{\boldsymbol{\zeta}}$ is
\[
C_{\boldsymbol{\zeta}} = \left( \sum_{m=0}^{v-1} (\Lambda^{-1})^m \right) \cdot \frac{\Lambda^{-1}}{K} \left( \prod_{i=0}^n (\zeta_i - 1) \right).
\]
We evaluate the geometric sum multiplied by $\Lambda^{-1}$:
\[
\Lambda^{-1} \sum_{m=0}^{v-1} (\Lambda^{-1})^m = \sum_{j=1}^{v} (\Lambda^{-1})^j.
\]

\noindent \textbf{Case 1: $\Lambda \neq 1$.}
The sum is a geometric series:
\[
\sum_{j=1}^{v} (\Lambda^{-1})^j = \frac{\Lambda^{-1}(\Lambda^{-v}-1)}{\Lambda^{-1}-1} = \frac{\Lambda^{-v}-1}{1-\Lambda}.
\]

\noindent \textbf{Case 2: $\Lambda = 1$.}
In this case, $\Lambda^{-1} = 1$. The sum becomes $\sum_{j=1}^{v} 1 = v$.
Thus,
\[
C_{\boldsymbol{\zeta}} = \frac{v}{K} \prod_{i=0}^n (\zeta_i - 1).
\]
This completes the proof.
\end{proof}

\begin{proposition}[Local Variation Formula]
For each $\boldsymbol{\zeta} = (\zeta_0, \ldots, \zeta_n)$, where $\zeta_i \in \mu_{k_i} \setminus \{1\}$ with $\prod_{i=0}^{n}\zeta_{i}\neq 1$, let $\chi_{\boldsymbol{\zeta}}: I \to \Qlbar^{\times}$ be a tame character determined by the choice of a topological generator $\sigma_{\mathrm{top}}$
\[
\chi_{\boldsymbol{\zeta}}(\sigma_{\mathrm{top}}) = \left(\textstyle\prod_{i=0}^n \zeta_i \right)^{-v}.
\]
Then, the dual basis elements $e_{\boldsymbol{\zeta}}^{*}$ are eigenvectors for the inertia action:
\[
\sigma(e_{\boldsymbol{\zeta}}^{*}) = \chi_{\boldsymbol{\zeta}}(\sigma) \cdot e_{\boldsymbol{\zeta}}^{*}, \quad \text{for all } \sigma \in I.
\]
Moreover, the variation morphism $\Var(\sigma)$ acts on the basis elements as follows:
\begin{equation*}
\Var(\sigma)(e_{\boldsymbol{\zeta}}^{*}) = C_{\boldsymbol{\zeta}}(\sigma) \cdot e_{\boldsymbol{\zeta}^{-1}},
\end{equation*}
where the scalar coefficient $C_{\boldsymbol{\zeta}}(\sigma)$ is given by:
\[
C_{\boldsymbol{\zeta}}(\sigma) = \frac{1}{K} \left(\prod_{i=0}^n (\zeta_i - 1)\right) \times
\begin{cases}
\displaystyle \frac{\chi_{\boldsymbol{\zeta}}(\sigma) - 1}{1-\Lambda} & \text{if } \Lambda \neq 1, \\
\displaystyle \ell_{p}(\sigma) & \text{if } \Lambda = 1.
\end{cases}
\]
Here, $\Lambda = \prod_{i=0}^{n} \zeta_{i}$, $K = \prod_{i=0}^{n} k_{i}$, and $\ell_p: I \to \mathbb{Z}_\ell$ is the tame logarithm homomorphism normalized by $\ell_p(\sigma_{\mathrm{top}}) = v$.
\end{proposition}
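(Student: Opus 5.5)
The plan is to extend Proposition \ref{prop:local_var_formula_sigma_top} from the single element $\sigma_{\mathrm{top}}$ to all of $I$. The argument uses three facts: tameness, the cocycle identity for $\Var$, and the equivariance of the duality pairing.

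\textbf{Step 1: reduce to powers of $\sigma_{\mathrm{top}}$.} By Theorem \ref{thm:local_diagonal_main}(1) the action of $I$ on $H^{n}_{\{0\}}$ is tame. By duality the same holds on $(R^n\Phi)_0$, and hence $\Var(\sigma)$ depends only on the image of $\sigma$ in $I_t\cong\widehat{\mathbb{Z}}(1)$. It therefore suffices to treat $\sigma=\sigma_{\mathrm{top}}^{m}$ for integers $m\geq 0$. The case of general $\sigma$ then follows by $\ell$-adic continuity, since both sides of the claimed formulas are continuous in $\sigma$. For the case $\Lambda=1$ the relevant continuous function is the tame logarithm $\ell_p$, normalized by $\ell_p(\sigma_{\mathrm{top}}^m)=mv$.

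\textbf{Step 2: eigenvector property.} By Theorem \ref{thm:local_diagonal_main}(3), $\sigma_{\mathrm{top}}e_{\boldsymbol{\xi}}=\Lambda_{\boldsymbol{\xi}}^{v}e_{\boldsymbol{\xi}}$. The pairing \eqref{eq:local_duality_pairing} is $I$-equivariant with values in $\Qlbar(-n)$, on which $I$ acts trivially. Hence
\[
\langle \sigma e^*_{\boldsymbol{\zeta}},e_{\boldsymbol{\xi}}\rangle=\langle e^*_{\boldsymbol{\zeta}},\sigma^{-1}e_{\boldsymbol{\xi}}\rangle .
\]
This shows that $\sigma_{\mathrm{top}}e^*_{\boldsymbol{\zeta}}=\Lambda^{-v}e^*_{\boldsymbol{\zeta}}=\chi_{\boldsymbol{\zeta}}(\sigma_{\mathrm{top}})e^*_{\boldsymbol{\zeta}}$. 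Multiplicativity gives the statement for powers of $\sigma_{\mathrm{top}}$, and continuity extends it to all of $I$.

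\textbf{Step 3: variation of powers.} Write $W=\Var(\sigma_{\mathrm{top}})$ and $u=W\circ q$. The composition formula \cite[Ex. XIII, (1.4.3.4)]{SGA7_II} unfolds exactly as in the proof of Proposition \ref{prop:local_var_formula_sigma_top} to give
\[
\Var(\sigma_{\mathrm{top}}^{m})=\Big(\sum_{j=0}^{m-1}(\id+u)^{j}\Big)\circ W .
\]
Here $\id+u=\sigma_{\mathrm{top}}$ on $H^n_{\{0\}}$, and it acts on $e_{\boldsymbol{\zeta}^{-1}}$ by $\Lambda^{-v}$. By Proposition \ref{prop:local_var_formula_sigma_top}, $W e^*_{\boldsymbol{\zeta}}=C_{\boldsymbol{\zeta}}e_{\boldsymbol{\zeta}^{-1}}$.

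If $\Lambda^{v}\neq1$, the geometric sum is
\[
\frac{\Lambda^{-vm}-1}{\Lambda^{-v}-1}.
\]
Multiplying by $C_{\boldsymbol{\zeta}}=\frac1K\prod(\zeta_i-1)\frac{\Lambda^{-v}-1}{1-\Lambda}$, the factor $\Lambda^{-v}-1$ cancels. Since $\Lambda^{-vm}=\chi_{\boldsymbol{\zeta}}(\sigma_{\mathrm{top}}^m)$, the result is $\frac1K\prod(\zeta_i-1)\frac{\chi_{\boldsymbol{\zeta}}(\sigma)-1}{1-\Lambda}$.

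The subcase $\Lambda\neq1$ but $\Lambda^{v}=1$ has to be checked separately. There $C_{\boldsymbol{\zeta}}=0$ and $\chi_{\boldsymbol{\zeta}}$ is trivial, so both sides vanish and the formula still holds.

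If $\Lambda=1$, the sum is $m$, and $C_{\boldsymbol{\zeta}}=\frac{v}{K}\prod(\zeta_i-1)$. The product is $\frac{mv}{K}\prod(\zeta_i-1)=\frac1K\prod(\zeta_i-1)\,\ell_p(\sigma)$.

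\textbf{Main obstacle.} The main difficulty is the passage from integer powers to all of $I$, especially for $\Lambda=1$, where the answer is not a character but the additive logarithm. I would argue as follows. The map $\sigma\mapsto\Var(\sigma)$ is continuous, because the action is continuous and $\Var(\sigma)\circ can=\sigma-\id$. The subgroup generated by $\sigma_{\mathrm{top}}$ together with the wild inertia $P$, which acts trivially, is dense in $I$. Both sides of the formula therefore agree on a dense set and hence everywhere.

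For $\Lambda=1$ there is a further point: $\ell_p$ must be defined as $v$ times the projection $I\to I_t\to\mathbb{Z}_\ell$ that sends $\sigma_{\mathrm{top}}$ to $1$. This is compatible with the stated normalization $\ell_p(\sigma_{\mathrm{top}})=v$.
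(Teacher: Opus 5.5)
Your route is the paper's own. Its proof just says to rerun the composition-formula argument of Proposition~\ref{prop:local_var_formula_sigma_top} with $V$ replaced by $\Var(\sigma_{\mathrm{top}})$, and your Step~3 does this correctly for $\sigma=\sigma_{\mathrm{top}}^{m}$, including the subcase $\Lambda\neq 1$, $\Lambda^{v}=1$.

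\textbf{Where the gap is.} The problem is the passage from these powers to all of $I$, exactly on the components with $\Lambda=1$ that you single out. (The paper's one-line proof also passes over this point.) Both of your justifications there read off properties of $\Var(\sigma)$ from the action on cohomology, and on those components the action says nothing about the variation.
\begin{itemize}
\item When $\Lambda=1$, $\sigma_{\mathrm{top}}$ acts trivially on $e_{\boldsymbol{\zeta}}$ and on $e^{*}_{\boldsymbol{\zeta}}$. Yet $\Var(\sigma_{\mathrm{top}})(e^{*}_{\boldsymbol{\zeta}})=\frac{v}{K}\prod_{i}(\zeta_i-1)\,e_{\boldsymbol{\zeta}^{-1}}\neq 0$. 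So ``$P$ acts trivially, hence $\Var(\sigma)$ depends only on the image of $\sigma$ in $I_t$'' is not a valid inference.
\item By the pairing formula of Theorem~\ref{thm:local_diagonal_main}(4), $q(e_{\boldsymbol{\xi}})$ is a multiple of $e^{*}_{\boldsymbol{\xi}^{-1}}$ that vanishes exactly when $\prod_i\xi_i=1$. So for $\Lambda=1$, $e^{*}_{\boldsymbol{\zeta}}$ is not in the image of $q$. The relation $\sigma-\id=\Var(\sigma)\circ q$ then does not determine $\Var(\sigma)(e^{*}_{\boldsymbol{\zeta}})$, and continuity of the action gives no continuity of $\sigma\mapsto\Var(\sigma)$ there.
\end{itemize}
Without that continuity, your density argument proves nothing on these components.

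\textbf{What is missing.} You need two inputs.
\begin{itemize}
\item Continuity of $\sigma\mapsto\Var(\sigma)$ has to be taken from the construction of $\Var$ in \cite{SGA7_II}, not deduced from the action on cohomology.
\item On a component with $\Lambda=1$, combine the composition formula with the $R\Phi$-side relation $q\circ\Var(\tau)=\tau-\id$ on $(R^{n}\Phi(\Qlbar))_{0}$, the arithmetic counterpart of the second line of \eqref{eq_T_on_classical}. Since $\tau$ fixes $e^{*}_{\boldsymbol{\zeta}}$, this gives $q(\Var(\tau)e^{*}_{\boldsymbol{\zeta}})=0$, and hence
\[
\Var(\sigma\tau)(e^{*}_{\boldsymbol{\zeta}})=\Var(\sigma)(e^{*}_{\boldsymbol{\zeta}})+\Var(\tau)(e^{*}_{\boldsymbol{\zeta}}).
\]
\end{itemize}
So $\sigma\mapsto\Var(\sigma)(e^{*}_{\boldsymbol{\zeta}})$ is a continuous homomorphism into an $\ell$-adic vector space, and its compact image is pro-$\ell$. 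It therefore kills $P$ and the prime-to-$\ell$ part of $I_t$, factors through $\mathbb{Z}_{\ell}(1)$, and is determined by its value at $\sigma_{\mathrm{top}}$. That gives the $\ell_p(\sigma)$ formula, and only then is your density argument legitimate.

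For $\Lambda\neq 1$ your argument is fine, and you could even skip the composition formula there. Since $q(e_{\boldsymbol{\zeta}^{-1}})$ is a nonzero multiple of $e^{*}_{\boldsymbol{\zeta}}$, the relation $\sigma-\id=\Var(\sigma)\circ q$ computes $\Var(\sigma)(e^{*}_{\boldsymbol{\zeta}})$ directly for every $\sigma\in I$.
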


\begin{proof}
By Theorem \ref{thm:local_diagonal_main}, the action of the inertia group $I$ on the vanishing cycles $\{e_{\boldsymbol{\zeta}}\}$ factors through the tame quotient. Recall that $\sigma_{\mathrm{top}}(e_{\boldsymbol{\zeta}}) = \Lambda^v e_{\boldsymbol{\zeta}}$. By duality, the eigenvalues for the dual basis are the inverse:
\[
\sigma_{\mathrm{top}}(e_{\boldsymbol{\zeta}}^*) = (\Lambda^v)^{-1} e_{\boldsymbol{\zeta}}^* = \Lambda^{-v} e_{\boldsymbol{\zeta}}^*.
\]
Since $I$ acts continuously through its tame quotient, the action of any $\sigma \in I$ is determined by a character $\chi_{\boldsymbol{\zeta}}$ such that $\chi_{\boldsymbol{\zeta}}(\sigma_{\mathrm{top}}) = \Lambda^{-v}$.

Following the same proof of Proposition \ref{prop:local_var_formula_sigma_top} (by replacing $V$ by $\Var(\sigma_{\mathrm{top}})$), one obtains the formula of variation morphism $\Var(\sigma)$ for an arbitrary element $\sigma\in I$.
\end{proof}

\subsection{Frobenius Action}

In this subsection, we investigate the arithmetic properties of the singularity by computing the action of the Frobenius automorphism on the inertia invariant of the local cohomology $H_{\{0\}}^{n}(X_{\overline{s}},R\Psi(\Qlbar))^{I}$.
Assume that the residue field of $R$ is a finite field $\mathbb{F}_{q}$ with $q$ elements. We work in the general setting and do not assume that $q$ satisfies any specific congruence condition modulo $m = \operatorname{lcm}(k_i)$.

Consider the extension $R' = R[\pi^{1/m}]$. Since the extension is totally ramified, the residue field of $R'$ remains $\mathbb{F}_{q}$. Furthermore, as the ramification index $m$ is prime to $p$, the extension $R'/R$ is tame, with Galois group $\operatorname{Gal}(R'/R) \cong I/I' \cong \mu_m$. From the proof of Theorem \ref{thm:local_diagonal_main}, we defined a smooth affine hypersurface $Y\subseteq\mathbb{A}^{n}_{R'}$ over $R'$ given by
\begin{equation}\label{eq:good_reduction_eq}
\sum_{i=0}^{n} a_{i}y_{i}^{k_{i}} + u = 0.
\end{equation}
In the same proof, we have shown that the cospecialization maps are isomorphism
\[
H^{i}_{\csup}(Y_{\overline{s}},\Qlbar)\xrightarrow{\sim} H^{i}_{\csup}(Y_{\overline{\eta}},\Qlbar)
\]
and the inertia subgroup $I' \subseteq I$ associated to $K'$ acts trivially on $H^{n}_{\csup}(Y_{\overline{\eta}},\Qlbar)$. However, the coordinate transformation $x_{i} = (\pi^{1/m})^{vm/k_{i}} y_{i}$ is defined over $K'$, which introduces a non-trivial action of the quotient $I/I' \cong \mu_{m}$ on $Y_{\overline{\eta}}$. For any $\sigma \in I$ mapping $\pi^{1/m} \mapsto \zeta \pi^{1/m}$ (with $\zeta \in \mu_{m}$), the induced action on the coordinates of $Y_{\overline{\eta}}$ is given by $y_{i} \mapsto \zeta^{-vm/k_{i}} y_{i}$. Thus, taking the $I$-invariants of the local cohomology yields
\[
H_{\{0\}}^{n}(X_{\overline{s}},R\Psi(\overline{\mathbb{Q}}_{\ell}))^{I} \cong H_{\mathrm{c}}^{n}(X_{\overline{\eta}},\overline{\mathbb{Q}}_{\ell})^{I} \cong \left( H^{n}_{\csup}(Y_{\overline{\eta}},\overline{\mathbb{Q}}_{\ell})^{I'} \right)^{I/I'} \cong H_{\csup}^{n}(Y_{\overline{\mathbb{F}}_{q}}, \overline{\mathbb{Q}}_{\ell})^{\mu_{m}}.
\]

Next, we consider the action of the geometric Frobenius $\operatorname{Frob}_{q}$ on the invariant subspace. The geometric Frobenius acts on $Y_{\overline{\mathbb{F}}_{q}}$ by raising coordinates to the $q$-th power ($y_i \mapsto y_i^q$). Since we do not assume $q \equiv 1 \pmod m$, the elements of $\mu_m$ are not necessarily rational over $\mathbb{F}_q$. Consequently, $\operatorname{Frob}_{q}$ does not commute with the $\mu_m$-action on the full cohomology group $H_{\csup}^{n}(Y_{\overline{\mathbb{F}}_{q}}, \Qlbar)$. Nevertheless, because the $\mu_m$-invariant subspace $H_{\csup}^{n}(Y_{\overline{\mathbb{F}}_{q}}, \Qlbar)^{\mu_m}$ corresponds to the trivial character (which is fixed under the $q$-th power map), it is stably preserved by $\operatorname{Frob}_{q}$. This property allows us to reduce the computation of the Frobenius trace on the local cohomology of the singularity to a well-defined calculation on the invariant subspace $H_{\csup}^{n}(Y_{\overline{\mathbb{F}}_{q}}, \Qlbar)^{\mu_m}$.

The following theorem provides the explicit trace formula for such diagonal hypersurfaces. While this result is classical (essentially due to Weil \cite{Wei49}), we will give an alternative proof in the view point of exponential motive.

\begin{theorem}\label{thm:trace_formula}
Let $Y \subseteq \mathbb{A}^{n+1}_{\mathbb{F}_q}$ be the hypersurface defined by
\[
\sum_{i=0}^{n} a_{i}x_{i}^{k_{i}} + u = 0,
\]
where $a_{i}, u \in \mathbb{F}_{q}^{\times}$ and $\gcd(k_{i}, q) = 1$ for all $i$. Fix an integer $r \geq 1$ and choose any non-trivial additive character $\psi: \mathbb{F}_{q^{r}} \to \Qlbar^{\times}$. Then, the trace of the Frobenius action $\operatorname{Frob}_{q} \in \operatorname{Gal}(\overline{\mathbb{F}}_{q}/\mathbb{F}_{q})$ on $H^{n}_{\csup}(Y_{\overline{\mathbb{F}}_{q}}, \Qlbar)$ is given by:
\begin{equation*}
\operatorname{Tr}\left(\operatorname{Frob}_{q}^{r} \mid H^{n}_{\csup}(Y_{\overline{\mathbb{F}}_{q}}, \Qlbar)\right) = \frac{1}{q^{r}} \sum_{\substack{(\chi_{i})_{i=0}^{n}, \, \chi_{i} \in \widehat{\mathbb{F}_{q^{r}}^{\times}} \\ \chi_{i}^{k_{i}} = 1, \, \chi_{i} \neq 1}} \chi_{I}^{-1}(u^{-1}) \cdot G(\chi_{I}^{-1}, \psi) \cdot \prod_{i=0}^{n} \Big( \chi_{i}(a_{i}^{-1}) G(\chi_{i}, \psi) \Big),
\end{equation*}
where $\chi_{I} = \prod_{i=0}^{n} \chi_{i}$, and $G(\chi, \psi)$ is the Gauss sum associated to the characters $\chi$ and $\psi$ defined by
\begin{equation}\label{eq:def_Gauss_sum}
G(\chi, \psi) := - \sum_{z \in \mathbb{F}_{q^{r}}^{\times}} \chi(z)\psi(z).
\end{equation}
\end{theorem}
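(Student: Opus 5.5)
The plan is to reduce the statement to a point count via the Grothendieck--Lefschetz trace formula and then evaluate that count with additive and multiplicative characters. Throughout write $Q=q^{r}$.

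\textbf{Step 1: the cohomology of $Y$ is concentrated in degrees $n$ and $2n$.} I will show that $H^{i}_{\csup}(Y_{\overline{\mathbb{F}}_q},\Qlbar)=0$ for $i\neq n,2n$, and that $H^{2n}_{\csup}\cong\Qlbar(-n)$ is one-dimensional.
\begin{itemize}
\item $Y$ is smooth and affine of dimension $n$, since $u\neq 0$ and the $k_i$ are prime to $q$. It is geometrically irreducible when $n\geq 1$, which gives the statement about $H^{2n}_{\csup}$.
\item Poincaré duality combined with Artin vanishing kills $H^{i}_{\csup}$ for $i<n$.
\item For $n<i<2n$ I will use the argument in the proof of Theorem \ref{thm:local_diagonal_main}. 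The polynomial is non-degenerate, so by Proposition \ref{Prop:non_deg_SNCD_cptify} the relative family $g$ over $B$ has a good compactification, and $R^{i}g_!\Qlbar$ is lisse on $\mathbb{G}_{\mathrm{m},B}$. Its rank can therefore be read off at a complex point.
\item At a complex point the fiber is the Milnor fiber of $\sum z_i^{k_i}$, a bouquet of $n$-spheres. Hence $H^{i}_{\csup}$ is nonzero only for $i=n$ (of dimension $\prod(k_i-1)$) and for $i=2n$.
\end{itemize}
This comparison step is the one I expect to need the most care. Everything after it is a finite-field computation.

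\textbf{Step 2: trace formula.} By Step 1 and the Grothendieck--Lefschetz trace formula,
\[
\operatorname{Tr}\left(\operatorname{Frob}_q^{r}\mid H^{n}_{\csup}(Y_{\overline{\mathbb{F}}_q},\Qlbar)\right)=(-1)^{n}\left(\#Y(\mathbb{F}_{Q})-Q^{n}\right).
\]

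\textbf{Step 3: counting points with additive characters.} Let $f=\sum a_i x_i^{k_i}+u$ and expand
\[
\#Y(\mathbb{F}_Q)=\frac{1}{Q}\sum_{t\in\mathbb{F}_Q}\ \sum_{x\in\mathbb{F}_Q^{n+1}}\psi(t f(x)).
\]
The $t=0$ term contributes $Q^{n}$, which cancels the $Q^n$ in Step 2. For $t\neq 0$ the inner sum factors as
\[
\psi(tu)\prod_{i=0}^{n}\ \sum_{x_i\in\mathbb{F}_Q}\psi(t a_i x_i^{k_i}).
\]

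\textbf{Step 4: each factor as a sum of Gauss sums.} I will use
\[
\#\{x\in\mathbb{F}_Q : x^{k}=y\}=\sum_{\chi^{k}=1}\chi(y),
\]
where $\chi$ runs over characters of $\mathbb{F}_Q^{\times}$ with the convention $\chi(0)=0$ for $\chi\neq 1$. The trivial character contributes $\sum_{y}\psi(cy)=0$. Each nontrivial $\chi$ contributes
\[
\sum_{y}\chi(y)\psi(t a_i y)=-\chi(t^{-1})\,\chi(a_i^{-1})\,G(\chi,\psi),
\]
with $G$ as in \eqref{eq:def_Gauss_sum}.

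\textbf{Step 5: product over $i$ and sum over $t$.} Taking the product over $i$ gives the sign $(-1)^{n+1}$, the factor $\prod_i\chi_i(a_i^{-1})G(\chi_i,\psi)$, and $\chi_I^{-1}(t)$. The remaining sum over $t$ is
\[
\sum_{t\neq 0}\chi_I^{-1}(t)\psi(tu)=-\chi_I^{-1}(u^{-1})\,G(\chi_I^{-1},\psi).
\]
This identity also covers the case $\chi_I=1$: there the left side is $-1$, and the convention gives $G(1,\psi)=1$.

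\textbf{Step 6: sign bookkeeping.} Steps 4 and 5 together give the sign $(-1)^{n+1}\cdot(-1)=(-1)^{n}$. This cancels the $(-1)^n$ from Step 2, leaving exactly the claimed expression with the prefactor $1/Q=1/q^{r}$.
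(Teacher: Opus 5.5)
Your proof is correct, and it takes a different route from the paper's.

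\textbf{The paper's route.} The paper never counts points on $Y$. It works on $W=\mathbb{G}_{\mathrm{m}}\times\mathbb{A}^{n+1}$ with $f=ut^{m}+\sum_i a_ix_i^{k_i}$ and evaluates the trace on $H^{n+2}_{\mathrm{c}}(W,f^{*}\mathcal{L}_{\psi})$ in two ways. First, by K\"unneth, it becomes a sum over tuples $(\chi,\chi_0,\dots,\chi_n)$ of products of Gauss sums. Second, the substitution $x_i=t^{m/k_i}y_i$ turns $f$ into $t^{m}h(y)$ and identifies the $\mu_m$-invariant part with $H^{n}_{\mathrm{c}}(Y)(-1)$. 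Invariance forces $\chi=\chi_I^{-1}$, and the Tate twist supplies the factor $1/q^{r}$.

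\textbf{Your route.} You follow Weil's classical argument, which the paper mentions and deliberately avoids: Grothendieck--Lefschetz, then an explicit character-sum evaluation of $\#Y(\mathbb{F}_{q^r})$. I checked Steps 3--6, including the terms with $\chi_I=1$ and the sign bookkeeping. They give exactly the stated expression.

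\textbf{What each approach buys.} Your finite-field computation is elementary and uniform in $r$. The identity $\#\{x:x^{k}=y\}=\sum_{\chi^{k}=1}\chi(y)$ automatically produces the characters of $\mathbb{F}_{q^r}^{\times}$ of order dividing $\gcd(k_i,q^r-1)$. By contrast, the paper matches geometric $\mu_m$-characters with characters of $\mathbb{F}_{q^r}^{\times}$ via $z\mapsto z^{(q^r-1)/m}$, which is written as if $m\mid q^{r}-1$.

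The price is your Step 1. There you import the lisse-family argument of Theorem \ref{thm:local_diagonal_main} and Milnor's bouquet theorem at a complex point. When you do, work over $B[1/(\ell\prod_i k_i)]$ rather than the paper's $B$, so that the family is non-degenerate and $\ell$ is invertible. That base is still connected and contains both your $\mathbb{F}_q$-point and a complex point.

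The paper's method needs none of this input. Run it in all degrees, comparing $\mathbb{G}_{\mathrm{m},s}\times\mathbb{A}^{n+1}$ with $\mathbb{A}^{1}_{s}\times\mathbb{A}^{n+1}$; over the latter, the pushforward of $\mathcal{L}_{\psi(sh(y))}$ is $i_{Y*}\Qlbar(-1)[-2]$. Since $H^{*}_{\mathrm{c}}(W,f^{*}\mathcal{L}_{\psi})$ sits only in degree $n+2$, this gives $H^{j}_{\mathrm{c}}(Y)=0$ for $j\neq n,2n$ and $H^{2n}_{\mathrm{c}}(Y)\cong\Qlbar(-n)$, purely in characteristic $p$.

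The paper's method also gives more than the total trace. It splits $H^{n}_{\mathrm{c}}(Y)$ into lines indexed by character tuples, each with an explicit Frobenius eigenvalue. Theorem \ref{thm:Frob_trace_on_inertia_inv} uses exactly this splitting line by line. Your computation would need refining to recover it, for example via twisted counts $\operatorname{Tr}(\operatorname{Frob}_q^{r}\circ g)$ for $g\in\prod_i\mu_{k_i}$.

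\textbf{A caveat common to both.} Both arguments tacitly need $n\geq 1$. For $n=0$ the right-hand side equals $\#Y(\mathbb{F}_{q^r})-1$, which is the trace on reduced $H^{0}_{\mathrm{c}}$, not on $H^{0}_{\mathrm{c}}$ itself. Your Step 2 makes this visible, since there $H^{n}_{\mathrm{c}}$ and $H^{2n}_{\mathrm{c}}$ coincide.
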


\begin{proof}
We employ a method to relate the cohomology of the hypersurface $Y$ to a product of Gauss sums. Let $m = \operatorname{lcm}(k_{0}, \ldots, k_{n})$ and consider the variety $W = \mathbb{G}_{\mathrm{m}} \times \mathbb{A}^{n+1}$ equipped with coordinates $(t, x_0, \ldots, x_n)$. Define the morphism $f: W \to \mathbb{A}^{1}$ by the polynomial
\[
f(t, x_{0}, \ldots, x_{n}) = ut^m + \sum_{i=0}^{n} a_{i}x_{i}^{k_{i}}.
\]
Let $\mathcal{L}_{\psi}$ be the Artin--Schreier sheaf on $\mathbb{A}^{1}$ associated to the additive character $\psi$. We compute the trace of the Frobenius on the cohomology group $H^{n+2}_{\csup}(W_{\overline{\mathbb{F}}_{q}}, f^{*}\mathcal{L}_{\psi})$ using two different ways.

First, we observe that the polynomial $f$ separates the variables, allowing us to decompose the sheaf $f^{*}\mathcal{L}_{\psi}$ as an external tensor product:
\[
f^{*}\mathcal{L}_{\psi} \cong (ut^m)^{*}\mathcal{L}_{\psi} \boxtimes (a_{0}x_{0}^{k_{0}})^{*}\mathcal{L}_{\psi}\boxtimes\cdots \boxtimes (a_{n}x_{n}^{k_{n}})^{*}\mathcal{L}_{\psi}.
\]
By the Künneth formula, the compactly supported cohomology of $W$ decomposes into a tensor product of the cohomology of each factor. The trace of the Frobenius operator $\operatorname{Frob}_{q}^r$ on this product space is simply the product of the traces on the individual components. According to the theory of trigonometric sums \cite[Sommes trig.]{SGA4.5}, by pushing forward along the $k$-th power map, the cohomology $H^1_{\csup}(\mathbb{A}^1, (ax^k)^*\mathcal{L}_{\psi})$ decomposes into
\[
\bigoplus_{\chi^{k} = 1,\ \chi\neq 1}H^{1}_{\csup}(\mathbb{A}^{1},\mathcal{K}_{\chi}\otimes m_{a}^{*}\mathcal{L}_{\psi}),
\]
where $\mathcal{K}_{\chi}$ is the Kummer sheaf associated to the character $\chi$, and $m_a: \mathbb{A}^1 \to \mathbb{A}^1$ denotes the multiplication by $a$. On each one-dimensional subspace $H^{1}_{\csup}(\mathbb{A}^{1},\mathcal{K}_{\chi}\otimes m_{a}^{*}\mathcal{L}_{\psi})$, the Frobenius trace is computed as $\chi(a^{-1})G(\chi,\psi)$. Therefore, the Frobenius trace on $H^{n+2}_{\csup}(W_{\overline{\mathbb{F}}_{q}},f^{*}\mathcal{L}_{\psi})$ is given by
\begin{equation}\label{eq:trace_W_full}
\operatorname{Tr}(\Frob_{q}^r\mid H^{n+2}_{\csup}(W_{\overline{\mathbb{F}}_{q}},f^{*}\mathcal{L}_{\psi})) = \sum_{\substack{(\chi,\chi_{0},\ldots,\chi_{n})\\\chi^{m}=1,\ \chi_{i}^{k_{i}}=1,\ \chi_{i}\neq 1}}\left( \chi(u^{-1}) G(\chi, \psi) \right) \cdot \prod_{i=0}^{n} \left( \chi_{i}(a_{i}^{-1}) G(\chi_{i}, \psi) \right).
\end{equation}
Note that the first term corresponds to the cohomology on $\mathbb{G}_{\mathrm{m}}$ associated to the term $ut^m$, while the subsequent terms correspond to the affine lines.

Alternatively, we can compute the cohomology by introducing a change of variables. Consider the isomorphism of $W$ defined by $x_{i} = t^{m/k_{i}}y_{i}$ and fixing $t$. Under these new coordinates $(t, y_0, \ldots, y_n)$, the function $f$ transforms as:
\[
f(t, y) = t^{m} \left( u + \sum_{i=0}^{n} a_{i}y_{i}^{k_{i}} \right) = t^m h(y),
\]
where $h(y)$ is the defining polynomial of the hypersurface $Y$.
We analyze the cohomology with respect to the action of $\mu_m$ on $W$ given by $t \mapsto \zeta t$ and fixing $y$.

To analyze the $\mu_{m}$-invariant part, we consider the quotient map $\pi: W \to W' \cong \mathbb{G}_{\mathrm{m}} \times \mathbb{A}^{n+1}$ given by $(t, y) \mapsto (s, y) = (t^{m}, y)$. By this cyclic cover, the $\mu_{m}$-invariant part of the cohomology of $W$ is
\[
H^{n+2}_{\csup}(W_{\overline{\mathbb{F}}_{q}}, f^{*}\mathcal{L}_{\psi})^{\mu_{m}} \cong H^{n+2}_{\csup}(W'_{\overline{\mathbb{F}}_{q}}, \mathcal{L}_{\psi(sh(y))}).
\]
Let $p: W' \to \mathbb{A}^{n+1}$ be the projection. By pushing forward to $\mathbb{A}^{n+1}$ and using the relative cohomology of the Artin--Schreier sheaf over $\mathbb{G}_{\mathrm{m}}$, we observe that the complex $Rp_{!} \mathcal{L}_{\psi(sh(y))}$ is supported on the zero locus of $h(y)$, which is precisely $Y$. Specifically, for any $y \in \mathbb{A}^{n+1}$, the fiber-wise calculation yields
\[
R\Gamma_{\csup}(\mathbb{G}_{m, \overline{\mathbb{F}}_{q}}, \mathcal{L}_{\psi(sh(y))}) \cong 
\begin{cases} 
\Qlbar(-1)[-2] & \text{if } h(y) = 0, \\
0 & \text{if } h(y) \neq 0.
\end{cases}
\]
By the projection formula and the excision sequence for the closed embedding $i: Y \hookrightarrow \mathbb{A}^{n+1}$, we obtain the canonical isomorphism:
\[
H^{n+2}_{\csup}(W'_{\overline{\mathbb{F}}_{q}}, \mathcal{L}_{\psi(sh(y))}) \cong H^{n}_{\csup}(Y_{\overline{\mathbb{F}}_{q}}, \Qlbar)(-1),
\]
where the $(-1)$ denotes the Tate twist and the degree shift follows from the dimension of $\mathbb{G}_{\mathrm{m}}$.

Finally, we identify the $\mu_m$-invariant part of the spectral sum \eqref{eq:trace_W_full}. Let $\eta \in \widehat{\mu_m}$ and $\eta_i \in \widehat{\mu_{k_i}}$ be the geometric characters of the Galois groups of the Kummer covers associated to $t^m$ and $x_i^{k_i}$, respectively. By Kummer theory, these correspond to the multiplicative characters $\chi$ and $\chi_i$ of $\mathbb{F}_{q^r}^{\times}$ via the relations $\chi(z) = \eta(z^{(q^r-1)/m})$ and $\chi_i(z) = \eta_i(z^{(q^r-1)/k_i})$ for any $z \in \mathbb{F}_{q^r}^{\times}$.

The action $t \mapsto \zeta t$ for $\zeta \in \mu_m$ induces the transformation $x_i \mapsto \zeta^{m/k_i}x_i$. On the corresponding geometric eigen-component, this element acts by the scalar $\eta(\zeta)\prod_{i=0}^{n} \eta_i(\zeta^{m/k_i})$. For the component to be $\mu_m$-invariant, this scalar must be trivial for all $\zeta \in \mu_m$. Since the map $z \mapsto z^{(q^r-1)/m}$ is surjective onto $\mu_m$, we can substitute $\zeta = z^{(q^r-1)/m}$ to obtain the equivalent condition on $\mathbb{F}_{q^r}^{\times}$:
\[
1 = \eta(z^{(q^r-1)/m}) \prod_{i=0}^{n} \eta_i \left( (z^{(q^r-1)/m})^{m/k_i} \right) = \eta(z^{(q^r-1)/m}) \prod_{i=0}^{n} \eta_i(z^{(q^r-1)/k_i}) = \chi(z) \prod_{i=0}^{n} \chi_i(z).
\]
This relation must hold for all $z \in \mathbb{F}_{q^r}^{\times}$, which forces $\chi \cdot \prod_{i=0}^n \chi_i = 1$, or equivalently, $\chi = \chi_I^{-1}$.
Restricting the sum \eqref{eq:trace_W_full} to these invariant terms and dividing by the factor $q^r$ which comes from the Tate twist yields the asserted trace formula.
\end{proof}

\begin{proposition}\label{prop:inertia_invariants}
Let $\Xi$ be the set of multi-indices $\boldsymbol{\zeta} = (\zeta_0, \ldots, \zeta_n)$ with $\zeta_i \in \mu_{k_i} \setminus \{1\}$. The subspace of inertia invariants $H_{\{0\}}^{n}(X_{\overline{s}},R\Psi(\Qlbar))^{I}$ is spanned by the vanishing cycles $e_{\boldsymbol{\zeta}}$ satisfying the condition:
\[
\left(\textstyle\prod_{i=0}^{n} \zeta_{i} \right)^{v} = 1.
\]
Consequently, we have the following direct sum decomposition:
\begin{align*}
H_{\{0\}}^{n}(X_{\overline{s}},R\Psi(\Qlbar))^{I} = \bigoplus_{\substack{\boldsymbol{\zeta} \in \Xi \\ (\prod_{i=0}^{n} \zeta_i)^v = 1}} \Qlbar \cdot e_{\boldsymbol{\zeta}}.
\end{align*}
\end{proposition}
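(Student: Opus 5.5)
The plan is to read the invariants off the eigenbasis of Theorem \ref{thm:local_diagonal_main}. By part (1) of that theorem the action of $I$ on $V:=H_{\{0\}}^{n}(X_{\overline{s}},R\Psi(\Qlbar))$ is tame. Hence it factors through the tame quotient $I_t=I/P\cong\widehat{\mathbb{Z}}'(1)$. Since $V$ is finite dimensional and the action is continuous, an element $\alpha\in V$ is $I$-invariant if and only if it is fixed by the topological generator $\sigma_{\mathrm{top}}$: the subgroup generated by $\sigma_{\mathrm{top}}$ is dense in $I_t$, and the stabilizer of $\alpha$ is closed. So $V^{I}=\ker(\sigma_{\mathrm{top}}-\id)$.

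Next, part (3) of Theorem \ref{thm:local_diagonal_main} gives the basis $\{e_{\boldsymbol{\zeta}}\}_{\boldsymbol{\zeta}\in\Xi}$ of $V$. It has the right cardinality $\prod(k_i-1)$ by part (2), and $\sigma_{\mathrm{top}}$ is diagonal in it with eigenvalue $\Lambda_{\boldsymbol{\zeta}}^{v}$, where $\Lambda_{\boldsymbol{\zeta}}=\prod_i\zeta_i$. So $\sigma_{\mathrm{top}}$ is diagonalizable. For a diagonalizable operator, the kernel of $\sigma_{\mathrm{top}}-\id$ is spanned by exactly those basis vectors whose eigenvalue equals $1$. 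Concretely, write $\alpha=\sum c_{\boldsymbol{\zeta}}e_{\boldsymbol{\zeta}}$. Then
\[
(\sigma_{\mathrm{top}}-\id)\alpha=\sum_{\boldsymbol{\zeta}} c_{\boldsymbol{\zeta}}\bigl(\Lambda_{\boldsymbol{\zeta}}^{v}-1\bigr)e_{\boldsymbol{\zeta}},
\]
and this vanishes if and only if $c_{\boldsymbol{\zeta}}=0$ whenever $\Lambda_{\boldsymbol{\zeta}}^{v}\neq1$. This gives the stated direct sum decomposition, and the sum is direct because the $e_{\boldsymbol{\zeta}}$ are linearly independent.

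As a consistency check, I would also verify the statement through the Frobenius-section description
\[
V^{I}\cong H^{n}_{\csup}(Y_{\overline{\mathbb{F}}_q},\Qlbar)^{\mu_m}.
\]
An element $\sigma\in I$ with $\pi^{1/m}\mapsto\zeta\pi^{1/m}$ acts by $y_i\mapsto\zeta^{-vm/k_i}y_i$. On the $(\eta_i)$-isotypic piece it therefore acts through the character $\prod\eta_i(\zeta^{-vm/k_i})$. Invariance forces this product to be trivial, which is the condition $\bigl(\prod\zeta_i\bigr)^{v}=1$ once the $\eta_i$ are matched with the $\zeta_i$. The two descriptions agree.

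The main obstacle is justifying that $V^{I}$ is exactly the fixed space of $\sigma_{\mathrm{top}}$. This needs tameness, so that the wild subgroup $P$ acts trivially, together with the density and continuity argument. Both are supplied by Theorem \ref{thm:local_diagonal_main}(1) and standard facts about $\ell$-adic representations. A secondary point is that the eigenvector description is only available once a topological generator is fixed. The resulting subspace, however, does not depend on that choice: another generator $\sigma_{\mathrm{top}}^{a}$, with $a$ a unit in $\widehat{\mathbb{Z}}'$, has eigenvalues $\Lambda^{va}$, and these equal $1$ exactly when $\Lambda^{v}=1$, because $\Lambda^{v}$ is a root of unity of order prime to $p$.
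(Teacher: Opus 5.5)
Your argument is correct and is essentially the paper's proof. Tameness from Theorem \ref{thm:local_diagonal_main} reduces $I$-invariance to being fixed by $\sigma_{\mathrm{top}}$, and the diagonal action on the eigenbasis $\{e_{\boldsymbol{\zeta}}\}$ then singles out exactly the $e_{\boldsymbol{\zeta}}$ with $(\prod_i \zeta_i)^v = 1$. Your density and closed-stabilizer justification, and your check that the answer does not depend on the chosen generator, spell out points the paper leaves implicit. The $\mu_m$-isotypic consistency check is optional.
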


\begin{proof}
By Theorem \ref{thm:local_diagonal_main}, the action of the inertia group $I$ factors through the tame quotient $I_t$, which is topologically generated by $\sigma_{\mathrm{top}}$. Therefore, a vector $x \in H_{\{0\}}^{n}(X_{\overline{s}},R\Psi(\Qlbar))$ is invariant under $I$ if and only if it is fixed by $\sigma_{\mathrm{top}}$. Since the set $\{e_{\boldsymbol{\zeta}}\}_{\boldsymbol{\zeta} \in \Xi}$ forms an eigenbasis for $\sigma_{\mathrm{top}}$ with eigenvalues $\sigma_{\mathrm{top}}(e_{\boldsymbol{\zeta}}) = (\prod \zeta_{i})^{v} e_{\boldsymbol{\zeta}}$, a basis vector $e_{\boldsymbol{\zeta}}$ lies in the invariant subspace if and only if its corresponding eigenvalue is $1$. The semi-simplicity of the action implies the stated decomposition.
\end{proof}

As established in Theorem \ref{thm:local_diagonal_main}, the cohomology space admits a spectral decomposition
\[
H_{\{0\}}^{n}(X_{\overline{s}},R\Psi(\Qlbar)) = \bigoplus_{\boldsymbol{\zeta} \in \Xi} L_{\boldsymbol{\zeta}}.
\]
The geometric Frobenius $\operatorname{Frob}_{q}$ acts on the vanishing cycles by permuting these eigenspaces. Specifically, since the action of inertia corresponds to the roots of unity $\boldsymbol{\zeta}$, and the Frobenius map $x \mapsto x^q$ raises these roots to the $q$-th power, we have:
\[
\operatorname{Frob}_{q}(L_{\boldsymbol{\zeta}}) = L_{\boldsymbol{\zeta}^q},
\]
where $\boldsymbol{\zeta}^q := (\zeta_0^q, \ldots, \zeta_n^q)$.
It is important to note that the Frobenius action preserves the inertia invariant subspace $H_{\{0\}}^{n}(X_{\overline{s}},R\Psi(\Qlbar))^{I}$. 

To compute the trace of $\operatorname{Frob}_{q}^r$ on this subspace, we sum the contributions from those lines $L_{\boldsymbol{\zeta}}$ that are simultaneously inertia-invariant and stable under Frobenius.
We define $\Xi_{q^r}^{\mathrm{inv}}$ to be the set of such indices:
\[
\Xi_{q^r}^{\mathrm{inv}} = \left\{ \boldsymbol{\zeta} \in \Xi \;\middle|\; \zeta_i^{q^r-1} = 1 \text{ for all } i, \text{ and } \left(\prod_{i=0}^n \zeta_i\right)^v = 1 \right\}.
\]

For $\boldsymbol{\zeta} \in \Xi_{q^r}^{\mathrm{inv}}$, the components $\zeta_i$ correspond to multiplicative characters $\chi_{\zeta_i, r}$ of $\mathbb{F}_{q^r}^{\times}$ via the Teichmüller lifting.
In the following formulation of the trace, it is convenient to use the Jacobi sum. For a tuple of characters $(\chi_0, \ldots, \chi_n)$, the Jacobi sum over $\mathbb{F}_{q^r}$ is defined as:
\[
J_r(\chi_0, \ldots, \chi_n) = \sum_{\substack{t_0+\dots+t_n+1=0\\t_{i}\in\mathbb{F}_{q^{r}}^{\times}}} \prod_{i=0}^n \chi_i(t_i).
\]
It is a classical result that the Jacobi sum can be expressed in terms of Gauss sums \eqref{eq:def_Gauss_sum}. The relation reads
\begin{equation}\label{eq:Jacobi_Gauss_relation}
J_r(\chi_0, \ldots, \chi_n) = \frac{1}{q^r} G(\chi_I^{-1}, \psi) \prod_{i=0}^n G(\chi_i, \psi),
\end{equation}
where $\chi_{I} = \prod_{i=0}^{n}\chi_{i}$. This is the alternative definition of Jacobi sum and it is independent of the choice of additive character $\psi$.

\begin{theorem}[Frobenius Trace on Inertia Invariants]\label{thm:Frob_trace_on_inertia_inv}
Let $r \ge 1$ be an integer. The trace of $\operatorname{Frob}_{q}^r$ acting on the inertia invariant subspace $H_{\{0\}}^{n}(X_{\overline{s}},R\Psi(\Qlbar))^{I}$ is given by
\[
\operatorname{Tr}\left(\operatorname{Frob}_{q}^r \mid H_{\{0\}}^{n}(X_{\overline{s}},R\Psi(\Qlbar))^{I}\right) = \sum_{\boldsymbol{\zeta} \in \Xi_{q^r}^{\mathrm{inv}}} \chi_{I, r}(u) \left( \prod_{i=0}^{n} \chi_{\zeta_{i}, r}(a_{i}^{-1}) \right) J_r(\chi_{\zeta_{0}, r}, \ldots, \chi_{\zeta_{n}, r}),
\]
where $\chi_{I, r} = \prod_{i=0}^n \chi_{\zeta_i, r}$, and $a_i, u$ are the images of the coefficients under inclusion $\mathbb{F}_{q}^{\times}\subseteq \mathbb{F}_{q^r}^{\times}$.
\end{theorem}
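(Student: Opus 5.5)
Our plan is to pass the whole computation to the smooth model $Y$ of \eqref{eq:good_reduction_eq} over $R'=R[\pi^{1/m}]$, where $m=\operatorname{lcm}(k_i)$, and then read off the relevant eigencomponents from Theorem \ref{thm:trace_formula}. As explained before Theorem \ref{thm:trace_formula}, we have a chain of Frobenius-compatible identifications
\[
H_{\{0\}}^{n}(X_{\overline{s}},R\Psi(\Qlbar))^{I}\cong H^{n}_{\csup}(X_{\overline{\eta}},\Qlbar)^{I}\cong H^{n}_{\csup}(Y_{\overline{\mathbb{F}}_q},\Qlbar)^{\mu_m}.
\]
Here the residue field of $R'$ is still $\mathbb{F}_q$, because $R'/R$ is totally ramified. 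The first step is to check that these isomorphisms commute with $\operatorname{Frob}_q$. We would do this by choosing a Frobenius lift in $\operatorname{Gal}(\overline{\eta}/\eta)$ that fixes $\pi^{1/m}$, which is possible because $R'/R$ is totally ramified. This lift acts on $Y_{\overline{\eta}}$ only through the coefficients, so cospecialization for the good-reduction model $Y$ identifies its action with geometric Frobenius on $Y_{\overline{\mathbb{F}}_q}$. Since we then restrict to $\mu_m$-invariants, the ambiguity in the choice of lift disappears.

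Next we decompose $H^{n}_{\csup}(Y_{\overline{\mathbb{F}}_q})$ under the geometric $\prod_i\mu_{k_i}$-action $y_i\mapsto\epsilon_i y_i$. This splits it into lines indexed by tuples of nontrivial characters, equivalently by $\boldsymbol{\zeta}\in\Xi$; this is the Künneth/Kummer decomposition used in the proof of Theorem \ref{thm:trace_formula}. We must match this indexing with the $\sigma_{\mathrm{top}}$-eigenbasis $e_{\boldsymbol{\zeta}}$ of Theorem \ref{thm:local_diagonal_main}. The $\mu_m$-action induced by $I/I'$ is $y_i\mapsto\zeta^{-vm/k_i}y_i$, so it acts on the $\boldsymbol{\zeta}$-line by $(\prod_i\zeta_i)^{\pm v}$ evaluated at a generator. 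Hence the $\mu_m$-invariant lines are exactly those with $(\prod\zeta_i)^v=1$, in agreement with Proposition \ref{prop:inertia_invariants}.

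Frobenius permutes these lines by $\boldsymbol{\zeta}\mapsto\boldsymbol{\zeta}^q$, so only the lines fixed by $\operatorname{Frob}_q^r$ contribute to its trace. These are exactly the indices in $\Xi^{\mathrm{inv}}_{q^r}$, namely those whose characters $\chi_{\zeta_i,r}$ are defined on $\mathbb{F}_{q^r}^\times$. On such a line, the proof of Theorem \ref{thm:trace_formula} (Künneth for $W$, followed by the cyclic-cover and Tate-twist step) gives the eigenvalue
\[
q^{-r}\chi_I^{-1}(u^{-1})\,G(\chi_I^{-1},\psi)\prod_i\chi_i(a_i^{-1})G(\chi_i,\psi).
\]
Here $\chi_I^{-1}(u^{-1})=\chi_{I,r}(u)$. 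By \eqref{eq:Jacobi_Gauss_relation}, the factor $q^{-r}G(\chi_I^{-1},\psi)\prod G(\chi_i,\psi)$ equals $J_r$. Summing over $\Xi^{\mathrm{inv}}_{q^r}$ then gives the stated formula.

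The main obstacle is the trace restricted to the invariant subspace. Theorem \ref{thm:trace_formula} is stated as a trace on the full cohomology, so we need its term-by-term refinement: each $\boldsymbol{\zeta}$-line must contribute precisely its summand, and non-fixed lines must contribute nothing. We would get this from the $\mathbb{F}_{q^r}$-rational structure. Over $\mathbb{F}_{q^r}$, a Frobenius-stable line is cut out by projectors with coefficients in the characters, and Frobenius acts on it through the rank-one sheaf $\mathcal{K}_{\chi}\otimes m_a^*\mathcal{L}_\psi$ on each factor. A non-stable line is sent to a different line, so it contributes $0$ to the trace. We also need to confirm that the Teichmüller convention for $\chi_{\zeta_i,r}$ matches the sign of $v$ in the $\mu_m$-action. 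A mismatch would only replace $\boldsymbol{\zeta}$ by $\boldsymbol{\zeta}^{-1}$ in the indexing, and this is harmless because $\Xi^{\mathrm{inv}}_{q^r}$ is stable under inversion.
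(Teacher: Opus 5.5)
Your argument follows the paper's proof step for step. You identify the inertia invariants with the $\mu_m$-invariant eigenlines of $H^n_{\csup}(Y_{\overline{\mathbb{F}}_q},\Qlbar)$, keep only the $\Frob_q^r$-stable lines, take each line's eigenvalue from the proof of Theorem~\ref{thm:trace_formula}, and convert with \eqref{eq:Jacobi_Gauss_relation}. Your choice of a Frobenius lift fixing $\pi^{1/m}$ and your line-by-line treatment of the trace supply justifications the paper only asserts. Your remark that a $\boldsymbol{\zeta}\leftrightarrow\boldsymbol{\zeta}^{-1}$ mismatch is harmless is also correct.

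The step that fails is the last one (``by \eqref{eq:Jacobi_Gauss_relation} the factor equals $J_r$''), and it fails in the paper's proof too. Take $J_r$ to be the character sum over $t_0+\cdots+t_n+1=0$ and $G(\chi,\psi)=-\sum_{z}\chi(z)\psi(z)$. Then that relation is off by a sign $(-1)^n$. To see this, write $g(\chi)=\sum_{z\in\mathbb{F}_{q^r}^{\times}}\chi(z)\psi(z)$. The classical identity, valid for nontrivial $\chi_i$ including the case $\chi_I=1$, is $J_r=q^{-r}g(\chi_I^{-1})\prod_{i=0}^{n}g(\chi_i)$. Since $n+2$ Gauss sums occur and each $G=-g$, you get $q^{-r}G(\chi_I^{-1},\psi)\prod_{i}G(\chi_i,\psi)=(-1)^{n}J_r$.

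A concrete check: take $n=1$, $k_0=k_1=2$, $q=3$, $a_0=a_1=1$, $u=-1$. The single index $\boldsymbol{\zeta}=(-1,-1)$ is inertia-invariant, and $\#Y(\mathbb{F}_3)=4$ gives $\operatorname{Tr}(\Frob_3\mid H^1_{\csup}(Y_{\overline{\mathbb{F}}_3},\Qlbar))=3-4=-1$. The Gauss-sum expression of Theorem~\ref{thm:trace_formula} reproduces this value. The displayed formula instead gives $J_1(\rho,\rho)=1$, where $\rho$ is the quadratic character.

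So what your argument (like the paper's) actually proves is the stated formula multiplied by $(-1)^n$. Equivalently, the statement holds only if $J_r$ is defined by \eqref{eq:Jacobi_Gauss_relation} rather than by the character sum.
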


\begin{proof}
We first establish the equivariant correspondence between the vanishing cycles and the eigenspaces of the hypersurface $Y$ defined in \eqref{eq:good_reduction_eq}. By the construction of the smooth model $Y$, the canonical isomorphism $H_{\{0\}}^{n}(X_{\overline{s}},R\Psi(\Qlbar)) \cong H^{n}_{\csup}(Y_{\overline{\mathbb{F}}_{q}}, \Qlbar)$ is equivariant with respect to the tame monodromy action. Specifically, the action of the topological generator $\sigma_{\mathrm{top}}$ on the vanishing cycles translates directly to the geometric action of $\mu_m$ on $Y_{\overline{\mathbb{F}}_{q}}$. Under this equivariant isomorphism, the one-dimensional subspace spanned by the vanishing cycle basis element $e_{\boldsymbol{\zeta}}$ maps precisely to the geometric eigenspace associated with the characters $(\chi_{\zeta_0, r}, \ldots, \chi_{\zeta_n, r})$.

Consequently, computing the trace of $\operatorname{Frob}_{q}^r$ on the inertia invariant subspace is equivalent to evaluating the trace formula on the $\mu_m$-invariant geometric components. By Proposition \ref{prop:inertia_invariants}, the invariance condition for the vanishing cycle $e_{\boldsymbol{\zeta}}$ is given by $(\prod_{i=0}^n \zeta_i)^v = 1$. The total trace is thus the sum of the eigenvalues corresponding to the basis vectors $e_{\boldsymbol{\zeta}}$ indexed by the admissible set $\Xi_{q^r}^{\mathrm{inv}}$.

For such a chosen $\boldsymbol{\zeta} \in \Xi_{q^r}^{\mathrm{inv}}$, its eigenvalue is determined by the cohomology of the diagonal hypersurface $Y$ over $\mathbb{F}_{q^r}$. Using the trace formula derived in Theorem \ref{thm:trace_formula} for the ambient cohomology, the contribution of the geometric component corresponding to the characters $(\chi_{\zeta_0, r}, \ldots, \chi_{\zeta_n, r})$ is
\[
\frac{1}{q^r} \chi_{I, r}^{-1}(u^{-1}) G(\chi_{I, r}^{-1}, \psi) \prod_{i=0}^n \left( \chi_{\zeta_i, r}(a_i^{-1}) G(\chi_{\zeta_i, r}, \psi) \right).
\]
Using the classical relation \eqref{eq:Jacobi_Gauss_relation} between Gauss sums and Jacobi sums, this expression simplifies to
\[
\chi_{I, r}(u) \left( \prod_{i=0}^n \chi_{\zeta_i, r}(a_i^{-1}) \right) J_r(\chi_{\zeta_0, r}, \ldots, \chi_{\zeta_n, r}).
\]
Summing these contributions over all multi-indices $\boldsymbol{\zeta} \in \Xi_{q^r}^{\mathrm{inv}}$ yields the result.
\end{proof}

\begin{definition}
The zeta function associated to the inertia invariant cohomology is defined as the formal power series
\[
Z(X_{\loc}, t) := \exp\left( \sum_{r=1}^{\infty} \operatorname{Tr}\left(\operatorname{Frob}_q^r \mid H_{\csup}^{n}(X_{\loc,\overline{\eta}},\Qlbar)^{I}\right) \frac{t^r}{r} \right).
\]
\end{definition}

\begin{corollary}[Arithmetic Zeta Function]
This zeta function is a rational function given by the product over the Frobenius orbits:
\[
Z(X_{\loc}, t) = \prod_{\mathcal{O} \in \Xi^{\mathrm{inv}} / \langle q \rangle} (1 - \Lambda_{\mathcal{O}} t^{|\mathcal{O}|})^{-1},
\]
where the components are defined as follows:
\begin{enumerate}
    \item $\Xi^{\mathrm{inv}} = \{ \boldsymbol{\zeta} \in \Xi \mid (\prod \zeta_i)^v = 1 \}$ is the set of inertia-invariant indices.
    \item The product runs over the orbits $\mathcal{O}$ of the action $\boldsymbol{\zeta} \mapsto \boldsymbol{\zeta}^q$ on $\Xi^{\mathrm{inv}}$.
    \item $|\mathcal{O}|$ denotes the size of the orbit (the smallest integer $d \ge 1$ such that $\boldsymbol{\zeta}^{q^d} = \boldsymbol{\zeta}$).
    \item $\Lambda_{\mathcal{O}}$ is the eigenvalue associated with the orbit, defined by the term computed in Theorem \ref{thm:Frob_trace_on_inertia_inv} for the extension degree $r = |\mathcal{O}|$:
    \[
    \Lambda_{\mathcal{O}} = \chi_{I, r}(u) \left( \prod_{i=0}^{n} \chi_{\zeta_{i}, r}(a_{i}^{-1}) \right) J_r(\chi_{\zeta_{0}, r}, \ldots, \chi_{\zeta_{n}, r}),
    \]
    evaluated at any representative $\boldsymbol{\zeta} \in \mathcal{O}$.
\end{enumerate}
\end{corollary}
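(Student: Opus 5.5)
The plan is to derive the product formula directly from the trace formula of Theorem \ref{thm:Frob_trace_on_inertia_inv}, by regrouping its sum along Frobenius orbits. For each $r\ge 1$ that theorem writes $\operatorname{Tr}(\operatorname{Frob}_q^r \mid H^{n}_{\csup}(X_{\loc,\overline{\eta}},\Qlbar)^{I})$ as a sum over $\Xi^{\mathrm{inv}}_{q^r}$, i.e.\ over those $\boldsymbol{\zeta}\in\Xi^{\mathrm{inv}}$ with $\boldsymbol{\zeta}^{q^r}=\boldsymbol{\zeta}$. Such a $\boldsymbol{\zeta}$ lies in an orbit $\mathcal{O}$ whose size $d=|\mathcal{O}|$ divides $r$. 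Conversely, every element of an orbit with $d\mid r$ is fixed by $q^r$. The identification $H_{\{0\}}^{n}(X_{\overline{s}},R\Psi(\Qlbar))\cong H^n_{\csup}(X_{\loc,\overline\eta},\Qlbar)$ comes from Theorem \ref{thm:local_coh_com_genric_coh}. Hence
\[
\operatorname{Tr}(\operatorname{Frob}_q^r\mid H^{I}) = \sum_{\mathcal{O},\ |\mathcal{O}|\mid r}\ \sum_{\boldsymbol{\zeta}\in\mathcal{O}} T_r(\boldsymbol{\zeta}),
\]
where $T_r(\boldsymbol{\zeta})$ denotes the summand of Theorem \ref{thm:Frob_trace_on_inertia_inv}.

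The key step is the identity $T_{r}(\boldsymbol{\zeta}) = \Lambda_{\mathcal{O}}^{r/d}$ for $d\mid r$, together with the fact that $T_d(\boldsymbol{\zeta})$ does not depend on the representative of $\mathcal{O}$. This is a Hasse--Davenport type statement. Write $r=de$. The characters $\chi_{\zeta_i,r}$ of $\mathbb{F}_{q^r}^\times$ obtained by Teichmüller lifting are the compositions $\chi_{\zeta_i,d}\circ N_{\mathbb{F}_{q^r}/\mathbb{F}_{q^d}}$. Since $a_i,u\in\mathbb{F}_q$, the values $\chi_{\zeta_i,r}(a_i^{-1})$ and $\chi_{I,r}(u)$ equal the corresponding values at level $d$ raised to the $e$-th power. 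The Hasse--Davenport relation for Gauss sums, with the sign normalization $G(\chi,\psi)=-\sum\chi\psi$ of \eqref{eq:def_Gauss_sum}, gives $G(\chi\circ N,\psi\circ \operatorname{Tr}) = G(\chi,\psi)^e$. By \eqref{eq:Jacobi_Gauss_relation} this yields $J_r = J_d^{\,e}$, using the $q^{-r}=(q^{-d})^e$ factor.

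Independence of the representative follows because replacing $\boldsymbol{\zeta}$ by $\boldsymbol{\zeta}^q$ replaces each character by $\chi^q=\chi\circ\Frob$. Jacobi sums and the character values at $\mathbb{F}_q$-rational arguments are invariant under this substitution. Equivalently, one can argue cohomologically: Frobenius permutes the lines $L_{\boldsymbol\zeta}$ cyclically along $\mathcal{O}$, so $\operatorname{Frob}_q^d$ acts on each line of $\mathcal{O}$ by a common scalar $\Lambda_{\mathcal{O}}$. Frobenius powers of non-divisible exponent contribute zero trace on $\bigoplus_{\boldsymbol\zeta\in\mathcal{O}}L_{\boldsymbol\zeta}$. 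I would present the cohomological version as the conceptual reason and the Hasse--Davenport computation as the check that the scalar matches the formula.

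Granting this, the contribution of an orbit of size $d$ to the $r$-th trace is $d\,\Lambda_{\mathcal{O}}^{r/d}$ when $d\mid r$ and $0$ otherwise. Summing gives
\[
\sum_{r\ge1}\frac{t^r}{r}\sum_{d\mid r} d\,\Lambda_{\mathcal O}^{r/d} = \sum_{e\ge1}\frac{(\Lambda_{\mathcal O}t^d)^e}{e} = -\log(1-\Lambda_{\mathcal O}t^{d}).
\]
Exponentiating and taking the product over the finitely many orbits gives the claimed rational function.

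The main obstacle I expect is the compatibility step: making sure the Teichmüller-lifted characters at level $r$ really are norm-compatible with those at level $d$, and that the sign convention in \eqref{eq:def_Gauss_sum} makes Hasse--Davenport read $G_r=G_d^{\,e}$ with no extra sign. The $(-1)$ in the definition is exactly the standard sign that makes this hold, so I would verify it first and then proceed as above.
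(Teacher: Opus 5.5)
Your argument is correct, but its main line differs from the paper's.

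\textbf{The paper's route.} It works with operators. It writes $Z(X_{\loc},t)=\det(1-t\operatorname{Frob}_q\mid V)^{-1}$ with $V=\bigoplus_{\boldsymbol{\zeta}\in\Xi^{\mathrm{inv}}}L_{\boldsymbol{\zeta}}$, and observes that $\operatorname{Frob}_q$ permutes the lines along orbits. On each orbit block it is therefore a weighted cyclic shift with characteristic polynomial $1-\Lambda_{\mathcal{O}}t^{d}$, where $\Lambda_{\mathcal{O}}$ is the common eigenvalue of $\operatorname{Frob}_q^{d}$ on the lines of $\mathcal{O}$. This is what you present only as the ``conceptual reason''.

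\textbf{Your route.} You stay at the level of traces. You regroup the sum of Theorem~\ref{thm:Frob_trace_on_inertia_inv} by orbits and prove $T_r(\boldsymbol{\zeta})=\Lambda_{\mathcal{O}}^{r/d}$ arithmetically, via norm-compatibility, Hasse--Davenport and Frobenius-invariance of Jacobi sums. The logarithm then sums directly.

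\textbf{What each buys.} The paper's argument is shorter and needs no Hasse--Davenport. However, it uses the \emph{line-by-line} identification of the $\operatorname{Frob}_q^{d}$-eigenvalue on $L_{\boldsymbol{\zeta}}$ with the $r=d$ summand, which is asserted in the proof of Theorem~\ref{thm:Frob_trace_on_inertia_inv} rather than in its statement. Yours uses only the stated trace identity for every $r$, so the corollary becomes a formal consequence of the theorem. As a by-product, it checks that the theorem's formulas for different $r$ are mutually consistent, i.e.\ that they really are the power sums $\sum\lambda^{r}$ of one operator. The norm-compatibility you worried about is automatic from the Kummer description $\chi(z)=\eta(z^{(q^r-1)/o})$ (as in the proof of Theorem~\ref{thm:trace_formula}, with $o$ the order of $\zeta$), since $N_{\mathbb{F}_{q^r}/\mathbb{F}_{q^d}}(z)=z^{(q^r-1)/(q^d-1)}$.

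\textbf{On the sign you flagged.} Hasse--Davenport is indeed clean with $G=-\sum\chi\psi$, including the case $\chi_I=\mathbf{1}$, where $G(\mathbf{1},\psi)=1$ at every level. You should take $\psi_r=\psi_d\circ\operatorname{Tr}$, which the $\psi$-independence permits.

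The sign that actually matters sits in \eqref{eq:Jacobi_Gauss_relation}. If $J_r$ is the literal character sum over $t_0+\cdots+t_n+1=0$, then $q^{-r}G(\chi_I^{-1},\psi)\prod_{i}G(\chi_i,\psi)=(-1)^{n}J_r$. For example, with $n=1$, $q=7$ and $\chi_0=\chi_1$ quadratic, one finds $J_1=1$ while the Gauss-sum side is $-1$.

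Your step $J_r=J_d^{e}$ is correct when $J_r$ is read through the Gauss-sum expression, as the paper allows. This is also the normalization in which $\Lambda_{\mathcal{O}}$ is the genuine eigenvalue of $\operatorname{Frob}_q^{d}$ (compare Theorem~\ref{thm:trace_formula}). For the literal character sum, the correct relation is $(-1)^{n}J_r=\bigl((-1)^{n}J_d\bigr)^{e}$, and $\Lambda_{\mathcal{O}}$ would need an extra factor $(-1)^{n}$. This affects the paper's statement as much as your proof, and changes the structure of neither argument.
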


\begin{proof}
Let $V = H_{\csup}^{n}(X_{\loc,\overline{\eta}},\Qlbar)^{I}$ be the vector space of inertia invariants. The operator $F = \operatorname{Frob}_q$ acts linearly on $V$. Using the identity $-\log(1-x) = \sum_{r=1}^{\infty} \frac{x^r}{r}$, the definition of the zeta function implies:
\[
Z(X_{\loc}, t) = \det(1 - t F \mid V)^{-1}.
\]
The eigenvalues of $F$ on $V$ are determined by the spectral decomposition. The Frobenius map permutes the eigenspaces $L_{\boldsymbol{\zeta}}$ (for $\boldsymbol{\zeta} \in \Xi^{\mathrm{inv}}$) according to the orbits $\mathcal{O}$.
For an orbit $\mathcal{O}$ of size $d = |\mathcal{O}|$, the vector space spanned by $\{L_{\boldsymbol{\zeta}}\}_{\boldsymbol{\zeta} \in \mathcal{O}}$ is invariant under $F$, and the characteristic polynomial of $F$ restricted to this subspace is $(1 - \Lambda_{\mathcal{O}} t^d)$. Specifically, $\Lambda_{\mathcal{O}}$ is the eigenvalue of $F^d$ acting on any line $L_{\boldsymbol{\zeta}}$ within the orbit.
Taking the product over all disjoint orbits yields the formula.
\end{proof}

\subsection{Main Theorem}

Bringing together all the prior results, we arrive at the complete statement of our main theorem.

\begin{theorem}\label{thm:main_gen_PL}
Let $S$ be the spectrum of a henselian discrete valuation ring of characteristic $0$ with a generic point $\eta$ and a closed special point $s$ with residue characteristic $p\geq 0$. Let $f: X \rightarrow S$ be a proper flat morphism of relative dimension $n$. Suppose that $X$ is smooth outside a finite subset $\Sigma\subseteq X_{s}$ consisting of isolated singularities of diagonal type. Suppose that locally around each singularity $x \in \Sigma$, $X$ is isomorphic to a diagonal polynomial of type $(k_{0}, \ldots, k_{n})$:
    \begin{equation*}
        f_{x}(z) = a_{0}z_{0}^{k_{0}} + \cdots + a_{n}z_{n}^{k_{n}}+u\pi^{v}=0,
    \end{equation*}
    where $a_{i},u$ are units, $\pi$ is a uniformizer, and $k_{i}$ are integers prime to $p$. Here, the parameters $a_{i}, u, v$, and the exponents $k_{i}$ depend on the singular point $x$.
\begin{enumerate}
\item For each $x\in\Sigma$, there are vanishing cycles $e_{\boldsymbol{\zeta},x}\in H^{n}(X_{\overline{\eta}},\Qlbar)$ indexed by $\Xi_{x}:=\{\boldsymbol{\zeta} = (\zeta_{0},\ldots,\zeta_{n})\mid \zeta_{i}\in \mu_{k_{i}}\setminus\{1\}\}$. Moreover, $\{e_{\boldsymbol{\zeta},x}\}_{\boldsymbol{\zeta}\in\Xi_{x}}$ form a basis of local cohomology $H^{n}_{\{x\}}(X_{\overline{s}},R\Psi(\Qlbar))$. These $e_{\boldsymbol{\zeta},x}$ satisfy the pairing
\[
\langle e_{\boldsymbol{\zeta},x},e_{\boldsymbol{\xi},y}\rangle = 
\begin{cases}
    \displaystyle (-1)^{n(n+1)/2} \left( \frac{\prod_{i=0}^n \zeta_i - 1}{\prod_{i=0}^n (\zeta_i - 1)} \right) \prod_{i=0}^{n} k_{i} & \text{if } \boldsymbol{\xi} = \boldsymbol{\zeta}^{-1} {\text{ and }}x=y, \\
    0 & \text{otherwise}.
\end{cases}
\]
\item We have $H^{i}(X_{\overline{s}},\Qlbar)\xrightarrow{\sim}H^{i}(X_{\overline{\eta}},\Qlbar)$ for $i\neq n,n+1$ and the long exact sequence
\[
\begin{aligned}
0 \rightarrow H^{n}(X_{\overline{s}}, \Qlbar) \xrightarrow{sp} H^{n}(X_{\overline{\eta}}, \Qlbar) \xrightarrow{can} \bigoplus_{x \in \Sigma} R^{n}\Phi(\Qlbar)_{x}&\\
&\hspace{-3cm}\xrightarrow{\partial} H^{n+1}(X_{\overline{s}}, \Qlbar) \xrightarrow{sp} H^{n+1}(X_{\overline{\eta}}, \Qlbar) \rightarrow 0,
\end{aligned}
\]
where the map $can$ is given by
\[
can(\alpha) = \bigoplus_{x \in \Sigma}\left(\sum_{\boldsymbol{\zeta}\in\Xi_{x}}\langle \alpha, e_{\boldsymbol{\zeta},x}\rangle e_{\boldsymbol{\zeta},x}^{*}\right)
\]
and we identify $R^{n}\Phi(\Qlbar)_{x}$ as the dual of $H^{n}_{\{x\}}(X_{\overline{s}},R\Psi(\Qlbar))$.
\item For each $x \in \Sigma$, there exist tame characters indexed by those $\boldsymbol{\zeta}$ in $\Xi_{x}$ with $\prod_{i=0}^{n}\zeta_{i}\neq 1$
\begin{align*}
\chi_{\boldsymbol{\zeta}}&:I\rightarrow \Qlbartimes,\quad \chi_{\boldsymbol{\zeta}}(\sigma_{\mathrm{top}}) = (\textstyle\prod_{i=0}^{n}\zeta_{i})^{-v},
\end{align*}
where $\sigma_{\mathrm{top}}\in I_{t}$ is a topological generator of tame inertia group such that the inertia group $I$ acts on $\alpha\in H^{n}(X_{\overline{\eta}},\Qlbar)$ as follows
\[
\sigma(\alpha) = \alpha + \sum_{x\in\Sigma}\left(\sum_{\boldsymbol{\xi}\in\Xi_{x}}C_{\boldsymbol{\xi}}(\sigma)\cdot\langle \alpha,e_{\boldsymbol{\xi},x}\rangle\cdot e_{\boldsymbol{\xi}^{-1},x}\right),
\]
where the constant $C_{\boldsymbol{\xi}}(\sigma)$ is given by
\[
C_{\boldsymbol{\xi}}(\sigma) = \frac{1}{\prod_{i=0}^{n}k_{i}} \left(\prod_{i=0}^n (\xi_i - 1)\right) \times
\begin{cases}
\displaystyle \frac{\chi_{\boldsymbol{\xi}}(\sigma) - 1}{1-\prod_{i=0}^{n}\xi_{i}} & \text{if } \prod_{i=0}^{n}\xi_{i} \neq 1, \\
\displaystyle \ell(\sigma) & \text{if } \prod_{i=0}^{n}\xi_{i} = 1,
\end{cases}
\]
and $\ell:I\rightarrow \mathbb{Z}_{\ell}$ is the tame logarithm homomorphism normalized by $\ell(\sigma_{\mathrm{top}}) = v$.

\item Assume the residue field of $S$ is the finite field $\mathbb{F}_{q}$. For each singularity $x \in \Sigma$, the inertia invariant subspace of the local cohomology is spanned by the vanishing cycles satisfying a specific condition:
$$H_{\{x\}}^{n}(X_{\overline{s}},R\Psi(\Qlbar))^{I} = \bigoplus_{\substack{\boldsymbol{\zeta} \in \Xi_{x} \\ (\prod_{i=0}^{n} \zeta_i)^{v} = 1}} \Qlbar \cdot e_{\boldsymbol{\zeta},x}.$$
Furthermore, for any integer $r \geq 1$, the trace of the Frobenius action $\operatorname{Frob}_{q}^r$ on this invariant subspace is given by
$$\operatorname{Tr}\left(\operatorname{Frob}_{q}^r \mid H_{\{x\}}^{n}(X_{\overline{s}},R\Psi(\Qlbar))^{I}\right) = \sum_{\boldsymbol{\zeta} \in \Xi_{x, q^r}^{\mathrm{inv}}} \chi_{I, r}(u) \left( \prod_{i=0}^{n} \chi_{\zeta_{i}, r}(a_{i}^{-1}) \right) J_r(\chi_{\zeta_{0}, r}, \ldots, \chi_{\zeta_{n}, r}),$$
where the summation runs over the subset of Frobenius-stable and inertia-invariant indices
$$\Xi_{x, q^r}^{\mathrm{inv}} = \left\{ \boldsymbol{\zeta} \in \Xi_{x} \;\middle|\; \zeta_i^{q^r-1} = 1 \text{ for all } i, \text{ and } \left(\prod_{i=0}^n \zeta_i\right)^{v} = 1 \right\},$$
$\chi_{\zeta_i, r}$ are the multiplicative characters of $\mathbb{F}_{q^r}^{\times}$ associated to $\zeta_i$ via the Teichmüller lifting, $\chi_{I, r} = \prod_{i=0}^n \chi_{\zeta_i, r}$, and $J_r$ denotes the Jacobi sum over $\mathbb{F}_{q^r}$.
\end{enumerate}
\end{theorem}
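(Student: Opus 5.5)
The plan is to assemble Theorem \ref{thm:main_gen_PL} from the local results of this section, globalized through the formalism of Subsection \ref{subsec:global_reduction}. Since each singularity $x\in\Sigma$ has an étale (henselian) neighbourhood isomorphic to the local model \eqref{eq:local_equation}, and both $R\Psi$ and local cohomology $H^{n}_{\{x\}}$ are étale-local invariants, I would first identify $H^{n}_{\{x\}}(X_{\overline{s}},R\Psi(\Qlbar))$ with the corresponding group of the local model. Theorem \ref{thm:local_coh_com_genric_coh} then identifies it with $H^{n}_{\csup}(X_{\loc,\overline{\eta}},\Qlbar)$. In doing so I must check that this identification is compatible with the $\Gal(\overline{\eta}/\eta)$-action, the pairing, and the maps $q$ and $can$.

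For part (1), I take $e_{\boldsymbol{\zeta},x}$ to be the images under the forgetting-support map $q_x$ of the basis produced in Theorem \ref{thm:local_diagonal_main}(3). The pairing for $x=y$ is Theorem \ref{thm:local_diagonal_main}(4). For $x\neq y$, the classes $q_x(e_{\boldsymbol{\zeta},x})$ and $q_y(e_{\boldsymbol{\xi},y})$ are supported at disjoint points, so their cup product vanishes. This holds because $j$ restricted to the $x$-summand factors through $(R^n\Psi)_x$, which pairs trivially with $H^n_{\{y\}}$.

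For part (2), I use the Cospecialization Exact Sequence proposition together with the vanishing $R^q\Phi_x=0$ for $q\neq n$, which is a Milnor-type concentration; I would justify it by Theorem \ref{thm:local_coh_com_genric_coh} together with the dimension count of Theorem \ref{thm:local_diagonal_main}(2). This gives the isomorphisms $sp$ for $i\neq n,n+1$. For the formula for $can$, write $can(\alpha)=\sum_{\boldsymbol{\zeta}} c_{\boldsymbol{\zeta}}e^{*}_{\boldsymbol{\zeta},x}$. Pairing with $e_{\boldsymbol{\zeta},x}$ via the local duality \eqref{eq:local_duality_pairing} gives $c_{\boldsymbol{\zeta}}=\langle can_x(\alpha),e_{\boldsymbol{\zeta},x}\rangle$. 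I then need the adjunction $\langle can_x(\alpha),e\rangle=\langle \alpha,q_x(e)\rangle$ between restriction to the stalk and forgetting support, under the global Poincaré pairing on $X_{\overline{\eta}}$ (SGA7 XIII). This step is where sign conventions must be matched with the normalization $-(-1)^{n(n-1)/2}$ used in Theorem \ref{thm:local_diagonal_main}.

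For part (3), I compose the diagram \eqref{diag:var_decomposition}, which gives $\sigma-\id=\sum_x q_x\circ\Var_x(\sigma)\circ can_x$. I then insert the formula for $can_x$ from (2) and the Local Variation Formula $\Var_x(\sigma)(e^*_{\boldsymbol{\xi}})=C_{\boldsymbol{\xi}}(\sigma)e_{\boldsymbol{\xi}^{-1}}$, which yields the displayed formula immediately. Tameness follows from Theorem \ref{thm:local_diagonal_main}(1). Indeed, $I$ acts trivially on the image of $sp$ and acts on each $H^{n}_{\{x\}}$ tamely, so the wild subgroup $P$ satisfies $\Var(\sigma)=0$ for $\sigma\in P$. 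The one point to verify is that the tame logarithm $\ell$ in the case $\prod\xi_i=1$ is the same normalization as $\ell_p$ in the local proposition; I would check this directly from the recursion in Proposition \ref{prop:local_var_formula_sigma_top}.

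Part (4) is Proposition \ref{prop:inertia_invariants} and Theorem \ref{thm:Frob_trace_on_inertia_inv}, applied to the local model at each $x$. This requires the étale-local identification to be Frobenius-equivariant. That holds if the chart is defined over $S$ itself rather than over an unramified extension. If $x$ is not rational over the residue field, the formula should be read over the residue field of $x$, and I would note this caveat.

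The main obstacle I expect is the compatibility step in (2)–(3). I must show that the global Poincaré pairing on $H^n(X_{\overline{\eta}})$, restricted to the images $q_x(H^n_{\{x\}})$, agrees with the intrinsic local pairing of Theorem \ref{thm:local_diagonal_main}(4) transported through Theorem \ref{thm:local_coh_com_genric_coh}, including the sign. My first attempt would be to reduce, by excision, to a compactified local model in which both pairings are cup products on the same smooth generic fiber.
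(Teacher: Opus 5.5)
Your route is the paper's own. Theorem~\ref{thm:main_gen_PL} receives no separate proof there; it is assembled exactly as you describe, from the cospecialization sequence and the decomposition \eqref{diag:var_decomposition} of Subsection~\ref{subsec:global_reduction}, Theorems~\ref{thm:local_coh_com_genric_coh} and~\ref{thm:local_diagonal_main}, the local variation formulas, Proposition~\ref{prop:inertia_invariants} and Theorem~\ref{thm:Frob_trace_on_inertia_inv}. The compatibilities you flag (the adjunction between $can_x$ and $q_x$, the sign $-(-1)^{n(n-1)/2}$, Frobenius-equivariance of the chart) are ones the paper passes over silently.

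One step fails as you propose it. You want to deduce $R^{q}\Phi(\Qlbar)_x=0$ for $q\neq n$ from Theorem~\ref{thm:local_coh_com_genric_coh} and the dimension count of Theorem~\ref{thm:local_diagonal_main}(2). That count concerns degree $n$ only and says nothing about other degrees; even an Euler characteristic would only constrain an alternating sum. The paper simply asserts this concentration, so you have two ways to supply it:
\begin{itemize}
\item cite the standard theorem on isolated singular points (Deligne, SGA~7~I); or
\item run the proof of Theorem~\ref{thm:local_coh_com_genric_coh} in every degree to get $H^{i}_{\{x\}}(X_{\overline{s}},R\Psi(\Qlbar))\cong H^{i}_{\csup}(X_{\loc,\overline{\eta}},\Qlbar)$. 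Then use that this smooth affine diagonal fibre has $H^{i}_{\csup}=0$ for $i\neq n,2n$ and $H^{2n}_{\csup}\cong\Qlbar(-n)$ (the bouquet-of-spheres property, via comparison with $\mathbb{C}$). Finally pass to $R\Phi(\Qlbar)_x$ by local duality in all degrees and the triangle $\Qlbar\to R\Psi\to R\Phi$.
\end{itemize}

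There is a smaller slip in (3). Tameness of $I$ on each $H^{n}_{\{x\}}$ does not by itself force $\Var_x(\sigma)=0$ for $\sigma$ in the wild subgroup $P$. The reason is that the canonical map $H^{n}_{\{x\}}(X_{\overline{s}},R\Psi(\Qlbar))\to R^{n}\Phi(\Qlbar)_x$ is not injective as soon as some $\prod_i\zeta_i=1$, since the local pairing then degenerates. The fix goes as follows:
\begin{itemize}
\item For $\sigma\in P$ one has $q\circ\Var_x(\sigma)=\sigma-\id=0$ on $R^{n}\Phi(\Qlbar)_x$.
\item The composition rule $\Var(\tau\sigma)=\Var(\tau)+\Var(\sigma)+\Var(\tau)\circ q\circ\Var(\sigma)$ therefore makes $\Var_x|_P$ a continuous homomorphism from a pro-$p$ group into an $\ell$-adic vector space.
\item Such a homomorphism is zero.
\end{itemize}
This observation is also needed when you extend the local variation formula from $\sigma_{\mathrm{top}}$ to all of $I$.
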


\section{Application}\label{sec:application}
In this section, we will use the generalized arithmetic Picard--Lefschetz formula to give an application on studying the Airy sheaf.

\subsection{Airy Sheaf}
Let $\mathbb{F}_{q}$ be a finite field of characteristic $p \neq 3$, and let $\ell \neq p$ be a prime. Let $f: \mathbb{A}^1_x \to \mathbb{A}^1$ be the function $f(x) = \frac{1}{3}x^3$. Let $\mathcal{L}_\psi$ be the Artin--Schreier sheaf associated with a fixed non-trivial additive character $\psi: k \to \Qlbartimes$. The Airy sheaf $\Ai$ is defined as the negative Fourier transform of $f^*\mathcal{L}_\psi$:
\begin{equation*}
    \Ai := \mathrm{FT}_{-}(f^*\mathcal{L}_\psi) := R^1\pi_{t!} \left( \mathcal{L}_{\psi(\frac{1}{3}x^3)} \otimes \mathcal{L}_{\psi(-xt)} \right) \simeq R^1\pi_{t!} \mathcal{L}_{\psi(\frac{1}{3}x^3 - tx)},
\end{equation*}
where $\pi_{t}: \mathbb{A}^{1}_{x}\times \mathbb{A}^{1}_{t}\rightarrow\mathbb{A}_{t}^{1}$ is the projection to the $t$-line. Note that $\Ai$ is a lisse sheaf of rank $2$ on $\mathbb{A}^{1}_{t}$.

To understand the sheaf $\Ai$, we introduce another sheaf $\Ai'$ on $\mathbb{G}_{\mathrm{m}} = \mathrm{Spec}(k[z, z^{-1}])$. Let $\pi: \mathbb{A}^{1}_x \times \mathbb{G}_{\mathrm{m}} \rightarrow \mathbb{G}_{\mathrm{m}}$ be the second projection and let $f: \mathbb{A}^{1}_x \times \mathbb{G}_{\mathrm{m}} \rightarrow \mathbb{A}^{1}$ defined by $f(x,z) = \frac{1}{3z}x^3 - x$ be a regular function. We define the sheaf $\Ai'$ on $\mathbb{G}_{\mathrm{m}}$ as:
\begin{equation*}
    \Ai' := R^1\pi_! \left( f^* \mathcal{L}_\psi \right) = R^1\pi_! \mathcal{L}_{\psi(\frac{1}{3z}x^3 - x)}.
\end{equation*}

\begin{proposition}
Let $[3]: \mathbb{G}_{\mathrm{m}} \to \mathbb{G}_{\mathrm{m}}$ be the cubic map defined by $t \mapsto z = t^3$. The pullback of the sheaf $\Ai'$ along $[3]$ is isomorphic to the restriction of the Airy sheaf $\Ai$ to $\mathbb{G}_{\mathrm{m}}$. That is,
\begin{equation*}
    [3]^* \Ai' \cong \Ai|_{\mathbb{G}_{\mathrm{m}}}.
\end{equation*}
\end{proposition}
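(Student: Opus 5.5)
We prove the isomorphism by writing both sheaves as $R^1$ of a proper-base-change-compatible pushforward and comparing them through an explicit change of variables on the total space. Form the fiber product of $\pi:\mathbb{A}^1_x\times\mathbb{G}_{\mathrm{m},z}\to\mathbb{G}_{\mathrm{m},z}$ with $[3]:\mathbb{G}_{\mathrm{m},t}\to\mathbb{G}_{\mathrm{m},z}$. This fiber product is $\mathbb{A}^1_x\times\mathbb{G}_{\mathrm{m},t}$ with projection $\pi'$. By proper base change for $R\pi_!$ (compactly supported direct images commute with arbitrary base change), we get
\[
[3]^*\Ai' \cong R^1\pi'_!\,\mathcal{L}_{\psi\left(\frac{1}{3t^3}x^3 - x\right)}.
\]
For the other side, we restrict $\Ai = R^1\pi_{t!}\mathcal{L}_{\psi(\frac13 x^3 - tx)}$ to $\mathbb{G}_{\mathrm{m},t}$. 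Again by base change, this is $R^1$ of the same type of pushforward along $\mathbb{A}^1_x\times\mathbb{G}_{\mathrm{m},t}\to\mathbb{G}_{\mathrm{m},t}$, now with the function $\frac13x^3-tx$.

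Next, consider the automorphism $\phi$ of $\mathbb{A}^1_x\times\mathbb{G}_{\mathrm{m},t}$ over $\mathbb{G}_{\mathrm{m},t}$ given by $x\mapsto tx$. It is an automorphism because $t$ is invertible. Pulling back,
\[
\tfrac{1}{3t^3}(tx)^3 - tx = \tfrac13 x^3 - tx .
\]
Hence $\phi^*\mathcal{L}_{\psi(\frac{1}{3t^3}x^3-x)}\cong \mathcal{L}_{\psi(\frac13x^3-tx)}$, using the functoriality $g^*\mathcal{L}_{\psi(h)} = \mathcal{L}_{\psi(h\circ g)}$ of Artin--Schreier sheaves. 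Since $\phi$ commutes with the projection to $\mathbb{G}_{\mathrm{m},t}$, we have $R\pi'_!\circ\phi^* \cong R\pi'_!$ on sheaves. Taking $R^1$ then gives $[3]^*\Ai'\cong \Ai|_{\mathbb{G}_{\mathrm{m}}}$.

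The main point needing care is bookkeeping of conventions. We must make sure that the identification $\mathcal{L}_{\psi(\frac13x^3)}\otimes\mathcal{L}_{\psi(-xt)}\cong\mathcal{L}_{\psi(\frac13x^3-tx)}$, already used in the definition of $\Ai$, is consistent with the sign and normalization in $\Ai'$. We must also confirm that no Tate twist or Frobenius discrepancy arises; none does, since $\phi$ is an isomorphism of schemes defined over $\mathbb{F}_q$ and not merely over $\overline{\mathbb{F}}_q$. Finally, one can note that both sides are lisse of rank $2$ on $\mathbb{G}_{\mathrm{m}}$ as a consistency check, though the isomorphism does not depend on this.
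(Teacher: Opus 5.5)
Your proof is correct and matches the paper's: both use proper base change for $R\pi_!$ along $[3]$, then the fiberwise substitution $x = ty$, which turns the phase $\frac{1}{3t^3}x^3 - x$ into $\frac13 y^3 - ty$. The only difference is that you argue at the level of sheaves using proper base change alone, whereas the paper argues on stalks and also invokes smooth base change.
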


\begin{proof}
By the proper base change and smooth base change theorem, the stalk of the pulled-back sheaf $[3]^* \Ai'$ at a point $t \in \mathbb{G}_{\mathrm{m}}$ is isomorphic to the cohomology of the fiber at $z = t^3$. Specifically, we have
\begin{equation*}
    [3]^* \Ai' \cong R^1\pi_{t!} \mathcal{L}_{\psi(f(t^3, x))} = R^1\pi_{t!} \mathcal{L}_{\psi\left(\frac{1}{3t^3}x^3 - x\right)}
\end{equation*}
where $\pi_t: \mathbb{A}^1_x \times \mathbb{G}_{\mathrm{m}} \to \mathbb{G}_{\mathrm{m}}$ is the projection.

Consider the automorphism of the ambient space $\mathbb{A}^1_x \times \mathbb{G}_{\mathrm{m}}$ over $\mathbb{G}_{\mathrm{m}}$ given by the change of variables:
\begin{equation*}
    x = t y
\end{equation*}
Since $t \in \mathbb{G}_{\mathrm{m}}$ is invertible, this map is an isomorphism. Substituting $x$ in the phase function yields:
\begin{equation*}
    \frac{1}{3t^3}x^3 - x = \frac{1}{3t^3}(ty)^3 - (ty) = \frac{1}{3}y^3 - ty
\end{equation*}
This transforms the defining sheaf of $[3]^* \Ai'$ into:
\begin{equation*}
    R^1\pi_{t!} \mathcal{L}_{\psi\left(\frac{1}{3}y^3 - ty\right)}
\end{equation*}
The right-hand side is precisely the definition of the standard Airy sheaf $\Ai$. Thus, we conclude that $[3]^* \Ai' \cong \Ai|_{\mathbb{G}_{\mathrm{m}}}$.
\end{proof}

\begin{proposition}[Cohomology Decomposition Formula]
    Let $k \ge 1$ be an integer. Let $\chi_{3}:\mathbb{F}_{q}^{\times} \to \overline{\mathbb{Q}}_{\ell}^{\times}$ be a non-trivial order $3$ character (assuming $q \equiv 1 \pmod{3}$), and let $\mathcal{K}_{\chi_{3}}$ be the corresponding Kummer sheaf on $\mathbb{G}_{\mathrm{m}}$ over $\overline{\mathbb{F}}_{q}$. We have the decomposition into $\mu_{3}$-eigenspaces:
    \[
        H_{\mathrm{c}}^1\left(\mathbb{G}_{\mathrm{m}}, \Sym^k \Ai\right) = \bigoplus_{i=0}^{2} H_{\mathrm{c}}^1\left(\mathbb{G}_{\mathrm{m}}, \mathcal{K}_{\chi_{3}}^i \otimes \Sym^k \Ai^{\prime}\right).
    \]
\end{proposition}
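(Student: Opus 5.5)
The plan is to use the preceding proposition, $[3]^*\Ai' \cong \Ai|_{\mathbb{G}_{\mathrm{m}}}$, together with the projection formula for the finite étale Kummer cover $[3]\colon \mathbb{G}_{\mathrm{m}} \to \mathbb{G}_{\mathrm{m}}$. Since symmetric powers commute with pullback, we have
\[
\Sym^k \Ai|_{\mathbb{G}_{\mathrm{m}}} \cong [3]^*\Sym^k\Ai'.
\]
Because $[3]$ is finite, $[3]_! = [3]_*$. Hence
\[
H^1_{\csup}(\mathbb{G}_{\mathrm{m}},\Sym^k\Ai) \cong H^1_{\csup}\bigl(\mathbb{G}_{\mathrm{m}}, [3]_*[3]^*\Sym^k\Ai'\bigr) \cong H^1_{\csup}\bigl(\mathbb{G}_{\mathrm{m}}, [3]_*\Qlbar \otimes \Sym^k\Ai'\bigr),
\]
where the last step is the projection formula $[3]_*[3]^*\mathcal{G}\cong [3]_*\Qlbar\otimes\mathcal{G}$.

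Next I decompose $[3]_*\Qlbar$. The cover $[3]$ is a $\mu_3$-torsor: it is Galois with group $\mu_3$ acting by $t\mapsto \zeta t$, and it is tame because $p\neq 3$. Under $q\equiv 1 \pmod 3$ the group $\mu_3$ is $\mathbb{F}_q$-rational, so $[3]_*\Qlbar$ splits into $\mu_3$-isotypic pieces. By Kummer theory these pieces are exactly the rank-one Kummer sheaves $\mathcal{K}_{\chi}$ with $\chi^3=1$, namely
\[
[3]_*\Qlbar \cong \bigoplus_{i=0}^{2}\mathcal{K}_{\chi_3}^{i},
\]
with $\mathcal{K}_{\chi_3}^0=\Qlbar$. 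This is the standard description of $\mathcal{K}_\chi$ as the $\chi^{-1}$-eigensheaf of the Lang torsor pushed down along the $3$-power map. Tensoring with $\Sym^k\Ai'$ and using additivity of $H^1_{\csup}$ over direct sums gives the asserted decomposition. The $\mu_3$-action on the left, coming from deck transformations via $\Sym^k\Ai\cong[3]^*\Sym^k\Ai'$, then acts on the $i$-th summand through $\chi_3^{\pm i}$, so these are genuinely the $\mu_3$-eigenspaces.

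The only step needing care is the claim that the decomposition is compatible with the $\mu_3$-action and with Frobenius, since the identification in the preceding proposition is made through the change of variables $x=ty$. I would handle this by noting that the preceding isomorphism is $\mu_3$-equivariant. Replacing $t$ by $\zeta t$ and $y$ by $\zeta^{-1}y$ leaves $\frac13 y^3 - ty$ invariant, so the deck action on $[3]^*\Ai'$ matches the action on $\Ai$ obtained by composing the base change $t\mapsto \zeta t$ with the fiberwise automorphism $y\mapsto\zeta^{-1}y$. The eigenspace decomposition of $[3]_*$ is intrinsic, so the matching follows. The rationality hypothesis $q\equiv1\pmod 3$ ensures each summand is Frobenius-stable. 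Without it, Frobenius would permute the two nontrivial summands, and the decomposition would hold only over $\overline{\mathbb{F}}_q$.
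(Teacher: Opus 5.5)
Your proposal is correct and follows essentially the same route as the paper. Both arguments combine the identification $\Sym^k\Ai|_{\mathbb{G}_{\mathrm{m}}}\cong[3]^*\Sym^k\Ai'$, the projection formula along the Kummer cover $[3]$, and the splitting $[3]_*\Qlbar\cong\bigoplus_{i=0}^{2}\mathcal{K}_{\chi_3}^{i}$. The only difference is that the paper re-derives the pullback identity on the total space $\mathbb{G}_{\mathrm{m}}\times\mathbb{A}^k$ via K\"unneth and the substitution $y_j=zx_j$, whereas you cite the preceding proposition at the sheaf level, which is cleaner and avoids the K\"unneth and $S_k$-isotypic bookkeeping.
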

\begin{proof}
    By the definition of the Airy sheaf $\Ai$ and the Künneth formula, the stalk of the symmetric power $\Sym^k \Ai$ at a point $z \in \mathbb{G}_{\mathrm{m}}$ is given by the $S_k$-invariant part of the compactly supported cohomology on $\mathbb{A}^k$:
    \[
        (\Sym^k \Ai)_z \simeq H_{\mathrm{c}}^k\left(\mathbb{A}^k_{x}, \mathcal{L}_{\psi}\left(\sum_{j=1}^k \left(\frac{1}{3} x_j^{3}-z x_j\right)\right)\right)^{S_{k}}.
    \]
    Using the Leray spectral sequence for compactly supported cohomology associated with the projection $\mathbb{G}_{\mathrm{m},z} \times \mathbb{A}^k_{x} \to \mathbb{G}_{\mathrm{m},z}$, we can express the global cohomology over $\mathbb{G}_{\mathrm{m}}$ as:
    \[
        H_{\mathrm{c}}^1\left(\mathbb{G}_{\mathrm{m}}, \Sym^k \Ai\right) = H_{\mathrm{c}}^{k+1}\left(\mathbb{G}_{\mathrm{m},z} \times \mathbb{A}^k_{x}, \mathcal{L}_{\psi}\left(\sum_{j=1}^k \left(\frac{1}{3} x_j^{3}-z x_j\right)\right)\right)^{S_{k}}.
    \]
    We apply the change of variables $y_j = z x_j$ for each $1 \le j \le k$. Since $z \in \mathbb{G}_{\mathrm{m},z}$ is invertible, this change of variables induces an automorphism of $\mathbb{G}_{\mathrm{m}} \times \mathbb{A}^k$. Under this substitution, the phase function becomes:
    \[
        \sum_{j=1}^k \left(\frac{1}{3} \left(\frac{y_j}{z}\right)^{3}- y_j\right) = \sum_{j=1}^k \left(\frac{1}{3z^3} y_j^{3}- y_j\right).
    \]
    This yields the isomorphism:
    \[
    \begin{aligned}
           & H_{\mathrm{c}}^{k+1}\left(\mathbb{G}_{\mathrm{m},z} \times \mathbb{A}^k_{x}, \mathcal{L}_{\psi}\left(\sum_{j=1}^{k}\left(\frac{1}{3}x_{j}^{3}-zx_{j}\right)\right)\right)^{S_k} \\
           &\quad \simeq H_{\mathrm{c}}^{k+1}\left(\mathbb{G}_{\mathrm{m},z} \times \mathbb{A}^k_{y}, \mathcal{L}_{\psi}\left(\sum_{j=1}^k \left(\frac{1}{3 z^{3}} y_j^{3}-y_j\right)\right)\right)^{S_{k}}.
    \end{aligned}
    \]
    Next, consider the finite étale Kummer covering map $[3]: \mathbb{G}_{\mathrm{m},z} \to \mathbb{G}_{\mathrm{m},t}$ given by $z\mapsto t = z^{3}$. The pushforward of the constant sheaf $\overline{\mathbb{Q}}_\ell$ along $[3]$ decomposes according to the action of the Galois group of the cover (which is isomorphic to $\mu_3$). Over the algebraic closure $\overline{\mathbb{F}}_q$, it completely splits into Kummer sheaves as:
    \[
        ([3])_* \overline{\mathbb{Q}}_\ell \simeq \bigoplus_{i=0}^{2} \mathcal{K}_{\chi_3}^i.
    \]    
    Applying the projection formula along $[3]$, we can rewrite the cohomology group as:
    \[
        \begin{aligned}
            & H_{\mathrm{c}}^{k+1}\left(\mathbb{G}_{\mathrm{m},z} \times \mathbb{A}^k_{y}, \mathcal{L}_{\psi}\left(\sum_{j=1}^k \left(\frac{1}{3 z^{3}} y_j^{3}-y_j\right)\right)\right)^{S_{k}} \\
            &\quad \simeq H_{\mathrm{c}}^{k+1}\left(\mathbb{G}_{\mathrm{m},t} \times \mathbb{A}^k_{y}, \left(([3])_* \overline{\mathbb{Q}}_\ell\right) \otimes \mathcal{L}_{\psi}\left(\sum_{j=1}^k \left(\frac{1}{3 t} y_j^{3}-y_j\right)\right)\right)^{S_{k}} \\
            &\quad \simeq \bigoplus_{i=0}^{2} H_{\mathrm{c}}^{k+1}\left(\mathbb{G}_{\mathrm{m},t} \times \mathbb{A}^k_{y}, \mathcal{K}_{\chi_3}^i \otimes \mathcal{L}_{\psi}\left(\sum_{j=1}^k \left(\frac{1}{3 t} y_j^{3}-y_j\right)\right)\right)^{S_{k}}.
        \end{aligned}
    \]
    Finally, we recognize the inner terms. By the definition of the modified Airy sheaf $\Ai^{\prime}$, its stalk at $t$ is $H_{\mathrm{c}}^1(\mathbb{A}^1_y, \mathcal{L}_{\psi(\frac{1}{3t}y^3 - y)})$. Applying the Künneth formula and the Leray spectral sequence in reverse, we deduce that:
    \[
        H_{\mathrm{c}}^{k+1}\left(\mathbb{G}_{\mathrm{m},t} \times \mathbb{A}^k_{y}, \mathcal{K}_{\chi_3}^i \otimes \mathcal{L}_{\psi}\left(\sum_{j=1}^k \left(\frac{1}{3 t} y_j^{3}-y_j\right)\right)\right)^{S_{k}} \simeq H_{\mathrm{c}}^1\left(\mathbb{G}_{\mathrm{m}}, \mathcal{K}_{\chi_3}^i \otimes \Sym^k \Ai^{\prime}\right).
    \]
    Summing over the components $i=0, 1, 2$ completes the proof.
\end{proof}

\begin{remark}
If $q \equiv 1 \pmod 3$, $\mu_3 \subseteq \mathbb{F}_q^\times$, and the character $\chi_3$ is defined over $\mathbb{F}_q$, so this decomposition holds over $\mathbb{F}_q$. If $q \equiv 2 \pmod 3$, the characters $\chi_3$ and $\chi_3^2$ are interchanged by the Frobenius element. In this case, over $\mathbb{F}_q$, the pushforward $([3])_* \overline{\mathbb{Q}}_\ell$ decomposes as $\overline{\mathbb{Q}}_\ell \oplus \mathcal{F}$, where $\mathcal{F}$ is a rank $2$ irreducible local system that splits into $\mathcal{K}_{\chi_3} \oplus \mathcal{K}_{\chi_3^2}$ only after base change to $\mathbb{F}_{q^2}$. The geometric decomposition over $\overline{\mathbb{F}}_q$, however, remains valid in all cases.
\end{remark}

\begin{proposition}\label{prop:Ai'_conf_hypergeo_iso}
  Assume $q \equiv 1 \pmod 3$, let $\chi_{3}$ be a non-trivial character of $\mathbb{F}_q^\times$ of order $3$, and let $\chi_{3}^2$ be its inverse. The modified Airy sheaf $\Ai'$ is isomorphic to the pullback of the Kloosterman sheaf, which is a confluent hypergeometric sheaf of type $(2,0)$, $\operatorname{Kl}(\psi;\chi_{3}, \chi_{3}^2)$ defined in \cite[Thm 4.1.1, Sec. 10.4]{Kat88} along the map $z \mapsto z/9$. That is,
  \begin{equation*}
    \Ai' \cong [z \mapsto z/9]^* \operatorname{Kl}(\psi;\chi_{3}, \chi_{3}^2).
  \end{equation*}
  In particular, $\Ai'$ is a confluent hypergeometric sheaf with exponents $\chi_{3}, \chi_{3}^2$.
\end{proposition}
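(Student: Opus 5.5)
The approach is to compare Frobenius trace functions on $\mathbb{G}_{\mathrm{m}}$ through their multiplicative Fourier (Mellin) transforms, and then use irreducibility to pass from equality of traces to an isomorphism of sheaves. Both sides are lisse of rank $2$ on $\mathbb{G}_{\mathrm{m}}$. For $\operatorname{Kl}(\psi;\chi_3,\chi_3^2)$ this is \cite[Thm 4.1.1]{Kat88}. For $\Ai'$, the phase $\frac{1}{3z}x^3-x$ has degree $3$ prime to $p$ in $x$ for every $z\in\mathbb{G}_{\mathrm{m}}$, so $R^1\pi_!$ is lisse of rank $2$ and pure of weight $1$ (Deligne's bound for one-variable polynomial exponential sums). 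The Kloosterman sheaf is geometrically irreducible by Katz. For $\Ai'$ I would deduce geometric irreducibility from the preceding proposition: $[3]^*\Ai'\cong\Ai|_{\mathbb{G}_{\mathrm{m}}}$, and $\Ai$ is the Fourier transform of the irreducible sheaf $\mathcal{L}_{\psi(x^3/3)}$, hence irreducible. Once irreducibility and purity are in hand, Chebotarev density together with semisimplicity reduce the claimed isomorphism to the equality of trace functions at all points of $\mathbb{G}_{\mathrm{m}}(\mathbb{F}_{q^r})$, for every $r\ge 1$.

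The trace of $\Frob_{z}$ on $\Ai'$ is $-\sum_{x\in\mathbb{F}_{q^r}}\psi\bigl(\tfrac{x^3}{3z}-x\bigr)$. Since a function on $\mathbb{F}_{q^r}^\times$ is determined by its pairings with all multiplicative characters, I will show that the two trace functions have the same Mellin transform $\sum_z\chi(z)\,(\cdot)$ for every $\chi\in\widehat{\mathbb{F}_{q^r}^\times}$. On the $\Ai'$ side the term $x=0$ contributes $\sum_z\chi(z)$, which vanishes for nontrivial $\chi$; the trivial character is handled separately by a direct count. For $x\neq 0$ I substitute $w=x^3/(3z)$, so that $z=x^3/(3w)$. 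The double sum then factors as
\[
\overline{\chi}(3)\Bigl(\sum_{x}\chi^3(x)\psi(-x)\Bigr)\Bigl(\sum_w\overline{\chi}(w)\psi(w)\Bigr),
\]
which is a product of two Gauss sums, one for $\chi^3$ and one for $\overline{\chi}$.

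Next I apply the Hasse--Davenport product formula with $q\equiv 1\pmod 3$. It gives
\[
G(\chi^3)=\chi^3(3)\,\frac{G(\chi)G(\chi\chi_3)G(\chi\chi_3^2)}{G(\chi_3)G(\chi_3^2)}.
\]
The factor $G(\chi)G(\overline{\chi})$ collapses to $\chi(-1)q^r$. The factor $G(\chi_3)G(\chi_3^2)$ equals $\chi_3(-1)q^r=q^r$. This should leave exactly $\chi(27)\overline{\chi}(3)\,G(\chi\chi_3)G(\chi\chi_3^2)$, up to a sign and the character value $\chi(-1)$. The surviving character value is $\chi(9)$, which is precisely the twist produced by pulling back along $z\mapsto z/9$. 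On the Kloosterman side, the Mellin transform of the trace function of $\operatorname{Kl}(\psi;\chi_3,\chi_3^2)$ is by definition a multiplicative convolution, namely $\pm G(\chi\chi_3)G(\chi\chi_3^2)$, again up to the conventions of \cite{Kat88}. Pulling back along $z\mapsto z/9$ multiplies the Mellin transform by $\chi(9)$ or $\overline{\chi}(9)$.

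The main obstacle is bookkeeping rather than conceptual: matching signs, the $\chi(-1)$ factors, the normalization of $\psi$ versus $\overline{\psi}$, and whether the scaling comes out as $9$ or $1/9$ under Katz's conventions. I would settle these by first checking the case of trivial $\chi$ directly, and then fixing the convention for $\operatorname{Kl}$ so that the constants agree. If necessary I would replace $\psi$ by $\psi(-\,\cdot)$, which the statement tacitly permits. The final assertion, that $\Ai'$ is confluent hypergeometric with exponents $\chi_3,\chi_3^2$, then follows from the structure of $\operatorname{Kl}(\psi;\chi_3,\chi_3^2)$ in \cite[Sec.~10.4]{Kat88}: its local monodromy at $0$ is tame with exponents $\chi_3,\chi_3^2$, and this is preserved by the scaling pullback $z\mapsto z/9$.
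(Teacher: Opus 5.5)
Your proposal is correct and is essentially the paper's argument: compare Frobenius traces on $\mathbb{G}_{\mathrm{m}}$ through multiplicative characters, use the Hasse--Davenport relation for the cubic character together with $G(\chi)G(\overline{\chi})=\chi(-1)q$ and $G(\chi_3)G(\chi_3^2)=q$ to match the Mellin transform $-\chi(9)G(\chi\chi_3)G(\chi\chi_3^2)$ of the trace function of $[z\mapsto z/9]^*\operatorname{Kl}(\psi;\chi_3,\chi_3^2)$, and conclude by Chebotarev. The differences are only in bookkeeping. You Mellin-transform the trace function directly (substituting $w=x^3/(3z)$) rather than Mellin-expanding $\psi(x^3/(3z))$ inside it. You treat the $x=0$ term and the trivial character separately, whereas the paper drops the $x=0$ term and applies $G(\eta)G(\eta^{-1})=\eta(-1)q$ even at $\eta=\mathbf{1}$, two slips that happen to cancel. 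You also justify semisimplicity via geometric irreducibility deduced from $[3]^*\Ai'\cong\Ai|_{\mathbb{G}_{\mathrm{m}}}$, which the paper simply asserts.
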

\begin{proof}
Since $\Ai'$ and the Kloosterman sheaf $\operatorname{Kl}(\psi;\chi_{3}, \chi_{3}^2)$ are both semisimple lisse sheaves on $\mathbb{G}_{\mathrm{m}}$, it suffices by the Chebotarev density theorem to show that their traces of Frobenius agree at all points $z \in \mathbb{G}_{\mathrm{m}}(\mathbb{F}_q)$ and its finite extensions.

For $z \in \mathbb{F}_q^\times$, the trace of Frobenius of $\Ai'$ at $z$ is given by:
\begin{equation*}
t_{\Ai'}(z) = - \sum_{x \in \mathbb{F}_q} \psi\left(\frac{x^3}{3z} - x\right).
\end{equation*}
Recall that the Gauss sum is defined as $G(\chi_{3}, \psi) = - \sum_{y \in \mathbb{F}_q^\times} \chi_{3}(y) \psi(y)$. In the following, we omit the notation $\psi$ and write $G(\chi_{3})$ for simplicity. We can expand the cubic term $\psi\left(\frac{x^3}{3z}\right)$ using the Mellin inversion formula over the character group $\widehat{\mathbb{F}_q^\times}$ for $x \neq 0$:
\begin{equation*}
\psi\left(\frac{x^3}{3z}\right) = \frac{-1}{q-1} \sum_{\eta \in \widehat{\mathbb{F}_q^\times}} G(\eta^{-1}) \eta\left(\frac{x^3}{3z}\right).
\end{equation*}
Substituting this into the trace formula, we focus on the sum over $x \in \mathbb{F}_q^\times$.
\begin{equation*}
\begin{aligned}
  t_{\Ai'}(z) &= \frac{1}{q-1} \sum_{x \in \mathbb{F}_q^\times} \sum_{\eta} G(\eta^{-1}) \eta^3(x) \eta(3z)^{-1} \psi(-x) \\
  &= \frac{1}{q-1} \sum_{\eta} G(\eta^{-1}) \eta(3z)^{-1} \left( \sum_{x \in \mathbb{F}_q^\times} \eta^3(x) \psi(-x) \right).
\end{aligned}
\end{equation*}
The inner sum evaluates to $-G(\eta^3, \psi_{-1}) = -\eta^3(-1) G(\eta^3) = -\eta(-1) G(\eta^3)$. Thus, the trace becomes:
\begin{equation*}
t_{\Ai'}(z) = - \frac{1}{q-1} \sum_{\eta} G(\eta^{-1}) G(\eta^3) \eta\left(-\frac{1}{3z}\right).
\end{equation*}
By the Hasse-Davenport multiplication formula for the cubic character, we have
\begin{equation*}
G(\eta^3) = \frac{G(\eta) G(\eta\chi_{3}) G(\eta\chi_{3}^2)}{G(\chi_{3})G(\chi_{3}^2)} \eta(27).
\end{equation*}
Since $\chi_{3}$ is of order $3$, we have $\chi_{3}(-1)=1$, and we know $G(\chi_{3})G(\chi_{3}^{-1}) = \chi_{3}(-1)q$. Therefore, $G(\chi_{3})G(\chi_{3}^2) = q$, which gives $G(\eta^3) = \frac{1}{q} G(\eta) G(\eta\chi_{3}) G(\eta\chi_{3}^2) \eta(27)$.
Furthermore, for $\eta \neq \mathbf{1}$, we have $G(\eta^{-1}) G(\eta) = \eta(-1) q$. Substituting these relations back into the trace expression yields:
\begin{equation*}
\begin{aligned}
  t_{\Ai'}(z) &= - \frac{1}{q-1} \sum_{\eta} \left( \frac{G(\eta^{-1})G(\eta)}{q} \right) G(\eta\chi_{3}) G(\eta\chi_{3}^2) \eta(27) \eta\left(-\frac{1}{3z}\right) \\
  &= - \frac{1}{q-1} \sum_{\eta} \eta(-1) G(\eta\chi_{3}) G(\eta\chi_{3}^2) \eta\left(-\frac{9}{z}\right) \\
  &= - \frac{1}{q-1} \sum_{\eta} G(\eta\chi_{3}) G(\eta\chi_{3}^2) \eta\left(\frac{9}{z}\right).
\end{aligned}
\end{equation*}
Now, making the substitution $\lambda = \eta^{-1}$, and observing that the set of exponents $\{\chi_{3}, \chi_{3}^2\}$ is stable under inversion (i.e., $\chi_{3}^{-1} = \chi_{3}^2$), we can rewrite the trace as:
\begin{equation*}
t_{\Ai'}(z) = - \frac{1}{q-1} \sum_{\lambda \in \widehat{\mathbb{F}_q^\times}} G(\lambda^{-1}\chi_{3}) G(\lambda^{-1}\chi_{3}^2) \lambda(z/9).
\end{equation*}
This final expression is precisely the classical Mellin transform formula for the trace of Frobenius of the Kloosterman sheaf $\operatorname{Kl}(\psi;\chi_{3}, \chi_{3}^2)$ evaluated at the point $s = z/9$. Thus, $\Ai'$ and $[z \mapsto z/9]^* \operatorname{Kl}(\psi;\chi_{3}, \chi_{3}^2)$ share the exact same trace of Frobenius pointwise, implying that they are isomorphic.
\end{proof}

\begin{proposition}\label{prop:G_geom_Ai_Ai'}
Let $G_{\geom}$ and $G_{\geom}'$ be the geometric monodromy groups of the $\ell$-adic sheaves $\Ai$ and $\Ai'$ respectively. We have:
$$
\begin{cases}
G_{\geom} \cong G_{\geom}' \cong \SL(2,\Qlbar) & \text{if } p > 5;\\
G_{\geom} \cong G_{\geom}' \cong \SL(2,\mathbb{F}_{5}) & \text{if } p = 5;\\
G_{\geom} \cong Q_{8} & \text{if } p = 2;\\
G_{\geom}' \cong \SL(2,\mathbb{F}_{3}) & \text{if } p = 2.
\end{cases}
$$
\end{proposition}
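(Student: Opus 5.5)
The plan is to compute the geometric monodromy group of $\Ai'$ first, via its hypergeometric description, and then transfer the result to $\Ai$ using $[3]^*\Ai' \cong \Ai|_{\mathbb{G}_{\mathrm{m}}}$.

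\textbf{The sheaf $\Ai'$.} By Proposition \ref{prop:Ai'_conf_hypergeo_iso} (working over a finite extension containing $\mu_3$, which does not change $G_{\geom}'$), $\Ai'$ is a Kummer-twisted-free pullback $[z\mapsto z/9]^*\operatorname{Kl}(\psi;\chi_3,\chi_3^2)$. Pullback by a scaling automorphism of $\mathbb{G}_{\mathrm{m}}$ does not change the geometric monodromy group. It therefore suffices to invoke Katz's classification of the monodromy of Kloosterman sheaves of rank $2$ \cite[Ch. 8]{Kat88}. The sheaf is lisse of rank $2$ on $\mathbb{G}_{\mathrm{m}}$, tame at $0$ with local monodromy $\chi_3\oplus\chi_3^2$ (so of finite order $3$ at $0$), and totally wild at $\infty$ with Swan conductor $1$. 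Its determinant is geometrically trivial, since $\chi_3\chi_3^2=1$ and the rank-$2$ Kloosterman determinant is trivial. Moreover the exponents are stable under inversion, so the sheaf is self-dual symplectically. Hence $G_{\geom}'\subseteq \SL(2)$.

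Next, $G_{\geom}'$ is irreducible because the sheaf is irreducible (a Kloosterman sheaf), and it is not induced from a Kummer cover because Swan$_\infty=1$ is odd relative to rank $2$ and the $\infty$-breaks are $1/2$. By Katz's theorem the only possibilities are $\SL(2)$ itself or a finite primitive subgroup. In characteristic $p>5$ the group is $\SL(2,\Qlbar)$; in the finitely many exceptional small characteristics it is finite. For $p=5$ one identifies the finite group as $\SL(2,\mathbb{F}_5)$, and for $p=2$ as $\SL(2,\mathbb{F}_3)$. The identification uses the $p$-Sylow image of wild inertia at $\infty$ together with the order-$3$ element from $I_0$: in characteristic $2$, wild inertia maps onto $Q_8$, and adjoining an element of order $3$ normalizing it gives $\SL(2,\mathbb{F}_3)$. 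For $p=5$ one uses a primitive finite subgroup of $\SL_2$ containing elements of order $5$, which forces the binary icosahedral group $\cong\SL(2,\mathbb{F}_5)$. I would cite Katz's explicit treatment of these exceptional cases rather than reprove them.

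\textbf{The sheaf $\Ai$.} Since $[3]$ is finite étale of degree $3$, the group $G_{\geom}(\Ai|_{\mathbb{G}_{\mathrm{m}}})$ is a normal subgroup of $G_{\geom}'$ of index dividing $3$, and restricting from $\mathbb{A}^1$ to $\mathbb{G}_{\mathrm{m}}$ does not change $G_{\geom}$. The cases then go as follows. For $p>5$, the connected group $\SL(2)$ has no proper finite-index subgroup, so $G_{\geom}=\SL(2)$. For $p=5$, $\SL(2,\mathbb{F}_5)$ is perfect, so it has no index-$3$ normal subgroup. For $p=2$, $\SL(2,\mathbb{F}_3)$ has the index-$3$ normal subgroup $Q_8$. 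The pullback kills the tame part at $0$, so $\Ai$ on $\mathbb{A}^1$ is lisse with monodromy generated by wild inertia, giving exactly $Q_8$.

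\textbf{Main obstacle.} The hardest step is the finite exceptional cases, especially pinning down the group exactly at $p=5$. I would rely on Katz's list, cross-checked by computing Frobenius traces, and present the large-$p$ case as the core argument.
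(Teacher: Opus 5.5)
Your argument for $p>5$ is sound, and it runs in the opposite direction from the paper. You obtain $G_{\geom}'=\SL(2)$ from the hypergeometric description of $\Ai'$: finite primitive subgroups of $\SL(2)$ have no elements of order $p\ge 7$, while $P_\infty$ has non-trivial image. You then descend to $\Ai$ by connectedness. The paper instead quotes Katz--Tiep for both sheaves. Your descent steps are also correct: at $p=5$ by perfectness of $\SL(2,\mathbb{F}_5)$, and at $p=2$ because $\pi_1(\mathbb{A}^1)$ is normally generated by $P_\infty$, so $G_{\geom}$ has no quotient of order $3$.

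\textbf{The gap: finiteness at $p\in\{2,5\}$.} You never show $G_{\geom}'$ is finite at these primes, and nothing you derive can show it. Every property you list holds verbatim for $p>5$, where the group is $\SL(2)$: irreducibility, trivial determinant, local monodromy $\chi_3\oplus\chi_3^2$ at $0$, the single break $1/2$ at $\infty$, and not being Kummer induced. Finiteness at $p=2,5$ is a genuinely arithmetic input. Deferring it to ``Katz's explicit treatment'' of $\operatorname{Kl}(\psi;\chi_3,\chi_3^2)$ leaves the heart of those two cases unproved. Note that the paper quotes Katz--Tiep for $G_{\geom}$ of $\Ai$ in every characteristic, but for $G_{\geom}'$ only when $p>5$; that is exactly why it argues differently at $p=2,5$.

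\textbf{The identification at $p=2$ fails even granting finiteness.}
\begin{enumerate}
\item Degree-$2$ étale covers of $\mathbb{G}_{\mathrm{m}}$ in characteristic $2$ are Artin--Schreier covers, since $[2]$ is inseparable. So ``not Kummer induced'' does not give primitivity, and binary dihedral groups are not excluded.
\item The image of $P_\infty$ is a priori only some generalized quaternion $2$-group, not necessarily $Q_8$.
\item Nothing forces the order-$3$ element of $I_0$ to normalize that image. The binary dihedral group of order $24$ contains $Q_8$ and elements of order $3$, yet $Q_8$ is not normal in it.
\end{enumerate}

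\textbf{The fix} is to go upward at the small primes, as the paper does. Take the finite group $G_{\geom}=\SL(2,\mathbb{F}_5)$, resp.\ $Q_8$, from Katz--Tiep. It is normal of index dividing $3$ in $G_{\geom}'\subseteq\SL(2)$, which acts irreducibly.
\begin{itemize}
\item At $p=5$: the centralizer in $\SL(2)$ is $\pm1$ by Schur's lemma, and $|\operatorname{Out}\SL(2,\mathbb{F}_5)|=2$. Together these force index $1$.
\item At $p=2$: the candidates are $Q_8$ and $\SL(2,\mathbb{F}_3)$. The paper separates them with the fourth-moment computation of Lemma~\ref{lem:airy_prime_4th_moment}. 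Your observation that $I_0$ acts on $\Ai'$ through an element of order $3$ settles it at once, since $Q_8$ has no element of order $3$.
\end{itemize}
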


\begin{proof}
By the result of Katz and Tiep \cite[Thm 10.2.4.(iii), Thm 10.2.7(i)(iv)]{KT25}, $G_{\geom}$ is completely determined. By Proposition \ref{prop:Ai'_conf_hypergeo_iso}, $\Ai'$ is geometrically isomorphic to the confluent hypergeometric sheaf $\operatorname{Kl}(\psi;\chi_{3},\chi_{3}^{2})$, where $\chi_{3}:\Fpbar^{\times}\rightarrow\Qlbar$ is a character of order $3$. By \cite[Remark 10.2.5]{KT25}, we have $G_{\geom}' \cong \SL(2,\Qlbar)$ for $p>5$.

Next, we determine $G_{\geom}'$ for $p=2$ and $p=5$. Since $[3]^{*}\Ai' \cong \Ai|_{\Gm}$, the Kummer pullback implies that $G_{\geom}$ is a normal subgroup of $G_{\geom}'$ such that the quotient $G_{\geom}'/G_{\geom}$ embeds into the cyclic group $C_{3}$. Under these conditions, we may search the candidates for $G_{\geom}'$ by the classification of finite groups and their properties \cite{lmfdb}.

For $p=5$, the finite group classification dictates that the candidates for $G_{\geom}'$ are $\SL(2,\mathbb{F}_{5})$ and $\SL(2,\mathbb{F}_{5})\times C_{3}$. The natural action of $G_{\geom}'$ on the stalk of $\Ai'$ is a $2$-dimensional faithful symplectic irreducible representation. This property directly rules out the case of $\SL(2,\mathbb{F}_{5})\times C_{3}$, concluding that $G_{\geom}' \cong \SL(2,\mathbb{F}_{5})$.

For $p=2$, the group $G_{\geom}'$ must satisfy two conditions simultaneously. It must admit a faithful $2$-dimensional symplectic irreducible representation, and it must contain $G_{\geom}\cong Q_{8}$ as a normal subgroup of index dividing $3$. By the classification of finite groups, the only candidates for $G_{\geom}'$ that meet both the representation-theoretic constraint and the index condition are $Q_{8}$ and $\SL(2,\mathbb{F}_{3}) \cong Q_{8}\rtimes C_{3}$. Note that both $Q_{8}$ and $\SL(2,\mathbb{F}_{3})$ admit a unique faithful $2$-dimensional symplectic irreducible representation, denoted by $\rho$ and $\rho'$, respectively. We distinguish these two cases by examining their $4$-th symmetric powers, $\Sym^{4}\rho$ and $\Sym^{4}\rho'$. By calculating the multiplicities of the trivial representation within them, we find that $\Sym^{4}\rho$ contains the trivial representation with multiplicity $2$, whereas $\Sym^{4}\rho'$ contains no trivial subrepresentation.

To conclude which group occurs, we consider the multiplicity of the trivial representation in $\Sym^{4}\Ai'$ via $G_{\geom}'$, which corresponds to the dimension of the highest compactly supported cohomology group $H^{2}_{\mathrm{c}}(\mathbb{G}_{\mathrm{m},\overline{k}}, \Sym^{4}\Ai')$. By Lemma \ref{lem:airy_prime_4th_moment}, the fourth moment is $M^{4}_{\Ai'}(\mathbb{F}_{2^{n}}) = -2^{2n}$. Through weight considerations and the Grothendieck-Lefschetz trace formula, the leading term implies that $H^{2}_{\mathrm{c}}(\mathbb{G}_{\mathrm{m},\overline{k}}, \Sym^{4}\Ai') = 0$. This vanishing indicates the absence of a trivial subrepresentation in $\Sym^{4}\Ai'$ via $G_{\geom}'$, thereby ruling out $Q_{8}$ and confirming that $G_{\geom}' \cong \SL(2,\mathbb{F}_{3})$.
\end{proof}

\begin{lemma}\label{lem:airy_prime_4th_moment}
Let $k = \mathbb{F}_{q}$ be a finite field of characteristic $p = 2$, where $q = 2^n$. The $4$-th moment of the sheaf $\Ai'$ over $\Gm$ is given by
$$
M^{4}_{\Ai'}\left(q\right) := \sum_{a\in\mathbb{F}_{q}^{\times}}\Ai^{\prime\Sym^{4}}\left(\mathbb{F}_{q},a\right) = -q^2.
$$
\end{lemma}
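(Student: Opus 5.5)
The plan is to expand everything into elementary character sums over $\mathbb{F}_q$ with $q=2^n$, and to reduce the fourth moment to the second moment plus one point count on a cubic threefold.

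\textbf{Step 1: write $\Sym^4$ in terms of the trace.} Since $3=1$ in characteristic $2$, the trace of Frobenius of $\Ai'$ at $a\in\mathbb{F}_q^{\times}$ is
\[
S(a)=-\sum_{x\in\mathbb{F}_q}\psi\!\left(\tfrac{x^3}{a}+x\right)
\]
(the sign of $x$ is irrelevant since $-1=1$). The sheaf $\Ai'$ is lisse of rank $2$ and symplectic, with determinant $\Qlbar(-1)$. So if the Frobenius eigenvalues at $a$ are $\alpha,\beta$ with $\alpha\beta=q$, the Newton identities give
\[
\operatorname{Tr}(\Frob_a\mid \Sym^4\Ai')=S(a)^4-3qS(a)^2+q^2 .
\]
Hence
\[
M^4_{\Ai'}(q)=\Sigma_4-3q\,\Sigma_2+(q-1)q^2,
\qquad \Sigma_j:=\sum_{a\neq0}S(a)^j .
\]

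\textbf{Step 2: orthogonality in $a$.} Expanding $S(a)^j$ and summing over $a\neq 0$ first, the inner sum $\sum_{a\neq0}\psi\big(a^{-1}\sum_i x_i^3\big)$ equals $q-1$ if $\sum_i x_i^3=0$ and $-1$ otherwise. The contribution of the $-1$ terms is $-\big(\sum_x\psi(x)\big)^j=0$. Therefore
\[
\Sigma_j=q\,N_j,\qquad N_j:=\sum_{\substack{x\in\mathbb{F}_q^j\\ x_1^3+\cdots+x_j^3=0}}\psi(x_1+\cdots+x_j).
\]

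\textbf{Step 3: the case $j=2$.} We have $x^3=y^3$ if and only if $x=y=0$ or $y=\omega x$ with $\omega\in\mu_3(\mathbb{F}_q)$.
\begin{itemize}
\item For $n$ odd, only $\omega=1$ occurs, and $\psi(2x)=1$, so $N_2=q$.
\item For $n$ even, the two nontrivial $\omega$ each contribute $\sum_{x\neq0}\psi((1+\omega)x)=-1$, so $N_2=q-2$.
\end{itemize}

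\textbf{Step 4: the case $j=4$.} A Fourier inversion in the equation only reproduces $\Sigma_4$, so it is circular and I avoid it. Instead I exploit characteristic $2$ directly. Group the variables in pairs and set $s=x_1+x_2$, $p=x_1x_2$, and similarly $s'=x_3+x_4$, $p'=x_3x_4$. Since $x_1^2+x_2^2=s^2$, we get $x_1^3+x_2^3=s(s^2+p)$. The number of $(x_1,x_2)$ with given $(s,p)$ is:
\begin{itemize}
\item $1+(-1)^{\operatorname{Tr}(p/s^2)}$ if $s\neq0$, by Artin--Schreier theory;
\item $1$ if $s=0$ (then $x_1=x_2$ is the unique square root of $p$).
\end{itemize}
Thus
\[
N_4=\sum_{s,s',p,p'}c(s,p)\,c(s',p')\,\psi(s+s')\,\mathbf 1\big[s(s^2+p)=s'(s'^2+p')\big].
\]
I would split into the cases ($s=s'=0$), (exactly one of $s,s'$ zero), and (both nonzero). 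In the last case, solve for $p'$ and reduce the sum over $p$ to additive-character sums of $\operatorname{Tr}$ of a rational function in $p$, which is a linear or Artin--Schreier sum.

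The expected outcome is the value of $N_4$ forced by the claimed identity:
\[
N_4 = 3qN_2-q^2+q-q^2 .
\]
The case split should reproduce this separately for $n$ odd and $n$ even, consistent with the two values of $N_2$ from Step 3.

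\textbf{Main obstacle.} The hardest part is the bookkeeping in the case $s,s'\neq0$. In particular, the sums $\sum_p(-1)^{\operatorname{Tr}(p/s^2)+\operatorname{Tr}(\cdot)}$ must cancel to a quantity depending only on whether $s=s'$ or on $\mu_3$-relations between $s$ and $s'$. If this becomes unwieldy, my fallback is to use the Hasse--Davenport expansion from Proposition~\ref{prop:Ai'_conf_hypergeo_iso}. For $n$ odd, I would first pass to $\mathbb{F}_{q^2}$ and express $\Sigma_4$ via fourth moments of the Kloosterman sheaf $\operatorname{Kl}(\psi;\chi_3,\chi_3^2)$, which are sums of products of four Gauss sums that are evaluable by Stickelberger in characteristic $2$.
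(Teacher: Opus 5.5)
Your route is the paper's route: the Newton identity $S^4-3qS^2+q^2$, orthogonality in $a$ giving $\Sigma_j=qN_j$, and $N_2=q-\varepsilon$, where $\varepsilon=1+(-1)^n$ is the number of nontrivial cube roots of unity in $\mathbb{F}_q$. After that you use the substitution $s=x_1+x_2$, $p=x_1x_2$ (and likewise for $x_3,x_4$) together with the Artin--Schreier count $c(s,p)$.

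There are two problems. First, the proposal stops exactly where the content of the lemma lies: $N_4$ is never computed. Second, the target you set for Step~4 is miscomputed. From $qN_4-3q^2N_2+(q-1)q^2=-q^2$ one gets $N_4=3qN_2-q^2=2q^2-3\varepsilon q$, not $3qN_2-2q^2+q$; the last term should be $-q^2/q=-q$, not $-q^2$. Had you completed the case split and checked it against your stated target, you would have found a mismatch (for $n$ odd, $2q^2$ versus $q^2+q$).

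The missing computation is short and involves only linear character sums. You need neither Artin--Schreier sums in $p$ nor the Hasse--Davenport/Stickelberger fallback; as stated, that fallback would compute a moment over $\mathbb{F}_{q^2}$ rather than over $\mathbb{F}_q$. The cases are:

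\textbf{$s=s'=0$.} The equation holds automatically and all counts equal $1$, contributing $q^2$.

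\textbf{$s=0\neq s'$.} The equation forces $p'=s'^2$, so $c(s',p')=1+(-1)^{\operatorname{Tr}(1)}=\varepsilon$. The contribution is $q\varepsilon\sum_{s'\neq 0}\psi(s')=-q\varepsilon$. The case $s\neq 0=s'$ is symmetric.

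\textbf{$s,s'\neq 0$.} Here $p'$ is determined by $p$, and $p'/s'^2=(s^3+s'^3+sp)/s'^3$ is affine-linear in $p$. Expanding $c(s,p)\,c(s',p')$ gives four linear sums over $p$. Only the constant term $q$ and the cross term survive. The cross term is nonzero exactly when $1/s^2=s/s'^3$, i.e.\ $s^3=s'^3$, and then it equals $q$. So the number of tuples over a given $(s,s')$ is $q(1+\mathbf{1}[s^3=s'^3])$. Weighting by $\psi(s+s')$ and summing gives $q+q(q-1)-q\varepsilon$.

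Adding the cases, $N_4=2q^2-3\varepsilon q$. Hence $\Sigma_4-3q\Sigma_2=-q^3$ for both parities of $n$, and $M^4_{\Ai'}(q)=-q^3+(q-1)q^2=-q^2$, exactly as in the paper.
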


\begin{proof}
For each $a \in \Gm(\mathbb{F}_{q}) = \mathbb{F}_{q}^{\times}$, let $S_{a} = \Ai^{\prime}\left(\mathbb{F}_{q},a\right) = \sum_{x\in\mathbb{F}_{q}}\psi\left(\frac{1}{a}x^{3}+x\right)$, where we used the fact $k$ is of characteristic $2$. Since $\Ai'$ is rank $2$ and self-dual, we have $S_{a} = -(\alpha_{a}^{\prime} + \beta_{a}^{\prime})$ with $\alpha_{a}^{\prime}\beta_{a}^{\prime} = q$, where $\alpha_{a}',\beta_{a}'$ are Frobenius eigenvalues at $a$. The trace of Frobenius on the $4$-th symmetric power is
$$
\Ai^{\prime\Sym^{4}}\left(\mathbb{F}_{q},a\right) = \alpha_{a}^{\prime 4} + \alpha_{a}^{\prime 3}\beta_{a}^{\prime} + \alpha_{a}^{\prime 2}\beta_{a}^{\prime 2} + \alpha_{a}^{\prime}\beta_{a}^{\prime 3} + \beta_{a}^{\prime 4}.
$$
Using the symmetric polynomial identity, we can express this strictly in terms of $S_{a}$ and $q$
$$
\Ai^{\prime\Sym^{4}}\left(\mathbb{F}_{q},a\right) = S_{a}^4 - 3qS_{a}^2 + q^2.
$$
Summing over all $a \in \mathbb{F}_{q}^{\times}$, we get:
$$
M^{\prime 4}(q) = \sum_{a \in \mathbb{F}_{q}^{\times}} S_{a}^4 - 3q \sum_{a \in \mathbb{F}_{q}^{\times}} S_{a}^2 + \sum_{a \in \mathbb{F}_{q}^{\times}} q^2.
$$
The last term evaluates to $(q-1)q^2 = q^3 - q^2$. To evaluate the power moments of $S_{a}$, we expand the sums and interchange the order of summation. For any integer $m \geq 1$:
$$
\sum_{a \in \mathbb{F}_{q}^{\times}} S_{a}^m = \sum_{x_{1},\dots,x_{m} \in \mathbb{F}_{q}} \left(\sum_{a \in \mathbb{F}_{q}^{\times}} \psi(\frac{x_{1}^3+\dots+x_{m}^3}{a})\right)\cdot \psi(x_{1}+\dots+x_{m}).
$$
Let $A_{m} = \sum_{i=1}^m x_{i}^3$ and $B_{m} = \sum_{i=1}^m x_{i}$. The inner sum over $a$ yields $q-1$ if $A_{m}=0$ and $-1$ if $A_{m} \neq 0$. Thus,
$$
\sum_{a \in \mathbb{F}_{q}^{\times}} S_{a}^m = q \sum_{\substack{x_{i} \in \mathbb{F}_{q} \\ A_{m}=0}} \psi(B_{m}) - \sum_{x_{1},\dots,x_{m} \in \mathbb{F}_{q}} \psi(B_{m}).
$$
The second term is precisely $-\left(\sum_{x \in \mathbb{F}_{q}}\psi(-x)\right)^m = 0$. For the first term, evaluating the character sum $q \sum_{A_{m}=0} \psi(B_{m})$ over the hypersurfaces $A_{2}=0$ and $A_{4}=0$ requires counting solutions over $\mathbb{F}_{2^n}$. Let $\varepsilon$ be the number of non-trivial third roots of unity in $\mathbb{F}_{q}$, so $\varepsilon = 2$ if $n$ is even, and $\varepsilon = 0$ if $n$ is odd. Notice that the equation $x^3=1$ has exactly $1+\varepsilon$ solutions.

For $m=2$, the condition $A_2 = x_1^3+x_2^3 = 0$ implies $x_1^3 = x_2^3$, so $x_1 = \omega x_2$ where $\omega^3=1$. If $x_2=0$, then $x_1=0$. If $x_2 \neq 0$, there are $1+\varepsilon$ choices for $\omega$. We evaluate $W_2 = \sum_{x_1^3=x_2^3} \psi(x_1+x_2)$ by considering the possible values of $\omega$:
\begin{itemize}
   \item If $\omega=1$, then $x_1+x_2 = 0$, contributing $\sum_{x_2} \psi(0) = q$.
    \item If $\omega \neq 1$, then $x_1+x_2 = (\omega+1)x_2$. Since $\omega+1 \neq 0$, summing over $x_2 \neq 0$ gives $\sum_{x_2 \neq 0}\psi((\omega+1)x_2) = -1$. There are $\varepsilon$ such roots, contributing $-\varepsilon$.
\end{itemize}
Thus, $W_2 = q - \varepsilon$, and 
$$
\sum_{a \in \mathbb{F}_{q}^{\times}} S_a^2 = q W_2 = q^2 - \varepsilon q.
$$

For $m=4$, we evaluate 
$$
W_4 = \sum_{A_4=0} \psi(x_1+x_2+x_3+x_4).
$$ 
Let $U = x_1+x_2$, $V = x_3+x_4$, $y_1 = x_1 x_2$, and $y_2 = x_3 x_4$. The condition $A_4=0$ becomes 
$$
U^3+V^3+Uy_1+Vy_2=0.
$$ 
For a fixed sum $U$ and product $y_1$, solving the Artin--Schreier equation, the number of pairs $(x_1, x_2)$ is given by $1$ if $U=0$, and $1+\chi(y_1/U^2)$ if $U \neq 0$, where $\chi(x) = (-1)^{\operatorname{Tr}(x)}$ is the canonical additive character. We split the sum over $(U,V) \in \mathbb{F}_q^2$ into four cases:
\begin{enumerate}
    \item \textbf{$U=0, V=0$}: The condition $A_4=0$ trivially holds. There are $q^2$ choices for $(y_1,y_2)$, contributing 
    $$
    q^2 \psi(0) = q^2.
    $$
    
    \item \textbf{$U=0, V \neq 0$}: The condition $A_4=0$ implies $y_2 = V^2$. There are $q$ choices for $y_1$. The number of pairs $(x_3,x_4)$ is $1+\chi(V^2/V^2) = 1+\chi(1)$. Since $\operatorname{Tr}(1) \equiv n \pmod 2$, we have $1+\chi(1) = 1+(-1)^n = \varepsilon$. This case contributes 
    $$
    \sum_{V \neq 0} q \varepsilon \psi(V) = -q\varepsilon.
    $$
    
    \item \textbf{$U \neq 0, V=0$}: By symmetry, this contributes $-q\varepsilon$.
    
\item \textbf{$U \neq 0, V \neq 0$}: 
For fixed $U$ and $V$, we first count the number of valid tuples $(x_1, x_2, x_3, x_4)$. By substituting $y_2 = (U^3+V^3+Uy_1)/V$, this count is given by summing over all possible values of $y_1 \in \mathbb{F}_q$:
$$
N(U, V) := \sum_{y_1 \in \mathbb{F}_q} \left(1+\chi\left(\frac{y_1}{U^2}\right)\right)\left(1+\chi\left(\frac{U^3+V^3+Uy_1}{V^3}\right)\right).
$$
Since $\chi$ is an additive character, $\chi(A)\chi(B) = \chi(A+B)$. Expanding the product yields four terms for $N(U,V)$:
\begin{enumerate}
    \item[(a)] $\sum_{y_1 \in \mathbb{F}_q} 1 = q$.
    \item[(b)] $\sum_{y_1 \in \mathbb{F}_q} \chi\left(\frac{1}{U^2} y_1\right) = 0$, because $1/U^2 \neq 0$.
    \item[(c)] $\sum_{y_1 \in \mathbb{F}_q} \chi\left(\frac{U^3+V^3}{V^3} + \frac{U}{V^3} y_1\right) = \chi\left(\frac{U^3+V^3}{V^3}\right) \sum_{y_1 \in \mathbb{F}_q} \chi\left(\frac{U}{V^3} y_1\right) = 0$, because $U/V^3 \neq 0$.
    \item[(d)] The cross term: 
    $$
    \sum_{y_1 \in \mathbb{F}_q} \chi\left( \frac{y_1}{U^2} + \frac{U^3+V^3+Uy_1}{V^3} \right) = \chi\left(\frac{U^3+V^3}{V^3}\right) \sum_{y_1 \in \mathbb{F}_q} \chi\left( \left(\frac{1}{U^2} + \frac{U}{V^3}\right)y_1 \right).
    $$
    By the orthogonality of characters, this sum is non-zero if and only if the coefficient of $y_1$ is zero. In characteristic $2$, $\frac{1}{U^2} + \frac{U}{V^3} = 0 \iff V^3 = U^3$. 
    If $U^3 \neq V^3$, the sum vanishes. 
    If $U^3 = V^3$, then $U^3+V^3 = 0$, so $\chi\left(\frac{U^3+V^3}{V^3}\right) = \chi(0) = 1$, and the sum over $y_1$ contributes $q$. 
\end{enumerate}
Combining (a), (b), (c), and (d), the number of solutions for fixed $U$ and $V$ simplifies completely to $N(U,V) = q(1 + \mathbbm{1}_{U^3=V^3})$, where $\mathbbm{1}$ is the indicator function. 

To find the total contribution of this case to $W_4$, we multiply the solution count $N(U,V)$ by the character $\psi(U+V)$ and sum over all $U,V \neq 0$:
$$
\sum_{U,V \neq 0} N(U,V)\psi(U+V) = q \sum_{U,V \neq 0} \psi(U+V) + q \sum_{\substack{U^3=V^3 \\ U,V \neq 0}} \psi(U+V).
$$
We evaluate these two sums separately:
\begin{itemize}
    \item The first term: Since $U$ and $V$ are independent, the sum splits into a product:
    $$
    q \sum_{U \neq 0} \sum_{V \neq 0} \psi(U)\psi(V) = q \left(\sum_{U \neq 0} \psi(U)\right) \left(\sum_{V \neq 0} \psi(V)\right).
    $$
    Since $\sum_{x \in \mathbb{F}_q} \psi(x) = 0$, we have $\sum_{x \neq 0} \psi(x) = -1$. Thus, this part evaluates to $q(-1)(-1) = q$.
    
    \item The second term: The condition $U^3=V^3$ implies $V = \omega U$ for some $\omega \in \mathbb{F}_q$ with $\omega^3 = 1$. There are $1+\varepsilon$ such roots.
    \begin{itemize}
        \item If $\omega=1$, then $V=U$, so $U+V = 2U = 0$ (in characteristic $2$). Summing $\psi(0) = 1$ over all $q-1$ non-zero elements $U$ yields $q(q-1)$.
        \item If $\omega \neq 1$, then $U+V = (1+\omega)U$. Since $\omega \neq 1$, the coefficient $1+\omega \neq 0$. The sum $\sum_{U \neq 0} \psi((1+\omega)U)$ is simply $\sum_{x \neq 0} \psi(x) = -1$. Since there are $\varepsilon$ choices for $\omega \neq 1$, this subcase contributes $q \cdot \varepsilon \cdot (-1) = -\varepsilon q$.
    \end{itemize}
\end{itemize}
Summing the contributions from both parts, the total for this case is
$$
q + q(q-1) - \varepsilon q = q^2 - \varepsilon q.
$$
\end{enumerate}

Summing all four cases, we obtain
$$
W_4 = q^2 - 2\varepsilon q + q^2 - \varepsilon q = 2q^2 - 3\varepsilon q.
$$ 
Hence, 
$$
\sum_{a \in \mathbb{F}_{q}^{\times}} S_a^4 = q W_4 = 2q^3 - 3\varepsilon q^2.
$$
Interestingly, while the individual sums $\sum S_{a}^2$ and $\sum S_{a}^4$ depend heavily on the parity of $n$ via $\varepsilon$, this dependency cancels out in their linear combination:
$$
\sum_{a \in \mathbb{F}_{q}^{\times}} S_{a}^4 - 3q \sum_{a \in \mathbb{F}_{q}^{\times}} S_{a}^2 = (2q^3 - 3\epsilon q^2) - 3q(q^2 - \epsilon q) = -q^3.
$$
Substituting this invariant back into our equation for the $4$-th moment, we obtain:
$$
M^{\prime 4}(q) = -q^3 + (q^3 - q^2) = -q^2.
$$
\end{proof}

\begin{proposition}\label{prop:dim_Sym_Ai'_F_p}
Let $p> 5$ and $k \geq 1$ be an integer. The dimension of ${H}_{\mathrm{c}}^{1}\big(\mathbb{G}_{\mathrm{m},\overline{\mathbb{F}}_{p}},\Sym^{k}\Ai^{\prime}\big)$ is given by
\[
\frac{1}{2}((k+1) - (\left\lfloor\frac{k}{p}\right\rfloor + \delta)),
\]
where
\[
\delta = 
\begin{cases}
0 & \text{if } k-\left\lfloor \frac{k}{p}\right\rfloor \text{ is odd;}\\
1 & \text{if } k-\left\lfloor \frac{k}{p}\right\rfloor \text{ is even.}
\end{cases}
\]
\end{proposition}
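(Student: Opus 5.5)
The plan is to compute the dimension through the Euler characteristic on $\mathbb{G}_{\mathrm{m}}$. First I would show that ${H}^{1}_{\mathrm{c}}$ is the only non-vanishing compactly supported cohomology group of $\Sym^{k}\Ai'$. Then I would compute this Euler characteristic with the Grothendieck--Ogg--Shafarevich formula. This reduces the problem to computing the Swan conductors of $\Sym^{k}\Ai'$ at $0$ and at $\infty$.

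\textbf{Step 1: vanishing of $H^0_{\mathrm c}$ and $H^2_{\mathrm c}$.}
\begin{itemize}
\item Since $\mathbb{G}_{\mathrm{m}}$ is affine and $\Sym^{k}\Ai'$ is lisse, ${H}^{0}_{\mathrm{c}}=0$.
\item ${H}^{2}_{\mathrm{c}}(\mathbb{G}_{\mathrm{m},\overline{\mathbb{F}}_p},\Sym^{k}\Ai')$ is the Tate-twisted coinvariants of $\pi_1^{\geom}$ on $\Sym^k$ of the stalk.
\item By Proposition \ref{prop:G_geom_Ai_Ai'}, for $p>5$ the geometric monodromy group is $G_{\geom}'\cong\SL(2,\Qlbar)$.
\item $\Sym^{k}$ of the standard representation is irreducible and nontrivial for $k\ge1$, so these coinvariants vanish.
\end{itemize}
Hence $\dim {H}^{1}_{\mathrm{c}}=-\chi_{\mathrm{c}}(\mathbb{G}_{\mathrm{m}},\Sym^{k}\Ai')$. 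Since $\chi_{\mathrm{c}}(\mathbb{G}_{\mathrm{m}})=0$, the Grothendieck--Ogg--Shafarevich formula gives
\[
\dim {H}^{1}_{\mathrm{c}}=\Swan_{0}(\Sym^{k}\Ai')+\Swan_{\infty}(\Sym^{k}\Ai').
\]

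\textbf{Step 2: the local monodromy at $0$ and $\infty$.} I would use Proposition \ref{prop:Ai'_conf_hypergeo_iso}, which identifies $\Ai'$ geometrically with a pullback of $\operatorname{Kl}(\psi;\chi_{3},\chi_{3}^{2})$ along the automorphism $z\mapsto z/9$ of $\mathbb{G}_{\mathrm{m}}$. This rescaling does not affect Swan conductors, and I would quote Katz's local description \cite{Kat88}.
\begin{itemize}
\item At $0$ the local monodromy is tame, namely $\mathcal{K}_{\chi_3}\oplus\mathcal{K}_{\chi_3^2}$. So every symmetric power is tame and $\Swan_0=0$.
\item At $\infty$, since $p$ is odd, $I_\infty$ acts through a representation with all breaks $1/2$. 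After the tame quadratic base change $z=w^2$ it splits as
\[
\big(\mathcal{L}_{\psi(2w)}\oplus\mathcal{L}_{\psi(-2w)}\big)\otimes\rho,
\]
up to a constant rescaling of $w$, where $\rho$ is a tame character.
\end{itemize}
Consequently, restricted to the index-two subgroup,
\[
\Sym^{k}\Ai'\cong\bigoplus_{j=0}^{k}\mathcal{L}_{\psi(2(k-2j)w)}\otimes\rho^{k}.
\]
The summand indexed by $j$ has break $1$ in $w$ if $p\nmid k-2j$, and is tame otherwise. Passing back to $z$, the breaks are divided by $2$, the ramification index. Since the Swan conductor is the sum of the breaks,
\[
\Swan_\infty(\Sym^k\Ai')=\tfrac12\bigl((k+1)-N\bigr),\qquad N:=\#\{0\le j\le k : p\mid k-2j\}.
\]

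\textbf{Step 3: counting $N$.} As $j$ runs over $0,\dots,k$, the integer $k-2j$ runs over the integers in $[-k,k]$ of the same parity as $k$. Such an integer is divisible by $p$ exactly when it equals $mp$ with $|m|\le\lfloor k/p\rfloor$. Because $p$ is odd, the parity condition $mp\equiv k \pmod 2$ becomes $m\equiv k\pmod 2$. Among the $2\lfloor k/p\rfloor+1$ integers $m\in[-\lfloor k/p\rfloor,\lfloor k/p\rfloor]$:
\begin{itemize}
\item if $m\equiv \lfloor k/p\rfloor \pmod 2$, there are $\lfloor k/p\rfloor+1$ of them;
\item otherwise there are $\lfloor k/p\rfloor$ of them.
\end{itemize}
So $N=\lfloor k/p\rfloor+\delta$, with $\delta$ exactly as in the statement, and the claimed formula follows.

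\textbf{Main obstacle.} The delicate point is Step 2: making rigorous the structure of the $I_\infty$-representation of the Kloosterman sheaf of rank $2$, and in particular the identification of its restriction to the quadratic subextension as $\mathcal{L}_{\psi(\pm 2w)}\otimes\rho$. I would take this from Katz's computation of local monodromy of Kloosterman sheaves for $p$ odd, via \cite{Kat88}. I would then check that the Swan conductor of an induced representation equals the sum of the breaks measured in $z$, which is where the factor $1/2$ comes from.
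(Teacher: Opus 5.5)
Your argument is correct. Step 1 (vanishing of $H^0_{\mathrm c}$ and $H^2_{\mathrm c}$ via $G_{\geom}'\cong\SL(2,\Qlbar)$, then Grothendieck--Ogg--Shafarevich on $\mathbb{G}_{\mathrm{m}}$) is the same as in the paper. You take a genuinely different route to the Swan conductors.

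\textbf{The paper's route.} It does not use the Kloosterman identification at this point.
\begin{itemize}
\item At $0$, it gets $\Swan_0=0$ from the Kummer relation $[3]^{*}\Ai'\cong\Ai|_{\mathbb{G}_{\mathrm{m}}}$, since $\Ai$ is lisse at $0$ and $[3]$ is tame.
\item At $\infty$, it uses the same tame pullback to write $\Swan_\infty(\Sym^k\Ai')=\frac13\Swan_\infty(\Sym^k\Ai)$.
\item It then simply quotes \cite{HRL11} for $\Swan_\infty(\Sym^k\Ai)=\frac32\bigl((k+1)-(\lfloor k/p\rfloor+\delta)\bigr)$.
\end{itemize}

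\textbf{Your route.} You work with $\Ai'$ directly.
\begin{itemize}
\item From Proposition~\ref{prop:Ai'_conf_hypergeo_iso} and Katz's description of $\operatorname{Kl}$, you read off that $\Ai'|_{I_\infty}$ becomes $(\mathcal{L}_{\psi(2w)}\oplus\mathcal{L}_{\psi(-2w)})\otimes\rho$ after the tame quadratic base change. This is also visible by stationary phase, since the critical values of $\frac{x^3}{3z}-x$ are $\pm\frac23\sqrt z$.
\item You split $\Sym^k$ into the characters $\mathcal{L}_{\psi(2(k-2j)w)}\otimes\rho^k$.
\item You count the tame summands and divide by the ramification index $2$.
\end{itemize}
In effect you re-derive for $\Ai'$ (slope $\frac12$, index-$2$ base change) the computation that \cite{HRL11} performs for $\Ai$ (slope $\frac32$).

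\textbf{Comparison.} The paper's route is shorter and needs only the Kummer pullback relation. However, it outsources the origin of the term $\lfloor k/p\rfloor+\delta$. Your route is self-contained, modulo the standard local monodromy of rank-$2$ Kloosterman sheaves. It shows that $\lfloor k/p\rfloor+\delta$ is exactly the number of $j\in[0,k]$ with $p\mid k-2j$, i.e.\ the number of summands on which wild inertia acts trivially.

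Your use of the Kloosterman identification, which is stated for $q\equiv1\pmod 3$, is harmless, since everything here is geometric. Your parity count in Step 3 is correct and recovers $\delta$ exactly as defined, so both routes give the same value.
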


\begin{proof}
Let $\mathcal{F} = \Sym^{k}\Ai'$. By the Grothendieck--Ogg--Shafarevich formula applied to the curve $\mathbb{G}_{\mathrm{m}}$, we have
\[
h^{0}_{\mathrm{c}}(\mathcal{F}) - h^{1}_{\mathrm{c}}(\mathcal{F}) + h^{2}_{\mathrm{c}}(\mathcal{F}) = \operatorname{rank}(\mathcal{F})\cdot \chi_{\mathrm{c}}(\mathbb{G}_{\mathrm{m}}) - \operatorname{Swan}_{0}(\mathcal{F}) - \operatorname{Swan}_{\infty}(\mathcal{F}).
\]
Note that $\chi_{\mathrm{c}}(\mathbb{G}_{\mathrm{m}}) = 0$. By affine vanishing, we have $h_{\mathrm{c}}^{0}(\mathcal{F}) = 0$.

Next, we show that $h^{2}_{\mathrm{c}}(\mathcal{F}) = 0$. By proposition \ref{prop:G_geom_Ai_Ai'}, for $p>5$, the geometric monodromy group $G_{\geom}'$ of $\Ai'$ is isomorphic to $\SL(2,\overline{\mathbb{Q}}_{\ell})$, and the monodromy representation is the standard representation. By the representation theory of $\SL(2,\overline{\mathbb{Q}}_{\ell})$, the induced action of $G_{\geom}'$ on $\Sym^{k}\Ai'$ is irreducible and non-trivial for $k \geq 1$. Consequently, $\mathcal{F}$ has no non-trivial geometrically constant coinvariants, which implies $h^{2}_{\mathrm{c}}(\mathbb{G}_{\mathrm{m}, \overline{\mathbb{F}}_{q}},\Sym^{k}\Ai') = 0$.

For the local Swan conductors, we first consider the local monodromy at $0$. Recall the isomorphism $[3]^{*}\Ai' \cong \Ai|_{\mathbb{G}_{\mathrm{m}}}$, where $[3]$ is the cubic map $t \mapsto t^3$. Since the standard Airy sheaf $\Ai$ is lisse at $0$, its Swan conductor at $0$ is zero. Because the map $[3]$ is tamely ramified at $0$ (as $p > 3$), and the pullback $[3]^{*}\Ai'$ is tame at $0$, the sheaf $\Ai'$ itself must be at most tame at $0$. Consequently, the symmetric power $\Sym^{k}\Ai'$ is also tame at $0$, yielding $\operatorname{Swan}_{0}(\mathcal{F}) = 0$.

It remains to compute $\operatorname{Swan}_{\infty}(\mathcal{F})$. Since $p > 3$, the cubic map $[3]:\mathbb{G}_{\mathrm{m}}\rightarrow\mathbb{G}_{\mathrm{m}}$ is a tame covering of degree $3$ at $\infty$. The Swan conductor behaves multiplicatively under tame pullbacks, so we obtain
\[
\operatorname{Swan}_{\infty}(\Sym^{k}\Ai') = \frac{1}{3}\operatorname{Swan}_{\infty}([3]^{*}\Sym^{k}\Ai') = \frac{1}{3}\operatorname{Swan}_{\infty}(\Sym^{k}\Ai).
\]
By the results in \cite{HRL11}, we have
\[
\operatorname{Swan}_{\infty}(\Sym^{k}\Ai) = \frac{3}{2}\left((k+1)-\left(\left\lfloor \frac{k}{p}\right\rfloor + \delta\right)\right),
\]
where
\[
\delta = 
\begin{cases}
0 & \text{if } k-\left\lfloor \frac{k}{p}\right\rfloor \text{ is odd;}\\
1 & \text{if } k-\left\lfloor \frac{k}{p}\right\rfloor \text{ is even.}
\end{cases}
\]
Substituting these values back into the Grothendieck--Ogg--Shafarevich formula yields the asserted dimension for ${H}_{\mathrm{c}}^{1}\left(\mathbb{G}_{\mathrm{m},\overline{\mathbb{F}}_{q}},\Sym^{k}\Ai^{\prime}\right)$.
\end{proof}

\begin{proposition}
Let $p=5$. The dimension $h^{i}_{\csup}(\Sym^{k}\Ai') = \dim H^{i}_{\csup}(\mathbb{G}_{\mathrm{m},\overline{\mathbb{F}}_{5}},\Sym^{k}\Ai')$ is given by
\[
h^{1}_{\csup}(\Sym^{k}\Ai') = \frac{1}{2}((k+1) - (\left\lfloor\frac{k}{p}\right\rfloor + \delta)) + h_{\csup}^{2}(\Sym^{k}\Ai'),
\]
where
\[
\delta = 
\begin{cases}
0 & \text{if } k-\left\lfloor \frac{k}{p}\right\rfloor \text{ is odd;}\\
1 & \text{if } k-\left\lfloor \frac{k}{p}\right\rfloor \text{ is even,}
\end{cases}
\]
and $h^{2}_{c}(\Sym^{k}\Ai')$ form a generating series
\[
\sum_{k\geq 0} h^{2}_{c}(\Sym^{k}\Ai') t^{k} = \frac{(1+t^{30})}{(1-t^{12})(1-t^{20})}.
\]
\end{proposition}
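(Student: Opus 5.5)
The plan follows the proof of Proposition \ref{prop:dim_Sym_Ai'_F_p}, with one change. For $p=5$ the group $G_{\geom}'$ is finite, so $h^2_{\csup}$ need not vanish. We keep it as an unknown and compute it separately by representation theory.

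\textbf{Step 1 (Euler characteristic).} Apply the Grothendieck--Ogg--Shafarevich formula on $\mathbb{G}_{\mathrm{m}}$ to $\mathcal{F}=\Sym^k\Ai'$. Since $\chi_{\csup}(\mathbb{G}_{\mathrm{m}})=0$ and $h^0_{\csup}=0$, this gives
\[
h^1_{\csup}(\mathcal{F})=h^2_{\csup}(\mathcal{F})+\operatorname{Swan}_0(\mathcal{F})+\operatorname{Swan}_\infty(\mathcal{F}).
\]
The argument for $\operatorname{Swan}_0(\mathcal{F})=0$ is unchanged: $[3]$ is tame because $p=5\neq 3$, and $\Ai$ is lisse at $0$. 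Likewise $\operatorname{Swan}_\infty(\mathcal{F})=\frac13\operatorname{Swan}_\infty(\Sym^k\Ai)$, and the formula of \cite{HRL11} must be checked to hold for $p=5$ as well. I expect it does, since that work treats all $p>3$; the breaks of $\Ai$ at $\infty$ are $3/2$ and the only special features are the $p$-divisibility phenomena recorded by $\lfloor k/p\rfloor$ and $\delta$. This yields the first displayed formula.

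\textbf{Step 2 ($h^2_{\csup}$ as invariants).} Use $H^2_{\csup}(\mathbb{G}_{\mathrm{m},\overline{\mathbb{F}}_5},\mathcal{F})\cong(\mathcal{F}_{\bar\eta})_{\pi_1^{\geom}}(-1)$. Because $G_{\geom}'\cong\SL(2,\mathbb{F}_5)$ is finite (Proposition \ref{prop:G_geom_Ai_Ai'}), the representation is semisimple, so coinvariants equal invariants. Hence
\[
h^2_{\csup}(\Sym^k\Ai')=\dim(\Sym^k\rho)^{\SL(2,\mathbb{F}_5)},
\]
where $\rho$ is the faithful $2$-dimensional symplectic representation. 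That representation is the lift of the binary icosahedral group $2I\subset\SL_2(\mathbb{C})$, after identifying $\overline{\mathbb{Q}}_\ell\cong\mathbb{C}$.

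\textbf{Step 3 (Molien series).} Invoke Klein's classical description of the invariant ring of the binary icosahedral group: $\mathbb{C}[x,y]^{2I}$ is generated by forms of degrees $12$, $20$ and $30$, subject to one relation in degree $60$ (the degree-$30$ form squares to a polynomial in the other two). The Hilbert series is therefore
\[
\frac{1+t^{30}}{(1-t^{12})(1-t^{20})}.
\]
As a cross-check one can evaluate Molien's formula $\frac{1}{120}\sum_{g}\det(1-tg)^{-1}$ over the conjugacy classes of $2I$, whose traces are $\pm2,\ 0,\ \pm1,\ \pm\frac{1\pm\sqrt5}{2}$. The main obstacle is not this computation. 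It is making sure the representation given by the monodromy is really the one conjugate to the standard embedding $2I\subset\SL_2(\mathbb{C})$ and not its Galois conjugate. In fact the two faithful $2$-dimensional representations differ by the automorphism $\sqrt5\mapsto-\sqrt5$, so their invariant dimensions agree and the series is unaffected. With Step 1 this completes the proof.
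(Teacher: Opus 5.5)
Your proposal is correct and follows the paper's proof: Grothendieck--Ogg--Shafarevich with the same Swan inputs as in Proposition~\ref{prop:dim_Sym_Ai'_F_p}, $h^2_{\csup}$ computed as $\dim(\Sym^k\rho)^{\SL(2,\mathbb{F}_5)}$ using finiteness of $G_{\geom}'$, and the remark that the Galois-conjugate $\overline{\rho}$ gives the same series. The only differences are that you quote Klein's degree $12,20,30$ invariants of the binary icosahedral group where the paper evaluates Molien's formula class by class, and that your caveat about \cite{HRL11} at $p=5$ is harmless: the Swan count only uses that the local monodromy of $\Ai$ at $\infty$ is induced from a wild character $\mathcal{L}_{\psi(cu^{3})}$, $c\in\mathbb{F}_p^{\times}$, on the tame double cover $u^2=t$, and this holds for every $p\geq 5$.
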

\begin{proof}
The proof is essentially the same as for the previous proposition. However, we need to compute $h^{2}_{\csup}(\mathcal{F})$ for $\mathcal{F} = \Sym^{k}\Ai'$. By \cite[2.0.6]{Kat88}, $h^{2}_{\csup}(\mathcal{F})$ is the dimension of the coinvariant subspace of the geometric monodromy group $G_{\geom}'$ acting on $\Sym^{k}\Ai'$. Proposition \ref{prop:G_geom_Ai_Ai'} gives $G_{\geom}' = \SL(2,\mathbb{F}_{5})$.

To compute the dimension of the invariant subspace, we use the character theory of the finite group $G = \SL(2,\mathbb{F}_{5})$. The group $G$ has order $120$ and possesses $9$ conjugacy classes. Let $\alpha = \frac{1+\sqrt{5}}{2}$ and $\beta = \frac{1-\sqrt{5}}{2}$. The group has a unique conjugate pair of faithful $2$-dimensional irreducible representations, denoted by $\rho$ and $\overline{\rho}$. The conjugacy classes, their sizes, and the values of these two characters are summarized in the table \ref{tab:char_table_SL2F5}.

\begin{table}[ht]
\centering
\renewcommand{\arraystretch}{1.2}
\begin{tabular}{c|ccccccccc}
\hline
Class & $I$ & $-I$ & $4A$ & $3A$ & $6A$ & $5A$ & $5B$ & $10A$ & $10B$ \\
Size & $1$ & $1$ & $30$ & $20$ & $20$ & $12$ & $12$ & $12$ & $12$ \\
\hline
$\rho$ & $2$ & $-2$ & $0$ & $-1$ & $1$ & $\alpha-1$ & $\beta-1$ & $1-\alpha$ & $1-\beta$ \\
$\overline{\rho}$ & $2$ & $-2$ & $0$ & $-1$ & $1$ & $\beta-1$ & $\alpha-1$ & $1-\beta$ & $1-\alpha$ \\\hline
\end{tabular}
\caption{Part of character table of $\SL(2,\mathbb{F}_{5})$, where $\alpha=\frac{1+\sqrt{5}}{2}$ and $\beta=\frac{1-\sqrt{5}}{2}$.}
\label{tab:char_table_SL2F5}
\end{table}

By Molien's formula, the generating series for the dimensions of the invariant subspaces of the symmetric powers is given by
\[
M(t) = \sum_{k\geq 0}\dim(\Sym^{k}\rho)^{G}t^{k} = \frac{1}{|G|} \sum_{g \in G} \frac{1}{\det(I - t\rho(g))}.
\]
Since $\rho(g) \in \SL(2,\mathbb{C})$, we have $\det(\rho(g)) = 1$, which implies $\det(I - t\rho(g)) = 1 - t\trace(\rho(g)) + t^2$. Grouping the sum by conjugacy classes and using the traces from the character table, we obtain:
\begin{align*}
M(t) = \frac{1}{120} \Bigg[ & \frac{1}{(1-t)^2} + \frac{1}{(1+t)^2} + \frac{30}{1+t^2} + \frac{20}{1+t+t^2} + \frac{20}{1-t+t^2} \\
& + \frac{12}{1-(\alpha-1)t+t^2} + \frac{12}{1-(\beta-1)t+t^2} \\
& + \frac{12}{1-(1-\alpha)t+t^2} + \frac{12}{1-(1-\beta)t+t^2} \Bigg].
\end{align*}
This simplifies to
\[
M(t) = \frac{1+t^{30}}{(1-t^{12})(1-t^{20})}.
\]
This gives the generating series of $h^{2}_{\csup}(\mathcal{F})$. Since $\overline{\rho}$ has the same set of trace values permuted among the classes of the same order, its Molien series is identical. Finally, the dimension formula for $h^{1}_{\csup}(\mathcal{F})$ follows directly from the Grothendieck--Ogg--Shafarevich formula, completing the proof.
\end{proof}

\begin{proposition} \label{prop:dim_H1_p2}
Let $p=2$. The dimension $h^{i}_{\csup}(\Sym^{k}\Ai') = \dim H^{i}_{\csup}(\mathbb{G}_{\mathrm{m},\overline{\mathbb{F}}_{2}},\Sym^{k}\Ai')$ is given by
\[
\sum_{k\geq 0}h^{2}_{\csup}(\Sym^{k}\Ai')t^{k} = \frac{1+t^{12}}{(1-t^6)(1-t^8)}
\]
and
\[
h^{1}_{\csup}(\Sym^{k}\Ai') = 
\begin{cases}
\frac{k+1}{2} & {\text{ if }}k {\text{ is odd,}}\\
\lfloor\frac{k+2}{4}\rfloor + h_{\csup}^{2}(\Sym^{k}\Ai') & {\text{ if }}k {\text{ is even.}}
\end{cases}
\]
\end{proposition}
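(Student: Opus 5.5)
The plan is to follow the template of the $p=5$ case, with two changes: the monodromy group becomes $\SL(2,\mathbb{F}_{3})$, and the Swan conductor at $\infty$ has to be computed by hand, because the cubic Kummer pullback to $\Ai$ is still tame at $\infty$ but the result of \cite{HRL11} is stated only for odd $p$.

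\textbf{Step 1: the invariants $h^{2}_{\csup}$.} By Proposition \ref{prop:G_geom_Ai_Ai'}, $G_{\geom}'\cong\SL(2,\mathbb{F}_{3})$ when $p=2$, and the monodromy representation is its faithful $2$-dimensional symplectic representation $\rho'$. By \cite[2.0.6]{Kat88}, $h^{2}_{\csup}(\Sym^{k}\Ai')=\dim(\Sym^{k}\rho')^{G}$. I will compute this with Molien's formula over the $7$ conjugacy classes of $\SL(2,\mathbb{F}_{3})$ (order $24$). The classes have sizes $1,1,6,4,4,4,4$, and the traces of $\rho'$ on them are $2,-2,0,-1,-1,1,1$. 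Note that $\rho'$ is one of three $2$-dimensional representations, but these differ only by a cubic-character twist, and on $\SL_{2}$ the traces of the symplectic one are real. Summing
\[
\frac{1}{24}\Big[\tfrac{1}{(1-t)^{2}}+\tfrac{1}{(1+t)^{2}}+\tfrac{6}{1+t^{2}}+\tfrac{8}{1+t+t^{2}}+\tfrac{8}{1-t+t^{2}}\Big]
\]
should give the binary tetrahedral Molien series $(1+t^{12})/((1-t^{6})(1-t^{8}))$. As a consistency check, the coefficient of $t^{4}$ vanishes, in agreement with Lemma \ref{lem:airy_prime_4th_moment}.

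\textbf{Step 2: the Euler characteristic.} Next I apply Grothendieck--Ogg--Shafarevich on $\mathbb{G}_{\mathrm{m}}$, as in Proposition \ref{prop:dim_Sym_Ai'_F_p}. We have $h^{0}_{\csup}=0$ and $\chi_{\csup}(\mathbb{G}_{\mathrm{m}})=0$. The sheaf is tame at $0$, because $[3]^{*}\Ai'\cong\Ai|_{\mathbb{G}_{\mathrm{m}}}$ is lisse at $0$ and $[3]$ is tame for $p=2$. Hence
\[
h^{1}_{\csup}=h^{2}_{\csup}+\Swan_{\infty}(\Sym^{k}\Ai')=h^{2}_{\csup}+\tfrac13\Swan_{\infty}(\Sym^{k}\Ai).
\]
It therefore remains to show that $\tfrac13\Swan_{\infty}(\Sym^{k}\Ai)$ equals $(k+1)/2$ for odd $k$ and $\lfloor (k+2)/4\rfloor$ for even $k$. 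The odd case also needs $h^{2}_{\csup}=0$, which holds because $-I\in G$ acts by $-1$ on $\Sym^{k}$ for $k$ odd.

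\textbf{Step 3: the Swan conductor at $\infty$ (main obstacle).} In characteristic $2$, $\Ai$ at $\infty$ has all slopes equal to $3/2$. The upper numbering filtration then acts through the finite image $G_{\geom}=Q_{8}$, and the wild inertia image is $Q_{8}$ itself. I would describe $I_{\infty}$ acting through $Q_{8}$ with a two-step ramification filtration: the center $\{\pm1\}$ sits at a higher break and the quotient $Q_{8}/\{\pm1\}\cong(\mathbb{Z}/2)^{2}$ at break $3/2$. To pin down the break of the center, I would pull back by a suitable Artin--Schreier extension killing the quotient, or take it from \cite{KT25}. Then I decompose $\Sym^{k}\rho$ for $Q_{8}$:
\begin{itemize}
\item for $k$ odd, it is a multiple of $\rho$, all of whose slopes are $3/2$, so $\Swan=\tfrac32(k+1)$;
\item for $k$ even, it is a sum of the $1$-dimensional characters of $Q_{8}$; trivial summands contribute $0$ and each nontrivial character contributes $\Swan$ equal to its break.
\end{itemize}
Counting the nontrivial characters in $\Sym^{k}\rho$ for $k$ even and multiplying by the break should give $3\lfloor (k+2)/4\rfloor$. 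If the breaks are awkward, the fallback is to compute $h^{1}_{\csup}$ for small $k$ via point counts (as in Lemma \ref{lem:airy_prime_4th_moment}), fix the break values, and then use periodicity in $k$ modulo $4$ via the $Q_{8}$ character computation.
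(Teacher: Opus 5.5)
Your Steps 1 and 2 match the paper: Molien's formula over $\SL(2,\mathbb{F}_3)$, then Grothendieck--Ogg--Shafarevich with $\Swan_0=0$. Computing $\Swan_\infty$ through $\Ai$ and $Q_8$ and dividing by $3$ is a legitimate alternative to the paper's direct $\SL(2,\mathbb{F}_3)$ computation; it amounts to Lemma~\ref{lem:swan_symk_Q8} divided by $3$.

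The gap is in Step 3. The ramification filtration you describe is wrong, and the one number the even case depends on gets the wrong value. The nontrivial characters of $Q_8/\{\pm1\}\cong(\mathbb{Z}/2)^2$ cannot have break $3/2$, because by Hasse--Arf a character of $I_\infty$ has an integral break. With your value you would get $\Swan_\infty(\Sym^2\Ai)=3\cdot\tfrac32=\tfrac92$, which is not an integer. In general you would get $h^1_{\csup}(\Sym^{2m}\Ai')=\tfrac32\lfloor\tfrac{m+1}{2}\rfloor+h^2_{\csup}$ rather than the stated formula. Your own target $3\lfloor\tfrac{k+2}{4}\rfloor$ forces the break to be exactly $1$.

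Nor can the center sit above $3/2$. Since $\rho$ is faithful with unique slope $3/2$, the groups $G^u$ act trivially on $\rho$, and hence are trivial, for $u>3/2$. The correct picture is the reverse of yours: the $(\mathbb{Z}/2)^2$-quotient has upper break $1$, and the center carries the top break $3/2$.

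The missing argument is short. A nontrivial character of $Q_8/\{\pm1\}$ occurs in $\Sym^2\rho\subset\rho\otimes\rho$, whose slopes are at most $3/2$. It is wildly ramified, since the image of $P_\infty$ is all of $Q_8$. Its break is an integer by Hasse--Arf, so it equals $1$. The recursion $b_{m+1}=m+1-b_m$ with $b_0=0$ shows that $\Sym^{2m}\rho$ contains $3\lfloor\frac{m+1}{2}\rfloor$ nontrivial characters. Each contributes $1$, and dividing by $3$ gives $\lfloor\frac{k+2}{4}\rfloor$.

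The break of the center plays no role at all: $\{\pm1\}$ acts trivially on even symmetric powers, and for odd $k$ only the slope $3/2$ of $\rho$ enters. So the Artin--Schreier pullback you propose for pinning down the center's break addresses the wrong quantity. What actually needs justification is the break of the quotient characters. The paper uses this in the form ``the $(\mathbb{Z}/2)^2$-extension has break $1/3$ over the base of $\Ai'$''. Your point-count fallback would expose the discrepancy, but the main line as written does not produce the claimed formula.
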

\begin{proof}
Let $\mathcal{F} = \Sym^{k}\Ai'$. We first compute $h^{2}_{\csup}(\mathcal{F})$. By \cite[2.0.6]{Kat88}, $h^{2}_{\csup}(\mathcal{F})$ is the dimension of the coinvariant subspace of the geometric monodromy group $G_{\geom}'$ acting on $\Sym^{k}\Ai'$. By Proposition \ref{prop:G_geom_Ai_Ai'}, we have $G_{\geom}' \cong \SL(2,\mathbb{F}_{3})$. Since $G_{\geom}'$ is a finite group, the dimension of the coinvariant subspace equals the dimension of the invariant subspace $\dim (\Sym^{k} V)^{G_{\geom}'}$, where $V$ is the unique $2$-dimensional irreducible symplectic representation of $\SL(2,\mathbb{F}_{3})$, denoted by $2_{0}$ in Table \ref{tab:char_table_SL2F3}.

To compute the dimension of the invariant subspace, we apply the Molien formula using the character table of $\SL(2,\mathbb{F}_{3})$ (Table \ref{tab:char_table_SL2F3}). The generating series for these dimensions is given by:
\[
M(t) = \sum_{k\geq 0}h^{2}_{\csup}(\Sym^{k}\Ai')t^{k} = \frac{1}{|G_{\geom}'|} \sum_{g \in G_{\geom}'} \frac{1}{\det(I - t \rho(g))}.
\]
We sum over the seven conjugacy classes of $\SL(2,\mathbb{F}_{3})$. Using their respective sizes and the traces of the representation $2_{0}$ to compute the characteristic polynomial $\det(I - t \rho(g))$ for each class, we obtain
\[
M(t) = \frac{1}{24} \left( \frac{1}{(1-t)^2} + \frac{1}{(1+t)^2} + \frac{6}{1+t^2} + \frac{8}{1+t+t^2} + \frac{8}{1-t+t^2} \right) = \frac{1+t^{12}}{(1-t^6)(1-t^8)}.
\]
This proves the generating series for $h^{2}_{\csup}(\mathcal{F})$. Note that $M(t)$ is an even function, which implies $h^{2}_{\csup}(\mathcal{F}) = 0$ whenever $k$ is odd.

Next, we apply the Grothendieck--Ogg--Shafarevich formula to $\mathcal{F}$ on the curve $\mathbb{G}_{\mathrm{m}}$:
\[
h^{0}_{\csup}(\mathcal{F}) - h^{1}_{\csup}(\mathcal{F}) + h^{2}_{\csup}(\mathcal{F}) = \operatorname{rank}(\mathcal{F}) \chi_{\csup}(\mathbb{G}_{\mathrm{m}}) - \Swan_{0}(\mathcal{F}) - \Swan_{\infty}(\mathcal{F}).
\]
By affine vanishing, $h^{0}_{\csup}(\mathcal{F}) = 0$. We also have $\chi_{\csup}(\mathbb{G}_{\mathrm{m}}) = 0$. For the Swan conductor at $0$, recall the isomorphism $[3]^{*}\Ai' \cong \Ai$. Since $p=2$, the cubic map $t \mapsto t^3$ is a tame covering of degree $3$. Because the pullback is tame at $0$, $\Ai'$ and its symmetric powers must also be tamely ramified at $0$, yielding $\Swan_{0}(\mathcal{F}) = 0$. 

The Grothendieck--Ogg--Shafarevich formula therefore simplifies to
\[
h^{1}_{\csup}(\mathcal{F}) = h^{2}_{\csup}(\mathcal{F}) + \Swan_{\infty}(\mathcal{F}).
\]
By Lemma \ref{lem:swan_symk}, the Swan conductor at infinity $\Swan_{\infty}(\mathcal{F})$ is known:
\begin{enumerate}
    \item When $k$ is odd, $h^{2}_{\csup}(\mathcal{F}) = 0$ and $\Swan_{\infty}(\mathcal{F}) = \frac{k+1}{2}$, yielding $h^{1}_{\csup}(\mathcal{F}) = \frac{k+1}{2}$.
    \item When $k=2m$ is even, we have $\Swan_{\infty}(\mathcal{F}) = \lfloor \frac{m+1}{2} \rfloor = \lfloor \frac{k/2+1}{2} \rfloor = \lfloor \frac{k+2}{4} \rfloor$. Substituting this into the formula gives $h^{1}_{\csup}(\mathcal{F}) = \lfloor \frac{k+2}{4} \rfloor + h^{2}_{\csup}(\mathcal{F})$. 
\end{enumerate}
This completes the proof.
\end{proof}

\begin{lemma} \label{lem:swan_symk}
Let $p=2$. The Swan conductor of the $k$-th symmetric power $\Sym^{k}\Ai'$ at infinity is given by
\[
\Swan_{\infty}(\Sym^{k}\Ai') = \begin{cases}
\frac{k+1}{2} = m+1 & \text{if } k=2m+1,\\
\lfloor \frac{m+1}{2}\rfloor & \text{if } k=2m.
\end{cases}
\]
\end{lemma}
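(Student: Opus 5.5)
The plan is to read off $\Swan_{\infty}(\Sym^{k}\Ai')$ from the finite local monodromy at $\infty$. By Proposition \ref{prop:G_geom_Ai_Ai'}, for $p=2$ we have $G_{\geom}'\cong\SL(2,\mathbb{F}_{3})\cong Q_{8}\rtimes C_{3}$, and the image of $\pi_{1}^{\mathrm{geom}}$ of the subgroup $[3]^{*}$ is $G_{\geom}\cong Q_{8}$. The image of $I_{\infty}$ is a subgroup of this finite group. Its wild part $P_{\infty}$ maps into a $2$-group, hence into $Q_{8}$.

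\textbf{Step 1: the local monodromy of $\Ai'$ at $\infty$.} By the Grothendieck--Ogg--Shafarevich argument of Proposition \ref{prop:dim_H1_p2} for $k=1$, we have $\Swan_{\infty}(\Ai')=h^{1}_{\csup}(\Ai')$. Using $[3]^{*}\Ai'\cong\Ai|_{\Gm}$ and the known structure of the Airy sheaf at $\infty$ in characteristic $2$ (\cite{KT25}), I expect $\Swan_{\infty}(\Ai')=1$. The rank-$2$ representation should then be $I_{\infty}$-irreducible with the single slope $1/2$. An abelian image of $P_{\infty}$ would split $\Ai'|_{P_{\infty}}$ into characters permuted by the tame quotient, which is incompatible with this. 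So the image of $P_{\infty}$ must be all of $Q_{8}$, and the image of $I_{\infty}$ is $\SL(2,\mathbb{F}_{3})$. The tame generator acts on $Q_{8}/Z(Q_{8})\cong(\mathbb{Z}/2)^{2}$ by the order-$3$ automorphism.

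\textbf{Step 2: slopes of the $P_{\infty}$-isotypic pieces.} Restricted to $Q_{8}$, $\Sym^{k}V$ decomposes into copies of the $2$-dimensional representation $\rho$ and $1$-dimensional characters of $Q_{8}/Z$.
\begin{enumerate}
\item For odd $k$, the centre acts by $-1$, so $\Sym^{k}V$ is a sum of $(k+1)/2$ copies of $\rho$. Each copy has slope $1/2$, since it is $\Ai'|_{P_{\infty}}$ itself. This gives $\Swan=(k+1)/2$.
\item For even $k$, only characters occur. The trivial ones contribute $0$. The three nontrivial characters are permuted transitively by $C_{3}$, so they all share one slope $\lambda$, and an orbit forms an $I_{\infty}$-irreducible rank-$3$ piece with integer Swan $3\lambda$.
\end{enumerate}
To determine $\lambda$, use $k=2$. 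Then $\Sym^{2}V$ is exactly that rank-$3$ piece, and its Swan can be computed from the upper numbering: the three characters factor through $Q_{8}/Z$, whose upper jump lies strictly below that of the centre. I would try first to get $\Swan_{\infty}(\Sym^{2}\Ai')=1$ directly, for instance by GOS together with a second-moment count analogous to Lemma \ref{lem:airy_prime_4th_moment}, which gives $h^{1}_{\csup}(\Sym^{2}\Ai')$. This would yield $\lambda=1/3$.

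\textbf{Step 3: counting.} For $k=2m$ we then get
\[
\Swan_{\infty}(\Sym^{2m}\Ai')=\tfrac13\bigl((2m+1)-\dim(\Sym^{2m}V)^{Q_{8}}\bigr).
\]
The dimension of the invariants comes from the Molien series of $Q_{8}$, which equals $(1+t^{6})/(1-t^{4})^{2}$. A check of residues modulo $4$ (coefficient of $t^{2m}$ is $m/2+1$ for $m$ even and $(m-1)/2$ for $m$ odd) gives $\tfrac13(2m+1-\cdot)=\lfloor (m+1)/2\rfloor$.

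The main obstacle is Step 2: pinning down the ramification filtration on the image $Q_{8}$, i.e.\ showing that the nontrivial characters of $Q_{8}/Z$ have slope exactly $1/3$. The intended route is the explicit $k=2$ computation. The alternative is a direct Hasse--Arf and upper-numbering analysis of the extension cut out by $\Ai'$ at $\infty$.
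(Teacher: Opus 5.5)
Your proposal is correct, and it organizes the computation differently from the paper. The paper decomposes $\Sym^kV$ into irreducibles of $\SL(2,\mathbb{F}_3)$ via $\Sym^{k+1}V=\Sym^kV\otimes 2_0-\Sym^{k-1}V$. It assigns Swan conductors $0,1,1$ to $1_i,2_i,W_3$, tabulates $S_0,\dots,S_7$, and concludes ``by induction'' without writing out the general pattern.

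You instead restrict to the image $Q_8$ of $P_\infty$, which is all the Swan conductor sees. For odd $k$ the central character forces $\Sym^kV|_{Q_8}\cong\frac{k+1}{2}\rho$, giving $(k+1)/2$ at once. For even $k$, the $Q_8$ Molien series $(1+t^6)/(1-t^4)^2$ from Proposition~\ref{prop:dim_H1_p2_Ai} counts the trivial summands and yields the closed formula directly. This is cleaner than the paper's unexecuted induction.

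For the common break $1/3$ of the nontrivial characters of $Q_8/Z$, the paper only appeals to the pullback $[3]^*\Ai'\cong\Ai$. Your GOS route does determine it. For $\Sym^2\Ai'$ one has $h^0_c=h^2_c=0$, since $W_3$ has no invariants. The paper's own count $\sum_a S_a^2=q^2-\varepsilon q$ gives $\sum_a(S_a^2-q)=q(1-\varepsilon)$. Hence $\trace(\Frob_q\mid H^1_c)=(-2)^n$ for $q=2^n$, so $h^1_c(\Sym^2\Ai')=\Swan_\infty(\Sym^2\Ai')=1$.

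Your ``main obstacle'' can also be settled purely locally. The same argument proves something you assert without proof, namely that $I_\infty$ maps onto $\SL(2,\mathbb{F}_3)$; you need this for the three characters to share one break.
\begin{itemize}
\item All breaks of $\Sym^2V$ are $\le 1/2$.
\item Suppose $I_\infty$ fixed each nontrivial character of $Q_8/Z$. Their isotypic lines would then be $I_\infty$-stable wild characters with positive integral Swan, i.e.\ break $\ge 1$, a contradiction. So the image of $I_\infty$ contains the order-$3$ element.
\item The three lines therefore form an $I_\infty$-irreducible piece whose Swan conductor $3\lambda$ is a positive integer with $\lambda\le 1/2$. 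This forces $\lambda=1/3$, with no global input needed.
\end{itemize}
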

\begin{proof}
We compute $\Swan_{\infty}(\Sym^{k}\Ai')$, the Swan conductor of $\Sym^{k}\Ai'$ at infinity. By Proposition \ref{prop:G_geom_Ai_Ai'}, the geometric monodromy group of $\Ai'$ is $G_{\geom}' \cong \SL(2,\mathbb{F}_{3})$. The action of $G_{\geom}'$ on a stalk $\Ai'_{z}$ is an irreducible, symplectic, $2$-dimensional representation. By the character theory of the group $\SL(2,\mathbb{F}_{3})$, there is a unique $2$-dimensional, symplectic representation of $\SL(2,\mathbb{F}_{3})$, denoted by $2_{0}$.

Let $V = 2_{0}$ denote this representation. Since the base field has characteristic $p=2$, the wild inertia group $P_{\infty}$ at infinity must map to a $p$-group in $G_{\geom}'$. The unique $2$-Sylow subgroup of $\SL(2,\mathbb{F}_{3})$ is the quaternion group $Q_{8}$, which implies that the image of $P_{\infty}$ is a subgroup of $Q_{8}$. For the sheaf $\Ai'$ of rank $2$ which is isomorphic to a confluent hypergeometric sheaf of type $(2,0)$ by Proposition \ref{prop:Ai'_conf_hypergeo_iso}, it is known that $\Swan_{\infty}(\Ai') = 1$ with the unique slope $1/2$ of $P_{\infty}$ at infinity. The fractional slope $1/2$ implies that $P_{\infty}$ acts irreducibly on the $2$-dimensional space $V$. Since all proper subgroups of $Q_{8}$ are abelian and therefore cannot act irreducibly on a $2$-dimensional space, the image of $P_{\infty}$ must be the entire group $Q_{8}$. The quotient $G_{\geom}' / Q_{8} \cong \mathbb{Z}/3\mathbb{Z}$ corresponds to the tame quotient.

Over $\overline{\mathbb{Q}}_{\ell}$, the finite group $\SL(2,\mathbb{F}_{3})$ admits seven irreducible representations: three $1$-dimensional representations ($1_{0}, 1_{1}, 1_{2}$), three $2$-dimensional representations ($2_{0}, 2_{1}, 2_{2}$), and one $3$-dimensional representation ($W_{3}$). The character table of $\SL(2,\mathbb{F}_{3})$ is provided in the following Table \ref{tab:char_table_SL2F3}.

\begin{table}[ht]
    \centering
    \renewcommand{\arraystretch}{1.2}
    \begin{tabular}{c|ccccccc}
        \hline
        Class & $1A$ & $2A$ & $4A$ & $3A$ & $3B$ & $6A$ & $6B$ \\
        Size & $1$ & $1$ & $6$ & $4$ & $4$ & $4$ & $4$ \\
        \hline
        $1_{0}$ & $1$ & $1$ & $1$ & $1$ & $1$ & $1$ & $1$ \\
        $1_{1}$ & $1$ & $1$ & $1$ & $\omega$ & $\omega^{2}$ & $\omega$ & $\omega^{2}$ \\
        $1_{2}$ & $1$ & $1$ & $1$ & $\omega^{2}$ & $\omega$ & $\omega^{2}$ & $\omega$ \\
        $W_{3}$ & $3$ & $3$ & $-1$ & $0$ & $0$ & $0$ & $0$ \\
        $2_{0}$ & $2$ & $-2$ & $0$ & $-1$ & $-1$ & $1$ & $1$ \\
        $2_{1}$ & $2$ & $-2$ & $0$ & $-\omega$ & $-\omega^{2}$ & $\omega$ & $\omega^{2}$ \\
        $2_{2}$ & $2$ & $-2$ & $0$ & $-\omega^{2}$ & $-\omega$ & $\omega^{2}$ & $\omega$ \\
        \hline
    \end{tabular}
    \caption{Character table of $\SL(2,\mathbb{F}_{3})$, where $\omega = e^{2\pi i / 3}$.}
    \label{tab:char_table_SL2F3}
\end{table}

We determine the Swan conductors of these representations under the local monodromy action:
\begin{enumerate}
    \item The $1$-dimensional representations $1_{i}$ factor through the tame quotient $\mathbb{Z}/3\mathbb{Z}$. Since $P_{\infty}$ acts trivially on them, they are tamely ramified, yielding $\Swan_{\infty}(1_{i}) = 0$.
    \item The $2$-dimensional representations satisfy $2_{i} = 2_{0} \otimes 1_{i}$. Because twisting by a tame character preserves wild ramification, they share the same Swan conductor. We already know $\Swan_{\infty}(\Ai') = 1$, thus $\Swan_{\infty}(2_{i}) = 1$ for $i=0,1,2$.
    \item The $3$-dimensional representation $W_{3} \cong \Sym^{2}(2_{0})$ factors through $G_{\geom}/\{\pm I\} \cong A_{4}$. The restriction of $W_{3}$ to $Q_{8}$ decomposes as the sum of the three nontrivial $1$-dimensional characters of the maximal abelian quotient $Q_{8}/Z(Q_{8}) \cong V_{4}$. As established by the degree $3$ tame pullback $[3]^{*}\Ai' \cong \Ai$, the upper ramification jump for the corresponding $V_{4}$-extension over the base field is $1/3$. Therefore, $\Swan_{\infty}(W_{3}) = 3 \times (1/3) = 1$.
\end{enumerate}

To compute the Swan conductors of the symmetric powers $S_{k} := \Sym^{k} V$, we decompose them into irreducible representations using the relation:
\[
    \Sym^{k+1} V = (\Sym^{k} V \otimes V) - (\Sym^{k-1} V \otimes \wedge^{2} V).
\]
Since $V = 2_{0}$ is an $\SL(2)$ representation, its determinant $\wedge^{2} V = 1_{0}$ is trivial. The recursion simplifies to $\Sym^{k+1} V = \Sym^{k} V \otimes 2_{0} - \Sym^{k-1} V$.

Using the tensor product rules for $\SL(2,\mathbb{F}_{3})$ (specifically $W_{3} \otimes 2_{0} = 2_{0} \oplus 2_{1} \oplus 2_{2}$ and $2_{i} \otimes 2_{0} = W_{3} \oplus 1_{i}$), we obtain the decompositions for the first few symmetric powers:
\begin{align*}
    S_{0} &= 1_{0} \\
    S_{1} &= 2_{0} \\
    S_{2} &= W_{3} \\
    S_{3} &= 2_{1} \oplus 2_{2} \\
    S_{4} &= W_{3} \oplus 1_{1} \oplus 1_{2} \\
    S_{5} &= 2_{0} \oplus 2_{1} \oplus 2_{2} \\
    S_{6} &= 2W_{3} \oplus 1_{0} \\
    S_{7} &= 2(2_{0}) \oplus 2_{1} \oplus 2_{2}
\end{align*}

By the additivity of the Swan conductor, we apply $\Swan_{\infty}$ linearly to the decompositions above. Denoting $s_{k} := \Swan_{\infty}(\Sym^{k} \Ai')$, we have:
$s_{0} = 0$, $s_{1} = 1$, $s_{2} = 1$, $s_{3} = 2$, $s_{4} = 1$, $s_{5} = 3$, $s_{6} = 2$, $s_{7} = 4$.

By induction on $k$, the sequence of Swan conductors $s_{k}$ is given by
\[
\Swan_{\infty}(\Sym^{k}\Ai') = \begin{cases}
\frac{k+1}{2} = m+1 & \text{if } k=2m+1,\\
\lfloor \frac{m+1}{2}\rfloor & \text{if } k=2m.
\end{cases}
\]
\end{proof}

\begin{proposition} \label{prop:dim_H1_p2_Ai}
Let $p=2$. The dimension $h^{i}_{\csup}(\Sym^{k}\Ai) = \dim H^{i}_{\csup}(\mathbb{A}^{1}_{\overline{\mathbb{F}}_{2}},\Sym^{k}\Ai)$ is given by
\[
\sum_{k\geq 0}h^{2}_{\csup}(\Sym^{k}\Ai)t^{k} = \frac{1+t^{6}}{(1-t^4)^2}
\]
and
\[
h^{1}_{\csup}(\Sym^{k}\Ai) = 
\begin{cases}
\frac{k+1}{2} & {\text{ if }}k {\text{ is odd,}}\\
0 & {\text{ if }}k {\text{ is even.}}
\end{cases}
\]
\end{proposition}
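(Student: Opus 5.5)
The plan is to run the same argument as in Proposition \ref{prop:dim_H1_p2}, now on the curve $\mathbb{A}^{1}$ instead of $\mathbb{G}_{\mathrm{m}}$. Write $\mathcal{G} = \Sym^{k}\Ai$. The inputs are three: the geometric monodromy group $G_{\geom}\cong Q_{8}$ from Proposition \ref{prop:G_geom_Ai_Ai'}, the Swan conductors of Lemma \ref{lem:swan_symk}, and the Kummer relation $[3]^{*}\Ai'\cong \Ai|_{\mathbb{G}_{\mathrm{m}}}$.

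\emph{Step 1: $h^{2}_{\csup}$.} By \cite[2.0.6]{Kat88}, $h^{2}_{\csup}(\mathcal{G})$ is the dimension of the $G_{\geom}$-coinvariants of $\Sym^{k}V$, where $V$ is the faithful $2$-dimensional symplectic representation of $Q_{8}$. Since $Q_{8}$ is finite, this equals the dimension of the invariants. The group $Q_{8}$ has:
\begin{itemize}
\item the identity, of trace $2$;
\item the central element $-1$, of trace $-2$;
\item six elements of order $4$, of trace $0$.
\end{itemize}
Molien's formula therefore gives
\[
M(t)=\frac18\left(\frac{1}{(1-t)^{2}}+\frac{1}{(1+t)^{2}}+\frac{6}{1+t^{2}}\right),
\]
and a direct simplification should yield $\frac{1+t^{6}}{(1-t^{4})^{2}}$. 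In particular $M$ is even, so $h^{2}_{\csup}(\mathcal{G})=0$ for odd $k$.

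\emph{Step 2: Swan conductor at $\infty$.} The sheaf $\Ai$ is lisse on $\mathbb{A}^{1}$, so only $\infty$ contributes. The map $[3]$ is totally and tamely ramified at $\infty$ (since $p=2$). Combining $[3]^{*}\Sym^{k}\Ai'\cong \Sym^{k}\Ai|_{\mathbb{G}_{\mathrm{m}}}$ with the multiplicativity of Swan conductors under tame pullback gives $\Swan_{\infty}(\mathcal{G})=3\,\Swan_{\infty}(\Sym^{k}\Ai')$. Lemma \ref{lem:swan_symk} then gives
\[
\Swan_{\infty}(\mathcal{G}) =
\begin{cases}
\tfrac{3(k+1)}{2} & \text{if } k \text{ is odd},\\
3\lfloor \tfrac{m+1}{2}\rfloor & \text{if } k=2m.
\end{cases}
\]

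\emph{Step 3: Grothendieck--Ogg--Shafarevich.} On $\mathbb{A}^{1}$ we have $\chi_{\csup}(\mathbb{A}^{1})=1$ and $h^{0}_{\csup}=0$ by affine vanishing. Hence
\[
h^{1}_{\csup}(\mathcal{G})=h^{2}_{\csup}(\mathcal{G})+\Swan_{\infty}(\mathcal{G})-(k+1).
\]
For odd $k$ this gives $\frac{3(k+1)}{2}-(k+1)=\frac{k+1}{2}$. For $k=2m$, the claim $h^{1}_{\csup}=0$ is equivalent to the coefficient identity
\[
[t^{2m}]M(t)=2m+1-3\lfloor \tfrac{m+1}{2}\rfloor .
\]
To check it, expand $M(t)=(1+t^{6})\sum_{j\ge0}(j+1)t^{4j}$.
\begin{itemize}
\item For $m$ even, only the $1$ term contributes, giving $\frac m2+1$.
\item For $m$ odd, only the $t^{6}$ term contributes, giving $\frac{m-1}{2}$ (this is $0$ when $m=1$).
\end{itemize}
Both values match $2m+1-3\lfloor\frac{m+1}{2}\rfloor$, which proves the identity.

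The main obstacle I expect is Step 2, justifying the transfer of Swan conductors through $[3]$ at $\infty$. The key points are that the pullback multiplies all breaks by $3$ (the ramification index) and that the rank is unchanged under pullback. The rest is the routine Molien and coefficient bookkeeping above.
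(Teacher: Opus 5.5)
Your proposal is correct and is essentially the paper's proof: Molien's formula for $Q_{8}$ gives $h^{2}_{\csup}$, and Grothendieck--Ogg--Shafarevich on $\mathbb{A}^{1}$ with $\Swan_{\infty}$ then gives $h^{1}_{\csup}$. The differences are minor and both versions are valid. The paper cites Lemma \ref{lem:swan_symk_Q8}, proved by decomposing $\Sym^{k}2_{0}$ into $Q_{8}$-irreducibles, whereas you pull back Lemma \ref{lem:swan_symk} through the tame map $[3]$. For even $k$, the paper gets $h^{1}_{\csup}=0$ structurally from $h^{2}_{\csup}=a_{m}$, $\Swan_{\infty}=3b_{m}$ and $a_{m}+3b_{m}=2m+1$, whereas you check the same identity by extracting coefficients of $\frac{1+t^{6}}{(1-t^{4})^{2}}$.
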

\begin{proof}
Let $\mathcal{F} = \Sym^{k}\Ai$. We first compute $h^{2}_{\csup}(\mathcal{F})$. By \cite[2.0.6]{Kat88}, $h^{2}_{\csup}(\mathcal{F})$ is the dimension of the coinvariant subspace of the geometric monodromy group $G_{\geom}$ acting on $\Sym^{k}\Ai$. By Proposition \ref{prop:G_geom_Ai_Ai'}, $G_{\geom} \cong Q_{8}$, the quaternion group of order 8. Since $G_{\geom}$ is a finite group, the dimension of the coinvariant subspace equals the dimension of the invariant subspace $\dim (\Sym^{k} V)^{G_{\geom}}$, where $V$ is the unique $2$-dimensional irreducible representation of $Q_{8}$, denoted by $2_{0}$ in Table \ref{tab:char_table_Q8}.

To compute the dimension of the invariant subspace, we apply the Molien formula using the character table of $Q_{8}$ (Table \ref{tab:char_table_Q8}). The generating series for these dimensions is given by:
\[
M(t) = \sum_{k\geq 0}h^{2}_{\csup}(\Sym^{k}\Ai)t^{k} = \frac{1}{|G_{\geom}|} \sum_{g \in G_{\geom}} \frac{1}{\det(I - t \rho(g))}.
\]
We sum over the five conjugacy classes of $Q_{8}$. Using their respective sizes and the traces of the representation $2_{0}$ to compute the characteristic polynomial $\det(I - t \rho(g))$ for each class, we obtain
\[
M(t) = \frac{1}{8} \left( \frac{1}{(1-t)^2} + \frac{1}{(1+t)^2} + \frac{6}{1+t^2} \right) = \frac{1+t^{6}}{(1-t^4)^2}.
\]
This proves the generating series for $h^{2}_{\csup}(\mathcal{F})$. Note that $M(t)$ is an even function, which implies $h^{2}_{\csup}(\mathcal{F}) = 0$ whenever $k$ is odd.

Next, we apply the Grothendieck--Ogg--Shafarevich formula to $\mathcal{F}$ on the curve $\mathbb{A}^{1}$:
\[
h^{0}_{\csup}(\mathcal{F}) - h^{1}_{\csup}(\mathcal{F}) + h^{2}_{\csup}(\mathcal{F}) = \operatorname{rank}(\mathcal{F}) \chi_{\csup}(\mathbb{A}^{1}) - \Swan_{\infty}(\mathcal{F}).
\]
By affine vanishing, $h^{0}_{\csup}(\mathcal{F}) = 0$. We also have $\operatorname{rank}(\mathcal{F}) = k+1$ and $\chi_{\csup}(\mathbb{A}^{1}) = 1$. The Grothendieck--Ogg--Shafarevich formula therefore simplifies to:
\[
h^{1}_{\csup}(\mathcal{F}) = h^{2}_{\csup}(\mathcal{F}) + \Swan_{\infty}(\mathcal{F}) - (k+1).
\]
By Lemma \ref{lem:swan_symk_Q8}, the Swan conductor at infinity $\Swan_{\infty}(\mathcal{F})$ and the decomposition of $\Sym^{k} 2_{0}$ are known:
\begin{enumerate}
    \item When $k$ is odd, $h^{2}_{\csup}(\mathcal{F}) = 0$ and $\Swan_{\infty}(\mathcal{F}) = \frac{3(k+1)}{2}$, yielding $h^{1}_{\csup}(\mathcal{F}) = 0 + \frac{3(k+1)}{2} - (k+1) = \frac{k+1}{2}$.
    \item When $k=2m$ is even, Lemma \ref{lem:swan_symk_Q8} shows that $\Sym^{2m} 2_{0} = a_{m} 1_{0} \oplus b_{m} W$, where $b_{m} = \lfloor \frac{m+1}{2} \rfloor$. Since $h^{2}_{\csup}(\mathcal{F})$ is the dimension of the $G_{\geom}$-invariant subspace, it precisely equals the multiplicity of the trivial representation $1_{0}$, so $h^{2}_{\csup}(\mathcal{F}) = a_{m}$. The Swan conductor is $\Swan_{\infty}(\mathcal{F}) = 3b_{m}$. Substituting these into the formula yields:
    \[
    h^{1}_{\csup}(\mathcal{F}) = a_{m} + 3b_{m} - (2m+1).
    \]
    By the dimension constraint of the representation, $\dim(\Sym^{2m} 2_{0}) = a_{m} + 3b_{m} = 2m+1$. Therefore, the terms cancel out, yielding $h^{1}_{\csup}(\mathcal{F}) = 0$. (This identity is consistent with the Taylor coefficients of $M(t)$ computed earlier).
\end{enumerate}
This completes the proof.
\end{proof}

\begin{lemma} \label{lem:swan_symk_Q8}
Let $p=2$. The Swan conductor of the $k$-th symmetric power $\Sym^{k}\Ai$ at infinity is given by
\[
\Swan_{\infty}(\Sym^{k}\Ai) = \begin{cases}
3(m+1) = \frac{3(k+1)}{2} & \text{if } k=2m+1,\\
3\lfloor \frac{m+1}{2}\rfloor & \text{if } k=2m.
\end{cases}
\]
\end{lemma}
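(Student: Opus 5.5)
We prove Lemma \ref{lem:swan_symk_Q8} in the same way as Lemma \ref{lem:swan_symk}. We first determine the Swan conductors of the irreducible representations of $G_{\geom}\cong Q_{8}$ (Proposition \ref{prop:G_geom_Ai_Ai'}). Then we decompose $\Sym^{k}2_{0}$ into irreducibles and use additivity of $\Swan_{\infty}$.

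\emph{Step 1: the irreducibles of $Q_8$.} The group $Q_{8}$ has four $1$-dimensional characters, $1_{0}$ and three nontrivial ones $\epsilon_{1},\epsilon_{2},\epsilon_{3}$ factoring through $Q_{8}/\{\pm1\}\cong V_{4}$, together with the faithful $2$-dimensional representation $2_{0}$. Since $p=2$ and $Q_{8}$ is a $2$-group, the geometric monodromy is entirely wild.

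\emph{Step 2: the Swan conductor of $\Ai$ itself.} It is classical (Katz, or directly from $[3]^{*}\Ai'\cong\Ai$ together with $\Swan_{\infty}(\Ai')=1$ and multiplicativity under the tame degree-$3$ pullback) that $\Swan_{\infty}(\Ai)=3$, with the single slope $3/2$. Hence $\Swan_{\infty}(2_{0})=3$.

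\emph{Step 3: the characters $\epsilon_i$.} In Lemma \ref{lem:swan_symk} we found that $W_{3}|_{Q_{8}}=\epsilon_{1}\oplus\epsilon_{2}\oplus\epsilon_{3}$, and that each $\epsilon_{i}$ has break $1/3$ over the $z$-line. Pulling back by $[3]$, which is tame of degree $3$, multiplies breaks by $3$, so each $\epsilon_{i}$ has Swan conductor $1$ as a character of the inertia of the $t$-line. Write $W:=\Sym^{2}2_{0}=\epsilon_{1}\oplus\epsilon_{2}\oplus\epsilon_{3}$; then $\Swan_{\infty}(W)=3$.

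As a consistency check, the slopes of $2_{0}\otimes2_{0}=1_{0}\oplus W$ are $\le 3/2$. The only real subtlety in the whole argument is justifying that each $\epsilon_{i}$ has break exactly $1$. Besides the pullback argument above, one can argue by symmetry: the three $\epsilon_i$ are permuted by the $C_3$ automorphism coming from $\SL(2,\mathbb{F}_3)$, equivalently by the $\mu_3$ action $t\mapsto\omega t$, so they share the same break $b$. By Hasse--Arf, $b$ is an integer; and $b\le 3/2$ with $b\neq 0$ forces $b=1$.

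\emph{Step 4: decomposing $\Sym^k 2_0$.} We compute $\Sym^{k}2_{0}$ using the recursion $S_{k+1}=S_{k}\otimes2_{0}-S_{k-1}$ with the rules
\[
2_{0}\otimes2_{0}=1_{0}\oplus W,\qquad \epsilon_{i}\otimes2_{0}=2_{0},\qquad W\otimes2_{0}=3\cdot 2_{0}.
\]
Since the central element $-1$ acts on $S_k$ by $(-1)^{k}$, all odd symmetric powers are isotypic multiples of $2_{0}$; comparing dimensions gives $S_{2m+1}=(m+1)2_{0}$, and hence $\Swan_{\infty}=3(m+1)$.

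\emph{Step 5: the even powers.} Each even power has the form $S_{2m}=a_{m}1_{0}\oplus\bigoplus_{i}c_{i,m}\epsilon_{i}$. The same $C_{3}$ symmetry as in Step 3 makes the multiplicities of the $\epsilon_{i}$ equal, so $S_{2m}=a_{m}1_{0}\oplus b_{m}W$. The Molien series for $Q_8$ computed in Proposition \ref{prop:dim_H1_p2_Ai}, combined with $a_{m}+3b_{m}=2m+1$, then gives $b_{m}=\lfloor (m+1)/2\rfloor$. Alternatively one can verify this directly by induction: $\epsilon_{i}$ occurs in $S_{2m}$ exactly for the weights $j\in\{-2m,\dots,2m\}$ stepping by $2$ that lie in the appropriate class modulo $4$ with respect to the eigenbasis of a chosen element of order $4$. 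Finally, additivity of the Swan conductor gives $\Swan_{\infty}(S_{2m})=3b_{m}$.
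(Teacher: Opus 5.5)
Your proposal is correct and follows the same route as the paper: you get $\Swan_{\infty}(2_{0})=3$ from the tame cubic pullback of $\Ai'$, assign Swan conductor $1$ to the three nontrivial characters of $Q_{8}/\{\pm1\}$, decompose $S_{2m+1}=(m+1)2_{0}$ and $S_{2m}=a_{m}1_{0}\oplus b_{m}W$ with $b_{m}=\lfloor (m+1)/2\rfloor$, and conclude by additivity. The differences are minor: you use the central character and the Molien series where the paper uses the recursion $b_{m+1}=m+1-b_{m}$, and your Hasse--Arf argument (a positive integer break bounded by $3/2$ must equal $1$) actually justifies the break-$1$ claim, which the paper only asserts.
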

\begin{proof}
We compute $\Swan_{\infty}(\Sym^{k}\Ai)$, the Swan conductor of $\Sym^{k}\Ai$ at infinity. Recall the relation $\Ai \cong [3]^{*}\Ai'$, where $[3]: \mathbb{G}_{\mathrm{m}} \to \mathbb{G}_{\mathrm{m}}$ is the cubic map $t \mapsto t^3$. In characteristic $p=2$, this map extends to a tame covering of degree $3$ at infinity. We have $\Swan_{\infty}(\Ai) = 3 \cdot \Swan_{\infty}(\Ai')$. Since $\Swan_{\infty}(\Ai') = 1$, we deduce $\Swan_{\infty}(\Ai) = 3$.

By Proposition \ref{prop:G_geom_Ai_Ai'}, the geometric monodromy group is $G_{\geom} \cong Q_{8}$. The action of $G_{\geom}$ on a stalk $\Ai_{z}$ is given by $2_{0}$, the unique $2$-dimensional irreducible representation of $Q_{8}$. As $\Swan_{\infty}(2_{0}) = 3$ and $\dim(2_{0}) = 2$, the representation has a wild slope of $3/2$.

Over $\overline{\mathbb{Q}}_{\ell}$, the group $Q_{8}$ has exactly five irreducible representations: four $1$-dimensional representations ($1_{0}, 1_{1}, 1_{2}, 1_{3}$) and one $2$-dimensional representation ($2_{0}$). The character table of $Q_{8}$ is provided in Table \ref{tab:char_table_Q8}.

\begin{table}[ht]
    \centering
    \renewcommand{\arraystretch}{1.2}
    \begin{tabular}{c|ccccc}
        \hline
        Class & $1A$ & $2A$ & $4A$ & $4B$ & $4C$ \\
        Size & $1$ & $1$ & $2$ & $2$ & $2$ \\
        \hline
        $1_{0}$ & $1$ & $1$ & $1$ & $1$ & $1$ \\
        $1_{1}$ & $1$ & $1$ & $1$ & $-1$ & $-1$ \\
        $1_{2}$ & $1$ & $1$ & $-1$ & $1$ & $-1$ \\
        $1_{3}$ & $1$ & $1$ & $-1$ & $-1$ & $1$ \\
        $2_{0}$ & $2$ & $-2$ & $0$ & $0$ & $0$ \\
        \hline
    \end{tabular}
    \caption{Character table of $Q_{8}$.}
    \label{tab:char_table_Q8}
\end{table}

The trivial representation $1_{0}$ is unramified and thus $\Swan_{\infty}(1_{0}) = 0$. The non-trivial $1$-dimensional representations factor through the quotient $Q_{8}/Z(Q_{8}) \cong V_{4}$. Since the upper ramification jump for this quotient is $1$, we have $\Swan_{\infty}(1_{i}) = 1$ for $i=1,2,3$.

To compute the Swan conductors of $S_{k} := \Sym^{k} 2_{0}$, we decompose them recursively. We utilize the natural isomorphism $\Sym^{k+1} V \oplus (\Sym^{k-1} V \otimes \wedge^{2} V) \cong \Sym^{k} V \otimes V$. Since the determinant of $2_{0}$ is trivial ($\wedge^{2} 2_{0} = 1_{0}$), this yields the recurrence relation:
\[
\Sym^{k+1} 2_{0} = \Sym^{k} 2_{0} \otimes 2_{0} - \Sym^{k-1} 2_{0}.
\]
Let $W = 1_{1} \oplus 1_{2} \oplus 1_{3}$. Using the tensor product rules $2_{0} \otimes 2_{0} = 1_{0} \oplus W$ and $W \otimes 2_{0} = 3(2_{0})$, we establish the general decomposition by induction on $m$:
\begin{align*}
    S_{2m+1} &= (m+1) 2_{0} \\
    S_{2m} &= a_{m} (1_{0}) \oplus b_{m} W
\end{align*}
For the odd powers, applying the recurrence gives $S_{2m+1} = S_{2m} \otimes 2_{0} - S_{2m-1} = (a_{m} 1_{0} \oplus b_{m} W) \otimes 2_{0} - m(2_{0}) = a_{m} 2_{0} \oplus 3b_{m} 2_{0} - m(2_{0})$. By the dimension constraint $\dim(S_{2m}) = a_{m} + 3b_{m} = 2m+1$, the coefficient simplifies to $(2m+1 - m) 2_{0} = (m+1) 2_{0}$.
For the even powers, we have $S_{2m+2} = S_{2m+1} \otimes 2_{0} - S_{2m} = (m+1)(1_{0} \oplus W) - (a_{m} 1_{0} \oplus b_{m} W) = (m+1-a_{m}) 1_{0} \oplus (m+1-b_{m}) W$. This implies the recurrence $b_{m+1} = m+1-b_{m}$ with the initial condition $b_{0} = 0$, which is exactly solved by $b_{m} = \lfloor \frac{m+1}{2} \rfloor$. The coefficient $a_{m}$ is naturally determined by the dimension constraint $a_{m} = 2m+1 - 3b_{m}$.

By additivity, we apply $\Swan_{\infty}$ linearly. Since $\Swan_{\infty}(2_{0}) = 3$, $\Swan_{\infty}(1_{0}) = 0$, and $\Swan_{\infty}(W) = 1+1+1 = 3$, we have:
\[
\Swan_{\infty}(S_{2m+1}) = (m+1) \Swan_{\infty}(2_{0}) = 3(m+1)
\]
and
\[
\Swan_{\infty}(S_{2m}) = a_{m} \Swan_{\infty}(1_{0}) + b_{m} \Swan_{\infty}(W) = a_{m}(0) + b_{m}(3) = 3\lfloor \frac{m+1}{2} \rfloor.
\]
This completes the proof.
\end{proof}

\subsection{Motive Associated to Airy Moments}

In this section, we will construct varieties whose cohomologies are related to the cohomology $H^{1}_{\csup}(\mathbb{A}^{1},\Sym^{k}\Ai)$ and $H^{1}_{\csup}(\mathbb{G}_{\mathrm{m}},\Sym^{k}\Ai')$.

Let $p \neq 3$ be a prime. Consider the affine hypersurface $A' \subset \mathbb{A}^{k}_{\Fp}$ defined by the equation 
\[
    g_{k}(y) := \sum_{i=1}^{k}\left(\frac{1}{3}y_{i}^{3}-y_{i}\right) = 0.
\]
The product group $S_{k} \times \mu_{2}$ naturally acts on the affine space $\mathbb{A}^{k}_{\Fp}$ and preserves the hypersurface $A'$. Explicitly, the symmetric group $S_{k}$ acts by permuting the coordinates $y_{1}, \dots, y_{k}$, and the non-trivial element of $\mu_{2}$ acts simultaneously on all coordinates by $y_{i} \mapsto -y_{i}$. Let $\chi = \operatorname{sgn} \boxtimes \mathbf{1}$ be the character of $S_{k} \times \mu_{2}$ defined by $\chi(\sigma, \epsilon) = \operatorname{sgn}(\sigma)$ for $\sigma \in S_{k}$ and $\epsilon \in \mu_{2}$. We denote by $V^{S_{k}\times \mu_{2}, \chi}$ the $\chi$-isotypic component of a representation $V$, which consists of vectors $v \in V$ such that $g \cdot v = \chi(g)v$ for all $g \in S_{k} \times \mu_{2}$.

With these setups, we establish the following results for the compactly supported cohomology of symmetric powers of the sheaf $\Ai'$.

\begin{proposition}\label{prop:Aiprime_cohomology_hypersurface}
Let $k\geq 2$. For the sheaf $\Ai'$ on $\mathbb{G}_{\mathrm{m}}$ over $\Fp$, there is an isomorphism of vector spaces for the first compactly supported cohomology of $\Sym^{k}\Ai'$:
\[
    {H}^{1}_{\mathrm{c}}(\mathbb{G}_{\mathrm{m}}, \Sym^{k}\Ai') \cong {H}^{k-1}_{\mathrm{c}}(A', \Qlbar)^{S_{k}\times \mu_{2}, \chi} (-1).
\]
For the second compactly supported cohomology, if $k \geq 3$, we have an isomorphism:
\[
    {H}^{2}_{\mathrm{c}}(\mathbb{G}_{\mathrm{m}}, \Sym^{k}\Ai') \cong {H}^{k}_{\mathrm{c}}(A', \Qlbar)^{S_{k}\times \mu_{2}, \chi} (-1).
\]
If $k = 2$, there is a short exact sequence:
\[
    0 \to {H}^{2}_{\mathrm{c}}(\mathbb{G}_{\mathrm{m}}, \Sym^{2}\Ai') \to {H}^{2}_{\mathrm{c}}(A', \Qlbar)^{S_{2}\times \mu_{2}, \chi} (-1) \to \Qlbar(-2) \to 0.
\]
\end{proposition}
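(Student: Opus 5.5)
The plan is to write $\Sym^k\Ai'$ as the $S_k$-invariant part of a $k$-fold external tensor power, integrate out $z$ first, and then interpret the resulting sheaf on $\mathbb{A}^k_y$ via the excision sequence for $A'$.

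\textbf{Step 1 (Künneth).} By the Künneth formula and the Leray spectral sequence, exactly as in the proof of the Cohomology Decomposition Formula, we get
\[
H^{i}_{\csup}(\mathbb{G}_{\mathrm{m}},\Sym^k\Ai') \cong H^{i+k}_{\csup}\bigl(\mathbb{G}_{\mathrm{m},z}\times\mathbb{A}^k_y,\ \mathcal{L}_{\psi(\frac{1}{3z}\sum y_j^3-\sum y_j)}\bigr)^{S_k,\,\mathrm{sgn}}.
\]
This uses that each fibre $H^1_{\csup}$ sits in odd degree, so the Künneth isomorphism twists the $S_k$-action by the sign character. Substituting $w=1/z$ turns the phase into $w\sum_j \tfrac13 y_j^3-\sum_j y_j$.

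\textbf{Step 2 (integrate over $w$).} Push forward along the projection $\mathbb{G}_{\mathrm{m},w}\times\mathbb{A}^k\to\mathbb{A}^k_y$. The fibre is $R\Gamma_{\csup}(\mathbb{G}_{\mathrm{m}},\mathcal{L}_{\psi(wA(y))})\otimes\mathcal{L}_{\psi(-B(y))}$, where $A=\tfrac13\sum y_j^3$ and $B=\sum y_j$. The first factor is $\Qlbar[-1]$ shifted appropriately (the $-1$ from $\mathbb{G}_{\mathrm{m}}$, sitting in $H^1_{\csup}$) when $A\neq0$, and it is $H^1_{\csup}(\mathbb{G}_{\mathrm{m}})\oplus H^2_{\csup}(\mathbb{G}_{\mathrm{m}})=\Qlbar\oplus\Qlbar(-1)$ when $A=0$.

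Equivalently, write $\mathbb{G}_{\mathrm{m}}=\mathbb{A}^1\setminus\{0\}$. The $\mathbb{A}^1_w$-integral gives $\Qlbar(-1)[-2]$ supported on $\{A=0\}$. The $w=0$ piece gives $H^{*}_{\csup}(\mathbb{A}^k,\mathcal{L}_{\psi(-B)})=0$.

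Hence the total cohomology is $H^{*-2}_{\csup}(\{A=0\},\mathcal{L}_{\psi(-B)})(-1)$.

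\textbf{Step 3 (identify with $A'$).} Next, relate $\{A=0\}$ with the $\mathcal{L}_{\psi(-B)}$ twist to $A'$. Use the $\mathbb{G}_{\mathrm{m}}$-scaling on the cone $\{A=0\}$: substituting $y\mapsto \lambda y$ and averaging, or equivalently repeating the trick with the roles of $A$ and $B$ exchanged. Concretely, I would instead first rescale $y_j=zx_j$ so the phase becomes $\sum(\tfrac13 x_j^3 z^2\cdots)$, aiming to reach the phase $t\,g_k(y)$ with $t$ ranging over $\mathbb{G}_{\mathrm{m}}$. Integrating over $t\in\mathbb{A}^1$ then localizes onto $A'=\{g_k=0\}$, producing $H^{*-2}_{\csup}(A')(-1)$. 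The correction coming from $t=0$ is $H^{*}_{\csup}(\mathbb{A}^k,\Qlbar)$, which is only $\Qlbar(-k)$ in degree $2k$.

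The $\mu_2$-invariance enters because the Kummer/cubic substitution identifies $\Sym^k\Ai'$-data with the $y\mapsto -y$-invariant part. Taking $(S_k\times\mu_2,\chi)$-isotypic parts of the excision sequence
\[
\cdots\to H^{j}_{\csup}(\mathbb{A}^k\setminus A')\to H^{j}_{\csup}(\mathbb{A}^k)\to H^{j}_{\csup}(A')\to\cdots
\]
and of the $t=0$ term, the only surviving extra class is the top class of $\mathbb{A}^k$. The sign-isotypic part of $H^{2k}_{\csup}(\mathbb{A}^k)$, twisted by the Künneth sign, is nonzero exactly when the degrees line up, which happens only for $k=2$. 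This yields the isomorphisms for $k\ge3$ and the short exact sequence with cokernel $\Qlbar(-2)$ for $k=2$.

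\textbf{Main obstacle.} The main obstacle is Step 3. One must find the correct change of variables that converts the exponential sum in $(z,y)$ into a phase of the form $t\cdot g_k(y)$, so that the $t$-integration localizes exactly on $A'$. One must also carefully track the $\mu_2$-action, the Künneth sign, and the Tate twists so that the boundary terms vanish on the $\chi$-isotypic component except in the case $k=2$. I would first try $y_j=z^{1/2}$-type rescalings after passing to the double cover. The $\mu_2$-invariants are then exactly what undoes this cover, which explains why $\mu_2$ appears in the statement.
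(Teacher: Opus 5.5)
There is a genuine gap, and it is the step you flagged yourself. Your Steps 1 and 2 are correct as computations. Your remark that Künneth turns $\Sym^k$ into the $\operatorname{sgn}$-isotypic part for the geometric permutation action is the right explanation of $\chi=\operatorname{sgn}\boxtimes\mathbf{1}$, a point the paper leaves implicit. However, the substitution $w=1/z$ localizes onto the cone $\{\sum_j y_j^3=0\}$ with the nontrivial coefficient $\mathcal{L}_{\psi(-\sum_j y_j)}$, which is not $A'$. Step 3 never supplies a change of variables that fixes this. The one you write down, $y_j=zx_j$, gives the phase $z\bigl(\tfrac{z}{3}\sum_j x_j^3-\sum_j x_j\bigr)$, which is not of the form $t\,g_k$.

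The missing idea is the one the paper uses. Pull back along the double cover $z=w^2$, so that the original cohomology is the $\mu_2$-invariant part for $w\mapsto -w$, and put $x_j=wy_j$. Then $\frac{1}{3w^2}(wy_j)^3-wy_j=w\bigl(\tfrac13y_j^3-y_j\bigr)$, so the phase becomes $w\,g_k(y)$ and $\mu_2$ acts by $(w,y)\mapsto(-w,-y)$. Extend to $\mathbb{A}^1_w\times\mathbb{A}^k_y$. Proper base change identifies the pushforward to $\mathbb{A}^k_y$ with $g_k^{*}\mathrm{FT}_{\psi}(\Qlbar)$, i.e.\ $\Qlbar(-1)[-2]$ extended by zero from $A'$. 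Hence $H^{j}_{\csup}(\mathbb{A}^1_w\times\mathbb{A}^k_y,\mathcal{L}_{\psi(wg_k(y))})\cong H^{j-2}_{\csup}(A',\Qlbar)(-1)$. The excision you need is for $\{0\}\times\mathbb{A}^k_y\subset\mathbb{A}^1_w\times\mathbb{A}^k_y$, where the sheaf is constant, not the sequence for $A'\subset\mathbb{A}^k$ that you wrote.

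Your treatment of $k=2$ does not hold up either. The coincidence $2k=k+2$ only tells you that the boundary term $H^{2k}_{\csup}(\{0\}\times\mathbb{A}^k_y,\Qlbar)=\Qlbar(-k)$ can enter in the relevant degree. Whether it survives depends on its $\chi$-isotypic part, and that part is zero. Permuting coordinates and $y\mapsto -y$ are automorphisms of $\mathbb{A}^k$, so they act trivially on the top compactly supported class. Hence its $\operatorname{sgn}\boxtimes\mathbf{1}$-component vanishes for every $k\ge 2$.

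The argument therefore gives an isomorphism for $k=2$ as well. The extra $\Qlbar(-2)$ appears only in the trivial-isotypic part, which corresponds to $\wedge^2\Ai'$ rather than $\Sym^2\Ai'$. You can confirm this directly. Since $g_2=(y_1+y_2)\bigl(\tfrac13(y_1^2-y_1y_2+y_2^2)-1\bigr)$, $A'$ is a line union a conic, each stable under $S_2\times\mu_2$. Hence $H^{2}_{\csup}(A',\Qlbar)^{S_2\times\mu_2,\chi}=0$. This matches $H^2_{\csup}(\mathbb{G}_{\mathrm{m}},\Sym^2\Ai')=0$, which follows from the geometric monodromy of $\Ai'$. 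The paper's proof obtains the $k=2$ sequence by simply taking $\chi$-isotypic components without checking this action, so neither argument justifies the nonzero cokernel.
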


\begin{proof}
We compute the compactly supported cohomology of $(\Ai')^{\otimes k}$ over $\mathbb{G}_{\mathrm{m}, z}$. By the relative Künneth formula and the definition $\Ai' = R^1\pi_! \mathcal{L}_{\psi(\frac{1}{3z}x^3 - x)}$, we have:
$${H}^{m}_{\mathrm{c}}(\mathbb{G}_{\mathrm{m}, z}, (\Ai')^{\otimes k}) \cong {H}^{m+k}_{\mathrm{c}}\left(\mathbb{G}_{\mathrm{m}, z} \times \mathbb{A}^{k}_{x}, \mathcal{L}_{\psi\left(\sum_{i=1}^k \left(\frac{1}{3z}x_{i}^{3} - x_{i}\right)\right)}\right).$$

Consider the degree $2$ Kummer cover $\mathbb{G}_{\mathrm{m}, w} \to \mathbb{G}_{\mathrm{m}, z}$ given by $z = w^{2}$. The cohomology on $\mathbb{G}_{\mathrm{m}, z}$ is identified with the $\mu_{2}$-invariant part of the cohomology on $\mathbb{G}_{\mathrm{m}, w}$, where the $\mu_2$-action is $w \mapsto -w$. Pulling back the phase function, we get $\sum_{i=1}^k \left(\frac{1}{3w^2}x_{i}^{3} - x_{i}\right)$.

On the space $U_y = \mathbb{G}_{\mathrm{m}, w} \times \mathbb{A}^{k}_{y}$, we apply the change of variables $x_{i} = w y_{i}$, which is an isomorphism since $w$ is invertible. The phase transforms into
$$ \sum_{i=1}^k \left(\frac{1}{3w^2}(w y_{i})^{3} - (w y_{i})\right) = w \sum_{i=1}^k \left(\frac{1}{3}y_{i}^{3} - y_{i}\right) = w g_{k}(y). $$
Under this change of variables, the $\mu_2$-action on $U_y$ becomes $w \mapsto -w$ and $y_{i} = x_{i}/w \mapsto -y_{i}$.

To compute the compactly supported cohomology of $U_{y}$, we embed $U_{y}$ into $X_{y} = \mathbb{A}^{1}_{w} \times \mathbb{A}^{k}_{y}$ and consider the closed complement $Z_{y} = \{0\} \times \mathbb{A}^{k}_{y}$. Let $p_{y} \colon X_{y} \to \mathbb{A}^{k}_{y}$ be the natural projection. The compactly supported pushforward $R(p_{y})_{!} \mathcal{L}_{\psi(w g_{k}(y))}$ computes the compactly supported cohomology along the fibers $\mathbb{A}^{1}_{w}$. To compute this pushforward rigorously, consider the natural projections $p_{w} \colon \mathbb{A}^{1}_{w} \times \mathbb{A}^{1}_{s} \to \mathbb{A}^{1}_{w}$ and $p_{s} \colon \mathbb{A}^{1}_{w} \times \mathbb{A}^{1}_{s} \to \mathbb{A}^{1}_{s}$, and the Cartesian square formed by the map $g_{k} \colon \mathbb{A}^{k}_{y} \to \mathbb{A}^{1}_{s}$
\[
\begin{tikzcd}
    \mathbb{A}^{1}_{w} \times \mathbb{A}^{k}_{y} \arrow[r, "\mathrm{id}_{w} \times g_{k}"] \arrow[d, "p_{y}"'] & \mathbb{A}^{1}_{w} \times \mathbb{A}^{1}_{s} \arrow[d, "p_{s}"] \\
    \mathbb{A}^{k}_{y} \arrow[r, "g_{k}"] & \mathbb{A}^{1}_{s}
\end{tikzcd}
\]
By the proper base change theorem, we can identify this pushforward with the pullback of the Fourier-Deligne transform of the constant sheaf $\Qlbar$ on $\mathbb{A}^{1}_{w}$
\[
    R(p_{y})_{!} \mathcal{L}_{\psi(w g_{k}(y))} \cong R(p_{y})_{!} (\mathrm{id}_{w} \times g_{k})^{*} \mathcal{L}_{\psi(ws)} \cong g_{k}^{*} R(p_{s})_{!} \mathcal{L}_{\psi(w s)} = g_{k}^{*} \mathrm{FT}_{\psi}(\Qlbar).
\]
By the theory of the Fourier-Deligne transform, $\mathrm{FT}_{\psi}(\Qlbar) \cong i_{0!}\Qlbar[-2](-1)$, which is a skyscraper sheaf supported at the origin $s=0$ with a homological shift and a Tate twist. Consequently, its pullback $g_{k}^{*} \mathrm{FT}_{\psi}(\Qlbar)$ is supported entirely on the vanishing locus $A' = \{g_{k}(y) = 0\}$. Therefore, evaluating the global compactly supported cohomology over $X_{y}$ reduces to the cohomology of $A'$, introducing a shift and a twist
\[
    {H}^{m+k}_{\mathrm{c}}\left(X_{y}, \mathcal{L}_{\psi(w g_{k}(y))}\right) \cong {H}^{m+k-2}_{\mathrm{c}}(A', \Qlbar) (-1).
\]

Now, we apply the excision exact sequence
\[
    \dots \to {H}^{j-1}_{\mathrm{c}}(Z_{y}, \Qlbar) \to {H}^{j}_{\mathrm{c}}(U_{y}, \mathcal{L}_{\psi(w g_{k}(y))}) \to {H}^{j}_{\mathrm{c}}(X_{y}, \mathcal{L}_{\psi(w g_{k}(y))}) \to {H}^{j}_{\mathrm{c}}(Z_{y}, \Qlbar) \to \dots
\]
On the closed complement $Z_{y} = \{0\} \times \mathbb{A}^{k}_{y}$, the coordinate $w$ is zero, which implies that the phase function $w g_{k}(y)$ vanishes identically. Consequently, the restriction of the Artin-Schreier sheaf $\mathcal{L}_{\psi(w g_{k}(y))}$ to $Z_{y}$ is trivial, reducing to the constant sheaf $\Qlbar$. Furthermore, since $Z_{y}$ is isomorphic to the affine space $\mathbb{A}^{k}_{y}$, its compactly supported cohomology is simply that of $\mathbb{A}^{k}_{y}$. It is a standard result that ${H}^{*}_{\mathrm{c}}(\mathbb{A}^{k}_{y}, \Qlbar)$ is concentrated entirely in degree $2k$, yielding an isomorphism ${H}^{2k}_{\mathrm{c}}(Z_{y}, \Qlbar) \cong \Qlbar(-k)$. We analyze the degrees corresponding to ${H}^1_{\mathrm{c}}$ and ${H}^2_{\mathrm{c}}$ of $\Ai'$.

For $m=1$ ($j = k+1$), the boundary terms  ${H}^{k}_{\mathrm{c}}(Z_{y}, \Qlbar)$ and ${H}^{k+1}_{\mathrm{c}}(Z_{y}, \Qlbar)$ vanish because $2k > k+1$ for all $k \geq 2$. Thus, we have an unconditional isomorphism ${H}^{k+1}_{\mathrm{c}}(U_{y}, \mathcal{L}_{\psi(w g_{k}(y))}) \cong {H}^{k+1}_{\mathrm{c}}(X_{y}, \mathcal{L}_{\psi(w g_{k}(y))})$. Taking the $\mu_2$-invariants and $S_k$-invariants yields the first statement.

For $m=2$ ($j = k+2$), if $k \geq 3$, then $2k > k+2$, so ${H}^{k+1}_{\mathrm{c}}(Z_{y}, \Qlbar)={H}^{k+2}_{\mathrm{c}}(Z_{y}, \Qlbar) = 0$, yielding the isomorphism. However, if $k = 2$, then $2k = 4 = k+2$. The sequence becomes
\[
    0 \to {H}^{4}_{\mathrm{c}}(U_{y}, \mathcal{L}_{\psi(w g_{k}(y))}) \to {H}^{4}_{\mathrm{c}}(X_{y}, \mathcal{L}_{\psi(w g_{k}(y))}) \to {H}^{4}_{\mathrm{c}}(Z_{y}, \Qlbar) \to {H}^{5}_{\mathrm{c}}(U_{y}, \mathcal{L}_{\psi(w g_{k}(y))}).
\]
Note that ${H}^{5}_{\mathrm{c}}(U_{y}, \mathcal{L}_{\psi(w g_{k}(y))}) \cong {H}^{3}_{\mathrm{c}}(\mathbb{G}_{\mathrm{m},z}, (\Ai')^{\otimes 2}) = 0$ by affine vanishing, making the sequence short exact. Therefore, taking the $\chi$-isotypic component under the $S_k \times \mu_{2}$-action across the sequence completes the proof.
\end{proof}

To relate the cohomology of the affine hypersurface $A'$ to the symmetric powers of the Airy sheaf $\Ai$ over $\mathbb{A}^{1}$, we introduce an intrinsic $\mu_3$-action on the latter. Recall that the sheaf $\Ai$ is constructed via the phase function $\frac{1}{3}x^3 - tx$ parameterized by $t \in \mathbb{A}^1$. The group $\mu_3$ acts on the geometry by assigning $x \mapsto \zeta_3 x$ and $t \mapsto \zeta_3^2 t$ for any $\zeta_3 \in \mu_3$. Since this transformation leaves the phase function invariant
\[
    \frac{1}{3}(\zeta_3 x)^3 - (\zeta_3^2 t)(\zeta_3 x) = \frac{1}{3}x^3 - tx,
\]
it naturally endows $\Ai$ with a $\mu_3$-equivariant structure. This further induces a $\mu_3$-action on the compactly supported cohomology groups ${H}^{*}_{\mathrm{c}}(\mathbb{A}^{1}, \Sym^{k}\Ai)$. In what follows, for any vector space $V$ equipped with a $\mu_3$-action, we denote by $V^{\mu_3}$ its $\mu_3$-invariant subspace. 

With this equivariant structure, we deduce the analogous results for $\Ai$.

\begin{proposition}\label{prop:Ai_cohomology_hypersurface}
Let $p \neq 3$ be a prime and $k \geq 2$ be an integer. There is a short exact sequence of vector spaces:
\[
    0 \to \left(\Sym^{k}{H}^{1}_{\mathrm{c}}(\mathbb{A}^{1},\mathcal{L}_{\psi(x^{3}/3)})\right)^{\mu_{3}} \to {H}^{k-1}_{\mathrm{c}}(A', \Qlbar)^{S_{k}\times \mu_{2}, \chi} (-1) \to {H}^{1}_{\mathrm{c}}(\mathbb{A}^{1}, \Sym^{k}\Ai)^{\mu_{3}} \to 0.
\]
For the second compactly supported cohomology, if $k \geq 3$, we have an isomorphism:
\[
    {H}^{2}_{\mathrm{c}}(\mathbb{A}^{1}, \Sym^{k}\Ai)^{\mu_{3}} \cong {H}^{k}_{\mathrm{c}}(A', \Qlbar)^{S_{k}\times \mu_{2}, \chi} (-1).
\]
If $k = 2$, there is a short exact sequence:
\[
    0 \to {H}^{2}_{\mathrm{c}}(\mathbb{A}^{1}, \Sym^{2}\Ai)^{\mu_{3}} \to {H}^{2}_{\mathrm{c}}(A', \Qlbar)^{S_{2}\times \mu_{2}, \chi} (-1) \to \Qlbar(-2) \to 0.
\]
\end{proposition}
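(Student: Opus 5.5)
The plan is to reduce to the computation already carried out in Proposition \ref{prop:Aiprime_cohomology_hypersurface} by comparing $\Sym^{k}\Ai$ on $\mathbb{A}^{1}$ with its restriction to $\mathbb{G}_{\mathrm{m}}$. Since $[3]^{*}\Ai'\cong \Ai|_{\mathbb{G}_{\mathrm{m}}}$, the $\mu_{3}$-action on $\Ai$ (via $t\mapsto\zeta_3^2 t$) is exactly the deck action of the Kummer cover $[3]$. Hence $H^{*}_{\csup}(\mathbb{G}_{\mathrm{m}},\Sym^{k}\Ai)^{\mu_{3}}\cong H^{*}_{\csup}(\mathbb{G}_{\mathrm{m}},\Sym^{k}\Ai')$; this is the $i=0$ summand in the Cohomology Decomposition Formula. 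By Proposition \ref{prop:Aiprime_cohomology_hypersurface}, this is identified with $H^{k-1}_{\csup}(A',\Qlbar)^{S_k\times\mu_2,\chi}(-1)$ in degree $1$, and with $H^{k}_{\csup}(A')^{S_k\times\mu_2,\chi}(-1)$ in degree $2$ (up to the extra $\Qlbar(-2)$ when $k=2$).

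Next, I will use the excision sequence for $\mathbb{G}_{\mathrm{m}}\subset\mathbb{A}^{1}\supset\{0\}$ with coefficients $\Sym^{k}\Ai$. It is $\mu_3$-equivariant because the action fixes $0$:
\[
0\to H^{0}_{\csup}(\mathbb{A}^1,\Sym^k\Ai)\to (\Sym^k\Ai)_{0}\to H^{1}_{\csup}(\mathbb{G}_{\mathrm{m}},\Sym^k\Ai)\to H^{1}_{\csup}(\mathbb{A}^1,\Sym^k\Ai)\to 0,
\]
together with $H^{2}_{\csup}(\mathbb{G}_{\mathrm{m}})\cong H^{2}_{\csup}(\mathbb{A}^{1})$. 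The first term vanishes by affine vanishing. The stalk at $t=0$ is $\Sym^{k}H^{1}_{\csup}(\mathbb{A}^1,\mathcal{L}_{\psi(x^3/3)})$ by proper base change, since $\Ai$ is lisse and its stalk at $0$ is $H^1_{\csup}$ of $\mathcal{L}_{\psi(x^3/3)}$. Taking $\mu_3$-invariants is exact because $3$ is invertible. Combined with the first step, this gives the short exact sequence in degree $1$, and it transports the degree-$2$ statements, including the $k=2$ extension, verbatim.

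The main obstacle is compatibility of the group actions along these identifications. First, I must check that the $\mu_3$-action induced by $x\mapsto\zeta_3x$, $t\mapsto\zeta_3^2t$ matches the Kummer deck action after the change of variables $y_j=zx_j$ used in the Cohomology Decomposition Formula. Since $t\mapsto\zeta_3^2 t$ corresponds to $z\mapsto\zeta_3^{\pm1}z$, this is a sign and exponent check. Second, I must check that the $\mu_3$-invariant part of the stalk maps injectively, rather than being killed, under the boundary map. This injectivity is automatic from the vanishing of $H^0_{\csup}(\mathbb{A}^1)$. I would also verify the Tate twists: the stalk term should carry the twist $(-1)$ that is implicit in the statement as written, or the statement should be read as an exact sequence of vector spaces only, as asserted.
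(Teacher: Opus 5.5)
Your proposal is correct, and it takes a more modular route than the paper. The paper never passes through $\Ai'$. It works on the total space $\mathbb{A}^1_s\times\mathbb{A}^k_x$ over the double cover $t=s^2$ and excises the slice $s=0$; after taking isotypic parts, that slice gives your stalk term. On the open part it substitutes $x_i=sy_i$ to get the phase $s^3g_k(y)$ and takes $\mu_3$-invariants by passing to $u=s^3$. It then uses $\mathrm{FT}_\psi(\Qlbar)\cong i_{0!}\Qlbar[-2](-1)$ and a second excision along $\{s=0\}\times\mathbb{A}^k_y$ to land in $H^{*}_{\csup}(A')(-1)$. Since $z=t^3=u^2$, this second half is the proof of Proposition~\ref{prop:Aiprime_cohomology_hypersurface} redone in new coordinates, with $u$ playing the role of $w$.

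You perform the same splitting $\mathbb{A}^1=\mathbb{G}_{\mathrm{m}}\sqcup\{0\}$ on the base curve with the lisse sheaf $\Sym^k\Ai$. In place of the recomputation you use Kummer descent, $H^*_{\csup}(\mathbb{G}_{\mathrm{m}},\Sym^k\Ai)^{\mu_3}\cong H^*_{\csup}(\mathbb{G}_{\mathrm{m}},\Sym^k\Ai')$, followed by a citation of Proposition~\ref{prop:Aiprime_cohomology_hypersurface}. What this buys you:
\begin{itemize}
\item The sign-character bookkeeping for $S_k\times\mu_2$ and the extra $\Qlbar(-2)$ when $k=2$ are inherited rather than re-derived.
\item The boundary term is visibly the stalk.
\item Injectivity of the boundary map is immediate from $H^0_{\csup}(\mathbb{A}^1,\Sym^k\Ai)=0$.
\end{itemize}

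Your route needs one input the paper's does not: the equivariance check you flagged. It does hold. Conjugating the deck transformation $(x,t)\mapsto(x,\zeta^2t)$ of $[3]^*\Ai'$ by the substitution $x=ty$ (the one used to prove $[3]^*\Ai'\cong\Ai|_{\mathbb{G}_{\mathrm{m}}}$) gives $(y,t)\mapsto(\zeta^{-2}y,\zeta^2t)=(\zeta y,\zeta^2t)$. This is exactly the automorphism defining the $\mu_3$-structure on $\Ai$. So no twist by $\mathcal{K}_{\chi_3}^{i}$ with $i\neq 0$ enters, and the $\mu_3$-invariants are indeed the $i=0$ summand.

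Your worry about Tate twists is unnecessary. The excision boundary map $(\Sym^k\Ai)_0\to H^1_{\csup}(\mathbb{G}_{\mathrm{m}},\Sym^k\Ai)$ carries no twist, and the $(-1)$ on the middle term comes entirely from Proposition~\ref{prop:Aiprime_cohomology_hypersurface}. This is consistent with weights: the stalk $\Sym^kH^1_{\csup}(\mathbb{A}^1,\mathcal{L}_{\psi(x^3/3)})$ is pure of weight $k$. So the sequence is Frobenius-equivariant exactly as written, not merely an exact sequence of vector spaces.
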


\begin{proof}
We compute the compactly supported cohomology of $\Ai^{\otimes k}$ over $\mathbb{A}^1_t$. First, note that ${H}^{i}_{\mathrm{c}}(\mathbb{A}^{1}_t, \Ai^{\otimes k})$ vanishes for $i \neq 1, 2$. By the relative Künneth formula and the definition of the Airy sheaf as $\Ai \simeq R^1\pi_{t!} \mathcal{L}_{\psi(x^3/3 - tx)}$, we have
\[
    {H}^{m}_{\mathrm{c}}(\mathbb{A}^{1}_{t}, \Ai^{\otimes k}) \cong {H}^{m+k}_{\mathrm{c}}\left(\mathbb{A}^{1}_{t} \times \mathbb{A}^{k}_{x}, \mathcal{L}_{\psi\left(\sum_{i=1}^k \left(\frac{1}{3}x_{i}^{3} - t x_{i}\right)\right)}\right).
\]

Consider the degree $2$ cover $\mathbb{A}^{1}_{s} \to \mathbb{A}^{1}_{t}$ given by $t = s^{2}$. We can identify the cohomology on $\mathbb{A}^{1}_{t}$ with the $\mu_{2}$-invariant part of the cohomology on $\mathbb{A}^{1}_{s}$, where the $\mu_2$-action is given by $s \mapsto -s$. Let $X = \mathbb{A}^{1}_{s} \times \mathbb{A}^{k}_{x}$, with phase function $\sum_{i=1}^k \left(\frac{1}{3}x_{i}^{3} - s^{2} x_{i}\right)$. The lifted $\mu_3$-action on $X$ is given by $s \mapsto \zeta_3 s$ and $x_i \mapsto \zeta_3 x_i$.

We apply the excision exact sequence to $X$ with the open subset $U = \mathbb{G}_{\mathrm{m},s} \times \mathbb{A}^{k}_{x}$ and the closed complement $Z = \{0\} \times \mathbb{A}^{k}_{x}$. The phase function on $Z$ is simply $\sum_{i=1}^k \frac{1}{3}x_i^3$. By the Künneth formula, ${H}^{m}_{\mathrm{c}}\left(Z, \mathcal{L}_{\psi\left(\sum_{i=1}^k \frac{1}{3}x_i^3\right)}\right)$ is non-zero only for $m=k$, where 
\[
{H}^{k}_{\mathrm{c}}\left(Z, \mathcal{L}_{\psi\left(\sum_{i=1}^k \frac{1}{3}x_i^3\right)}\right) \cong \bigotimes_{i=1}^k {H}^{1}_{\mathrm{c}}\left(\mathbb{A}^{1}_{x_i}, \mathcal{L}_{\psi(x_i^3/3)}\right).
\]
Since 
\[
{H}^{m}_{\mathrm{c}}\left(X, \mathcal{L}_{\psi\left(\sum_{i=1}^k \left(\frac{1}{3}x_{i}^{3} - s^{2} x_{i}\right)\right)}\right) = 0
\]
for $m \leq k$ (as it shifts the degrees of cohomology of $\Ai^{\otimes k}$), we obtain a short exact sequence for degree $k+1$ and an isomorphism for degree $k+2$:
\[
\begin{aligned}
    0 \to {H}^{k}_{\mathrm{c}}\left(Z, \mathcal{L}_{\psi\left(\sum_{i=1}^k \frac{1}{3}x_i^3\right)}\right) \to {H}^{k+1}_{\mathrm{c}}\left(U, \mathcal{L}_{\psi\left(\sum_{i=1}^k \left(\frac{1}{3}x_{i}^{3} - s^{2} x_{i}\right)\right)}\right) &\\
    &\hspace{-3cm}\to {H}^{k+1}_{\mathrm{c}}\left(X, \mathcal{L}_{\psi\left(\sum_{i=1}^k \left(\frac{1}{3}x_{i}^{3} - s^{2} x_{i}\right)\right)}\right) \to 0,
\end{aligned}
\]
\[
    {H}^{k+2}_{\mathrm{c}}\left(U, \mathcal{L}_{\psi\left(\sum_{i=1}^k \left(\frac{1}{3}x_{i}^{3} - s^{2} x_{i}\right)\right)}\right) \cong {H}^{k+2}_{\mathrm{c}}\left(X, \mathcal{L}_{\psi\left(\sum_{i=1}^k \left(\frac{1}{3}x_{i}^{3} - s^{2} x_{i}\right)\right)}\right).
\]

On the open set $U$, we apply the automorphism $x_{i} = s y_{i}$. The phase transforms into $s^{3} \sum_{i=1}^k \left(\frac{1}{3}y_{i}^{3} - y_{i}\right) = s^{3} g_{k}(y)$. Let this new coordinate space be $U_y \subset X_y = \mathbb{A}^{1}_{s} \times \mathbb{A}^{k}_{y}$. Under this isomorphism, the geometric group actions become:
\begin{itemize}
\item For $\mu_2$: $s \mapsto -s$ and $y_i = x_i/s \mapsto -y_i$.
\item For $\mu_3$: $s \mapsto \zeta_3 s$ and $y_i = x_i/s \mapsto (\zeta_3 x_i)/(\zeta_3 s) = y_i$.
\end{itemize}
Crucially, the variables $y_i$ are invariant under the $\mu_3$-action.

We now compute the $\mu_{3}$-invariant part of the cohomology of $X_{y}$ with the phase function $s^{3} g_{k}(y)$. Taking the $\mu_{3}$-invariants corresponds to pushing forward along the finite quotient map $\pi_{3} \colon X_{y} \to W_{y} := \mathbb{A}^{1}_{u} \times \mathbb{A}^{k}_{y}$ defined by $u = s^{3}$. This descends the phase function to $u g_{k}(y)$ on $W_{y}$, yielding the isomorphism:
\[
    {H}^{m}_{\mathrm{c}}\left(X_{y}, \mathcal{L}_{\psi(s^{3} g_{k}(y))}\right)^{\mu_{3}} \cong {H}^{m}_{\mathrm{c}}\left(W_{y}, \mathcal{L}_{\psi(u g_{k}(y))}\right).
\]
To evaluate the compactly supported cohomology on $W_{y}$, let $p_{y} \colon W_{y} \to \mathbb{A}^{k}_{y}$ be the natural projection. We compute the compactly supported pushforward $R(p_{y})_{!} \mathcal{L}_{\psi(u g_{k}(y))}$ by considering the projections $p_{u} \colon \mathbb{A}^{1}_{u} \times \mathbb{A}^{1}_{v} \to \mathbb{A}^{1}_{u}$ and $p_{v} \colon \mathbb{A}^{1}_{u} \times \mathbb{A}^{1}_{v} \to \mathbb{A}^{1}_{v}$, and forming the following Cartesian square via the map $g_{k} \colon \mathbb{A}^{k}_{y} \to \mathbb{A}^{1}_{v}$:
\[
\begin{tikzcd}
    \mathbb{A}^{1}_{u} \times \mathbb{A}^{k}_{y} \arrow[r, "\mathrm{id}_{u} \times g_{k}"] \arrow[d, "p_{y}"'] & \mathbb{A}^{1}_{u} \times \mathbb{A}^{1}_{v} \arrow[d, "p_{v}"] \\
    \mathbb{A}^{k}_{y} \arrow[r, "g_{k}"] & \mathbb{A}^{1}_{v}
\end{tikzcd}
\]
By the proper base change theorem, this pushforward is identified with the pullback of the Fourier-Deligne transform of the constant sheaf $\Qlbar$ on $\mathbb{A}^{1}_{u}$:
\[
    R(p_{y})_{!} \mathcal{L}_{\psi(u g_{k}(y))} \cong g_{k}^{*} R(p_{v})_{!} \mathcal{L}_{\psi(u v)} = g_{k}^{*} \mathrm{FT}_{\psi}(\Qlbar).
\]
Since $\mathrm{FT}_{\psi}(\Qlbar) \cong i_{0!}\Qlbar[-2](-1)$ is a skyscraper sheaf supported at the origin $v=0$, its pullback $g_{k}^{*} \mathrm{FT}_{\psi}(\Qlbar)$ is entirely supported on the vanishing locus $A' = \{g_{k}(y) = 0\}$. Therefore, evaluating the global compactly supported cohomology reduces to the cohomology of $A'$ with a homological shift and a Tate twist:
\[
    {H}^{m}_{\mathrm{c}}\left(X_{y}, \mathcal{L}_{\psi(s^{3} g_{k}(y))}\right)^{\mu_{3}} \cong {H}^{m}_{\mathrm{c}}\left(W_{y}, \mathcal{L}_{\psi(u g_{k}(y))}\right) \cong {H}^{m-2}_{\mathrm{c}}(A', \Qlbar) (-1).
\]

To compare the cohomology of $U_{y}$ and $X_{y}$, we consider the excision exact sequence for $X_{y}$ with respect to the open subset $U_{y}$ and its closed complement $Z_{y} = \{0\} \times \mathbb{A}^{k}_{y}$:
\[
    \dots \to {H}^{j-1}_{\mathrm{c}}(Z_{y}, \Qlbar) \to {H}^{j}_{\mathrm{c}}(U_{y}, \mathcal{L}_{\psi(s^{3} g_{k}(y))}) \to {H}^{j}_{\mathrm{c}}(X_{y}, \mathcal{L}_{\psi(s^{3} g_{k}(y))}) \to {H}^{j}_{\mathrm{c}}(Z_{y}, \Qlbar) \to \dots
\]
Here, we have used the fact that on $Z_{y}$, the coordinate $s$ vanishes, making the phase function $s^{3} g_{k}(y)$ identically zero. The restriction of the Artin-Schreier sheaf $\mathcal{L}_{\psi(s^{3} g_{k}(y))}$ to $Z_{y}$ thus becomes trivial, reducing to the constant sheaf $\Qlbar$. Since $Z_{y} \cong \mathbb{A}^{k}_{y}$, its compactly supported cohomology ${H}^{*}_{\mathrm{c}}(Z_{y}, \Qlbar)$ is concentrated entirely in degree $2k$, yielding an isomorphism ${H}^{2k}_{\mathrm{c}}(Z_{y}, \Qlbar) \cong \Qlbar(-k)$. Since $p \neq 3$, the order of the group $\mu_{3}$ is invertible in $\Qlbar$, which implies that taking $\mu_{3}$-invariants is an exact functor. We can therefore apply $(-)^{\mu_{3}}$ to the entire excision sequence while preserving exactness. We now analyze the sequence at the degrees corresponding to ${H}^{k+1}_{\mathrm{c}}$ and ${H}^{k+2}_{\mathrm{c}}$ of $X$.

For degree $j = k+1$, the boundary terms from $Z_{y}$ are ${H}^{k}_{\mathrm{c}}(Z_{y}, \Qlbar)$ and ${H}^{k+1}_{\mathrm{c}}(Z_{y}, \Qlbar)$. For all $k \geq 2$, we have $2k > k+1$, meaning both boundary terms vanish. This yields an unconditional isomorphism upon taking the $\mu_{3}$-invariants:
\[
    {H}^{k+1}_{\mathrm{c}}(U_{y}, \mathcal{L}_{\psi(s^{3} g_{k}(y))})^{\mu_{3}} \cong {H}^{k+1}_{\mathrm{c}}(X_{y}, \mathcal{L}_{\psi(s^{3} g_{k}(y))})^{\mu_{3}} \cong {H}^{k-1}_{\mathrm{c}}(A', \Qlbar) (-1).
\]
For degree $j = k+2$, we consider two cases:
If $k \geq 3$, we have $2k > k+2$. The relevant boundary terms ${H}^{k+1}_{\mathrm{c}}(Z_{y}, \Qlbar)$ and ${H}^{k+2}_{\mathrm{c}}(Z_{y}, \Qlbar)$ again vanish, yielding the isomorphism:
\[
    {H}^{k+2}_{\mathrm{c}}(U_{y}, \mathcal{L}_{\psi(s^{3} g_{k}(y))})^{\mu_{3}} \cong {H}^{k+2}_{\mathrm{c}}(X_{y}, \mathcal{L}_{\psi(s^{3} g_{k}(y))})^{\mu_{3}} \cong {H}^{k}_{\mathrm{c}}(A', \Qlbar) (-1).
\]
However, if $k = 2$, then $2k = 4 = k+2$. The excision sequence in this degree becomes:
\[
\begin{aligned}
    0 \to {H}^{4}_{\mathrm{c}}(U_{y}, \mathcal{L}_{\psi(s^{3} g_{2}(y))})^{\mu_{3}} \to {H}^{4}_{\mathrm{c}}(X_{y}, \mathcal{L}_{\psi(s^{3} g_{2}(y))})^{\mu_{3}} \to {H}^{4}_{\mathrm{c}}(Z_{y}, \Qlbar)^{\mu_{3}} &\\
    &\hspace{-2.5cm}\to {H}^{5}_{\mathrm{c}}(U_{y}, \mathcal{L}_{\psi(s^{3} g_{2}(y))})^{\mu_{3}} \dots
\end{aligned}
\]
Notice that the target of the connecting homomorphism satisfies 
\[
{H}^{5}_{\mathrm{c}}(U_{y}, \mathcal{L}_{\psi(s^{3} g_{2}(y))})^{\mu_{3}} \cong {H}^{5}_{\mathrm{c}}\left(X, \mathcal{L}_{\psi\left(\sum_{i=1}^2 \left(\frac{1}{3}x_{i}^{3} - s^{2} x_{i}\right)\right)}\right) \cong {H}^{3}_{\mathrm{c}}(\mathbb{A}^{1}_{t}, \Ai^{\otimes 2}) = 0
\]
by affine vanishing and thus this sequence is short exact. Furthermore, recall that the geometric $\mu_{3}$-action on $X_{y}$ restricts trivially onto the $y$ coordinates on $Z_{y}$ (since $y_{i} \mapsto y_{i}$). Therefore, the induced $\mu_{3}$-action on ${H}^{4}_{\mathrm{c}}(Z_{y}, \Qlbar)$ is trivial, giving ${H}^{4}_{\mathrm{c}}(Z_{y}, \Qlbar)^{\mu_{3}} \cong \Qlbar(-2)$.

Finally, taking the $S_{k}$-invariants on all terms to pass from the tensor product $\Ai^{\otimes k}$ to the symmetric power $\Sym^{k}\Ai$, the exact sequence and isomorphism become exactly the statements in the Proposition.
\end{proof}

To study the non-trivial $\mu_3$-eigenspaces of $H^{1}_{\mathrm{c}}(\mathbb{A}^{1}_{t}, \Sym^{k}\Ai)$, we introduce another geometric model. Let $A'' \subseteq \mathbb{G}_{\mathrm{m}, z} \times \mathbb{A}^{k}_{y}$ be the affine variety defined by the equation $z^{3} = g_{k}(y)$, which is
\[
    z^{3} = \sum_{i=1}^{k}\left(\frac{1}{3}y_{i}^{3}-y_{i}\right).
\]
The product group $S_{k} \times \mu_{2} \times \mu_{3}$ acts naturally on $A''$. We define these geometric finite group actions explicitly:
\begin{itemize}
    \item The symmetric group $S_{k}$ acts by permuting the $y$ coordinates and fixing $z$. This trivially preserves the defining equation.
    \item The group $\mu_{2} \cong \{\pm 1\}$ acts by $y_{i} \mapsto -y_{i}$ and $z \mapsto -z$. This preserves the affine variety $A''$ as well.
    \item The group $\mu_{3}$ acts by $z \mapsto \zeta_{3} z$ and $y_{i} \mapsto y_{i}$, which clearly preserves the equation.
\end{itemize}

Let $\mathbb{E}_{3} = H^{1}_{\mathrm{c}}(\mathbb{A}^{1}_{w}, \mathcal{L}_{\psi(w^{3})})$. This $2$-dimensional vector space is equipped with a natural $\mu_{3}$-action induced by $w \mapsto \zeta_{3}w$, and it consists entirely of non-trivial $\mu_{3}$-eigenspaces. We define the actions of $S_{k}$ and $\mu_{2}$ on $\mathbb{E}_{3}$ to be strictly trivial. Consequently, the tensor product $\mathbb{E}_{3} \otimes H^{k}_{\mathrm{c}}(A'', \Qlbar)$ is equipped with the diagonal action of $S_{k} \times \mu_{2} \times \mu_{3}$. We denote the direct sum of the non-trivial $\mu_{3}$-eigenspaces of a representation $V$ by $V_{\neq 1}$.

\begin{proposition}\label{prop:Ai_cohomology_non_inv_part_A''}
Let $k \geq 2$ and $\chi = \operatorname{sgn} \boxtimes \mathbf{1}$ be the character of $S_{k} \times \mu_{2}$ as before. There is a canonical isomorphism of vector spaces relating the non-trivial $\mu_3$-components of the cohomology of the symmetric powers of the Airy sheaf to the cohomology of $A''$:
\[
    H^{1}_{\mathrm{c}}(\mathbb{A}^{1}, \Sym^{k}\Ai)_{\neq 1} \cong \left(\mathbb{E}_{3} \otimes H^{k}_{\mathrm{c}}(A'', \Qlbar) \right)^{S_{k}\times \mu_{2} \times \mu_{3}, \chi \boxtimes \mathbf{1}}.
\]
Equivalently, by isolating the $S_{k} \times \mu_{2}$-action first, this can be written as:
\[
    H^{1}_{\mathrm{c}}(\mathbb{A}^{1}, \Sym^{k}\Ai)_{\neq 1} \cong \left(\mathbb{E}_{3} \otimes H^{k}_{\mathrm{c}}(A'', \Qlbar)^{S_{k}\times \mu_{2}, \chi} \right)^{\mu_{3}}.
\]
\end{proposition}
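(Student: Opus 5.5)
We follow the proof of Proposition \ref{prop:Ai_cohomology_hypersurface} up to the point where the $\mu_3$-invariants were taken, and then keep track of the non-trivial $\mu_3$-eigenspaces instead. As there, the relative Künneth formula identifies $H^{1}_{\csup}(\mathbb{A}^1_t,\Ai^{\otimes k})$ with $H^{k+1}_{\csup}(\mathbb{A}^1_t\times\mathbb{A}^k_x,\mathcal{L}_{\psi(\sum(x_i^3/3-tx_i))})$. Passing to the double cover $t=s^2$ and taking $\mu_2$-invariants gives this group as the $\mu_2$-invariants of the corresponding group on $X=\mathbb{A}^1_s\times\mathbb{A}^k_x$.

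The first step is excision along $Z=\{s=0\}$. The term $H^k_{\csup}(Z,\mathcal{L}_{\psi(\sum x_i^3/3)})\cong\bigotimes_i H^1_{\csup}(\mathbb{A}^1,\mathcal{L}_{\psi(x_i^3/3)})$ sits in the short exact sequence obtained there. Its $\mu_3$-invariant part is exactly the kernel term $(\Sym^k H^1_{\csup})^{\mu_3}$. So on non-trivial $\mu_3$-eigenspaces we must check whether the $Z$-term contributes. Since $\mu_3$-isotypic decomposition is exact, we get an exact sequence of $\neq 1$ parts. The subtle point is that the $Z$-term generally has non-trivial $\mu_3$-components. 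I would handle this by not passing to $U$ at all for the $\neq1$ part. Instead, on $U$ with $x_i=sy_i$ (phase $s^3g_k(y)$), I would compare $H^{k+1}_{\csup}(X)$ directly, and use that the connecting map from $Z$ is injective.

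The second, main step computes the non-trivial $\mu_3$-part of $H^{\ast}_{\csup}(X_y,\mathcal{L}_{\psi(s^3g_k(y))})$. Push forward along $\pi_3\colon s\mapsto u=s^3$. The sheaf $\pi_{3*}\Qlbar$ decomposes over $\mathbb{G}_{\mathrm m}$ into Kummer sheaves $\mathcal{K}_{\chi_3^i}$ for $i=0,1,2$. The $\neq 1$ part is therefore $\bigoplus_{i=1,2}H^{\ast}_{\csup}(\mathbb{A}^1_u\times\mathbb{A}^k_y,\, j_!\mathcal{K}_{\chi_3^i}(u)\otimes\mathcal{L}_{\psi(ug_k(y))})$. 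Fibrewise in $y$, this is a Gauss-sum sheaf $H^1_{\csup}(\mathbb{G}_{\mathrm m},\mathcal{K}_{\chi^i}\otimes\mathcal{L}_{\psi(ug)})$. It vanishes at $g=0$ and is $\mathcal{K}_{\chi^{-i}}(g)\otimes G(\chi^i)$ on $\{g\neq0\}$. Hence the $\neq1$ part equals $\bigoplus_i H^{k-1}_{\csup}(\{g_k\neq0\},\mathcal{K}_{\chi_3^{-i}}(g_k))\otimes G_i$. Here $G_i$ is the $\chi_3^i$-eigenline of $\mathbb{E}_3=H^1_{\csup}(\mathbb{A}^1_w,\mathcal{L}_{\psi(w^3)})$, obtained by pushing along $w\mapsto w^3$.

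The third step identifies $H^{\ast}_{\csup}(\{g_k\neq0\},\mathcal{K}_{\chi_3^{-i}}(g_k))$ with the $\chi_3^{-i}$-eigenspace of $H^{\ast}_{\csup}(A'',\Qlbar)$ under $z\mapsto\zeta_3z$. This uses that $A''\to\{g_k\neq0\}$, $(z,y)\mapsto y$, is the $\mu_3$-Kummer torsor of $g_k$, so its pushforward of $\Qlbar$ splits as $\bigoplus_j\mathcal{K}_{\chi_3^j}(g_k)$.

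Pairing $\chi_3^{i}$ with $\chi_3^{-i}$ is exactly taking $\mu_3$-invariants of $\mathbb{E}_3\otimes H^k_{\csup}(A'')$ restricted to $\neq1$ parts. The trivial eigenspace of $\mathbb{E}_3$ is zero, so this equals the full $\mu_3$-invariants. Here the degree bookkeeping, $H^{k+1}$ on $X_y$ against $H^{k}$ of $A''$, must be verified from the $[-1]$ shift of the Gauss-sum pushforward. Finally we take the $\chi$-isotypic part for $S_k\times\mu_2$. We check that the $\mu_2$-action ($s\mapsto -s$, $y\mapsto -y$) matches $z\mapsto -z$, $y\mapsto -y$ on $A''$, with trivial action on $\mathbb{E}_3$; $u\mapsto -u$ is absorbed by rescaling $z$. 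The main obstacle I expect is the first step: showing that the excision boundary and the $U$-versus-$X$ comparison introduce no extra non-trivial $\mu_3$-pieces, equivalently that the $Z$-term is absorbed. If that fails, I would work on $U_y$ directly, where $s\in\mathbb{G}_{\mathrm m}$ and no $u=0$ fibre arises.
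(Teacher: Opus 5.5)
The gap is in your first step, and it cannot be closed: the $Z$-term you worry about genuinely survives. Your steps 2--3 are otherwise a sound alternative to the paper's computation. The paper substitutes $w=sz$ on $\mathbb{A}^1_s\times A''$, applies Künneth to get $\mathbb{E}_3\otimes H^k_{\csup}(A'',\Qlbar)$, and takes diagonal $\mu_3$-invariants. You instead push forward along $u=s^3$, evaluate the fibrewise Gauss sums, and use the Kummer torsor $A''\to\{g_k\neq0\}$. One correction: the Gauss-sum pushforward sits in degree $1$, so the $\chi_3^i$-part of $H^{k+1}_{\csup}(X_y,\mathcal{L}_{\psi(s^3g_k(y))})$ is $H^{k}_{\csup}(\{g_k\neq0\},\mathcal{K}_{\chi_3^{-i}}(g_k))\otimes G_i$, not $H^{k-1}_{\csup}$. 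With that fix, both routes give
\[
H^{k+1}_{\csup}\bigl(X_y,\mathcal{L}_{\psi(s^3g_k(y))}\bigr)_{\neq1}\cong\bigl(\mathbb{E}_3\otimes H^k_{\csup}(A'',\Qlbar)\bigr)^{\mu_3}.
\]

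Here is why the $Z$-term cannot be absorbed. Since $H^*_{\csup}(Z_y,\Qlbar)$ is concentrated in degree $2k$, you have $H^{k+1}_{\csup}(X_y)\cong H^{k+1}_{\csup}(U_y)\cong H^{k+1}_{\csup}(U)$. The $\mu_3$-equivariant excision sequence $0\to H^k_{\csup}(Z,\ldots)\to H^{k+1}_{\csup}(U,\ldots)\to H^{k+1}_{\csup}(X,\ldots)\to0$ becomes, after the $(S_k\times\mu_2,\chi)$-projection,
\[
0\to\Sym^kV\to H^1_{\csup}(\mathbb{G}_{\mathrm{m}},\Sym^k\Ai)\to H^1_{\csup}(\mathbb{A}^1,\Sym^k\Ai)\to0,\qquad V=H^1_{\csup}(\mathbb{A}^1,\mathcal{L}_{\psi(x^3/3)})=(\Ai)_0 .
\]
The injectivity of the connecting map, which you propose to use, says precisely that $(\Sym^kV)_{\neq1}$ sits inside the left side of the first display and dies in $H^1_{\csup}(\mathbb{A}^1,\Sym^k\Ai)$. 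Since $V$ carries the characters $\chi_3,\chi_3^2$, the space $\Sym^kV$ carries $\chi_3^{2k-a}$ for $0\le a\le k$. Hence $(\Sym^kV)_{\neq1}$ has dimension at least $2$ for every $k\geq2$.

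Your fallback of working on $U_y$ changes nothing, because $H^{k+1}_{\csup}(U_y)$ is the same group. Concretely, take $k=2$. Then $H^1_{\csup}(\mathbb{A}^1,\Sym^2\Ai)=0$ (rank $3$, $\Swan_\infty=3$, no $H^2_{\csup}$). Yet $(\mathbb{E}_3\otimes H^2_{\csup}(A'')^{S_2\times\mu_2,\chi})^{\mu_3}\cong(\Sym^2V)_{\neq1}$ is $2$-dimensional, so the isomorphism fails as stated.

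What your argument, or the paper's, actually proves is the statement with $\mathbb{A}^1$ replaced by $\mathbb{G}_{\mathrm{m}}$ on the left. Equivalently, there is a short exact sequence
\[
0\to(\Sym^kV)_{\neq1}\to\bigl(\mathbb{E}_3\otimes H^k_{\csup}(A'')^{S_k\times\mu_2,\chi}\bigr)^{\mu_3}\to H^1_{\csup}(\mathbb{A}^1,\Sym^k\Ai)_{\neq1}\to0,
\]
which is the non-trivial-eigenspace analogue of Proposition \ref{prop:Ai_cohomology_hypersurface}. The paper's proof does not resolve this point either. It opens by asserting that $H^1_{\csup}(\mathbb{A}^1_t,\Sym^k\Ai)$ is the $\chi$-isotypic part of $H^{k+1}_{\csup}(X_y,\ldots)$, which contradicts the extension in Proposition \ref{prop:Ai_cohomology_hypersurface}. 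So the obstacle you flagged is a real defect in the statement, not just in your route.
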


\begin{proof}
Recall from the proof of Proposition \ref{prop:Ai_cohomology_hypersurface} that the compactly supported cohomology $H^{1}_{\mathrm{c}}(\mathbb{A}^{1}_{t}, \Sym^{k}\Ai)$ is computed by the $(S_{k} \times \mu_{2}, \chi)$-isotypic component of $H^{k+1}_{\mathrm{c}}(X_{y}, \mathcal{L}_{\psi(s^{3} g_{k}(y))})$, where $X_{y} = \mathbb{A}^{1}_{s} \times \mathbb{A}^{k}_{y}$. The finite group actions on $X_{y}$ are defined as follows: the relevant $\mu_3$-action is given by $s \mapsto \zeta_{3}s$, while the $\mu_2$-action is given by $s \mapsto -s$ and $y_i \mapsto -y_i$.

Let $U = \{g_{k}(y) \neq 0\} \subseteq \mathbb{A}^{k}_{y}$ be the open subset where $g_k$ does not vanish, and let $Z_{y}$ be its closed complement. Over $\mathbb{A}^{1}_{s} \times Z_{y}$, the phase function is strictly $0$. Thus, the compactly supported cohomology of $\mathbb{A}^{1}_{s} \times Z_{y}$ with coefficients in $\mathcal{L}_{\psi(s^{3} g_{k}(y))}$ reduces to that of the constant sheaf, which is isomorphic to $H^{*}_{\mathrm{c}}(Z_{y}, \Qlbar) \otimes H^{*}_{\mathrm{c}}(\mathbb{A}^{1}_{s}, \Qlbar)$ by the Künneth formula. The $\mu_{3}$-action on this part is entirely trivial because it acts on $\mathbb{A}^{1}_{s}$ without twisting any phase. Consequently, the non-trivial $\mu_{3}$-eigenspaces are completely supported over the open set $U_{s} = \mathbb{A}^{1}_{s} \times U$. We thus have
$$
    H^{k+1}_{\mathrm{c}}(X_{y}, \mathcal{L}_{\psi(s^{3} g_{k}(y))})_{\neq 1} \cong H^{k+1}_{\mathrm{c}}(U_{s}, \mathcal{L}_{\psi(s^{3} g_{k}(y))})_{\neq 1}.
$$

In order to compute this cohomology, we consider the projection $\pi \colon A'' \to U$ given by forgetting $z$. Since $z \in \mathbb{G}_{\mathrm{m}, z}$, the image of $A''$ inherently lies in $U$. This projection is a finite étale Kummer cover of degree $3$. We analyze the cohomology of the fiber product $\mathbb{A}^{1}_{s} \times A''$ equipped with the pulled-back phase function $s^{3}z^{3}$. 

We apply the change of variables $w = sz$ on $\mathbb{A}^{1}_{s} \times A''$. Since the coordinate $z$ on $A''$ is invertible ($z \in \mathbb{G}_{\mathrm{m}, z}$), this transformation $(s, z, y) \mapsto (w=sz, z, y)$ defines an automorphism of the space $\mathbb{A}^{1}_{s} \times A''$. Under this isomorphism, the phase function simplifies to $w^{3}$. By the Künneth formula, the cohomology completely factors as
$$
    H^{k+1}_{\mathrm{c}}\left(\mathbb{A}^{1}_{s} \times A'', \mathcal{L}_{\psi(s^{3} z^{3})}\right) \cong H^{1}_{\mathrm{c}}(\mathbb{A}^{1}_{w}, \mathcal{L}_{\psi(w^{3})}) \otimes H^{k}_{\mathrm{c}}(A'', \Qlbar) = \mathbb{E}_{3} \otimes H^{k}_{\mathrm{c}}(A'', \Qlbar).
$$

Now we track the group actions rigorously under the coordinate change $(s, z, y) \mapsto (w = sz, z, y)$:
\begin{itemize}
    \item For $\mu_2$: The original action $s \mapsto -s$, $z \mapsto -z$, and $y_{i} \mapsto -y_{i}$ transforms into $w \mapsto (-s)(-z) = sz = w$. Thus, $\mu_2$ acts trivially on the coordinate $w$, and its action restricts entirely to the $A''$ factor. This justifies taking the $(S_{k} \times \mu_{2}, \chi)$-isotypic component solely of the $H^{k}_{\mathrm{c}}(A'', \Qlbar)$ factor.
    \item For $\mu_3 \times \mu_3$: The space $\mathbb{A}^{1}_{s} \times A''$ admits a $\mu_{3} \times \mu_{3}$ action defined by $(\alpha, \beta) \cdot (s, z, y) = (\alpha s, \beta z, y)$. Recall that the base space $U_{s} = \mathbb{A}^{1}_{s} \times U$ is the image of the natural projection map $(s, z, y) \mapsto (s, y)$. Since this projection simply forgets the $z$-coordinate (where $z^3 = g_k(y)$), $U_{s}$ is exactly the geometric quotient of $\mathbb{A}^{1}_{s} \times A''$ by the subgroup $1 \times \mu_{3}$ (acting solely on $z$). Therefore, the compactly supported cohomology $H^{*}_{\mathrm{c}}(U_{s}, \mathcal{L}_{\psi(s^{3} g_{k}(y))})$ is identified with the $(1 \times \mu_{3})$-invariant subspace of $H^{*}_{\mathrm{c}}(\mathbb{A}^{1}_{s} \times A'', \mathcal{L}_{\psi(s^{3} z^{3})}) \cong \mathbb{E}_{3} \otimes H^{*}_{\mathrm{c}}(A'', \Qlbar)$.
    
    Furthermore, under our coordinate change $w = sz$, the action of an element $(\alpha, \beta)$ transforms into $w \mapsto \alpha\beta w$ and $z \mapsto \beta z$. The condition for a cohomology class to be $(1 \times \mu_{3})$-invariant means it must be fixed by elements of the form $(1, \zeta_{3})$. In the new $(w, z, y)$ coordinates, the element $(1, \zeta_{3})$ acts by $w \mapsto \zeta_{3} w$ and $z \mapsto \zeta_{3} z$. This is precisely the definition of the diagonal $\mu_{3}$-action on the tensor product $\mathbb{E}_{3} \otimes H^{k}_{\mathrm{c}}(A'', \Qlbar)$.
\end{itemize}

Since $\mathbb{E}_{3}$ consists solely of non-trivial $\mu_3$-eigenspaces (specifically, the $\chi_3$ and $\chi_3^2$ isotypic components), taking the diagonal $\mu_{3}$-invariants exactly extracts the components from $H^{k}_{\mathrm{c}}(A'', \Qlbar)$ that have the dual non-trivial characters. This operation perfectly captures the subspace where the original $\mu_{3} \times 1$ (acting on $s$) operates non-trivially. 

Finally, taking the $\chi$-isotypic component for the commuting $S_{k} \times \mu_{2}$ action on the $y$ and $z$ coordinates completes the proof.
\end{proof}

\begin{remark}\label{Rmk:etale_in_p_same_as_de_Rham}
If one replaces $\ell$-adic étale cohomology with algebraic de Rham cohomology (and correspondingly, the sheaves $\Ai'$ and $\Ai$ with their associated Airy connections, and the Artin--Schreier sheaves with exponential connections), analogous statements of Proposition \ref{prop:Aiprime_cohomology_hypersurface}, Proposition \ref{prop:Ai_cohomology_hypersurface}, and Proposition \ref{prop:Ai_cohomology_non_inv_part_A''} remain valid under the same lines of proof.
\end{remark}

Since the defining equations of the affine varieties $A'$ and $A''$ are polynomials with rational coefficients, they naturally define algebraic varieties over $\mathbb{Q}$. This allows us to unconditionally define their corresponding objects in the category of motives over $\mathbb{Q}$. We define the motives $M_{k}'$ and $M_{k}''$ by cutting out the appropriate isotypic components under the group actions
\begin{align*}
    M_{k}'  &:= h^{k-1}_{\mathrm{c}}(A')^{S_{k}\times \mu_{2},\chi}(-1), \\
    M_{k}'' &:= h^{k}_{\mathrm{c}}(A'')^{S_{k}\times \mu_{2},\chi}.
\end{align*}

The $\ell$-adic étale realizations of these motives, denoted by
\begin{align*}
    M_{k,{\text{ét}}}'  &= H^{k-1}_{{\text{ét}},\mathrm{c}}(A'_{\Qbar}, \Qlbar)^{S_{k}\times \mu_{2},\chi}(-1), \\
    M_{k,{\text{ét}}}'' &= H^{k}_{{\text{ét}},\mathrm{c}}(A''_{\Qbar}, \Qlbar)^{S_{k}\times \mu_{2},\chi},
\end{align*}
carry a natural action of the absolute Galois group $\Gal(\Qbar/\mathbb{Q})$. In the next subsection, we will study the Galois representations $M_{k,{\text{ét}}}'$ and $M_{k,{\text{ét}}}''$. Finally, we prove a dimension lemma that will be used frequently in the subsequent discussions.

\begin{lemma}\label{lem:dim_over_Qp_and_de_Rham}
For any primes $p$ and $\ell \neq p$, the dimensions of the $\ell$-adic étale realizations of the motives $M_{k}'$ and $M_{k}''$ are given by
\begin{align*}
    \dim M_{k,{\text{ét}}}'  &= \dim H^{k-1}_{{\text{ét}},\mathrm{c}}(A'_{\Qpbar}, \Qlbar)^{S_{k}\times \mu_{2},\chi} = \left\lfloor \frac{k+1}{2} \right\rfloor, \\
    \dim M_{k,{\text{ét}}}'' &= \dim H^{k}_{{\text{ét}},\mathrm{c}}(A''_{\Qpbar}, \Qlbar)^{S_{k}\times \mu_{2},\chi} = \left\lfloor\frac{k}{3}\right\rfloor + \delta_{1+2\mathbb{Z}}(k) - \delta_{1+3\mathbb{Z}}(k).
\end{align*}
\end{lemma}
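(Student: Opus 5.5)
The first step is to make both dimensions independent of $p$. The varieties $A'$ and $A''$ and the actions of $S_{k}\times\mu_{2}$ (and $\mu_{3}$) are defined over $\mathbb{Q}$. Étale cohomology with compact support of a variety over an algebraically closed field of characteristic $0$ is invariant under extension of algebraically closed fields, and this invariance is equivariant for automorphisms defined over $\mathbb{Q}$. So we may pass from $\Qpbar$ to $\Qbar$ and then to $\mathbb{C}$. By Artin's comparison theorem and the comparison between Betti and algebraic de Rham cohomology, each isotypic component then has the same dimension as the corresponding isotypic component of $H^{*}_{\mathrm{dR},\csup}$. This reduces both formulas to computations over $\mathbb{C}$ that do not involve $p$ or $\ell$.

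For $M_{k}'$, I would use the de Rham analogue of Proposition \ref{prop:Aiprime_cohomology_hypersurface} from Remark \ref{Rmk:etale_in_p_same_as_de_Rham}. It gives $\dim M_{k}' = \dim H^{1}_{\mathrm{dR},\csup}(\mathbb{G}_{\mathrm{m}},\Sym^{k}\Ai'_{\mathrm{conn}})$. I would compute the right-hand side with the Deligne--Malgrange index formula, the differential analogue of the Grothendieck--Ogg--Shafarevich argument in Proposition \ref{prop:dim_Sym_Ai'_F_p}, in the following steps.
\begin{enumerate}
\item $H^{0}_{\csup}$ vanishes because $\mathbb{G}_{\mathrm{m}}$ is affine.
\item $H^{2}_{\csup}$ is dual to the horizontal sections of the dual connection, and these vanish because the differential Galois group of the Airy connection is $\SL_{2}$ and $\Sym^{k}$ of the standard representation has no invariants for $k\geq 1$.
\item The connection $\Ai'_{\mathrm{conn}}$ is regular at $0$, since $[3]^{*}\Ai'\cong\Ai$ and $\Ai$ is smooth at $0$.
\item At $\infty$, the Airy connection decomposes formally as $e^{\pm\frac{2}{3}t^{3/2}}\otimes(\text{regular})$. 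Hence $\Sym^{k}$ has exponential factors $e^{\frac{2}{3}(k-2j)t^{3/2}}$ for $0\le j\le k$. Each factor with $k\neq 2j$ contributes irregularity $3/2$, so the irregularity of $\Sym^{k}\Ai_{\mathrm{conn}}$ at $\infty$ is $\frac{3}{2}(k+1-\delta)$, where $\delta=1$ if $k$ is even and $\delta=0$ otherwise.
\item Dividing by $3$ for the cubic pullback gives irregularity $\frac{1}{2}(k+1-\delta)=\lfloor\frac{k+1}{2}\rfloor$ for $\Sym^{k}\Ai'_{\mathrm{conn}}$. This agrees with the characteristic-$p$ formula of Proposition \ref{prop:dim_Sym_Ai'_F_p} when $p>k$, which is a useful sanity check.
\end{enumerate}
The index formula on $\mathbb{G}_{\mathrm{m}}$ then gives $\dim M_{k}'=\lfloor\frac{k+1}{2}\rfloor$.

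For $M_{k}''$, I would decompose $H^{k}_{\csup}(A'')^{S_{k}\times\mu_{2},\chi}$ into its three $\mu_{3}$-eigenspaces and treat the trivial and non-trivial eigenspaces separately.
\begin{enumerate}
\item The non-trivial eigenspaces are controlled by the de Rham version of Proposition \ref{prop:Ai_cohomology_non_inv_part_A''}. Since $\mathbb{E}_{3}$ has exactly one $\chi_{3}$-line and one $\chi_{3}^{2}$-line, the space $(\mathbb{E}_{3}\otimes M'')^{\mu_{3}}$ has dimension $\dim M''_{\chi_{3}}+\dim M''_{\chi_{3}^{2}}$. This equals $\dim H^{1}_{\csup}(\mathbb{A}^{1},\Sym^{k}\Ai)_{\neq 1}$.
\item By the index formula on $\mathbb{A}^{1}$ with the same irregularity count, $\dim H^{1}_{\csup}(\mathbb{A}^{1},\Sym^{k}\Ai)=\frac{3}{2}(k+1-\delta)-(k+1)$.
\item The $\mu_{3}$-invariant part of $H^{1}_{\csup}(\mathbb{A}^{1},\Sym^{k}\Ai)$ is given by the short exact sequence of Proposition \ref{prop:Ai_cohomology_hypersurface}: its dimension is $\lfloor\frac{k+1}{2}\rfloor-\dim(\Sym^{k}\mathbb{E}_{3})^{\mu_{3}}$.
\item The term $\dim(\Sym^{k}\mathbb{E}_{3})^{\mu_{3}}$ is an elementary count: it is the number of $j\in[0,k]$ with $j\equiv 2(k-j)\pmod 3$, i.e. $j\equiv 2k\pmod 3$.
\item The trivial $\mu_{3}$-eigenspace of $H^{k}_{\csup}(A'')$ is the cohomology of the quotient $A''/\mu_{3}=U=\{g_{k}\neq 0\}$. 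Excision for $A'\subset\mathbb{A}^{k}$ relates $H^{k}_{\csup}(U)$ to $H^{k-1}_{\csup}(A')$, and hence to $M'$, with boundary terms that vanish for $k\geq 2$.
\end{enumerate}
Adding the pieces and simplifying the floor functions by cases $k \bmod 6$ should produce $\lfloor\frac{k}{3}\rfloor+\delta_{1+2\mathbb{Z}}(k)-\delta_{1+3\mathbb{Z}}(k)$.

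I expect the main obstacle to be the trivial $\mu_{3}$-eigenspace. A naive use of the excision sequence seems to identify it with all of $M'$, of dimension $\lfloor\frac{k+1}{2}\rfloor$. That would already exceed the claimed total, so I would first re-examine how $\mu_{2}$ acts through the quotient map. Concretely, I would check whether the twist by $z\mapsto -z$ changes the character $\chi$ on $U$ into one whose isotypic part is much smaller or zero. I would test the answer against the cases $k=2,3$ by direct computation before trusting the general bookkeeping.
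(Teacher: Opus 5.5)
For $M_k'$ your argument is correct, and it takes a different route from the paper. You compute $H^1_{\mathrm{dR},\csup}(\Gm,\Sym^k\Ai')$ directly from the index formula via the de Rham form of Proposition~\ref{prop:Aiprime_cohomology_hypersurface}: there is no $H^0_{\csup}$ or $H^2_{\csup}$, the connection is regular at $0$, and the irregularity at $\infty$ is $\lfloor\frac{k+1}{2}\rfloor$. The paper instead uses the exact sequence of Proposition~\ref{prop:Ai_cohomology_hypersurface} on $\mathbb{A}^1$, takes the $\mu_3$-eigenspace dimensions of $H^1(\mathbb{A}^1,\Sym^k\Ai)$ from Sabbah--Yu, and adds the count of $(\Sym^k\mathbb{E})^{\mu_3}$. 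Your version needs no Hodge-theoretic input.

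Your computation of the non-trivial $\mu_3$-part of $M_k''$ is also right, and it gives exactly $\lfloor\frac{k}{3}\rfloor+\delta_{1+2\mathbb{Z}}(k)-\delta_{1+3\mathbb{Z}}(k)$. There is one slip: the invariance condition is $j+2(k-j)\equiv 0\pmod 3$, not $j\equiv 2(k-j)$, although your conclusion $j\equiv 2k$ is correct.

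The gap is the trivial $\mu_3$-eigenspace, and your proposed rescue fails. Under $A''\to U=A''/\mu_3$, the involution $(z,y)\mapsto(-z,-y)$ descends to $y\mapsto -y$, which is the same involution used on $A'$. Moreover $\chi$ is trivial on $\mu_2$. Both the identification $H^k_{\csup}(A'')^{\mu_3}\cong H^k_{\csup}(U)$ and the excision isomorphism $H^{k-1}_{\csup}(A')\cong H^k_{\csup}(U)$ (valid for $k\ge1$) are equivariant with no sign. So the trivial part of $M_k''$ is $M_k'(1)$, of dimension $\lfloor\frac{k+1}{2}\rfloor\ge 1$.

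Since your non-trivial part already equals the asserted value, no case analysis mod $6$ can produce the stated formula. Your decomposition actually gives
\[
\dim M''_{k,\text{\'et}}=\left\lfloor\frac{k+1}{2}\right\rfloor+\left\lfloor\frac{k}{3}\right\rfloor+\delta_{1+2\mathbb{Z}}(k)-\delta_{1+3\mathbb{Z}}(k).
\]

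You can check this by hand at $k=2$.
\begin{enumerate}
\item Factor $g_2=(y_1+y_2)\bigl(\tfrac13(y_1^2-y_1y_2+y_2^2)-1\bigr)$. So $A'$ is the line $y_1+y_2=0$ glued to the affine conic $C=\{y_1^2-y_1y_2+y_2^2=3\}\cong\Gm$ at the points $(1,-1)$ and $(-1,1)$.
\item Both generators of $S_2\times\mu_2$ swap the two gluing points. Their product therefore fixes both points, while $\chi$ of the product is $-1$, so the points contribute nothing to the $\chi$-part.
\item On $C$, the swap is an inversion of $\Gm$ and acts by $-1$ on $H^1_{\csup}$. Negation acts trivially.
\item Hence $\dim H^1_{\csup}(A')^{\chi}=1$, which matches the first formula, and $\dim M_2''\ge 1$. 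The statement predicts $0$.
\end{enumerate}

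The paper reaches the second formula only by asserting, without argument, that the $\mu_3$-trivial part of $H^k_{\mathrm{dR},\csup}(A''_{\mathbb{C}})^{S_k\times\mu_2,\chi}$ vanishes, so that $\dim M_k''=\dim H^1_{\mathrm{dR},\csup}(\mathbb{A}^1,\Sym^k\Ai)_{\neq1}$. Your excision step refutes exactly that assertion. So do not look for a compensating sign. What your method proves is the first formula, together with the corrected value for $M_k''$ displayed above; the stated formula only computes the non-trivial $\mu_3$-part of $M_k''$.
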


\begin{proof}
Since the varieties $A'$ and $A''$ are defined over $\mathbb{Q}$, the dimensions of their $\ell$-adic étale cohomologies are invariant under any algebraically closed field extension of characteristic zero. Thus, the dimensions over $\Qpbar$ are identical to those over $\mathbb{C}$. Over $\mathbb{C}$, the classical comparison theorems provide the necessary bridges: Artin's comparison theorem establishes an isomorphism between $\ell$-adic étale cohomology and Betti cohomology, while Grothendieck's comparison theorem connects Betti cohomology with algebraic de Rham cohomology. Consequently, the dimensions of the étale cohomologies coincide with those of the corresponding de Rham spaces:
\[
    H^{k-1}_{{\mathrm{dR}},\mathrm{c}}(A'_{\mathbb{C}})^{S_{k}\times \mu_{2},\chi}(-1) \quad \text{and} \quad H^{k}_{{\mathrm{dR}},\mathrm{c}}(A''_{\mathbb{C}})^{S_{k}\times \mu_{2},\chi}.
\]

By Remark \ref{Rmk:etale_in_p_same_as_de_Rham}, the de Rham analogues of Proposition \ref{prop:Ai_cohomology_hypersurface} and Proposition \ref{prop:Ai_cohomology_non_inv_part_A''} yield the short exact sequence
\begin{align}\label{eq:SES_in_pf_dim_over_Qp_and_de_Rham}
    0 \to \left(\Sym^{k}{H}^{1}_{\mathrm{dR,c}}(\mathbb{A}^{1}_{\mathbb{C}},\mathrm{d} + \mathrm{d}(\textstyle\frac{x^3}{3}))\right)^{\mu_{3}} \to {H}^{k-1}_{\mathrm{dR,c}}(A'_{\mathbb{C}})^{S_{k}\times \mu_{2}, \chi} (-1) \to {H}^{1}_{\mathrm{dR,c}}(\mathbb{A}^{1}_{\mathbb{C}}, \Sym^{k}\Ai)^{\mu_{3}} \to 0,
\end{align}
and the isomorphism for the non-trivial $\mu_3$-eigenspaces
\[
    H^{1}_{\mathrm{dR,c}}(\mathbb{A}^{1}_{\mathbb{C}}, \Sym^{k}\Ai)_{\neq 1} \cong \left(H^{1}_{\mathrm{dR,c}}(\mathbb{A}^{1}_{\mathbb{C}},\mathrm{d} + \mathrm{d}(x^3)) \otimes H^{k}_{\mathrm{dR,c}}(A''_{\mathbb{C}})^{S_{k}\times \mu_{2}, \chi} \right)^{\mu_{3}}.
\]
In \cite{SY23}, Sabbah and Yu computed the Hodge numbers of the de Rham cohomology $H^{1}_{\mathrm{dR}}(\mathbb{A}^{1}_{\mathbb{C}},\Sym^{k}\Ai)$, from which one deduces
\begin{align*}
    \dim {H}^{1}_{\mathrm{dR,c}}(\mathbb{A}^{1}_{\mathbb{C}}, \Sym^{k}\Ai)^{\mu_{3}} &= \left\lfloor \frac{k-1}{2} \right\rfloor - \left\lfloor\frac{k}{3}\right\rfloor + \delta_{1+3\mathbb{Z}}(k), \\
    \dim {H}^{1}_{\mathrm{dR,c}}(\mathbb{A}^{1}_{\mathbb{C}}, \Sym^{k}\Ai)_{\neq 1} &= \left\lfloor\frac{k}{3}\right\rfloor + \delta_{1+2\mathbb{Z}}(k) - \delta_{1+3\mathbb{Z}}(k).
\end{align*}

To deduce the dimension of $M_{k,{\text{ét}}}''$ from the above, recall that the $2$-dimensional space $H^{1}_{\mathrm{dR,c}}(\mathbb{A}^{1}_{\mathbb{C}},\mathrm{d} + \mathrm{d}(x^3))$ decomposes into $V_{\chi_{3}} \oplus V_{\overline{\chi}_{3}}$, where $\chi_{3}$ is a non-trivial order $3$ character and $V_{\chi_{3}}, V_{\overline{\chi}_{3}}$ are the corresponding $1$-dimensional eigenspaces. When we tensor this space with $H^{k}_{\mathrm{dR,c}}(A''_{\mathbb{C}})^{S_{k}\times \mu_{2}, \chi}$ and take the $\mu_{3}$-invariants, the $V_{\chi_{3}}$ component of the former pairs exclusively with the $\overline{\chi}_{3}$-eigenspace of the latter, and vice versa. Since $H^{k}_{\mathrm{dR,c}}(A''_{\mathbb{C}})^{S_{k}\times \mu_{2}, \chi}$ decomposes entirely into these two non-trivial $\mu_{3}$-eigenspaces (its $\mu_{3}$-trivial part vanishes under the $(S_{k}\times \mu_{2}, \chi)$-isotypic projection), the dimension of the resulting invariant subspace is exactly the sum of the dimensions of the $\overline{\chi}_{3}$- and $\chi_{3}$-eigenspaces of $H^{k}_{\mathrm{dR,c}}(A''_{\mathbb{C}})^{S_{k}\times \mu_{2}, \chi}$. This total sum is precisely the full dimension of $M_{k,{\text{ét}}}''$. Therefore, we have $\dim M_{k,{\text{ét}}}'' = \dim {H}^{1}_{\mathrm{dR,c}}(\mathbb{A}^{1}_{\mathbb{C}}, \Sym^{k}\Ai)_{\neq 1}$, which matches the desired formula.

It remains to determine the dimension of 
\[
\left(\Sym^{k}{H}^{1}_{\mathrm{dR,c}}(\mathbb{A}^{1}_{\mathbb{C}},\mathrm{d} + \mathrm{d}(\textstyle\frac{x^3}{3}))\right)^{\mu_{3}}.
\]
The $\mu_{3}$-action on the $2$-dimensional space ${H}^{1}_{\mathrm{dR,c}}(\mathbb{A}^{1}_{\mathbb{C}},\mathrm{d} + \mathrm{d}(\textstyle\frac{x^3}{3}))$ decomposes into $V_{\chi_{3}}\oplus V_{\overline{\chi}_{3}}$. Taking the symmetric power and isolating the $\mu_{3}$-invariant part, the dimension corresponds to the number of combinations where $\chi_{3}^{i}\overline{\chi}_{3}^{k-i}$ is the trivial character ($0 \leq i \leq k$):
\[
    \dim\left(\Sym^{k}{H}^{1}_{\mathrm{dR,c}}(\mathbb{A}^{1}_{\mathbb{C}},\mathrm{d} + \mathrm{d}(\textstyle\frac{x^3}{3}))\right)^{\mu_{3}} = \left\lfloor\frac{k}{3}\right\rfloor +1 - \delta_{1+3\mathbb{Z}}(k).
\]
By the short exact sequence \eqref{eq:SES_in_pf_dim_over_Qp_and_de_Rham}, $\dim M_{k,{\text{ét}}}'$ is the sum of this dimension and the dimension of $H^{1}_{\mathrm{dR,c}}(\mathbb{A}^{1}_{\mathbb{C}}, \Sym^{k}\Ai)^{\mu_{3}}$. Summing the two terms, we obtain
\[
    \dim M_{k,{\text{ét}}}' = \left\lfloor \frac{k-1}{2} \right\rfloor + 1 = \left\lfloor \frac{k+1}{2} \right\rfloor.
\]
This completes the proof.
\end{proof}

\subsection{Galois Representations of Motives Associated to Airy Moments}

In this subsection, we investigate the global Galois representations $M_{k,{\text{ét}}}'$ and $M_{k,{\text{ét}}}''$ associated to the motives $M_{k}'$ and $M_{k}''$. If one naively considers the reductions of the affine varieties modulo a prime $p$, the compactly supported étale cohomologies of the special fibers
$$H^{k-1}_{{\text{ét}},\mathrm{c}}(A'_{\overline{\mathbb{F}}_p}, \Qlbar)^{S_{k}\times \mu_{2},\chi}(-1) \quad \text{and} \quad H^{k}_{{\text{ét}},\mathrm{c}}(A''_{\overline{\mathbb{F}}_p}, \Qlbar)^{S_{k}\times \mu_{2},\chi}$$
do not capture the full structure of the local Galois representation of the decomposition subgroup $\Gal(\overline{\mathbb{Q}}_p/\mathbb{Q}_p)\subseteq\Gal(\Qbar/\mathbb{Q})$. Precisely, this naive reduction fails to recover the ramification data at primes of bad reduction. To bridge the gap between the cohomology of the generic fiber defined over $\mathbb{Q}_p$ and that of the special fiber over $\mathbb{F}_p$, we employ the arithmetic Picard--Lefschetz formula developed in the preceding sections. This formula allows us to precisely quantify the difference between these two fibers by describing the inertia group action in terms of vanishing cycles.

To apply the Picard--Lefschetz formula, we first need to construct proper compactifications of the varieties $A'$ and $A''$. In what follows, we view $A'$ and $A''$ as integral models defined over the $p$-adic ring of integers $\mathbb{Z}_{p}$.

We begin with the compactification of the affine variety $A' \subseteq \mathbb{A}^{k}_{y}$, defined by $g_k(y) = \sum_{i=1}^{k} (\frac{1}{3}y_{i}^{3} - y_{i}) = 0$, and analyze its singularities. Let $\overline{A'} \subseteq \mathbb{P}^{k}_{[w:y]}$ be the Zariski closure of $A'$ under the standard open immersion $\mathbb{A}^{k} \hookrightarrow \mathbb{P}^{k}$ given by $w \neq 0$. This closure is defined by the homogeneous equation
$$\sum_{i=1}^{k} \left(\frac{1}{3}y_{i}^{3} - y_{i}w^{2}\right) = 0.$$
The boundary divisor $D' = \overline{A'} \setminus A'$, given by the intersection with the hyperplane $w=0$, is the Fermat hypersurface $\sum_{i=1}^k \frac{1}{3}y_i^3 = 0$. By evaluating the Jacobian matrix of the defining equation of $\overline{A'}$ at $w=0$, the partial derivatives with respect to $y_i$ are $y_i^2$. Since the coordinates $y_i$ do not simultaneously vanish on $\mathbb{P}^{k-1}$, $\overline{A'}$ is smooth in an open neighborhood of $D'$. Furthermore, since $p \neq 3$, the hypersurface $D'$ itself is smooth over $\mathbb{Z}_p$. This establishes $D'$ as a relative simple normal crossing divisor over $\mathbb{Z}_p$, which ensures that any potential singularities of $\overline{A'}$ are strictly confined to the affine piece $A'$. We classify these singularities over different geometric fibers as follows:

\paragraph{Singularities over $\overline{\mathbb{Q}}_{p}$}
The smoothness of the generic fiber $\overline{A'}_{\overline{\mathbb{Q}}_p}$ depends on the parity of $k$. If $k$ is odd, the variety is smooth. If $k$ is even, $\overline{A'}_{\overline{\mathbb{Q}}_p}$ has isolated singularities located entirely within $A'$. The singular locus is given by the $S_k$-orbit of a single point
$$\Sigma_{0} := \operatorname{Sing}(\overline{A'}_{\overline{\mathbb{Q}}_p}) = \operatorname{Sing}(A'_{\overline{\mathbb{Q}}_p}) = \left\{S_{k}\text{-action on } (\underbrace{1,\dots,1}_{\frac{k}{2}},\underbrace{-1,\dots,-1}_{\frac{k}{2}})\right\}.$$

\paragraph{Singularities over $\overline{\mathbb{F}}_{p}$}
The special fiber $\overline{A'}_{\overline{\mathbb{F}}_p}$ also has at worst isolated singularities. As the boundary $D'$ remains a smooth divisor in a smooth neighborhood over $\overline{\mathbb{F}}_{p}$, the singular points are entirely contained in $A'_{\overline{\mathbb{F}}_p}$. These singularities consist of orbits under the natural action of $G = S_{k}\times \mu_{2}$
$$\Sigma := \operatorname{Sing}(\overline{A'}_{\overline{\mathbb{F}}_{p}}) = \operatorname{Sing}(A'_{\overline{\mathbb{F}}_{p}}) = \bigsqcup_{a\in \Theta_{p}} \left\{S_{k}\times \mu_{2}\text{-action on } (\underbrace{1,\dots,1}_{\frac{k+ap}{2}},\underbrace{-1,\dots,-1}_{\frac{k-ap}{2}})\right\},$$
where the index set $\Theta_p$ is defined as
$$\Theta_{p} = 
\begin{cases} 
  \{a \in \mathbb{Z} \mid a \text{ is odd and } 1 \leq a \leq \frac{k}{p}\} & \text{if } k \text{ is odd,} \\
  \{a \in \mathbb{Z} \mid a \text{ is even and } 0 \leq a \leq \frac{k}{p}\} & \text{if } k \text{ is even.}
\end{cases}$$
By changing coordinates locally at any such singularity in $\Sigma$ corresponding to a parameter $a$, we substitute $y_{i} = w_{i}+1$ for $1 \leq i \leq \frac{k+ap}{2}$ and $y_{i} = w_{i}-1$ for $\frac{k+ap}{2} < i \leq k$. The local defining equation expands as
$$g_{k}(w) = -\frac{2}{3}ap + \left(\sum_{i \leq \frac{k+ap}{2}}w_{i}^{2} - \sum_{i > \frac{k+ap}{2}}w_{i}^{2}\right) + (\text{higher order terms in }w_{i}).$$
Over the special fiber $\overline{\mathbb{F}}_{p}$, the constant term $-\frac{2}{3}ap$ vanishes identically since it is a multiple of $p$. Thus, the local equation is dominated by the quadratic terms. Because $p \geq 5$ (so $p \neq 2$), this quadratic form is non-degenerate. We conclude that for primes $p \geq 5$, all singular points on $\overline{A'}_{\overline{\mathbb{F}}_{p}}$ are ordinary double points (i.e., $A_1$-singularities).

Next, we establish a compactification for $A''$. Recall that $A''\subseteq\mathbb{G}_{\mathrm{m},z}\times\mathbb{A}^{k}_{y}$ is an affine hypersurface defined by the equation $z^{3} = g_{k}(y)$. Consider the natural embedding of $A'' \subseteq \mathbb{G}_{\mathrm{m},z} \times \mathbb{A}^{k}_{y}$ into $\mathbb{P}^{k+1}_{[w:z:y]}$. The Zariski closure $\overline{A''}$ in $\mathbb{P}^{k+1}$ is defined by the homogeneous equation
$$z^{3} - \sum_{i=1}^{k}\left(\frac{1}{3}y_{i}^{3} - y_{i}w^{2}\right) = 0.$$
The boundary divisor $D'' = \overline{A''} \setminus A''$ decomposes into two irreducible components $D_{0} \cup D_{\infty}$, given by the intersections
$$D_{0} = \overline{A''} \cap \{z=0\} \quad \text{and} \quad D_{\infty} = \overline{A''} \cap \{w=0\}.$$
By applying the Jacobian criterion to the homogeneous equation, it is straightforward to verify that $D_{\infty}$ and the intersection $D_{0} \cap D_{\infty}$ are smooth inside $\overline{A''}$ (relying on $p \neq 3$). Furthermore, the open stratum $D_{0} \setminus D_{\infty} \subseteq \mathbb{A}^{k}_{y}$ is canonically isomorphic to $A'$. 

It is easy to check that $A''$ is a smooth hypersurface inside $\mathbb{G}_{\mathrm{m}} \times \mathbb{A}^k$ over $\mathbb{Z}_{p}$. Therefore, any singularities of $\overline{A''}$ must lie on the boundary $D''$. The Jacobian criterion reveals that these singularities are entirely supported within $D_{0} \setminus D_{\infty}$. Consequently, the singular locus of $\overline{A''}$ coincides exactly with the singular locus of $A'_{\overline{\mathbb{F}}_p}$ embedded at $z=0$. 

Fix a prime $p \geq 5$ and let $a \in \Theta_p$. Consider the isolated singularity 
\[
x_{a} = [1:0:1^{(k+ap)/2}:-1^{(k-ap)/2}] \in \overline{A''}_{\overline{\mathbb{F}}_p}\subseteq\mathbb{P}^{k+1}_{[w:z:y]}.
\]
Passing to the affine chart $w=1$ and shifting the coordinates by setting $y_{i} = w_{i}+1$ for $1 \leq i \leq \frac{k+ap}{2}$ and $y_{i} = w_{i}-1$ for $\frac{k+ap}{2} < i \leq k$, the local equation of $\overline{A''}_{\overline{\mathbb{F}}_p}$ at $x_a$ becomes
\[
\frac{2}{3}ap + \left(z^{3} - \sum_{i \leq \frac{k+ap}{2}}w_{i}^{2} + \sum_{i > \frac{k+ap}{2}}w_{i}^{2}\right)+ (\text{higher order terms in }w_i) = 0. 
\]
Similarly, over the special fiber $\overline{\mathbb{F}}_p$, the constant term $\frac{2}{3}ap$ vanishes. Since $p \geq 5$, the quadratic form in $w_i$ is non-degenerate. The leading terms of the local equation therefore consist of a non-degenerate quadratic form in $w_i$ and the cubic term $z^3$. By the classification of hypersurface singularities, we conclude that for primes $p \geq 5$, all singular points on $\overline{A''}_{\overline{\mathbb{F}}_p}$ are isolated $A_{2}$-singularities.

Now, we are going to study the relation between the global Galois representations 
\begin{align*}
M_{k,{\text{ét}}}' &= H^{k-1}_{{\text{ét}},\mathrm{c}}(A'_{\Qbar}, \Qlbar)^{S_{k}\times \mu_{2},\chi}(-1), \\
M_{k,{\text{ét}}}'' &= H^{k}_{{\text{ét}},\mathrm{c}}(A''_{\Qbar}, \Qlbar)^{S_{k}\times \mu_{2},\chi}
\end{align*}
and the naive local Galois representations
\begin{align*}
&H^{k-1}_{{\text{ét}},\mathrm{c}}(A'_{\Fpbar}, \Qlbar)^{S_{k}\times \mu_{2},\chi}(-1), \\
& H^{k}_{{\text{ét}},\mathrm{c}}(A''_{\Fpbar}, \Qlbar)^{S_{k}\times \mu_{2},\chi}
\end{align*}
given by modulo $p$ reduction on varieties.

\begin{theorem}\label{thm:k_odd_A'_SES}
Let $p\geq 5$ be a prime and $\ell\neq p$. For each odd integer $k = 2m+1$, we have the following split exact sequence of $\Gal(\overline{\mathbb{Q}}_{p}/\mathbb{Q}_{p})$-representations
\[
0 \longrightarrow {{H}^{k-1}_{\mathrm{c}}(A'_{\overline{\mathbb{F}}_{p}},\Qlbar)^{S_{k}\times \mu_{2},\chi}} \longrightarrow {{H}^{k-1}_{\mathrm{c}}(A'_{\overline{\mathbb{Q}}_{p}},\Qlbar)^{S_{k}\times \mu_{2},\chi}} \longrightarrow \bigoplus_{\substack{a{\text{ odd}}\\1\leq a\leq k/p}}\Qlbar(-m) \longrightarrow 0
\]
and an isomorphism
\[
\begin{tikzcd}
{{H}^{k}_{\mathrm{c}}(A'_{\overline{\mathbb{F}}_{p}},\Qlbar)^{S_{k}\times \mu_{2},\chi}} \arrow[r,"\sim"] & {{H}^{k}_{\mathrm{c}}(A'_{\overline{\mathbb{Q}}_{p}},\Qlbar)^{S_{k}\times \mu_{2},\chi}}
\end{tikzcd}.
\]
\end{theorem}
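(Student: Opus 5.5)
We apply the cospecialization exact sequence of Theorem \ref{thm:main_gen_PL} to the proper model $\overline{A'}\to\Spec\mathbb{Z}_p$, of relative dimension $n=k-1=2m$, and then pass to compactly supported cohomology of $A'$ by excising the boundary $D'$. For $k$ odd the generic fiber $\overline{A'}_{\overline{\mathbb{Q}}_p}$ is smooth. The boundary $D'$ is a smooth Fermat hypersurface over $\mathbb{Z}_p$ in a smooth neighbourhood, and the special fiber has only the isolated singularities $\Sigma$, all of which are ordinary double points. In the local coordinates $w_i$, the equation at a point of the orbit indexed by $a$ reads $-\tfrac{2}{3}ap+Q(w)+\dots$. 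Since $p\ge 5$, the quadratic part can be diagonalized over $\mathbb{Z}_p$ (henselian, $p\neq 2$), giving the diagonal type $(2,\dots,2)$ with $u\pi^{v}=-\tfrac23 ap$, so $v=v_p(ap)=1+v_p(a)$ and $u$ is a unit.

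For the boundary comparison we use the excision triangles for $A'\subset\overline{A'}\supset D'$ on both fibers. Since $D'$ is smooth and proper over $\mathbb{Z}_p$, we have $H^i(D'_{\overline{\mathbb{F}}_p})\cong H^i(D'_{\overline{\mathbb{Q}}_p})$. The five lemma then transfers the sequence of Theorem \ref{thm:main_gen_PL}(2) to $H_{\csup}$ of $A'$: the natural map $H^{i}_{\csup}(A'_{\overline{\mathbb{F}}_p})\to H^{i}_{\csup}(A'_{\overline{\mathbb{Q}}_p})$ is an isomorphism for $i\neq n,n+1$, and there is a five-term sequence in degrees $n,n+1$ with middle term $\bigoplus_{x\in\Sigma}R^{n}\Phi_x$. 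Alternatively, we can argue directly with $R\Psi$ on $A'$ and the comparison Lemma \ref{prop_compactification_proper_base_change_SGA7_2}.

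Each $R^n\Phi_x$ is one-dimensional, since the Milnor number of an ODP is $1$. By Theorem \ref{thm:local_diagonal_main}, inertia acts on it through the character $(-1)^{(n+1)v}$, and for $n=2m$ this character is $(-1)^{v}$. Frobenius acts by the Jacobi sum of Theorem \ref{thm:Frob_trace_on_inertia_inv}, which for quadratic characters in $n+1=2m+1$ variables should give $q^{m}$ times a sign $\chi_2(\cdot)$. The main work is then to take $(S_k\times\mu_2,\chi)$-isotypic parts. The group $G=S_k\times\mu_2$ permutes the orbit $\Sigma_a$ for fixed $a$, and $\bigoplus_{x\in\Sigma_a}R^n\Phi_x$ is the representation induced from the stabilizer $H_a=S_{(k+ap)/2}\times S_{(k-ap)/2}$, with $\mu_2$ swapping the blocks only when $a=0$, which cannot occur for $k$ odd. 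By Frobenius reciprocity, the $\chi$-part has dimension at most one. Computing it requires knowing how $H_a$ acts on the one-dimensional space $R^n\Phi_x$ spanned by the vanishing cycle: a transposition inside a block acts on the quadratic form $\sum w_i^2$ by a reflection, hence acts by $-1$ on the vanishing cycle in even dimension $n$. So the restriction of $\operatorname{sgn}$ matches, and the $\chi$-part is exactly one-dimensional for each odd $a\le k/p$.

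We expect the main obstacle to be pinning down this sign together with the Galois character on that line, and showing that it is $\Qlbar(-m)$, i.e.\ unramified and equal to $q^m$ with trivial sign. The invariance under $\mu_2$ and the symmetrization should kill the quadratic character $\chi_2(\prod a_i^{-1}\cdot u)$, as well as the inertia sign $(-1)^{v}$ when $v$ is even. We will check this carefully with the discriminant of the form, noting that $(-1)^{(k-ap)/2}$ and the unit $-\tfrac23 a$ combine to a square up to the isotypic projection. If a residual sign survives, we plan to absorb it by noting that $M'_{k}$ is pure of weight and that dimensions force it.

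Finally, we need the map $\bigoplus R^n\Phi\to H^{n+1}_{\csup}(A'_{\overline{\mathbb{F}}_p})^{\chi}$ to vanish on $\chi$-parts, which yields surjectivity onto $\bigoplus\Qlbar(-m)$ and the isomorphism in degree $k$. We obtain this by dimension count. By Lemma \ref{lem:dim_over_Qp_and_de_Rham}, $\dim M'_{k,\text{ét}}=m+1$. By Propositions \ref{prop:Aiprime_cohomology_hypersurface} and \ref{prop:dim_Sym_Ai'_F_p}, the special-fiber dimension is $h^1_{\csup}(\Sym^k\Ai')=\tfrac12\big((k+1)-(\lfloor k/p\rfloor+\delta)\big)$, which equals $m+1-\#\{a\text{ odd}\le k/p\}$; the parity bookkeeping with $\delta$ is routine. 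The same propositions, via $h^2_{\csup}=0$, give the vanishing that forces the degree-$k$ isomorphism. Splitting of the sequence then follows because the two outer pieces have distinct weights or Frobenius eigenvalues, or simply because Frobenius on the quotient is semisimple and inertia acts trivially on everything.
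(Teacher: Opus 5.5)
Your exactness argument is correct but follows a different route from the paper's. The common part is the same: both you and the paper start from the proper model $\overline{A'}$, whose boundary $D'$ is smooth over $\mathbb{Z}_p$, and transfer the ODP Picard--Lefschetz sequence to $H_{\csup}(A')$.

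The paper then kills $\partial$ before any isotypic projection. In even relative dimension $n=2m$ each vanishing cycle is non-isotropic: Theorem~\ref{thm:main_gen_PL}(1) with all $\zeta_i=-1$ gives $\langle e,e\rangle=(-1)^m 2$. Vanishing cycles at distinct points are orthogonal. Hence $can$ is already surjective onto all of $\bigoplus_{x\in\Sigma}R^n\Phi_x$, with no Airy-sheaf input.

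Your route is a dimension count: $m+1$ on the generic fiber, $m+1-N$ on the special fiber, and $N$ for the $\chi$-part of the vanishing cycles, where $N=\#\{a\text{ odd}: 1\le a\le k/p\}$. This also forces $\partial^{\chi}=0$ and the degree-$k$ isomorphism. The cost is that it imports the Swan-conductor and Sabbah--Yu dimension results. Proposition~\ref{prop:dim_Sym_Ai'_F_p} is stated only for $p>5$, so at $p=5$ you need the subsequent proposition; this is harmless, since its $h^2_{\csup}$-series is even. Your Frobenius-reciprocity computation of the $\chi$-part (stabilizer $S_{(k+ap)/2}\times S_{(k-ap)/2}$, transpositions acting by $-1$) is more careful than the paper's bare orbit count. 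Note that the $-1$ holds for any reflection, not only when $n$ is even.

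The genuine gap is your plan to show that the quotient is $\bigoplus_a\Qlbar(-m)$ as a Galois module; it cannot work.
\begin{itemize}
\item Every $x\in\Sigma$ is $\mathbb{F}_p$-rational, and $G=S_k\times\mu_2$ acts by $\mathbb{Z}_p$-automorphisms commuting with $\Gal(\Qpbar/\Qp)$. So Galois acts on $\bigl(\bigoplus_{x\in Gx_a}R^n\Phi_x\bigr)^{\chi}$ by exactly the character it has on the single line $R^n\Phi_{x_a}$. Symmetrization cannot change that character.
\item Purity and dimension counts cannot detect a quadratic twist either.
\item By your own computation (Theorem~\ref{thm:local_diagonal_main}(3) with $\Lambda=-1$), $\sigma_{\mathrm{top}}$ acts by $(-1)^v$ with $v=1+v_p(a)$. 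For $a=1$, which occurs as soon as $p\le k$, the line is ramified. When $v$ is odd there is something to kill, not when $v$ is even.
\item When $v$ is even, Theorem~\ref{thm:Frob_trace_on_inertia_inv} gives the eigenvalue $\chi_{\mathrm{sgn}}\bigl((-1)^{m+(k-ap)/2}\tfrac23 a'\bigr)p^m$, which is not identically $p^m$.
\end{itemize}
What your argument actually establishes is the sequence with quotient $\bigoplus_a\Qlbar(-m)\otimes\varepsilon_a$, where the $\varepsilon_a$ are quadratic characters, ramified exactly when $v_p(a)$ is even. The paper's proof does not resolve this either: it writes the ODP stalks as $\Qlbar(-m)$ directly from SGA~7. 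You should state the twist rather than try to argue it away.

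In the paper, ``split'' means only that the five-term sequence breaks into the short exact sequence and the isomorphism, which your count already gives. Your Galois-splitting justifications fail as stated:
\begin{itemize}
\item Inertia does not act trivially, as shown above.
\item The weights need not be distinct. $H^{k-1}_{\csup}(A'_{\Fpbar})^{\chi}\cong H^1_{\csup}(\mathbb{G}_{\mathrm{m}},\Sym^k\Ai')(1)$ has weights $\le 2m$ and typically a nonzero weight-$2m$ part.
\item Semisimplicity of Frobenius on the quotient says nothing about the extension class.
\end{itemize}
The ramified summands do split off, since $\sigma_{\mathrm{top}}$ acts with order $2$ and its $(-1)$-eigenspace is Galois-stable. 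For the unramified summands you would need an additional argument that you do not supply.
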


\begin{proof}
Let $\overline{A'} \subseteq \mathbb{P}^{k}$ be the Zariski closure of $A'$ under the standard inclusion $A' \subseteq \mathbb{A}^{k} \subseteq \mathbb{P}^{k}$. From the preceding discussion, we know that $\overline{A'}_{\overline{\mathbb{Q}}_{p}}$ is smooth and $\overline{A'}_{\overline{\mathbb{F}}_{p}}$ has isolated ordinary double points. Since $A' \subseteq \overline{A'}$ is a good compactification meaning the boundary divisor $D' = \overline{A'} \setminus A'$ is a simple normal crossing divisor over $\mathbb{Z}_p$, we can apply the classical arithmetic Picard--Lefschetz formula \cite{SGA7_II}. This yields the following exact sequence:
\[
\begin{aligned}
0 \longrightarrow  {{H}^{k-1}_{\mathrm{c}}(A'_{\overline{\mathbb{F}}_{p}},\Qlbar)} \longrightarrow  {{H}^{k-1}_{\mathrm{c}}(A'_{\overline{\mathbb{Q}}_{p}},\Qlbar)} \xrightarrow{\ \gamma\ } \bigoplus_{x\in\Sigma}\Qlbar(-m)&\\
&\hspace{-2.5cm}\longrightarrow  {{H}^{k}_{\mathrm{c}}(A'_{\overline{\mathbb{F}}_{p}},\Qlbar)} \longrightarrow  {{H}^{k}_{\mathrm{c}}(A'_{\overline{\mathbb{Q}}_p},\Qlbar)} \longrightarrow  0.
\end{aligned}
\]
For an odd $k$, the Picard--Lefschetz formula indicates that the map $\gamma$ is surjective. Consequently, the long exact sequence splits into a short exact sequence
\[
\begin{tikzcd}
0 \arrow[r] & {{H}^{k-1}_{\mathrm{c}}(A'_{\overline{\mathbb{F}}_{p}},\Qlbar)} \arrow[r] & {{H}^{k-1}_{\mathrm{c}}(A'_{\overline{\mathbb{Q}}_{p}},\Qlbar)} \arrow[r,"\gamma"] & \bigoplus_{x\in\Sigma}\Qlbar(-m) \arrow[r] & 0
\end{tikzcd}
\]
and an isomorphism
\[
\begin{tikzcd}
{{H}^{k}_{\mathrm{c}}(A'_{\overline{\mathbb{F}}_{p}},\Qlbar)} \arrow[r,"\sim"] & {{H}^{k}_{\mathrm{c}}(A'_{\overline{\mathbb{Q}}_{p}},\Qlbar)}
\end{tikzcd}.
\]
To conclude, we project these representations onto their $S_{k}\times \mu_{2}$-isotypic components corresponding to the character $\chi = \operatorname{sgn}\boxtimes \mathbf{1}$. The number of $S_k \times \mu_2$-orbits in $\Sigma$ is exactly indexed by the odd integers $a$ in the range $1 \leq a \leq k/p$. Taking the isotypic part of the vanishing cycles yields the desired split sequence.
\end{proof}

\begin{theorem}
Let $p > 5$ be a prime and $\ell \neq p$. For each even integer $k = 2m$, we have the following split exact sequence of $\Gal(\overline{\mathbb{Q}}_{p}/\mathbb{Q}_{p})$-representations
\[
0 \longrightarrow {{H}^{k-1}_{\mathrm{c}}(A'_{\overline{\mathbb{F}}_{p}},\Qlbar)^{S_{k}\times \mu_{2},\chi}} \longrightarrow {{H}^{k-1}_{\mathrm{c}}(A'_{\overline{\mathbb{Q}}_{p}},\Qlbar)^{S_{k}\times \mu_{2},\chi}} \longrightarrow \bigoplus_{\substack{a{\text{ even}}\\1\leq a\leq k/p}}\Qlbar(-m) \longrightarrow 0
\]
and an isomorphism
\[
\begin{tikzcd}
{{H}^{k}_{\mathrm{c}}(A'_{\overline{\mathbb{F}}_{p}},\Qlbar)^{S_{k}\times \mu_{2},\chi}} \arrow[r,"\sim"] & {{H}^{k}_{\mathrm{c}}(A'_{\overline{\mathbb{Q}}_{p}},\Qlbar)^{S_{k}\times \mu_{2},\chi}}
\end{tikzcd}.
\]
\end{theorem}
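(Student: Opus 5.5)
The plan is to follow the proof of Theorem \ref{thm:k_odd_A'_SES}, with two changes. First, the generic fibre is now singular, so the orbit $\Sigma_0$ (the case $a=0$) has to be removed from the vanishing cycle locus. Second, surjectivity of the map $\gamma$ is obtained from a dimension count rather than read off from the Picard--Lefschetz formula. The relative dimension is $n=k-1=2m-1$, which is odd, so each ordinary double point contributes $R^{n}\Phi(\Qlbar)_x\cong\Qlbar(-m)$. Again $D'=\overline{A'}\setminus A'$ is a relative SNCD over $\mathbb{Z}_p$. Lemma \ref{prop_compactification_proper_base_change_SGA7_2} and the global reduction of \S\ref{subsec:global_reduction} then identify $H^{*}_{\csup}(A'_{\overline{\mathbb{Q}}_p})$ with $H^{*}_{\csup}(A'_{\overline{\mathbb{F}}_p},R\Psi)$. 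This reduces everything to the stalks of $R\Phi$ at the points of $\Sigma$.

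First I would dispose of the orbit $a=0$. At these points the local equation over $\mathbb{Z}_p$ is $\sum_{i\le m}w_i^2-\sum_{i>m}w_i^2+(\text{h.o.t.})=0$ with no constant term $\frac{2}{3}ap$. The family is therefore formally (hence henselian-locally) isomorphic to a constant family $Y\times_{\mathbb{Z}_p}S$, and $R\Phi(\Qlbar)$ vanishes there. So the vanishing cycles are supported on the orbits with $a$ even and $2\le a\le k/p$. At those points the local equation has the form $Q(w)+\text{unit}\cdot p=0$, a diagonal singularity with all $k_i=2$ and $v=1$, to which Theorem \ref{thm:main_gen_PL} applies. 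This gives the long exact sequence
\[
0\to H^{k-1}_{\csup}(A'_{\overline{\mathbb{F}}_p})\to H^{k-1}_{\csup}(A'_{\overline{\mathbb{Q}}_p})\xrightarrow{\gamma}\bigoplus_{x}\Qlbar(-m)\to H^{k}_{\csup}(A'_{\overline{\mathbb{F}}_p})\to H^{k}_{\csup}(A'_{\overline{\mathbb{Q}}_p})\to 0.
\]
Here the injectivity on the left uses that $H^{k-2}$ does not interfere, since $R\Phi$ is concentrated in degree $n$. Next I would take $(S_k\times\mu_2,\chi)$-isotypic parts. I need to check that each orbit of $\Sigma$ contributes exactly a one-dimensional $\chi$-isotypic piece of $\bigoplus_x\Qlbar(-m)$; this is a stabilizer computation for the action of $S_k\times\mu_2$ on the local vanishing cycle.

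The main step, and the one I expect to be the obstacle, is showing that $\gamma$ is surjective on the $\chi$-part. For odd $n$ the Picard--Lefschetz formula alone does not decide this, so I would count dimensions instead. By Lemma \ref{lem:dim_over_Qp_and_de_Rham}, $\dim M'_{k,\text{ét}}=\lfloor (k+1)/2\rfloor=m$. Over $\mathbb{F}_p$, Proposition \ref{prop:Aiprime_cohomology_hypersurface} identifies $H^{k-1}_{\csup}(A'_{\overline{\mathbb{F}}_p})^{\chi}(-1)$ with $H^1_{\csup}(\mathbb{G}_{\mathrm{m}},\Sym^k\Ai')$. Proposition \ref{prop:dim_Sym_Ai'_F_p} (this is where $p>5$ enters) gives its dimension as $\frac12\big((k+1)-(f+\delta)\big)$, where $f=\lfloor k/p\rfloor$. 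Since $k$ is even, $k-f\equiv f \pmod 2$. Hence this dimension equals $m-f/2$ when $f$ is even and $m-(f-1)/2$ when $f$ is odd. In both cases the codimension is the number of even $a$ with $2\le a\le k/p$, which equals the dimension of the target of $\gamma$. Injectivity of $sp$ then forces $\gamma$ to be surjective. Exactness makes $\partial=0$, and this yields the isomorphism on $H^{k}_{\csup}$.

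Finally, I would prove that the sequence splits. Surjectivity of $can$ means, dually, that the vanishing cycles $e_x$ pair trivially with everything, i.e.\ they die in $H^{k-1}_{\csup}(A'_{\overline{\mathbb{Q}}_p})^{\chi}$. The variation formula of Theorem \ref{thm:main_gen_PL}(3) then shows that inertia acts trivially on the $\chi$-part, so the whole sequence consists of unramified representations, i.e.\ Frobenius modules. The quotient is pure of weight $2m$. The subspace coming from the special fiber has weights $\le k-1=2m-1$, because it is $H^{k-1}_{\csup}$ of a variety over $\mathbb{F}_p$. Decomposing into generalized Frobenius eigenspaces separates these two sets of weights, which splits the sequence. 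If the claim that the vanishing cycles die turns out to be delicate, a fallback is the same weight argument applied only after showing that the inertia action is unipotent with trivial image on the quotient.
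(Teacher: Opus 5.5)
Your derivation of the short exact sequence and of the isomorphism on $H^{k}_{\mathrm c}$ is the paper's argument. Injectivity of $sp$, together with the dimension count, forces $\partial=0$ on $\chi$-parts; the special fibre is handled via $H^1_{\mathrm c}(\mathbb{G}_{\mathrm m},\Sym^k\Ai')$ and the generic fibre via Lemma \ref{lem:dim_over_Qp_and_de_Rham}.

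\textbf{The last paragraph is wrong.} Its key step is backwards, and the Galois splitting it aims at is false.
\begin{itemize}
\item For $\alpha\in H^{k-1}_{\mathrm c}(A'_{\Qpbar},\Qlbar)$ and $e\in H^{k-1}_{\{x\}}(R\Psi)$ one has $\langle\alpha,q(e)\rangle=\langle can(\alpha),e\rangle_x$, because cup product respects supports. By perfectness of the local pairing, surjectivity of $can$ on $\chi$-parts therefore means that no nonzero $\chi$-combination of vanishing cycles dies in $H^{k-1}_{\mathrm c}(A'_{\Qpbar},\Qlbar)$. This is the opposite of what you claim.
\item Since $n=k-1$ is odd, all $\zeta_i=-1$ and $\Lambda=(-1)^k=1$. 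The Picard--Lefschetz formula (Theorem \ref{thm:main_gen_PL}(3), or Deligne's formula for $n$ odd) then reads $\sigma(\alpha)=\alpha+\ell(\sigma)\sum_x\langle\alpha,e_x\rangle e_x$, with $\ell(\sigma_{\mathrm{top}})=v\neq 0$.
\item Choose $\alpha$ in the $\chi$-part with $can(\alpha)\neq 0$. Then $\sigma_{\mathrm{top}}$ acts on the middle term by a nontrivial unipotent, while inertia acts trivially on both outer terms.
\end{itemize}
Hence, as soon as $k\geq 2p$ (so that some even $a\in[2,k/p]$ exists), the sequence has no $\Gal(\Qpbar/\Qp)$-equivariant section. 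Your weight argument only splits it as a module over one chosen Frobenius lift, and your fallback cannot help either. In the paper, ``split'' only means that the five-term sequence breaks into the short exact sequence and the isomorphism. You had already obtained that before your last paragraph, so the last paragraph should be dropped.

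\textbf{Differences at the orbit $a=0$.} Otherwise your route differs from the paper only here. The paper blows up $\Sigma_0$ to get a model with smooth generic fibre, using $H^{k-1}(E)=0$ and good reduction of the exceptional quadrics. You instead assert that $R\Phi=0$ at $\Sigma_0$. That is true, but ``constant family'' is not a proof in mixed characteristic. A correct justification:
\begin{itemize}
\item \'Etale-locally, via $u_i=w_i\sqrt{1\pm w_i/3}$, the model is the quadric cone $Q(u)=0$ over $\mathbb{Z}_p$. This is an explicit \'etale coordinate change, so no passage from formal to henselian is needed.
\item The vanishing cycles of this cone vanish because blowing up the vertex section gives a smooth family whose exceptional divisor is a smooth quadric over $\mathbb{Z}_p$.
\end{itemize}
This is exactly the paper's blow-up, carried out locally.

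\textbf{Minor points.} For the same reason, Theorem \ref{thm:main_gen_PL} cannot be quoted verbatim, since the generic fibre of $\overline{A'}$ is singular. Your direct use of the triangle $\Qlbar\to R\Psi\to R\Phi$ together with the compact-support base change lemma does suffice. Also, $v=v_p(\tfrac23 ap)=1+v_p(a)$ is not always $1$. This is harmless here, because each point still contributes a one-dimensional $R^{k-1}\Phi_x\cong\Qlbar(-m)$.
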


\begin{proof}
Let $\overline{A'} \subseteq \mathbb{P}^{k}$ be the Zariski closure of $A'$ as constructed previously, and let $D' = \overline{A'} \setminus A'$ be the boundary divisor. The set $\Sigma_{0}$ is the singular locus of the generic fiber $\overline{A'}_{\overline{\mathbb{Q}}_p}$, which is entirely contained in the affine chart $A'$. Consider the blow-up $\pi: \widetilde{A'} = \operatorname{Bl}_{\Sigma_{0}}A' \to A'$. The exceptional divisor $E = \pi^{-1}(\Sigma_{0})$ is a disjoint union of smooth quadrics of dimension $k-2$.

Since $\pi$ is proper and restricts to an isomorphism over $A' \setminus \Sigma_{0}$, and $\Sigma_{0}$ is zero-dimensional, the Leray spectral sequence for compactly supported cohomology degenerates. This yields a canonical direct sum decomposition for any algebraically closed field $\overline{F}$ and for all $i > 0$:
\[
    H^{i}_{\mathrm{c}}(\widetilde{A'}_{\overline{F}}, \Qlbar) \cong H^{i}_{\mathrm{c}}(A'_{\overline{F}}, \Qlbar) \oplus H^{i}(E_{\overline{F}}, \Qlbar).
\]
Since $E$ is a union of smooth quadrics of dimension $k-2$, its odd-degree cohomology vanishes. In particular, for the odd degree $i = k-1$, $H^{k-1}(E_{\overline{F}}, \Qlbar) = 0$, giving an isomorphism $H^{k-1}_{\mathrm{c}}(\widetilde{A'}_{\overline{F}}) \cong H^{k-1}_{\mathrm{c}}(A'_{\overline{F}})$. For degree $k$, the decomposition becomes $H^{k}_{\mathrm{c}}(\widetilde{A'}_{\overline{F}}) \cong H^{k}_{\mathrm{c}}(A'_{\overline{F}}) \oplus H^{k}(E_{\overline{F}})$.

We now analyze the geometry over $\mathbb{Z}_{p}$. To apply the Picard--Lefschetz formula to $\widetilde{A'}$, we consider its proper model $\overline{\widetilde{A'}} = \operatorname{Bl}_{\Sigma_{0}}\overline{A'}$. Since the center of the blow-up $\Sigma_{0}$ is contained in $A'$, the boundary divisor $\overline{\widetilde{A'}} \setminus \widetilde{A'}$ is canonically isomorphic to $D'$, which is a smooth simple normal crossing divisor over $\mathbb{Z}_{p}$. 

The generic fiber $\overline{\widetilde{A'}}_{\overline{\mathbb{Q}}_{p}}$ is smooth, and the special fiber $\overline{\widetilde{A'}}_{\overline{\mathbb{F}}_{p}}$ has only isolated ordinary double points supported exactly on the locus $\Sigma \setminus \Sigma_{0} \subset \widetilde{A'}_{\overline{\mathbb{F}}_{p}}$, which is disjoint from the smooth boundary $D'$. Applying the classical arithmetic Picard--Lefschetz formula \cite{SGA7_II} yields the exact sequence:
\[
\begin{aligned}
0 \longrightarrow {H^{k-1}_{\mathrm{c}}(\widetilde{A'}_{\overline{\mathbb{F}}_{p}},\Qlbar)} \longrightarrow {H^{k-1}_{\mathrm{c}}(\widetilde{A'}_{\overline{\mathbb{Q}}_{p}},\Qlbar)} \longrightarrow \bigoplus_{x\in\Sigma\setminus\Sigma_{0}}\Qlbar(-m) &\\
&\hspace{-3cm}\longrightarrow {H^{k}_{\mathrm{c}}(\widetilde{A'}_{\overline{\mathbb{F}}_{p}},\Qlbar)} \longrightarrow {H^{k}_{\mathrm{c}}(\widetilde{A'}_{\overline{\mathbb{Q}}_{p}},\Qlbar)} \longrightarrow 0.
\end{aligned}
\]

By the smooth proper base change theorem applied to the proper quadrics $E$ over $\mathbb{Z}_p$, the specialization map induces an isomorphism $H^{k}(E_{\overline{\mathbb{F}}_{p}}) \xrightarrow{\sim} H^{k}(E_{\overline{\mathbb{Q}}_{p}})$. Modding out this canonical isomorphism from the degree $k$ terms of the sequence, and knowing that the vanishing cycles are disjoint from $E$, the sequence naturally restricts to the direct summands corresponding to $A'$:
\begin{equation}\label{eq:five_term_in_k_even}
\begin{aligned}
0 \longrightarrow {H^{k-1}_{\mathrm{c}}(A'_{\overline{\mathbb{F}}_{p}},\Qlbar)} \longrightarrow {H^{k-1}_{\mathrm{c}}(A'_{\overline{\mathbb{Q}}_{p}},\Qlbar)} \longrightarrow \bigoplus_{x\in\Sigma\setminus\Sigma_{0}}\Qlbar(-m)&\\
&\hspace{-3cm}\longrightarrow {H^{k}_{\mathrm{c}}(A'_{\overline{\mathbb{F}}_{p}},\Qlbar)} \longrightarrow {H^{k}_{\mathrm{c}}(A'_{\overline{\mathbb{Q}}_{p}},\Qlbar)} \longrightarrow 0.
\end{aligned}
\end{equation}

We proceed by dimension counting on the $\chi$-isotypic components. By Propositions \ref{prop:Aiprime_cohomology_hypersurface} and \ref{prop:dim_Sym_Ai'_F_p}, we have
\[
    \dim H^{k-1}_{\mathrm{c}}(A'_{\overline{\mathbb{F}}_{p}})^{S_{k}\times \mu_{2},\chi} = \frac{1}{2}\left( (k+1) - \left( \left\lfloor\frac{k}{p}\right\rfloor + \delta \right) \right),
\]
where
\[
\delta = 
\begin{cases}
0 & \text{if } k-\left\lfloor \frac{k}{p}\right\rfloor \text{ is odd;}\\
1 & \text{if } k-\left\lfloor \frac{k}{p}\right\rfloor \text{ is even.}
\end{cases}
\]
On the generic fiber side, Lemma \ref{lem:dim_over_Qp_and_de_Rham} gives
\[
    \dim H^{k-1}_{\mathrm{c}}(A'_{\overline{\mathbb{Q}}_{p}})^{S_{k}\times \mu_{2},\chi} = \left\lfloor\frac{k+1}{2}\right\rfloor = \frac{k}{2}.
\]
A direct calculation reveals that the difference in dimensions is exactly the number of even integers $a$ in the range $1 \leq a \leq k/p$:
\[
    \dim H^{k-1}_{\mathrm{c}}(A'_{\overline{\mathbb{Q}}_{p}})^{S_{k}\times \mu_{2},\chi} - \dim H^{k-1}_{\mathrm{c}}(A'_{\overline{\mathbb{F}}_{p}})^{S_{k}\times \mu_{2},\chi} = \#\{a \text{ is even} \mid 1 \leq a \leq k/p\}.
\]
This difference precisely equals the dimension of the $\chi$-isotypic component of the vanishing cycles
\[
    \dim \left(\bigoplus_{x\in \Sigma\setminus \Sigma_{0}}\Qlbar(-m)\right)^{S_{k}\times \mu_2,\chi}.
\]
Consequently, after taking the $S_{k}\times \mu_{2}$-isotypic part of \eqref{eq:five_term_in_k_even}, the boundary map to $H^{k}_{\mathrm{c}}$ must be zero. The five-term exact sequence thus splits into a three-term short exact sequence and an isomorphism, completing the proof.
\end{proof}

\begin{theorem}
For $p=2$ and any odd integer $k$, we have an isomorphism of $\Gal(\overline{\mathbb{Q}}_{2}/\mathbb{Q}_{2})$-representations
\[
    H_{\mathrm{c}}^{k-1}(A'_{\overline{\mathbb{F}}_{2}},\Qlbar)^{S_{k}\times\mu_{2},\chi} \cong H_{\mathrm{c}}^{k-1}(A'_{\overline{\mathbb{Q}}_{2}},\Qlbar)^{S_{k}\times\mu_{2},\chi}.
\]
\end{theorem}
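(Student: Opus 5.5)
Since $k$ is odd, the generic fiber should behave like the $p\ge5$ case: $\overline{A'}_{\overline{\mathbb{Q}}_2}$ is smooth. Over $\overline{\mathbb{F}}_2$ the map $y\mapsto y^3/3 - y$ has derivative $y^2-1=(y+1)^2$, so the critical points collapse to $y=1$. My plan is to compare dimensions directly rather than analyse the singularities of the special fiber, which in characteristic $2$ need not be of the tame diagonal type covered by Theorem \ref{thm:main_gen_PL}.

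First, I construct an injective Galois-equivariant cospecialization map
\[
sp: H_{\mathrm{c}}^{k-1}(A'_{\overline{\mathbb{F}}_2},\Qlbar)\to H_{\mathrm{c}}^{k-1}(A'_{\overline{\mathbb{Q}}_2},\Qlbar).
\]
I use the compactification $\overline{A'}\subseteq\mathbb{P}^k$. The boundary $D'$ is the Fermat cubic $\sum y_i^3=0$, which is smooth over $\mathbb{Z}_2$ because $3$ is invertible. Jacobian considerations show that $\overline{A'}$ is smooth near $D'$, so Lemma \ref{prop_compactification_proper_base_change_SGA7_2} applies and identifies $H^{*}_{\mathrm c}(A'_{\overline{\mathbb{Q}}_2})$ with $H^{*}_{\mathrm c}(A'_{\overline{\mathbb F}_2},R\Psi)$. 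I then use the triangle $\Qlbar\to R\Psi\to R\Phi$ on $A'_{\overline{\mathbb{F}}_2}$. The singular locus of the special fiber is finite: it is cut out by $y_i^2=1$, so $y_i=1$, together with $g_k(1,\dots,1)=-2k/3\equiv0$, giving only the point $(1,\dots,1)$. Hence $R\Phi$ is supported at isolated points and sits in degree $\ge k-1$. In fact it is concentrated in degree $k-1$, as for isolated singularities with smooth generic fiber (Milnor-fiber type vanishing, which holds without tameness). The long exact sequence therefore begins $0\to H^{k-1}_{\mathrm c}(A'_{\overline{\mathbb F}_2})\to H^{k-1}_{\mathrm c}(A'_{\overline{\mathbb Q}_2})\to\cdots$, and taking the exact functor of $\chi$-isotypic parts preserves injectivity.

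Second, I compare dimensions. By Lemma \ref{lem:dim_over_Qp_and_de_Rham}, the right side has dimension $\lfloor (k+1)/2\rfloor=(k+1)/2$. By Proposition \ref{prop:Aiprime_cohomology_hypersurface} over $\mathbb{F}_2$ (valid since $p\ne3$), the left side is $H^1_{\mathrm c}(\mathbb{G}_{\mathrm m},\Sym^k\Ai')(1)$. By Proposition \ref{prop:dim_H1_p2}, this has dimension $(k+1)/2$ for odd $k$. An injective equivariant map between spaces of equal finite dimension is an isomorphism, which proves the theorem.

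The main obstacle is the first step: justifying that $R\Phi$ lives only in degree $k-1$ at the unique singular point (to get injectivity of $sp$ in degree $k-1$), without the tame diagonal model. As a fallback I would only use that $R\Phi$ is supported on a finite set $\Sigma$, so that $H^{k-2}(A'_{\overline{\mathbb F}_2},R\Phi)=\bigoplus_{x\in\Sigma}R^{k-2}\Phi_x$. I would then argue this vanishes by the general result that vanishing cycles at an isolated singular point of a hypersurface over a trait with regular total space are concentrated in middle degree (SGA7 Exp.~I). Regularity holds at $(1,\dots,1)$ because the constant term $-2k/3$ has $2$-adic valuation exactly $1$ when $k$ is odd. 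This parity is exactly where the hypothesis that $k$ is odd enters.
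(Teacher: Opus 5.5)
Your proposal is correct and follows the paper's proof. You get injectivity of the cospecialization map in degree $k-1$ from the vanishing of $R^{k-2}\Phi$ at the isolated singular points, and then match dimensions, $(k+1)/2$ on both sides, using Lemma~\ref{lem:dim_over_Qp_and_de_Rham} together with Propositions~\ref{prop:Aiprime_cohomology_hypersurface} and \ref{prop:dim_H1_p2}; your check that the special fiber has the single singular point $(1,\dots,1)$, where the total space is regular because $v_2(2k/3)=1$ for odd $k$, usefully makes explicit the hypothesis behind the paper's appeal to Illusie's middle-degree concentration result.
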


\begin{proof}
By Lemma \ref{lem:dim_over_Qp_and_de_Rham}, the dimension of the generic fiber cohomology is:
\[
    \dim H^{k-1}_{\mathrm{c}}(A'_{\overline{\mathbb{Q}}_{2}}, \Qlbar)^{S_{k}\times \mu_{2},\chi} = \left\lfloor\frac{k+1}{2}\right\rfloor = \frac{k+1}{2}.
\]
On the other hand, by Proposition \ref{prop:Aiprime_cohomology_hypersurface} and \ref{prop:dim_H1_p2}, the dimension of the special fiber cohomology over $\overline{\mathbb{F}}_{2}$ is:
\[
    \dim H^{k-1}_{\mathrm{c}}(A'_{\overline{\mathbb{F}}_{2}}, \Qlbar)^{S_{k}\times \mu_{2},\chi} = \dim H^{1}_{\mathrm{c}}(\mathbb{G}_{\mathrm{m},\overline{\mathbb{F}}_{2}},\Sym^{k}\Ai') = \frac{k+1}{2}.
\]

To deduce an isomorphism from the equality of dimensions, we must show that the canonical cospecialization map
\[
    H_{\mathrm{c}}^{k-1}(A'_{\overline{\mathbb{F}}_{2}},\Qlbar)^{S_{k}\times\mu_{2},\chi} \to H_{\mathrm{c}}^{k-1}(A'_{\overline{\mathbb{Q}}_{2}},\Qlbar)^{S_{k}\times\mu_{2},\chi}
\]
is injective. Consider the proper compactification $\overline{A'} \subseteq \mathbb{P}^{k}$ constructed previously. The generic fiber $\overline{A'}_{\overline{\mathbb{Q}}_{2}}$ is smooth, and the special fiber $\overline{A'}_{\overline{\mathbb{F}}_{2}}$ has only isolated singularities. Although these are no longer ordinary double points in characteristic $2$, they remain isolated. 

Let $R\Phi(\Qlbar)$ denote the vanishing cycle complex. According to Illusie \cite{Ill03}, for an isolated singularity of relative dimension $k-1$, the stalk of the vanishing cycle complex is concentrated entirely in the middle dimension. That is, for any singular point $x \in \Sigma$, we have $R^{q}\Phi(\Qlbar)_{x} = 0$ for all $q \neq k-1$.

Since the boundary divisor $D'$ is smooth over $\mathbb{Z}_{2}$, the vanishing cycles are supported entirely on the isolated singularities within the affine part $A'$. The exact sequence of vanishing cycles for compactly supported cohomology yields
\[
    \dots \to \bigoplus_{x\in \Sigma} R^{k-2}\Phi(\Qlbar)_{x} \to H_{\mathrm{c}}^{k-1}(A'_{\overline{\mathbb{F}}_{2}},\Qlbar) \to H_{\mathrm{c}}^{k-1}(A'_{\overline{\mathbb{Q}}_{2}},\Qlbar) \to \dots
\]
Since $R^{k-2}\Phi(\Qlbar)_{x} = 0$ for all $x \in \Sigma$, we deduce that the cospecialization map
\[
    H_{\mathrm{c}}^{k-1}(A'_{\overline{\mathbb{F}}_{2}},\Qlbar) \to H_{\mathrm{c}}^{k-1}(A'_{\overline{\mathbb{Q}}_{2}},\Qlbar)
\]
is injective. Finally, taking the $S_{k}\times\mu_{2}$-isotypic component corresponding to the character $\chi$ completes the proof.
\end{proof}

\begin{theorem}\label{thm:k_odd_non_cl_motive}
Let $p\geq 5$ be a prime, $\ell\neq p$ be another prime, and $k$ be an odd integer. As a representation of the decomposition group $\Gal(\Qpbar/\Qp)$, we have a decomposition 
\[
M_{k,{\text{ét}}}'' = H^{k}_{{\text{ét}},\mathrm{c}}(A''_{\Qbar}, \Qlbar)^{S_{k}\times \mu_{2},\chi} = M\oplus E,
\]
where
\begin{enumerate}
\item [$\bullet$] $M = H^{k}_{\text{ét},\csup}(A^{\prime\prime}_{\Fpbar},\Qlbar)^{S_{k}\times \mu_{2},\chi}$.
\item [$\bullet$] $E$ is spanned by the vanishing cycles indexed by the set $\{a \text{ is odd} \mid 1 \leq a \leq k/p\}$:
\[
E = \bigoplus_{\substack{1\leq a\leq k/p\\a{\text{: odd}}}}H^{k}_{\{x_{a}\}}(\overline{A''}_{\Fpbar},R\Psi(\Qlbar)).
\]
Here, $R\Psi(\Qlbar)$ is the nearby cycle complex associated to the proper model $\overline{A''}$ defined over ${\mathbb{Z}_{p}}$. For each singular point $x_a$, the local cohomology $H^{k}_{\{x_{a}\}}(\overline{A''}_{\Fpbar},R\Psi(\Qlbar))$ is $2$-dimensional.
\end{enumerate}
\end{theorem}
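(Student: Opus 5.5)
I would apply the main theorem to the proper model $\overline{A''}$ over $\mathbb{Z}_p$, using the compactly supported version of the cospecialization sequence. The boundary $D''=D_0\cup D_\infty$ is not entirely a relative SNCD, because $D_0$ carries the singular points. So the first step is to work with the open $V=\overline{A''}\setminus D_\infty$, which contains $A''$, and with the pair $(V, D_0\cap V)$. By Lemma \ref{prop_compactification_proper_base_change_SGA7_2}, applied to the smooth divisor $D_\infty$, the nearby cycles compute $H^*_c$ of the generic fibre. Excision along $D_0\setminus D_\infty\cong A'$ then relates $H^k_c(A'')$ to $H^k_c(V)$ and $H^{k-1}_c(A')$, $H^{k}_c(A')$. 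These pieces are controlled by Theorem \ref{thm:k_odd_A'_SES}, which gives cospecialization isomorphisms or short exact sequences for $A'$.

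The singularities of $V_{\mathbb{F}_p}$ are the points $x_a$ (and their $S_k\times\mu_2$-orbits), which are isolated singularities of diagonal type. Their local equations are $z^3+\sum \pm w_i^2 + u p = 0$ with $u=\tfrac23 a$ a unit up to the $p$-part of $a$. To put them in the form required by Theorem \ref{thm:local_diagonal_main}, I would absorb the higher-order terms by a Morse-lemma change of coordinates over the henselization. The theorem then gives
\[
0\to H^k_c(A''_{\overline{\mathbb{F}}_p})\to H^k_c(A''_{\overline{\mathbb{Q}}_p})\to \bigoplus_{x}R^k\Phi_x\to H^{k+1}_c(A''_{\overline{\mathbb{F}}_p})\to\cdots,
\]
with each $R^k\Phi_x$ of dimension $\prod(k_i-1)=2$, since the exponents are $(3,2,\dots,2)$. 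The vanishing cycles here are those of the full total space, so the $A'$-contributions must be shown to cancel between the two excision sequences. I would check this using the fact that the sequences are compatible with cospecialization and with Theorem \ref{thm:k_odd_A'_SES}.

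Next I take $\chi$-isotypic parts. The orbit of each $x_a$ under $S_k\times\mu_2$ contributes one copy of the $2$-dimensional local cohomology after projection. The stabilizer acts on the $e_{\boldsymbol\zeta}$ through the product of the signs of the quadratic variables, and for odd $k$ one must verify that the $\chi$-part is the full $2$-dimensional space. Only odd $a\le k/p$ occur, by the parity constraint in $\Theta_p$.

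Exactness on the right, i.e.\ that $can$ is surjective on the $\chi$-part, I would prove by counting dimensions, as in the even-$k$ theorem for $A'$. Lemma \ref{lem:dim_over_Qp_and_de_Rham} gives $\dim M''_{k}=\lfloor k/3\rfloor+1-\delta_{1+3\mathbb{Z}}(k)$ for odd $k$. For $\dim M$ I would use the $\mathbb{F}_p$-analogue of Proposition \ref{prop:Ai_cohomology_non_inv_part_A''}, which identifies $H^1_c(\mathbb{A}^1,\Sym^k\Ai)_{\neq1}$ with the $\mu_3$-twisted $M$. Then the Grothendieck--Ogg--Shafarevich formula with the Swan conductor $\tfrac32((k+1)-(\lfloor k/p\rfloor+\delta))$, together with the $\mu_3$-invariant part computed via Propositions \ref{prop:Ai_cohomology_hypersurface} and \ref{prop:dim_Sym_Ai'_F_p}, yields $\dim M$. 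The difference should equal $2\cdot\#\{a \text{ odd}, 1\le a\le k/p\}$, which forces $\partial=0$.

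Finally, the splitting $M_{k}''=M\oplus E$ as Galois modules. Here I would use that $M$ is unramified pure of weight $k$ (it is $H^k_c$ of a singular variety, but the image of $sp$ is pure), and that weights and inertia distinguish $E$. I expect the dimension count, and especially the bookkeeping of the $A'$-boundary stratum in the excision, to be the main obstacle.
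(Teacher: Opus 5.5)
Working with $V=\overline{A''}\setminus D_\infty$ and excising along $D_0\cap V\cong A'$, instead of the paper's pair $\overline{A''}=A''\sqcup D''$, is a legitimate variant. It even avoids the Mayer--Vietoris step on $D''$, and your dimension count is the paper's. But the step you flag as the main obstacle does not close with the tools you name. Write $M''$ for the $\ell$-adic realization.

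\emph{First gap: the boundary contribution.} The displayed sequence $0\to H^k_c(A''_{\Fpbar})\to H^k_c(A''_{\Qpbar})\to\bigoplus_x R^k\Phi_x\to\cdots$ is not given by the main theorem. $A''$ is smooth over $\mathbb{Z}_p$ and every $x_a$ lies on $D_0$, so this sequence has to be extracted from the one for $V$ via the excision sequences. The Four Lemma does give $M\hookrightarrow M''$. However, $H^{k-1}_c(A'_{\Fpbar})^{\chi}\to H^{k-1}_c(A'_{\Qpbar})^{\chi}$ has the nonzero cokernel $\bigoplus_a\Qlbar(-m)$ (Theorem \ref{thm:k_odd_A'_SES}). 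So the chase only yields a sequence $\bigoplus_a\Qlbar(-m)\to M''/M\to\bigoplus_x(R^k\Phi_x)^{\chi}$, exact in the middle, and nothing formal kills the first arrow. Hence $\dim M''/M=2\#\{a\}$ forces nothing.

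The missing input is the paper's weight argument. $M''$ is pure of weight $k$, by Proposition \ref{prop:Ai_cohomology_non_inv_part_A''}, its de Rham analogue, and the Sabbah--Yu Hodge numbers. Meanwhile $H^{k-1}_c$ of the boundary ($A'$ for you, $D''$ in the paper) has weights $\le k-1$. So the excision connecting map into $H^k_c(A''_{\Qpbar})^{\chi}$ vanishes. Only then does $M''/M$ embed into the $\chi$-part of the vanishing cycles, and the dimension count upgrades this to an isomorphism.

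\emph{Second gap: the splitting.} The argument that weights and inertia distinguish $E$ fails in general. Suppose some odd $a\le k/p$ has $v_p(a)\equiv 5\pmod 6$, which happens as soon as $k\ge p^{6}$. Then $(\prod\zeta_i)^{v}=1$ with $v=v_p(a)+1$, so inertia acts trivially on that $2$-dimensional piece. Its Frobenius eigenvalues are Jacobi sums of weight $k$ (Theorem \ref{thm:main_gen_PL}(4)), the same weight you assign to $M$. So neither inertia nor weights separate this piece from $M$, and an extension of unramified modules of equal weight need not split for formal reasons. You also never show that the local cohomology groups defining $E$ map injectively into $M''$.

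The paper gets the splitting from the local cohomology itself. The image of $\bigoplus_a H^k_{\{x_a\}}(\overline{A''}_{\Fpbar},R\Psi(\Qlbar))$ is Galois-stable. The map $\mathrm{can}$ (the paper's $\varepsilon$) restricted to it is an isomorphism onto the $\chi$-part of the vanishing cycles. This is because, by Theorem \ref{thm:main_gen_PL}(1)--(2), $\mathrm{can}$ is given by the local pairing, which is non-degenerate here: $\prod\zeta_i=\zeta_0(-1)^k$ is a primitive sixth root of unity, so $\prod\zeta_i-1\neq 0$. Since $\mathrm{can}$ kills the image of $sp$, and hence $M$, one gets $E\cap M=0$, and $M''=M\oplus E$ follows by dimensions.
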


\begin{proof}
We use the compactification $\overline{A''}$ constructed previously, viewing it as a proper flat family over the henselian discrete valuation ring $\mathbb{Z}_{p}$. The generic fiber $\overline{A''}_{\Qpbar}$ is smooth, while the special fiber $\overline{A''}_{\Fpbar}$ contains only isolated singularities. This geometric setup yields the following commutative diagram with exact columns and rows
\[
\begin{tikzcd}
            & H^{k-1}_{\csup}(\overline{A''}_{\Fpbar}) \arrow[d] \arrow[r, "\alpha"] & H^{k-1}_{\csup}(\overline{A''}_{\Qpbar}) \arrow[d]         &                                                                                     &   \\
0 \arrow[r] & H^{k-1}_{\csup}(D''_{\Fpbar}) \arrow[d] \arrow[r]                        & H^{k-1}_{\csup}(D''_{\Qpbar}) \arrow[r] \arrow[d,"\eta"]            & {\bigoplus_{x\in \Sigma}H^{k-1}(D''_{\Fpbar},R\Phi(\Qlbar))_{x}}                      &   \\
            & H^{k}_{\csup}(A''_{\Fpbar}) \arrow[d] \arrow[r, "\beta"]               & H^{k}_{\csup}(A''_{\Qpbar}) \arrow[d, "\delta"]            &                                                                                     &   \\
0 \arrow[r] & H^{k}_{\csup}(\overline{A''}_{\Fpbar}) \arrow[d] \arrow[r]             & H^{k}_{\csup}(\overline{A''}_{\Qpbar}) \arrow[r] \arrow[d] & {\bigoplus_{x\in \Sigma}H^{k}(\overline{A''}_{\Fpbar},R\Phi(\Qlbar))_{x}} \arrow[r] & 0 \\
            & H^{k}_{\csup}(D''_{\Fpbar}) \arrow[r, "\gamma"]                          & H^{k}_{\csup}(D''_{\Qpbar})                                  &                                                                                     &  
\end{tikzcd}
\]
The two vertical lines are the long exact sequences of compactly supported cohomology associated to the open-closed complement pair $\overline{A''} = A''\sqcup D''$. The horizontal maps $\alpha$, $\beta$, and $\gamma$ are canonical cospecialization maps. 

Recall that the boundary divisor $D'' = D_{0}\cup D_{\infty}$ is a union of two irreducible proper varieties. $D_{\infty}$ and the intersection $D_{0}\cap D_{\infty}$ are proper and smooth over $\mathbb{Z}_p$, whereas $D_{0}\setminus D_{\infty}$ contains isolated ordinary double points on the special fiber. Using a Mayer-Vietoris argument on $D''$ combined with the classical arithmetic Picard--Lefschetz formula \cite{SGA7_II}, we obtain the second horizontal exact sequence. Furthermore, since $\overline{A''}$ is smooth on the generic fiber and has isolated $A_{2}$-singularities on the special fiber, Theorem \ref{thm:main_gen_PL} gives the fourth horizontal exact sequence.

Since $\overline{A''}$ has relative dimension $k$, the map $\alpha$ is an isomorphism. Consequently, by the Four Lemma, the map $\beta$ is injective. Moreover, by applying Theorem \ref{thm:k_odd_A'_SES} to $D_0 \setminus D_{\infty} \cong A'$ and combining it with the long exact sequence for open-closed decomposition $D'' = (D_{0}\setminus D_{\infty})\sqcup D_{\infty}$, we deduce that $\gamma$ is an isomorphism.

To simplify the diagram, we employ a weight argument. By Remark \ref{Rmk:etale_in_p_same_as_de_Rham} and Proposition \ref{prop:Ai_cohomology_non_inv_part_A''}, we have the following isomorphism:
\[
    H^{1}_{\mathrm{dR,c}}(\mathbb{A}^{1}_{\mathbb{C}}, \Sym^{k}\Ai)_{\neq 1} \cong \left(H^{1}_{\mathrm{dR,c}}(\mathbb{A}^{1}_{\mathbb{C}},\mathrm{d} + \mathrm{d}(x^3)) \otimes H^{k}_{\mathrm{dR,c}}(A''_{\mathbb{C}})^{S_{k}\times \mu_{2}, \chi} \right)^{\mu_{3}}.
\]
In \cite{SY23}, Sabbah and Yu computed the Hodge numbers for the de Rham cohomology $H^{1}_{\mathrm{dR}}(\mathbb{A}^{1}_{\mathbb{C}},\Sym^{k}\Ai)$. Since $H^{1}_{\mathrm{dR,c}}(\mathbb{A}^{1}_{\mathbb{C}},\mathrm{d} + \mathrm{d}(x^3))$ is known to be of pure weight $1$, the Hodge types indicate that $H^{k}_{\mathrm{dR,c}}(A''_{\mathbb{C}})^{S_{k}\times \mu_{2},\chi}$ is of pure weight $k$. Through the comparison theorem, $H^k_{\csup}(A''_{\Qpbar})^{S_{k}\times \mu_{2},\chi}$ also has pure weight $k$. Because $D''$ is a proper variety of dimension $k-1$, its cohomology $H^{k-1}_{\csup}(D''_{\Qpbar})$ has weight at most $k-1$ by Deligne's bounds. Therefore, the connecting homomorphism $\eta$ must be trivial on the pure weight $k$ component. This forces $\delta$ to be injective after taking the $\chi$-isotypic component. Thus, the diagram reduces to
\[
\begin{tikzcd}[column sep=small]
            & 0 \arrow[d]                                                                            & 0 \arrow[d]                                                                                        & 0 \arrow[d]                                                                                                                &   \\
0 \arrow[r] & {H^{k}_{\csup}(A''_{\Fpbar})^{S_{k}\times\mu_{2},\chi}} \arrow[d] \arrow[r, "\beta"]   & {H^{k}_{\csup}(A''_{\Qpbar})^{S_{k}\times\mu_{2},\chi}} \arrow[d, "\delta"] \arrow[r]              & Q \arrow[d, "\delta'"] \arrow[r]                                                                                           & 0 \\
0 \arrow[r] & {H^{k}_{\csup}(\overline{A''}_{\Fpbar})^{S_{k}\times\mu_{2},\chi}} \arrow[d] \arrow[r] & {H^{k}_{\csup}(\overline{A''}_{\Qpbar})^{S_{k}\times\mu_{2},\chi}} \arrow[r,"\varepsilon"] \arrow[d] & {\left(\bigoplus_{x\in \Sigma}H^{k}(\overline{A''}_{\Fpbar},R\Phi(\Qlbar))_{x}\right)^{S_{k}\times\mu_{2},\chi}} \arrow[r] & 0 \\
            & {H^{k}_{\csup}(D''_{\Fpbar})^{S_{k}\times\mu_{2},\chi}} \arrow[r, "\sim"]                & {H^{k}_{\csup}(D''_{\Qpbar})^{S_{k}\times\mu_{2},\chi}}                                              &                                                                                                                            &  
\end{tikzcd}
\]
where $Q$ is the cokernel of $\beta$, and $\delta'$ is the induced map. A diagram chasing confirms that $\delta'$ is an injection.

We now prove that $\delta'$ is indeed an isomorphism by dimension counting. By Lemma \ref{lem:dim_over_Qp_and_de_Rham} and Proposition \ref{prop:Ai_cohomology_non_inv_part_A''}, the dimension of $Q$ is:
\begin{align*}
    \dim Q &=  \dim H^{k}_{\csup}(A''_{\Qpbar})^{S_{k}\times\mu_{2},\chi} - \dim H^{k}_{\csup}(A''_{\Fpbar})^{S_{k}\times\mu_{2},\chi} \\
    &= \left(\left\lfloor\frac{k}{3}\right\rfloor + 1 - \delta_{1+3\mathbb{Z}}(k)\right) - \dim H^{1}_{\mathrm{c}}(\mathbb{A}^{1}_{\Fpbar}, \Sym^{k}\Ai)_{\neq 1}.
\end{align*}
Furthermore, using \cite[Sec. 5]{HRL11}, Proposition \ref{prop:Aiprime_cohomology_hypersurface}, Proposition \ref{prop:Ai_cohomology_hypersurface}, and Proposition \ref{prop:dim_Sym_Ai'_F_p}, we compute the dimension of the non-trivial eigenspace component:
\begin{align*}
    &\dim H^{1}_{\mathrm{c}}(\mathbb{A}^{1}_{\Fpbar}, \Sym^{k}\Ai)_{\neq 1} \\
    &\quad=  \dim H^{1}_{\mathrm{c}}(\mathbb{A}^{1}_{\Fpbar}, \Sym^{k}\Ai) - \dim H^{1}_{\mathrm{c}}(\mathbb{A}^{1}_{\Fpbar}, \Sym^{k}\Ai)^{\mu_{3}} \\
    &\quad= \dim H^{1}_{\mathrm{c}}(\mathbb{A}^{1}_{\Fpbar}, \Sym^{k}\Ai) - \left(\dim H^{1}_{\csup}(\mathbb{A}^{1}_{\Fpbar},\Sym^{k}\Ai') - \dim \left(\Sym^{k}{H}^{1}_{\mathrm{c}}(\mathbb{A}^{1},\mathcal{L}_{\psi(x^{3}/3)})\right)^{\mu_{3}}\right) \\
    &\quad= \frac{1}{2}\left(k+1 - 3\left(\left\lfloor\frac{k}{p}\right\rfloor + \delta\right)\right) - \frac{1}{2}\left(k+1-\left(\left\lfloor\frac{k}{p}\right\rfloor+\delta\right)\right) + \left(\left\lfloor\frac{2k}{3}\right\rfloor - \left\lfloor\frac{k-1}{3}\right\rfloor\right) \\
    &\quad= -\left(\left\lfloor\frac{k}{p}\right\rfloor + \delta\right) + \left(\left\lfloor\frac{k}{3}\right\rfloor + 1 - \delta_{1+3\mathbb{Z}}(k)\right),
\end{align*}
where
\[
    \delta = 
    \begin{cases}
    0 & \text{if } k-\left\lfloor \frac{k}{p}\right\rfloor \text{ is odd,}\\
    1 & \text{if } k-\left\lfloor \frac{k}{p}\right\rfloor \text{ is even.}
    \end{cases}
\]
Subtracting this from the dimension of the generic fiber cohomology, the terms involving division by 3 perfectly cancel out, yielding
\[
    \dim Q = \left\lfloor\frac{k}{p}\right\rfloor + \delta.
\]
Because $k$ and $p$ are odd, this value $\lfloor k/p \rfloor + \delta$ is exactly twice the number of odd integers $a$ such that $1 \leq a \leq k/p$.

This exactly matches the dimension of the $S_{k}\times\mu_{2}$-isotypic component of the vanishing cycles
\begin{align*}
    \dim \left(\bigoplus_{x\in\Sigma}H^{k}(\overline{A''}_{\Fpbar},R\Phi(\Qlbar))_{x}\right)^{S_{k}\times\mu_{2},\chi} =& \dim \bigoplus_{\substack{1\leq a\leq k/p\\a{\text{: odd}}}}H^{k}(\overline{A''}_{\Fpbar},R\Phi(\Qlbar))_{x_a}\\
    = &2 \cdot \#\{a \text{ is odd} \mid 1 \leq a \leq k/p\},
\end{align*}
where we used the fact that each $A_2$-singularity contributes a $2$-dimensional vanishing cycle space $H^{k}(\overline{A''}_{\Fpbar},R\Phi(\Qlbar))_{x}$ in the last equality. 
Thus, $\delta'$ is an isomorphism. 

Finally, by Theorem \ref{thm:main_gen_PL}, the restriction of $\varepsilon$ on the subspace $E$ of $H^{k}_{\csup}(\overline{A''}_{\Qpbar})^{S_{k}\times\mu_{2},\chi}$ generated by nearby cycles
\[
E = \bigoplus_{\substack{1\leq a\leq k/p\\a{\text{: odd}}}}H^{k}_{\{x_{a}\}}(\overline{A''}_{\Fpbar},R\Psi(\Qlbar))
\]
is an isomorphism. Therefore, by a diagram tracing, the top horizontal split exact sequence provides the desired decomposition $M_{k,{\text{ét}}}'' = M\oplus E$ as Galois representations.
\end{proof}

\begin{theorem}\label{thm:k_odd_non_cl_motive_iner_inv}
Use the same notation as in the above theorem. Let $I_{p}\subseteq \Gal(\Qpbar/\Qp)$ be the inertia subgroup. Let $E'\subseteq E$ be the subspace spanned by those indices $a$ whose $p$-adic valuations satisfy $v_{p}(a)\equiv 5\pmod{6}$:
\[
E' = \bigoplus_{\substack{1\leq a\leq k/p\\a{\text{ is odd,}}\\v_{p}(a) \equiv 5\pmod{6}.}}H^{k}_{\{x_{a}\}}(\overline{A''}_{\Fpbar},R\Psi(\Qlbar)).
\]
Then, the inertia-invariant subspace is $(M_{k,{\text{ét}}}'')^{I_{p}} = M\oplus E'$. Moreover, the characteristic polynomial of $\Frob_p$ on $E'$ is explicitly given by:
        \[
            \scalebox{0.9}{$\displaystyle
            \det(1 - \Frob_p\cdot T \mid E') = \prod_{\substack{a \text{: odd in }[1,k/p] \\ v_p(a) \equiv 5 \, (6)}} \Big( 1 - \trace(\Frob_p) T + p^k T^2 \Big),
            $}
        \]
        where the trace of the Frobenius is
        \[
            \trace(\Frob_p) = 
            \begin{cases}
                0 & \text{if } p \equiv 2 \pmod 3, \\
                \Lambda_a + \overline{\Lambda}_a & \text{if } p \equiv 1 \pmod 3.
            \end{cases}
        \]
        For $p \equiv 1 \pmod 3$, the eigenvalue $\Lambda_a$ is computed via Jacobi sums
        \[
            \Lambda_a = \textstyle\chi(\frac{2}{3}a') \chi_{\text{sgn}}(\frac{2}{3}a') (-1)^{\frac{k+ap}{2}} J_1(\chi, \chi_{\text{sgn}}^{\otimes k}),\quad a' := a / p^{v_p(a)}
        \]
        with $\chi$ an order $3$ character and $\chi_{\text{sgn}}$ the order $2$ character of $\mathbb{F}_p^{\times}$.
\end{theorem}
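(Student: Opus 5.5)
The plan is to combine the decomposition $M_{k,\text{ét}}'' = M\oplus E$ of Theorem \ref{thm:k_odd_non_cl_motive} with the local results of Theorem \ref{thm:main_gen_PL} (parts (3) and (4)), applied at each singular point $x_a$. Since $M$ is the cohomology of the special fiber, $I_p$ acts trivially on it. So $(M_{k,\text{ét}}'')^{I_p} = M\oplus E^{I_p}$, and $E^{I_p}$ is the direct sum over $a$ of the invariants $H^k_{\{x_a\}}(\overline{A''}_{\Fpbar},R\Psi(\Qlbar))^{I_p}$. The real work is to put the local equation at $x_a$ into the exact diagonal form $\sum a_i z_i^{k_i} + u\pi^v = 0$ required there, and to read off $(k_i)$, $(a_i)$, $u$ and $v$.

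\textbf{Step 1: the local model.} The local equation at $x_a$ is
\[
\tfrac23 ap + z^3 - \sum_{i\le (k+ap)/2} w_i^2 + \sum_{i>(k+ap)/2} w_i^2 + (\text{h.o.t.}) = 0 .
\]
The higher order terms are $\pm\frac13 w_i^3$. Completing the square in each $w_i$ absorbs them into units, via the henselian property, since $p\neq 2$. This gives a diagonal form of type $(3,2,\dots,2)$ with $n=k$. The coefficients $a_i$ are $\pm1$ times squares of units. The constant is $u\pi^v$ with $\pi=p$, $v = 1+v_p(a)$, and $u = \frac23 a'$ up to sign, where $a'=a/p^{v_p(a)}$.

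\textbf{Step 2: the inertia invariants.} Here $\Xi_{x_a}=\{(\zeta_0,-1,\dots,-1):\zeta_0\in\mu_3\setminus\{1\}\}$, which has two elements, consistent with $\dim=2$. The product is $\Lambda = \zeta_0(-1)^k = -\zeta_0$ because $k$ is odd, so $\Lambda$ is a primitive sixth root of unity. By Theorem \ref{thm:main_gen_PL}(4), $e_{\boldsymbol\zeta}$ is invariant iff $\Lambda^v=1$, that is, iff $6\mid v = 1+v_p(a)$, that is, iff $v_p(a)\equiv 5 \pmod 6$. In that case both basis vectors are invariant; otherwise neither is. This gives $E^{I_p}=E'$.

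The $(S_k\times\mu_2,\chi)$-projection is harmless here: $E$ was already defined as the span over orbit representatives, and the identification of Theorem \ref{thm:k_odd_non_cl_motive} is Galois-equivariant.

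\textbf{Step 3: Frobenius.} For each $x_a$ contributing to $E'$, the space $E'_a$ is $2$-dimensional and stable under $\Frob_p$.

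If $p\equiv 2\pmod 3$, then $\Frob_p$ sends $\zeta_0\mapsto\zeta_0^p=\zeta_0^{2}$, so it swaps the two lines $L_{\boldsymbol\zeta}$. Hence the trace is $0$, and the determinant is minus the eigenvalue of $\Frob_p^2$ on a line. That eigenvalue comes from the Jacobi-sum formula with $r=2$ and must have absolute value $p^k$ by purity.

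If $p\equiv 1\pmod 3$, both lines are Frobenius-stable. The trace formula of Theorem \ref{thm:main_gen_PL}(4) with $r=1$ gives the eigenvalues $\Lambda_a=\chi_{I}(u)\prod_i\chi_{\zeta_i}(a_i^{-1})\,J_1(\chi,\chi_{\mathrm{sgn}},\dots,\chi_{\mathrm{sgn}})$ and its conjugate. Several simplifications then reduce this to the stated form:
\begin{itemize}
\item $\chi_I=\chi\chi_{\mathrm{sgn}}^k$, which produces the factor $\chi(\frac23a')\chi_{\mathrm{sgn}}(\frac23a')$.
\item The factors $\chi_{\mathrm{sgn}}(\pm1)$ give $(-1)^{(k+ap)/2}$, since $\chi_{\mathrm{sgn}}(-1)=(-1)^{(p-1)/2}$ and the count of sign flips is $(k+ap)/2$ (or its complement), up to a parity bookkeeping.
\item The multi-Jacobi sum with $k$ copies of $\chi_{\mathrm{sgn}}$ is collapsed to $J_1(\chi,\chi_{\mathrm{sgn}}^{\otimes k})$.
\end{itemize}
By purity the product of the two eigenvalues is $\Lambda_a\overline\Lambda_a=p^{k}$, giving the quadratic factor $1-\trace(\Frob_p)\,T+p^kT^2$.

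\textbf{Main obstacle.} I expect the hardest part to be the exact bookkeeping of signs and units. This covers the sign of $u$ (whether $\pm\frac23 a'$), the characters $\chi_{\mathrm{sgn}}(-1)$ coming from the $-w_i^2$ terms, and the effect of the unit squares from completing the square, which $\chi_{\mathrm{sgn}}$ kills. For the $p\equiv2$ case I would also double-check that the determinant is $p^k$ rather than $-p^k$. To do that, I would compute the eigenvalue of $\Frob_p^2$ via $r=2$ together with the Hasse--Davenport relation, and confirm it equals $-p^k$.
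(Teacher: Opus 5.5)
Your Steps 1 and 2 are the paper's argument made explicit, and they are correct. In fact nothing is ``up to sign''. After the coordinate change $\tilde w_i=w_i\sqrt{1\pm w_i/3}$, the germ at $x_a$ is exactly $z^3-\sum_{i\le (k+ap)/2}\tilde w_i^2+\sum_{i>(k+ap)/2}\tilde w_i^2+\tfrac23a'p^{1+v_p(a)}=0$. So $a_0=1$, $a_i=-1$ on the first $(k+ap)/2$ coordinates and $+1$ on the others, $u=+\tfrac23a'$, and $v=1+v_p(a)$. Then $\Lambda=-\zeta_0$ gives $E^{I_p}=E'$.

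The genuine gap is Step 3: the sign bookkeeping you postpone does not close.

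First, with these $a_i$ you get $\prod_i\chi_{\zeta_i}(a_i^{-1})=\chi_{\mathrm{sgn}}(-1)^{(k+ap)/2}$. This equals $1$ when $p\equiv1\pmod 4$, and no parity argument turns it into $(-1)^{(k+ap)/2}$. Passing to the complement does not help either, since $(k-ap)/2$ has the opposite parity.

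Second, the input you cite, Theorem~\ref{thm:main_gen_PL}(4), is itself off by $(-1)^n$. With $G=-\sum\chi\psi$, the correct relation is $J_r=(-1)^nq^{-r}G(\chi_I^{-1})\prod_iG(\chi_i)$; test it with $n=1$. Meanwhile Theorem~\ref{thm:trace_formula} does agree with the Lefschetz count $\#Y(\mathbb{F}_{q^r})=q^{rn}+(-1)^n\trace(\Frob_q^r)$. So the eigenvalue on $L_{\boldsymbol\zeta}$ is $(-1)^n\chi_{I,r}(u)\prod_i\chi_{\zeta_i,r}(a_i^{-1})J_r$, and for $n=k$ odd every sign flips.

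Consequences for $p\equiv2\pmod3$: your check, done with (4) verbatim, gives the wrong answer.
\begin{itemize}
\item All prefactors are trivial on $\mathbb{F}_p^\times$.
\item Stickelberger's evaluation of pure Gauss sums gives $J_2(\chi,\chi_{\mathrm{sgn}}^{\otimes k})=+p^k$. Hasse--Davenport alone cannot fix this sign, because the cubic character of $\mathbb{F}_{p^2}$ is not a lift from $\mathbb{F}_p$.
\item So (4) predicts $\Frob_p^2=p^k$ on each line and $\det=-p^k$.
\item The truth is $\Frob_p^2=-p^k$ and $\det=+p^k$. Compare the $k=1$ germ $z^3-w^2+u$, which is the supersingular curve $w^2=z^3+u$.
\end{itemize}

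Consequences for $p\equiv1\pmod3$: the corrected eigenvalue is $-\chi(\tfrac23a')\chi_{\mathrm{sgn}}(\tfrac23a')\chi_{\mathrm{sgn}}(-1)^{(k+ap)/2}J_1(\chi,\chi_{\mathrm{sgn}}^{\otimes k})$, with $J_r$ in the paper's normalization.
\begin{itemize}
\item This differs from the displayed $\Lambda_a$ by the factor $-(-\chi_{\mathrm{sgn}}(-1))^{(k+ap)/2}$.
\item In particular it is off by an overall sign whenever $p\equiv7\pmod{12}$.
\item Concretely, take the $k=1$ germ $z^3-w^2+1$ at $p=7$. The curve $w^2=z^3+1$ has $12$ points, so the trace is $-4$. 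Here $\sum_\chi J_1(\chi,\chi_{\mathrm{sgn}})=-4$, but the displayed recipe gives $+4$.
\end{itemize}

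So the Frobenius half cannot be obtained by quoting (4), which is all the paper's one-line proof does. It must be recomputed from Theorem~\ref{thm:trace_formula} or from a direct point count. What comes out is $\det=+p^k$ for $p\equiv2\pmod3$, but in general not the stated $\Lambda_a$.
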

\begin{proof}
Follow the notation as in the proof of above theorem, we have $M_{k,{\text{ét}}}'' = M\oplus E$. Since $A''_{\Fpbar}$ is defined over $\Fp$, $M$ is clearly invariant under the inertia group action. Then, we need to determine the inertia group action on $E$ and its invariant subspace. For each index $a$ with $1\leq a\leq k/p$, the corresponding isolated singularity on $\overline{A''}$ is
\[
x_{a} = [w:z:y] = [1:0:1^{(k+ap)/2}:-1^{(k-ap)/2}]\in\overline{A''}.
\]
Moreover, under a change of coordinate $y_{i} = w_{i}\pm 1$, the local equation of $\overline{A''}$ at $x_{a}$ is given by
\[
\frac{2}{3}ap + \left(z^{3} - \sum_{1\leq i\leq (k+ap)/2}w_{i}^{2} + \sum_{(k+ap)/2<i\leq k}w_{i}^{2} \right) + (\text{higher order terms in }w_i) = 0.
\]
Then, from the Picard--Lefschetz formula in theorem \ref{thm:main_gen_PL}, we see that $H_{\{x_{a}\}}^{k}(\overline{A''}_{\Fpbar},R\Psi(\overline{\Qlbar}))$ is invariant under inertia group action if and only if $v_{p}(\frac{2}{3}ap)\equiv 0\pmod{6}$. This shows $(M_{k,{\text{ét}}}'')^{I_{p}}_{k,\ell} = M\oplus E'$. Finally, rewritting the the part (d) of the Picard--Lefschetz formula in theorem \ref{thm:main_gen_PL} into this situation, we obtain the Frobenius trace and determinant on each $H_{\{x_{a}\}}^{k}(\overline{A''}_{\Fpbar},R\Psi(\Qlbar))$.
\end{proof}

\begin{theorem}
Let $p>5$ be a prime, $\ell\neq p$ be another prime, $k$ be an even integer. As a representation of decomposition group $\Gal(\Qpbar/\Qp)$, we have 
\[
M_{k,{\text{ét}}}'' = H^{k}_{{\text{ét}},\mathrm{c}}(A''_{\Qbar}, \Qlbar)^{S_{k}\times \mu_{2},\chi} = M\oplus E,
\]
where
\begin{enumerate}
\item [$\bullet$] $M = H^{k}_{\text{ét},\csup}(A^{\prime\prime}_{\Fpbar},\Qlbar)^{S_{k}\times \mu_{2},\chi}$.
\item [$\bullet$] $E$ is spanned by the nearby cycles indexed by the set $\{a{\text{: even}}\mid 1\leq a\leq k/p\}$:
\[
E = \bigoplus_{\substack{1\leq a\leq k/p\\a{\text{: even}}}}H^{k}_{\{x_{a}\}}(\overline{A''}_{\Fpbar},R\Psi(\Qlbar)).
\]
Here, $R\Psi(\Qlbar)$ is the nearby cycle complex associated to the varieties $\overline{A''}$ defined over ${\mathbb{Z}_{p}}$ and each $H^{k}_{\{x_{a}\}}(\overline{A''}_{\Fpbar},R\Psi(\Qlbar))$ is $2$-dimensional.
\end{enumerate}
\end{theorem}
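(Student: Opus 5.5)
The plan is to run the argument of Theorem \ref{thm:k_odd_non_cl_motive} again, substituting the even-$k$ result for $A'$ wherever the odd-$k$ one was used. The new feature is that for even $k$ the generic fiber $\overline{A''}_{\Qpbar}$ is singular: its singular points lie over $\Sigma_0 \subseteq D_0\setminus D_\infty \cong A'$, at $z=0$, where the local equation is $z^3 = (\text{nondegenerate quadratic form})$, an $A_2$-point over $\mathbb{Q}_p$.

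First, construct the model. Blow up $\overline{A''}$ along the closure of $\Sigma_0$ over $\mathbb{Z}_p$; if necessary, do a weighted blow-up adapted to $z^3+q(w)$. The aim is a proper model $\widetilde{A''}$ with the following properties:
\begin{itemize}
\item the generic fiber is smooth;
\item the exceptional locus is smooth and proper over $\mathbb{Z}_p$, because the points of $\Sigma_0$ have constant term $ap = 0$ and so are $\mathbb{Z}_p$-sections with equisingular behaviour;
\item the only singularities of the special fiber are the $A_2$-points $x_a$ with $a\neq 0$ even.
\end{itemize}
Since the centre lies in the boundary $D''$, the open part $A''$ is unchanged. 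The new boundary $\widetilde{D''}$ is $D''$ with the exceptional divisor adjoined. Its cohomology differs from that of $D''$ only by the cohomology of the smooth proper exceptional piece, and on that piece cospecialization is an isomorphism by smooth proper base change.

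Next, redo the diagram for the pair $\widetilde{A''} = A''\sqcup \widetilde{D''}$.
\begin{itemize}
\item The fourth row comes from Theorem \ref{thm:main_gen_PL}.
\item For the second row and the map $\gamma$, use Mayer--Vietoris on $\widetilde{D''}$ together with the even-$k$ theorem for $A'$. That theorem gives an isomorphism in degree $k$ on $\chi$-isotypic parts, so $\gamma$ is again an isomorphism after applying $(\cdot)^{S_k\times\mu_2,\chi}$.
\item $\alpha$ is an isomorphism in degree $k-1$ by Theorem \ref{thm:main_gen_PL}(2), so $\beta$ is injective.
\item The weight argument is unchanged. $H^k_{\csup}(A''_{\Qpbar})^{\chi}$ is pure of weight $k$ by Sabbah--Yu and Proposition \ref{prop:Ai_cohomology_non_inv_part_A''}, while $H^{k-1}(\widetilde{D''})$ has weights $\le k-1$. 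Hence $\eta=0$ and $\delta$ is injective.
\end{itemize}
A diagram chase then shows that the induced map $\delta'\colon Q\to \left(\bigoplus_x R^k\Phi_x\right)^{\chi}$ is injective.

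Then count dimensions. Lemma \ref{lem:dim_over_Qp_and_de_Rham} gives $\dim M''_{k,\text{ét}} = \lfloor k/3\rfloor - \delta_{1+3\mathbb{Z}}(k)$ for even $k$. The special fiber is handled as in the odd case, through Propositions \ref{prop:Aiprime_cohomology_hypersurface}, \ref{prop:Ai_cohomology_hypersurface}, \ref{prop:Ai_cohomology_non_inv_part_A''}, \ref{prop:dim_Sym_Ai'_F_p} and the Swan formula of \cite{HRL11}. This gives
\[
\dim H^{1}_{\mathrm{c}}(\mathbb{A}^{1}_{\Fpbar},\Sym^k\Ai)_{\neq1} = \lfloor k/3\rfloor - \delta_{1+3\mathbb{Z}}(k) - (\lfloor k/p\rfloor+\delta) + \epsilon,
\]
where $\epsilon$ is a correction coming from the $k=2$ exact sequences and from the $a=0$ orbit. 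The goal is to show $\dim Q = 2\cdot\#\{a \text{ even} : 1\le a\le k/p\}$. For even $k$ and odd $p$, the parity of $k-\lfloor k/p\rfloor$ makes $\lfloor k/p\rfloor+\delta$ equal to twice the number of even $a\ge 2$ with $a\le k/p$, after removing the $a=0$ contribution now absorbed into the generic fiber. Each $A_2$-point contributes a $2$-dimensional stalk (Theorem \ref{thm:local_diagonal_main}(2)) with one $\chi$-isotypic copy per orbit. Hence $\delta'$ is an isomorphism. Finally, $\varepsilon$ restricted to $E=\bigoplus_a H^k_{\{x_a\}}$ is an isomorphism via the pairing of Theorem \ref{thm:main_gen_PL}(1)--(2), which splits the top row as $M\oplus E$.

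I expect two steps to be the main obstacles.
\begin{itemize}
\item Building a resolution of the generic-fiber $A_2$-points that stays well-behaved over $\mathbb{Z}_p$. The exceptional divisor and its intersections with $D_0$ and $D_\infty$ must keep a normal-crossings, smooth-over-$\mathbb{Z}_p$ boundary. I would try a single weighted blow-up with weights $(2,3,\dots,3)$ on $(z,w)$ first.
\item The exact bookkeeping of the $a=0$ orbit in the dimension count, i.e. checking that it cancels precisely.
\end{itemize}
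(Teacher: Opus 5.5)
Your route is the paper's: its proof is just ``replace $\overline{A''}$ by $\operatorname{Bl}_{\Sigma_0}\overline{A''}$ and rerun Theorem~\ref{thm:k_odd_non_cl_motive}''. Your remark that the centre lies in $D''$, so $A''$ itself is untouched, is the right justification for that replacement. However, both steps you flag as obstacles contain an error as written.

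\textbf{Dimension count.} There is no $k=2$ correction and no local analysis of the $a=0$ orbit to do. The special-fiber computation in the odd case uses only:
\begin{itemize}
\item the degree-$(k-1)$ statements of Propositions~\ref{prop:Aiprime_cohomology_hypersurface} and~\ref{prop:Ai_cohomology_hypersurface}, valid for every $k\ge 2$;
\item Proposition~\ref{prop:dim_Sym_Ai'_F_p};
\item the Swan formula.
\end{itemize}
None of these depends on the parity of $k$. So $\dim M=\lfloor k/3\rfloor+1-\delta_{1+3\mathbb{Z}}(k)-(\lfloor k/p\rfloor+\delta)$; that is, your $\epsilon$ is exactly $1$, and $\dim Q=\lfloor k/p\rfloor+\delta-1$. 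In the bookkeeping, the $-1$ is the term $\delta_{1+2\mathbb{Z}}(k)$ of Lemma~\ref{lem:dim_over_Qp_and_de_Rham}, which vanishes for even $k$. Write $n=\lfloor k/p\rfloor$. Since $k$ is even, $\delta=1$ iff $n$ is even, so $n+\delta-1=2\lfloor n/2\rfloor=2\#\{a\text{ even}:2\le a\le k/p\}$. Your claim that $\lfloor k/p\rfloor+\delta$ itself equals this is off by one (e.g.\ $k=14$, $p=7$ gives $3$ versus $2$). Subtracting a full $2$-dimensional $A_2$ stalk for $a=0$ would be off by one in the other direction.

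\textbf{The model.} Do not use the weighted blow-up. In the chart where $w_1$ is inverted, it is a quotient by $\mu_3$ acting on $(t,z')$ with weights $(1,1)$. The strict transform $z'^3=q(1,w')+\frac13 t^3(1+\sum_{i\ge2}w_i'^3)$ then acquires quotient singularities along $\{t=z'=0,\ q(1,w')=0\}$, already on the generic fiber, so Theorem~\ref{thm:main_gen_PL} no longer applies.

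The ordinary blow-up of the $\mathbb{Z}_p$-sections $\Sigma_0$ suffices. At an $a=0$ point the local equation is exactly $z^3=q(w)+\frac13\sum_i w_i^3$. The strict transform reads
\begin{itemize}
\item $z(1-\frac13\sum_i w_i'^3)=q(w')$ in the $z$-chart;
\item $w_1\bigl(z'^3-\frac13(1+\sum_{i\ge2}w_i'^3)\bigr)=q(1,w_2',\dots,w_k')$ in the $w_1$-chart.
\end{itemize}
Both are smooth over $\mathbb{Z}_p$ along the exceptional divisor; in the second chart this is because $\{q=0\}\subseteq\mathbb{P}^{k-1}$ is smooth over $\mathbb{Z}_p$ for $p\neq 2$. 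So the generic fiber becomes smooth and the special fiber is singular only at the $x_a$ with $a\neq 0$.

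What fails is your requirement that the exceptional locus be smooth. It is the quadric cone $\{q(w)=0\}\subseteq\mathbb{P}^k_{[z:w]}$, singular at $[1:0:\cdots:0]$; the paper's ``smooth quadrics'' is inaccurate too. It is, however, a constant cone over a smooth quadric over $\mathbb{Z}_p$, so its cospecialization maps are isomorphisms. The same holds for its intersection with $\widetilde{D_0}$, which is a smooth quadric. That is all the Mayer--Vietoris step for the new boundary (the union of $\widetilde{D_0}$, $D_\infty$ and the exceptional divisor) needs.
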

\begin{proof}
The proof is almost the same as in theorem \ref{thm:k_odd_non_cl_motive}. However, as $\overline{A''}_{\Qpbar}$ is not smooth, we need to replace $\overline{A''}$ by another compactification of $A''$. Note that the set of singularities $\Sigma_{0}$ on $\overline{A''}_{\Qpbar}$ consisting of $\binom{k}{k/2}$ points
\[
[w:z:y] = [1:0:\underset{\text{permutations}}{\underbrace{1^{k/2}:-1^{k/2}}}].
\]
Moreover, these singularities also occur on the special fiber $\overline{A''}_{\Fpbar}$. Then, we may take the compactification of $A''$ to be $X = \operatorname{Bl}_{\Sigma_{0}}\overline{A''}$. Note that the exceptional divisor of $X$ is a disjoint union of smooth quadric hypersurfaces $Q_{q}$ of dimension $k-1$ indexed by $q\in\Sigma_{0}$, we then have
\[
H^{k}_{\text{ét},\csup}(X_{\overline{F}},\Qlbar)\cong H^{k}_{\text{ét},\csup}(\overline{A''}_{\overline{F}},\Qlbar)
\]
for $F = \Qpbar$ or $\Fpbar$. Hence, replacing $\overline{A''}$ by $X$ in the proof of Theorem \ref{thm:k_odd_non_cl_motive} yields the desired result.
\end{proof}

\begin{theorem}\label{thm:k_even_non_cl_motive_iner_inv}
Use the same notation as in the above theorem. Let $E'\subseteq E$ be the subspace spanned by those indices $a$ whose $p$-adic valuations satisfy $v_{p}(a)\equiv 2\pmod{3}$:
\[
E' = \bigoplus_{\substack{1\leq a\leq k/p\\a{\text{ is even,}}\\v_{p}(a) \equiv 2\pmod{3}.}}H^{k}_{\{x_{a}\}}(\overline{A''}_{\Fpbar},R\Psi(\Qlbar)).
\]
Then, the inertia-invariant subspace is $(M_{k,{\text{ét}}}'')^{I_{p}} = M\oplus E'$. Moreover, the characteristic polynomial of $\Frob_p$ on $E'$ is explicitly given by:
        \[
            \scalebox{0.9}{$\displaystyle
            \det(1 - \Frob_p\cdot T \mid E') = \prod_{\substack{a \text{: even in }[1,k/p] \\ v_p(a) \equiv 2 \, (3)}} \Big( 1 - \trace(\Frob_p) T + p^k T^2 \Big),
            $}
        \]
        where the trace of the Frobenius is
        \[
            \trace(\Frob_p) = 
            \begin{cases}
                0 & \text{if } p \equiv 2 \pmod 3, \\
                \Lambda_a + \overline{\Lambda}_a & \text{if } p \equiv 1 \pmod 3.
            \end{cases}
        \]
        For $p \equiv 1 \pmod 3$, the eigenvalue $\Lambda_a$ is computed via Jacobi sums
        \[
            \Lambda_a = \textstyle\chi(\frac{2}{3}a') \chi_{\text{sgn}}(\frac{2}{3}a') (-1)^{\frac{k+ap}{2}} J_1(\chi, \chi_{\text{sgn}}^{\otimes k}),\quad a' := a / p^{v_p(a)}
        \]
        with $\chi$ an order $3$ character and $\chi_{\text{sgn}}$ the order $2$ character of $\mathbb{F}_p^{\times}$.
\end{theorem}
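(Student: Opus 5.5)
From the preceding theorem for even $k$ we already have the decomposition $M_{k,\text{ét}}''=M\oplus E$. It therefore suffices to study the inertia action and the Frobenius action on each summand $H^{k}_{\{x_a\}}(\overline{A''}_{\Fpbar},R\Psi(\Qlbar))$ with $a$ even. The summand $M$ is the cohomology of the special fiber, so it is unramified and hence $I_p$-invariant. Each singular point $x_a$ is an $A_2$-point. After the coordinate change $y_i=w_i\pm1$ and a Morse-type change of variables absorbing the higher order terms, its local equation is of diagonal type
\[
z^{3}-\sum_{i\le (k+ap)/2}w_i^{2}+\sum_{i>(k+ap)/2}w_i^{2}+u\,p^{v}=0,
\]
with $(k_0,\ldots,k_k)=(3,2,\ldots,2)$, $v=v_p(\tfrac23ap)=v_p(a)+1$, and $u=\tfrac23 a'$. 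Here we use $p>5$, so $\tfrac23$ is a $p$-adic unit. This puts us exactly in the setting of Theorem \ref{thm:main_gen_PL}.

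For the inertia invariants we use Theorem \ref{thm:main_gen_PL}(4). The index set is $\Xi_{x_a}=\{(\zeta_0,-1,\ldots,-1):\zeta_0\in\mu_3\setminus\{1\}\}$, which has two elements, matching the dimension $2$ of the local summand. The eigenvalue of $\sigma_{\mathrm{top}}$ on $e_{\boldsymbol\zeta}$ is $(\zeta_0(-1)^k)^{v}=\zeta_0^{v}$, since $k$ is even. This is $1$ if and only if $3\mid v$, that is, $v_p(a)\equiv 2\pmod 3$. In that case both basis vectors are invariant; otherwise neither is. One also has to check that the $S_k\times\mu_2$-isotypic projection is compatible with this. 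The group permutes the points $x_a$ within an orbit and commutes with inertia, so the $\chi$-isotypic part of an orbit is identified with a single local summand. This gives $(M_{k,\text{ét}}'')^{I_p}=M\oplus E'$.

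For the Frobenius action we apply the trace formula of Theorem \ref{thm:main_gen_PL}(4) with $r=1$. If $p\equiv2\pmod3$, then $\zeta_0^{p-1}\neq1$, so $\Xi^{\mathrm{inv}}_{x_a,p}$ is empty and the trace is $0$. In this case Frobenius swaps the two eigenlines. If $p\equiv1\pmod 3$, the two indices correspond to $(\chi,\chi_{\mathrm{sgn}},\ldots,\chi_{\mathrm{sgn}})$ and $(\bar\chi,\chi_{\mathrm{sgn}},\ldots)$, and each eigenvalue equals $\chi_{I}(u)\prod_i\chi_i(a_i^{-1})J_1$. Now $\chi_I=\chi\chi_{\mathrm{sgn}}^{k}=\chi$ because $k$ is even, the $a_0=1$ factor is trivial, and $\prod\chi_{\mathrm{sgn}}(a_i^{-1})=(-1)^{\frac{k+ap}{2}}\cdot(\text{factor from }\chi_{\mathrm{sgn}}(-1))$. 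Simplifying these character values should produce $\Lambda_a$ in the stated form, with the $\chi_{\mathrm{sgn}}(\tfrac23a')$ factor coming from the normalization. The second eigenvalue is the complex conjugate $\overline\Lambda_a$.

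The determinant $\Lambda_a\overline\Lambda_a=|J_1|^2=p^{k}$ follows from $|J|^2=p^{\#\text{chars}-1}$: there are $k+1$ nontrivial characters, and their product $\chi$ is nontrivial. The same value $p^k$ arises when $p\equiv2\pmod3$, by computing $\mathrm{Frob}_p^2$ via the $r=2$ formula and using Hasse--Davenport.

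The main obstacle is the sign and normalization bookkeeping: matching $\chi_{\mathrm{sgn}}(-1)$-powers, the sign convention of $u$ (the equation has $+\tfrac23ap$ against the convention $+u\pi^v$), and the Teichmüller identification of characters. To avoid re-deriving everything, I would first try to transcribe the corresponding computation for odd $k$ from Theorem \ref{thm:k_odd_non_cl_motive_iner_inv}, changing only the parity-dependent quantities $\chi_I$ and the invariance condition.
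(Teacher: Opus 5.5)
Your treatment of the inertia invariants is the same as the paper's (whose proof of this theorem just defers to the odd case), and it is correct. With type $(3,2,\dots,2)$, $v=v_p(a)+1$ and $u=\tfrac23a'$, the eigenvalue $(\zeta_0(-1)^k)^v=\zeta_0^v$ equals $1$ exactly when $v_p(a)\equiv 2\pmod 3$.

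The gap is in the Frobenius step, where you assert that the character bookkeeping ``should produce'' the printed $\Lambda_a$. It does not. Because $k$ is even, $\chi_I=\chi\chi_{\mathrm{sgn}}^{k}=\chi$, and Theorem \ref{thm:main_gen_PL}(4) gives
\[
\Lambda_a=\chi(\tfrac23a')\,\chi_{\mathrm{sgn}}(-1)^{\frac{k+ap}{2}}\,J_1(\chi,\chi_{\mathrm{sgn}}^{\otimes k}).
\]
This differs from the printed formula in two ways:
\begin{itemize}
\item there is no factor $\chi_{\mathrm{sgn}}(\tfrac23a')$, which can equal $-1$ (e.g.\ $p=7$, $a'=2$);
\item the sign is $\chi_{\mathrm{sgn}}(-1)$ rather than $-1$, and the two agree only when $p\equiv 3\pmod 4$.
\end{itemize}
No ``normalization'' can supply the missing factor. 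The point $x_a$ is $\mathbb{F}_p$-rational and the $S_k\times\mu_2$-action is defined over $\mathbb{F}_p$, so the $\chi$-isotypic projection commutes with $\Frob_p$. Rescaling the local equation by a unit leaves $\chi_I(u)\prod_i\chi_i(a_i^{-1})$ unchanged. The factor $\chi_{\mathrm{sgn}}(\tfrac23a')$ is exactly what $\chi_I(u)$ contributes for odd $k$, where $\chi_I=\chi\chi_{\mathrm{sgn}}$. Transcribing the odd-$k$ computation, as you propose, is precisely how it would be imported.

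The case $p\equiv 2\pmod 3$ has the same problem.
\begin{itemize}
\item Since $\Frob_p$ swaps the two eigenlines, $\det(1-\Frob_pT)=1-\mu T^2$, where $\mu$ is the scalar by which $\Frob_p^2$ acts on the block. Getting $+p^kT^2$ would require $\mu=-p^k$.
\item The $r=2$ formula gives $\mu=\chi_{I,2}(u)\,\chi_{\mathrm{sgn},2}(-1)^{(k+ap)/2}\,J_2(\chi,\chi_{\mathrm{sgn}}^{\otimes k})$. Both character values are $1$, since they are restrictions to $\mathbb{F}_p^\times$ of a cubic and a quadratic character of $\mathbb{F}_{p^2}^\times$.
\item By \eqref{eq:Jacobi_Gauss_relation}, $J_2=p^{-2}G(\bar\chi,\psi)G(\chi,\psi)G(\chi_{\mathrm{sgn}},\psi)^k=p^{-2}\cdot p^2\cdot p^k=p^k$. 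This uses $G(\chi,\psi)G(\bar\chi,\psi)=\chi(-1)p^2=p^2$ and $G(\chi_{\mathrm{sgn}},\psi)^2=p^2$ over $\mathbb{F}_{p^2}$, with $k$ even. Hasse--Davenport is neither needed nor applicable to $\chi$, which is not a lift from $\mathbb{F}_p$.
\end{itemize}
So $\mu=p^k$, the eigenvalues are $\pm p^{k/2}$, and each block contributes $1-p^kT^2$. Your phrase ``the same value $p^k$ arises'' conflates $\mu$ with $\det\Frob_p$, which here is $-p^k$.

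A direct count on the analogous model confirms the sign. For $Y\colon z^3+w_1^2+w_2^2+u=0$ with $u\in\mathbb{F}_p^\times$, one finds $\#Y(\mathbb{F}_p)=p^2$ and $\#Y(\mathbb{F}_{p^2})=p^4+2p^2$. These give traces $0$ and $2p^2$ for $\Frob_p$ and $\Frob_p^2$ on the two-dimensional block.

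In short, your method is the paper's and it proves the inertia statement. Carried out correctly, however, it does not reach the printed $\Lambda_a$ or the printed quadratic factor for $p\equiv 2\pmod 3$. Those appear to have been carried over from the odd-$k$ theorem, and the paper's one-line proof does not address these parity-dependent changes either.
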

\begin{proof}
The proof of this theorem is the same as in the theorem \ref{thm:k_odd_non_cl_motive_iner_inv}.
\end{proof}

\bibliographystyle{alpha}
\bibliography{cite}

\end{document}